\title{Metastability for the Ising model on the hexagonal lattice} 
\author{Valentina Apollonio}
\address[Valentina Apollonio]{Dipartimento di Matematica e Fisica, 
              Università Roma Tre}
\author{Vanessa Jacquier}
\address[Vanessa Jacquier]{Dipartimento di Matematica ``Ulisse Dini'',
              Università degli studi di Firenze
}
\author{Francesca Romana Nardi}
\address[Francesca Romana Nardi]{Dipartimento di Matematica ``Ulisse Dini'',
              Università degli studi di Firenze and 
              Faculteit Wiskunde en Informatica, 
              Technische Universiteit Eindhoven}
\author{Alessio Troiani}
\address[Alessio Troiani]{Dipartimento di Matematica ``Tullio Levi-Civita'',
              Università degli Studi di Padova
}
\keywords{Ising model;
          Metastability; 
          Low temperature stochastic dynamics;
          Large deviations; Potential theory; 
          Hexagonal lattice;
          Polyiamonds
} 
\subjclass{60J10; 60J45; 82C20; 05B45} 
\newtheorem{theorem}{Theorem}[section]
\newtheorem{corollary}[theorem]{Corollary}
\newtheorem{lemma}[theorem]{Lemma} 
\newtheorem{definition}[theorem] {Definition}
\newtheorem{condition}[theorem] {Condition}
\newtheorem{remark}[theorem]{Remark}
\newtheorem{proposition}[theorem]{Proposition}
\newtheorem{notation}[theorem]{Notation}
\newtheorem{alg}[theorem]{Algorithm}
\newcommand{\area}[1]{\lVert #1 \rVert}
\newcommand{\areamax}[1]{A^{\max}(#1)}
\newcommand{\quasiregularset}{\ensuremath{\mathcal{Q}}}
\begin{document}

\begin{abstract}
We consider the Ising model on the hexagonal lattice 
evolving according to Metropolis dynamics. 
We study its metastable behavior in the limit 
of vanishing temperature when the system is immersed in
a small external magnetic field. We determine
the asymptotic properties of the transition time from the 
metastable to the stable state up to a 
multiplicative factor and study the mixing time and 
the spectral gap of the Markov process. 
We give a geometrical description of the critical configurations 
and show how not only their size but their shape varies depending 
on the thermodynamical parameters. 
Finally we provide some results concerning polyiamonds 
of maximal area and minimal perimeter.
\end{abstract}

\maketitle

\section{Introduction}
A thermodynamical system, subject to a \emph{noisy dynamics}, exhibits metastable behavior
when it remains for a long time in the vicinity of a state that is a local minimum of the energy before reaching a more stable state through a sudden transition. On a short time scale, the system behaves as if it were in equilibrium whereas, on a long time scale, it moves between regions of its state space.
This motion, linked to first order phase transitions, is triggered by the appearance of a \emph{critical} microscopic configuration of the system via a spontaneous fluctuation or some external perturbation. 

Several termodynamical systems, such as magnets immersed into an external magnetic field,
supercooled liquids or supersaturated gases, may show metastability. However this phenomenon is
not exclusive of thermodynamical systems, but it appears in a plethora of diverse fields including
biology, chemistry, computer science, economics.

Given the peculiar features of metastability outlined above, when studying the metastable behavior
of a system one is typically interested in studying the properties of the \emph{transition time} towards the stable state, the features of the critical configurations and the characterization of typical paths along which the transition takes place.
This investigation has been carried over, in the
literature, using mainly two different approaches:
pathwise (\cite{cerf2013nucleation,dehghanpour1997metropolis,kotecky1993droplet, manzo2004essential, manzo1998relaxation, manzo2001dynamical, schonmann1994slow, schonmann1998wulff, nardi1996low,  olivieri2005large, jovanovski2017metastability}) and potential
theoretic (\cite{bovier2016metastability, bovier2010homogeneous, gaudilliere2020asymptotic,bashiri2017note}).
More recently, other techniques have been used in \cite{beltran2010tunneling, beltran2012tunneling, gaudillierelandim2014} and in \cite{bianchi2016metastable}.

In the context of metastability
the stochastic Ising model, on the square and on the
cubic lattice, evolving according to Glauber dynamics has been one of the main subjects of investigation
(see, for instance, \cite{arous1996metastability, kotecky1994shapes, neves1991critical, neves1992behavior}).

In this paper we consider the Ising model on the hexagonal lattice at very low temperature, 
with isotropic interactions
and in presence of a weak external magnetic field. We let the system evolve according to Glauber dynamics.

We identify both the stable and metastable state and we study the transition from the metastable state to the stable one.
Since the system is at very low temperature, the transition probabilities of the dynamics are exponentially small. 
We compute the asymptotic expected value of the first hitting time 
of the stable state up to a multiplicative factor and
determine its probability distribution. 
Moreover we provide an estimate 
of the mixing time of the chain and its spectral gap.

These results are obtained leveraging on both
the pathwise and the potential theoretic approach
widely adopted in the literature.
However the shape of the droplets visited by
the typical trajectories from the metastable to
the stable state are heavily dependent on the geometry of the underlying lattice.

We give a 
characterization of the critical configurations triggering the transition and
show that they exhibit a \emph{gate} property. In particular, it is shown that, along with their size, also the shapes of the critical configurations are dependent on the relations between the parameters of the system. This characterization is achieved through a geometrical description of the spin configurations obtained by associating each cluster of spins with a polyiamond, that is a collection of faces of the triangular lattice.

To this end we provide some results concerning polyiamonds of minimal perimeter and maximal area, complementing those already present in the literature (e.g. \cite{FuSie,nagy2013isoperimetrically, davoli2017sharp}).
In particular we show that quasi-hexagonal polyiamonds are the unique shapes that maximize the area for fixed perimeter and minimize perimeter for fixed area for different notions of perimeter.
Recently, some of the authors of this paper studied a class of parallel dynamics (shaken dynamics in \cite{apollonio2019shaken,apollonio2019criticality}) connecting the Ising model on arbitrary graph and the Ising model on suitable bipartite graph. In particular, it has been shown how the shaken dynamics on the square lattice induces a collection of parallel dynamics on a family of Ising models on the hexagonal lattice with non-isotropic interaction where the spins in each of the two partitions are alternatively updated. This work, therefore, wants to serve also as a springboard to tackle the study of metastability for parallel dynamics on a whole family of hexagonal lattices analogously to what has been done for the homogeneous square lattice in \cite{cirillo2002note, cirillo2003metastability}, see also \cite{bet2020effect, cirillo2008competitive, cirillo2008metastability, nardi2012sharp} for other examples.

In Section~\ref{sec:model} we define the model and provide
the main results. Section~\ref{geom1} is devoted to
the description of the Ising configuration
in terms of clusters and these are linked to polyiamonds.
In Section~\ref{recurrenceproperty} we prove the theorems concerning the recurrence of the system to either the
stable or the metastable state whereas in
Section~\ref{bound} we identify the reference path.
The other theorems linked to the metastable
behavior of the system are proven in Section~\ref{sec:other_theorems}.
Finally, in Section~\ref{geom2}, we give our results concerning polyiamonds with the intent of providing 
a self contained set of tools that may be of use whenever
the volume-surface competition plays a role in determining
the properties of a statistical mechanics system living
on the hexagonal lattice.

\section{Model description and main results}\label{sec:model}
\subsection{Ising model on hexagonal lattice}
Consider the discrete hexagonal lattice $\mathbb{H}^2$ embedded in $\mathbb{R}^2$ and let $\mathbb{T}^{2}$ be its dual ($\mathbb{T}^{2}$ is, therefore, a triangular lattice). Two sites of the discrete hexagonal lattice are said to be \emph{nearest neighbors} when they share an edge of the lattice, see Figure \ref{neighbors}. 

Let $\Lambda$ be the subset of $\mathbb{H}^{2}$ obtained by
cutting a parallelogram of side length $L$ along two of the coordinate
axes of the triangular lattice. On $\Lambda$ defined as such we impose periodic
boundary conditions.  Note that $\Lambda$ contains $2L^2$ sites.
To each site $i\in\Lambda$ we associate a spin variable $\sigma(i)\in \{-1,+1\}$. We interpret $\sigma(i)=+1$ (respectively $\sigma(i)=-1$) as indicating that the spin at site $i$ is pointing upwards (respectively downwards). 
\begin{figure}[htb!]
\centering
    \includegraphics[scale=0.8]{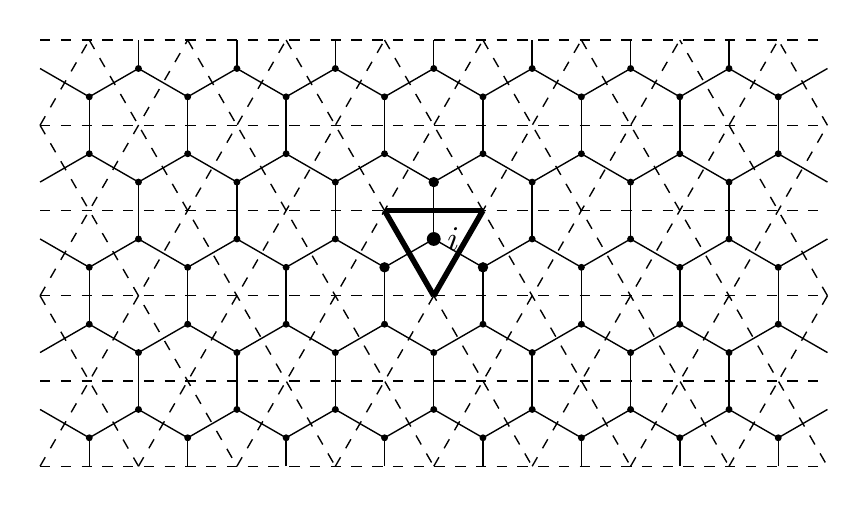}
    \caption{The solid lines show the hexagonal lattice,
            whereas the dashed lines show its dual, the triangular lattice. The solid triangle
            highlights the triangular face centered
            at site $i$. The thicker vertices are the
            nearest neighbors of site $i$ on the 
            hexagonal lattice.
    }
  \label{neighbors}
    \end{figure}
On the \emph{configuration space} $\mathcal{X}:=\{-1,+1\}^{\Lambda}$ we consider the \emph{Hamiltonian function} $H: \mathcal{X} \longrightarrow \mathbb{R}$ defined as
\begin{equation}\label{hamiltonianFunction}
H(\sigma):=-\frac{J}{2}\sum_{\substack{i,j \in \Lambda\\ d(i, j)=1}} \sigma (i) \sigma (j) -\frac{h}{2} \sum_{i \in \Lambda} \sigma (i),
\end{equation} 
where $J>0$ represents the ferromagnetic interaction between two spins, $h>0$ is the external magnetic field and $d(\cdot, \cdot)$ is the lattice distance on $\mathbb{H}^2$. 
We will consider the case $h \in (0,1)$ 
where, as it will be shown, the system exhibits a metastable behavior.
Throughout the paper we will assume that $\frac{J}{2h}-\frac{1}{2}$ is not integer.

We consider a Markov chain  $(X_t)_{t \in \mathbb{N}}$ on $\mathcal{X}$ 
defined via the so called \emph{Metropolis Algorithm}.
The transition probabilities of this dynamics are given by
\begin{equation}\label{transitionprob}
    P(\sigma, \eta)=q(\sigma,\eta) e^{-\beta[H(\eta) -H(\sigma)]_+}, \qquad \text{for all } \sigma \neq \eta,
\end{equation}
where $[\cdot]_+$ denotes the positive part and $q(\sigma,\eta)$ is a connectivity matrix
independent of $\beta$, defined, for all $\sigma \neq \eta$, as
\begin{equation}
    q(\sigma,\eta)= \left\{
    \begin{array}{ll}
    \frac{1}{|\Lambda|} & \;\;\textrm{ if } \exists \; x\in \Lambda: \sigma^{(x)}=\eta\\
    0& \;\;\textrm{ otherwise }
    \end{array}
    \right.
\end{equation}
where
\begin{equation}
    \sigma^{(x)}(z)= \left\{
    \begin{array}{ll}
        \sigma(z) & \;\;\textrm{ if } z \neq x\\
        -\sigma(x) & \;\;\textrm{ if } z = x
        \end{array}
    \right.
\end{equation}

Table \ref{Tab} shows all possible single spin flip probabilities.

\begin{table}[H]
    \centering
    \includegraphics[scale=0.8]{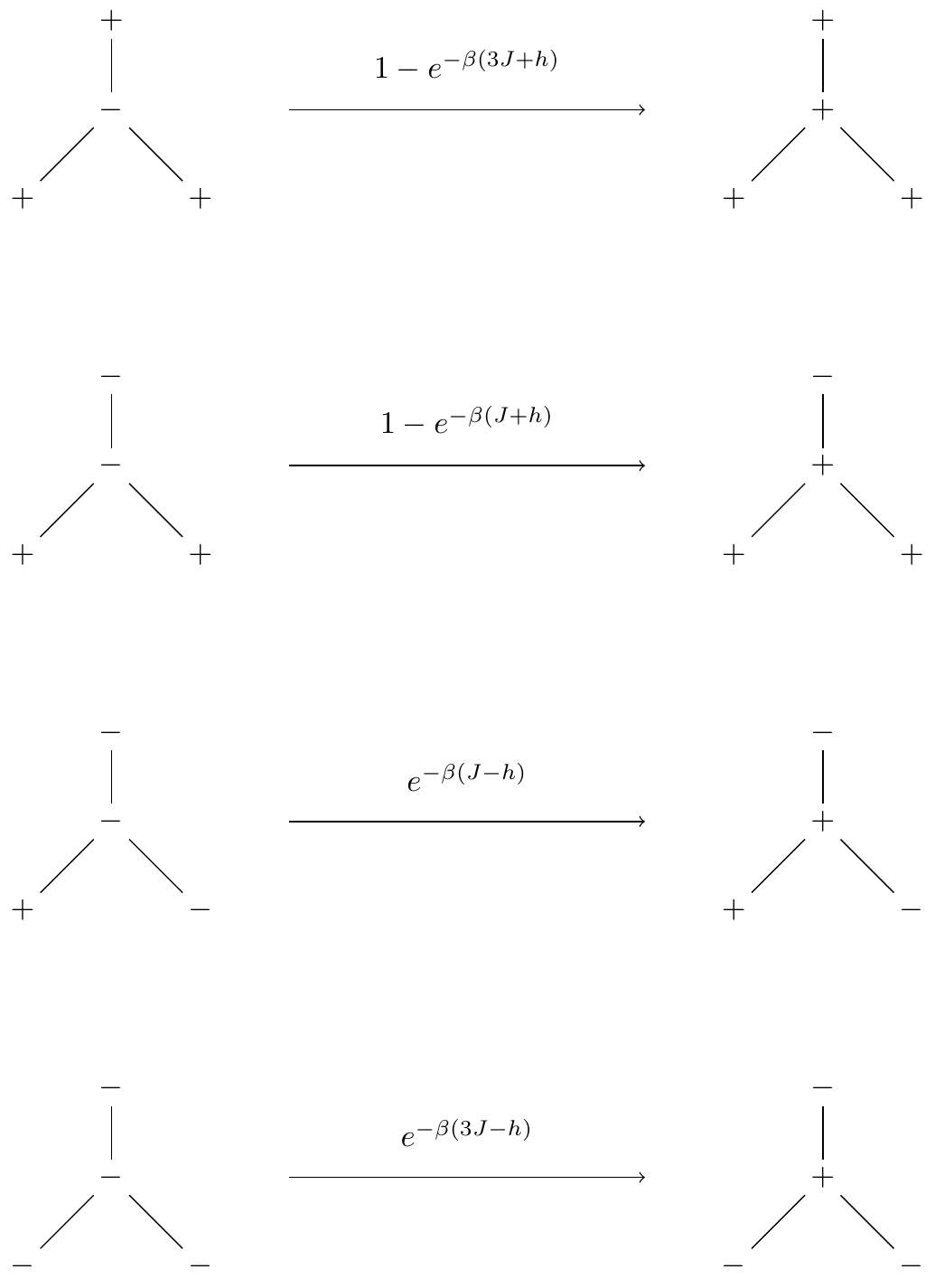}
    \caption{
        Transition probabilities when the local configuration
        on the right is obtained from the local configuration on the left by changing the value
        of the central spin for all possible values of the 
        neighboring spins. The probability that the change happens at the site at the center is uniform on all sites of $\Lambda$.
       }
    \label{Tab}
\end{table}

It is possible to check that $(X_t)_{t \in \mathbb{N}}$  is an ergodic-aperiodic Markov chain  on $\mathcal{X}$  satisfying the detailed balance condition
\begin{equation}\label{reversibility}
    \mu(\sigma)P(\sigma, \eta)=\mu(\eta)P(\eta, \sigma),
\end{equation}
with respect to the Gibbs measure 
\begin{equation}\label{def:gibbs}
    \mu(\sigma)=\frac{e^{-\beta H(\sigma)}}{\sum_{\eta \in \mathcal{X}} e^{-\beta H(\eta)}},
\end{equation}
where $\beta:=\frac{1}{T} >0$ is the inverse temperature.

Let
\begin{itemize}[label=\raisebox{0.25ex}{\tiny \textbullet}]
    \item $\underline{+1}$ be the configuration $\sigma$ such that $\sigma(i)=+1$ for every $i\in\Lambda$;
    \item $\underline{-1}$ be the configuration $\sigma$ such that $\sigma(i)=-1$ for every $i\in\Lambda$;
\end{itemize}
and assume, in the remainder,  $J\gg h>0$, for $h$ fixed. Under periodic boundary conditions, the energy of these configurations is, respectively
\begin{itemize}[label=\raisebox{0.25ex}{\tiny \textbullet}]
    \item $H(\underline{+1}) = -2{L^2}(3J+h)$,
    \item $H(\underline{-1}) = -2{L^2}(3J-h)$.
\end{itemize}

It is straightforward to check that
$\underline{+1}$ maximizes both
sums in \eqref{hamiltonianFunction} and, consequently,
we have the following

\begin{lemma}\label{statostabile} $\underline{+1}$ is the global minimum (or ground state) of the Hamiltonian \eqref{hamiltonianFunction}.
\end{lemma}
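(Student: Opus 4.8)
The plan is to minimize the two sums appearing in \eqref{hamiltonianFunction} separately. Writing $S_1(\sigma):=\sum_{i,j\in\Lambda,\ d(i,j)=1}\sigma(i)\sigma(j)$ and $S_2(\sigma):=\sum_{i\in\Lambda}\sigma(i)$, we have $H(\sigma)=-\tfrac{J}{2}S_1(\sigma)-\tfrac{h}{2}S_2(\sigma)$, and since $J,h>0$ minimizing $H$ is equivalent to maximizing both $S_1$ and $S_2$. So it suffices to exhibit a configuration that simultaneously maximizes both, and to check uniqueness.

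For $S_2$ this is immediate: $\sigma(i)\le 1$ for every site, with equality if and only if $\sigma(i)=+1$, hence $S_2(\sigma)\le|\Lambda|$ with equality exactly at $\sigma=\underline{+1}$. For $S_1$, each product obeys $\sigma(i)\sigma(j)\le 1$, so $S_1(\sigma)$ is bounded above by the number of pairs $i,j\in\Lambda$ with $d(i,j)=1$; this bound is attained by both $\underline{+1}$ and $\underline{-1}$ (and, invoking that $\Lambda$ with periodic boundary conditions is connected in the nearest-neighbor graph, only by these two constant configurations). In particular $S_1(\sigma)\le S_1(\underline{+1})$ for every $\sigma\in\mathcal{X}$.

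Combining the two bounds, for every $\sigma\in\mathcal{X}$,
\[
H(\sigma)=-\tfrac{J}{2}S_1(\sigma)-\tfrac{h}{2}S_2(\sigma)\ \ge\ -\tfrac{J}{2}S_1(\underline{+1})-\tfrac{h}{2}|\Lambda|=H(\underline{+1}),
\]
and if $\sigma\neq\underline{+1}$ then $S_2(\sigma)<|\Lambda|$ strictly while still $S_1(\sigma)\le S_1(\underline{+1})$, so the inequality is strict; thus $\underline{+1}$ is in fact the unique global minimum, with energy $H(\underline{+1})=-2L^2(3J+h)$ as recorded above. There is no real obstacle here: it is the elementary observation that the ferromagnetic term alone is minimized exactly on the two uniform configurations $\{\underline{+1},\underline{-1}\}$ while the magnetic term breaks the tie in favour of $\underline{+1}$; the only mild point is the appeal to connectivity of $\Lambda$ when identifying the maximizers of $S_1$, and that is not even needed for the uniqueness claim, which follows from the magnetic term alone.
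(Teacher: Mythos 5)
Your proof is correct and follows exactly the route the paper takes: the paper simply notes that $\underline{+1}$ maximizes both sums in \eqref{hamiltonianFunction} separately, which is precisely your decomposition into $S_1$ and $S_2$. You merely spell out the details (and add the uniqueness observation via the magnetic term), so there is nothing to object to.
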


In the remainder we will show that $\underline{-1}$ is the unique \emph{metastable} state, that is the  \emph{deepest local minimum} of the Hamiltonian (see Theorem \ref{Identification}).

\subsection{Metastability: definitions and notation}\label{defandnot}
The problem of metastability is the study of the first arrival of the process 
$(X_t)_{t \in \mathbb{N}}$
to the set of the stable states, corresponding to the set of absolute minima of $H$, starting from an initial local minimum. Local minima can be ordered in terms of their increasing stability level, i.e., the height of the barrier separating them from lower energy states. More precisely, for any $\sigma \in \mathcal{X}$, let $\mathcal{I}_{\sigma}$ be the set of configurations with energy strictly lower than $H(\sigma)$, i.e.,
\begin{equation}\label{I}
\mathcal{I}_{\sigma}:=\{\eta\in \mathcal{X} \,|\, H(\eta)<H(\sigma)\}.
\end{equation}
Let $\omega=\{\omega_1,\ldots,\omega_n\}$ be a finite sequence of configurations in $\mathcal{X}$, where, for each $k$ from $1$ to $n-1$, $\omega_{k+1}$ is obtained from $\omega_k$ by a single spin flip. We call $\omega$ a \emph{path} with starting configuration $\omega_1$ and final configuration $\omega_n$ and we denote the set of all these paths as $\Theta(\omega_1,\omega_n)$. We indicate the length of $\omega$ as $|\omega|=n$. We call \emph{communication height} between two configurations $\sigma$ and $\eta$ the maximal height along the minimal path in $\Theta(\sigma,\eta)$, i.e.,
\begin{equation}\label{minmax}
\Phi(\sigma,\eta):=\min_{\omega\in\Theta(\sigma,\eta)}\max_{\zeta \in \omega} H(\zeta).
\end{equation}
By $\Phi(\omega)$ we denote the communication height along the path 
\mbox{$\omega=\{\omega_1,\ldots,\omega_n\}$}, i.e. \linebreak 
\mbox{$\Phi(\omega)=\max_{i=1,\ldots,n} H(\omega_i)$}. 
Similarly, we also define the \emph{communication height} 
between two sets $A, B \subset \mathcal{X}$ as
\begin{equation}
\Phi(A,B):=\min_{\sigma \in A,\eta \in B} \Phi(\sigma,\eta).
\end{equation}
Now we are able to formally define the stability level as
\begin{equation}
V_{\sigma}:=\Phi(\sigma,\mathcal{I}_{\sigma})-H(\sigma).
\end{equation}
If $\mathcal{I}_{\sigma}$ is empty, then we define $V_{\sigma}=\infty$. We denote by $\mathcal{X}^s$ the set of global minima of the energy, and we refer to these as ground states. To define the set of metastable states, we introduce the \emph{maximal stability level} 
\begin{equation}\label{Gamma}
    \Gamma_m:=\max_{\sigma\in \mathcal{X}\setminus \mathcal{X}^s}V_{\sigma}.
\end{equation}

The metastable states are those that attain the maximal stability level $\Gamma_m< \infty$, that is 

\begin{equation}\label{Xm}
    \mathcal{X}^m:=\{y\in \mathcal{X}| \, V_{y}=\Gamma_m \}.
\end{equation}

Since the metastable states are defined in terms of their stability level, a crucial role in our proofs is played by the set of all configurations with stability level strictly greater than $V$, that is 

\begin{equation}\label{Xv}
\mathcal{X}_V:=\{x\in \mathcal{X} \,\, | \,\, V_{x}>V\}.
\end{equation}

We frame the problem of metastability as the identification of metastable states 
and the computation of transition times from the metastable states to the stable ones. 
To study the transition between $\mathcal{X}^m$ and $\mathcal{X}^s$, 
we define the \emph{first hitting time} of 
$A\subset \mathcal{X}$ starting from $\sigma \in \mathcal{X}$
\begin{equation}\label{fht}
    \tau^{\sigma}_A:=\inf\{t>0 \,|\, X_t\in A\}.
\end{equation}
Whenever possible we shall drop the superscript denoting the
starting point $\sigma$ from the notation and we denote by $\mathbb{P}_{\sigma}(\cdot)$ and $\mathbb{E}_{\sigma}[\cdot]$ respectively the probability and the average along the trajectories of the process started at $\sigma$. 

Now we define formally the \emph{energy barrier} $\Gamma$ as
\begin{equation}
    \Gamma:=\Phi(m, s)-H(m) \qquad \text{with } m \in \mathcal{X}^m, \, s \in \mathcal{X}^s.
\end{equation}
 In what follows we consider the set of paths realizing the minimal value of the maximal energy in the paths between
 any metastable state and the set of the stable states. 
 To this end, we define the set of \emph{optimal paths}. 
\begin{definition}
    We write $(\mathcal{A} \to \mathcal{B})_{opt}$ to denote 
    the set of \emph{optimal paths}, i.e., the set of all paths 
    from $\mathcal{A}$ to $\mathcal{B}$ realizing the min-max \eqref{minmax} 
    in $\mathcal{X}$ between $\mathcal{A}$ and $\mathcal{B}$.
\end{definition}
Another basic notion is the set of \emph{saddles} defined as 
the set of all maxima in the optimal paths between two configurations.  
\begin{definition} 
The set of \emph{minimal saddles} between $\sigma, \eta \in \mathcal{X}$ is defined as
\begin{equation}
    \mathscr{S}(\sigma,\eta):=\{\zeta \in \mathcal{X} \, | \, \exists \, \omega:\sigma \to \eta, \, \omega \ni \zeta \text{ such that } \max_{\xi \in \omega} H(\xi)=H(\zeta)=\Phi(\sigma,\eta)\}.
\end{equation}
\begin{equation}
\mathscr{S}(\mathcal{A},\mathcal{B}):= \bigcup_{\substack{\sigma \in \mathcal{A}, \, \eta \in \mathcal{B}: \\ \Phi(\sigma,\eta)=\Phi(\mathcal{A},\mathcal{B})}} \mathscr{S}(\sigma,\eta).
\end{equation}
\end{definition}
We focus on the subsets of saddles that are typically visited during the last excursion from a metastable state to the set of the stable states. To this end, we introduce the \emph{gates} from metastability to stability, defined as the subsets of $\mathscr{S}$ visited by all the optimal paths. More precisely,
\begin{definition}\label{gates}
Given a pair of configurations $\sigma, \eta \in \mathcal{X}$, we say that $\mathcal{W} \equiv \mathcal{W}(\sigma, \eta)$ is a \emph{gate} for the transition from $\sigma$ to $\eta$ if $\mathcal{W}(\sigma, \eta) \subseteq \mathscr{S}(\sigma,\eta)$ and $\omega \cap \mathcal{W} \neq \emptyset$ for all $\omega \in (\sigma \to \eta)_{opt}$.  
\end{definition}
Moreover,
\begin{definition}\label{minimalgate}
A gate $\mathcal{W}$ is a \emph{minimal gate} for the transition from $\sigma$ to $\eta$ if for any $\mathcal{W}' \subset \mathcal{W}$ there exists $\omega' \in (\sigma \to \eta)_{opt}$ such that $\omega' \cap \mathcal{W}' = \emptyset$. 

For a given pair $\eta, \eta'$, there may be several disjoint minimal gates. We denote by $\mathscr{G}(\eta, \eta')$ the union of all minimal gates:
\begin{equation}
    \mathscr{G}(\eta,\eta'):= \bigcup_{\mathcal{W}: \, \text{minimal gate for } (\eta, \eta')} \mathcal{W}
\end{equation}
Obviously, $\mathscr{G}(\sigma,\sigma') \subseteq \mathscr{S}(\sigma,\sigma')$ and $\mathscr{S}(\sigma,\sigma')$ is a gate (but in general it is not minimal). The configurations $\xi \in \mathscr{S}(\eta,\eta') \setminus \mathscr{G}(\eta,\eta')$ (if any) are called \emph{dead ends}.
\end{definition}
In words, a minimal gate is a minimal (by inclusion) subset of $\mathscr{S}(\sigma,\eta)$ that is visited by all optimal paths.
The configurations in the \emph{minimal gates} have the physical meaning of \emph{critical configurations} and are central objects both from a probabilistic and from a physical point of view.

To study the function $H(\sigma)$ it is convenient
to associate to each configuration
$\sigma \in \mathcal{X}$
certain geometrical objects and, then, to study
their properties.

To this end, recall that $\mathbb{H}^2$ is the discrete hexagonal lattice embedded in $\mathbb{R}^2$ and denote
by $\mathbb{T}^2$ its dual, i.e. $\mathbb{T}^2$ is the discrete triangular lattice embedded in
$\mathbb{R}^2$.
Given a configuration $\sigma \in \mathcal{X}$, consider the set $C(\sigma) \subseteq \Lambda$ defined as the union of the closed triangular faces centered at sites $i$ with the boundary contained in $\mathbb{T}^2$ and such that $\sigma(i)=+1$ (see Figure~\ref{neighbors}) and look at the maximal connected components $C_{1}, \ldots, C_{m}, m \in \mathbb{N},$ of $C(\sigma)$.
If a maximal connected component wraps around the torus it is called a \emph{plus strip}, otherwise it is called a \emph{cluster (of pluses)}.
This construction leads to a bijection that associates to each configuration a collection of its clusters and plus strips. 
Likewise, other geometrical objects may be associated to a configuration $\sigma$ by
considering the connected components of triangular faces centered at the sites of the lattice 
with spin value minus one. 
Among these, there could be a connected component which contains two or three lines that wrap 
around the torus parallel to the coordinate axes of $\mathbb{T}^2$. 
If this is the case, the component is called a \emph{sea of minuses}. 
Similarly, if there is only one line that wraps around the torus we call it a \emph{minus strip}. 
The other connected components of triangular faces centered at minus spins are called \emph{holes}. 

Given a configuration $\sigma \in \mathcal{X}$ we denote by $\gamma(\sigma)$ its Peierls contour that is the boundary of the clusters. 
Note that Peierls contours live on the dual lattice $\mathbb{T}^2$ and are the union of 
piecewise linear curves separating spins with opposite sign in $\sigma$. 
In particular in each dual vertex there are 0, 2, 4, 6 dual bonds contained in $\gamma(\sigma)$. 

In this setting, it is immediate to see that for each configuration $\sigma$ we have
\begin{align}\label{eq:peierls_hamiltonian}
    H(\sigma)-H(\underline{-1})=J|\gamma(\sigma)|-h N^{+}(\sigma),
\end{align}
where
\begin{align}\label{eq:number_of_pluses}
    N^{+}(\sigma)=\sum_{x \in \Lambda}\frac{\sigma(x)+1}{2},
\end{align}
represents the number of plus spins.
In this way the 
energy of each configuration is associated to the
area and the length of the boundary of a
suitable collection of triangular faces.

Call $r^*$ the \emph{critical radius}:
\begin{equation}\label{raggiocritico}
    r^*:=\big\lfloor \frac{J}{2h}-\frac{1}{2} \big\rfloor, 
\end{equation}
and let $\delta \in (0,1)$ be the fractional part of $\frac{J}{2h}-\frac{1}{2}$, that is
\begin{equation}
    \delta = \frac{J}{2h}-\frac{1}{2} - r^*.
\end{equation}
We will show that for our model the energy barrier $\Gamma$ is equal to
\begin{equation}\label{GammaH}
    \Gamma^{Hex}:=
    \begin{cases}
        -6{r^*}^2h+6r^*J-10r^*h+7J-5h & \text{if } 0<\delta<\frac{1}{2} \\
        -6(r^*+1)^2h+6(r^*+1)J-2(r^*+1)h+3J-h & \text{if } \frac{1}{2}<\delta<1
    \end{cases}
\end{equation}

The value of $\Gamma^{Hex}$ is obtained by 
computing the energy of the \emph{critical configurations}.
We will see that these configurations consist of a cluster having a shape
that is close to a hexagon of radius $r^*$ and, in particular,
we will compute the $\emph{critical area}$ to be
\begin{align}\label{areacritica}
        A^*_1=& 6{r^*}^2+10r^*+5 & \text{if } 0<\delta<\frac{1}{2}, \notag \\
        A^*_2=& 6(r^*+1)^2+2(r^*+1)+1 & \text{if } \frac{1}{2}<\delta<1.
\end{align}
\begin{figure}[htb!] 
\centering \includegraphics[scale=0.5]{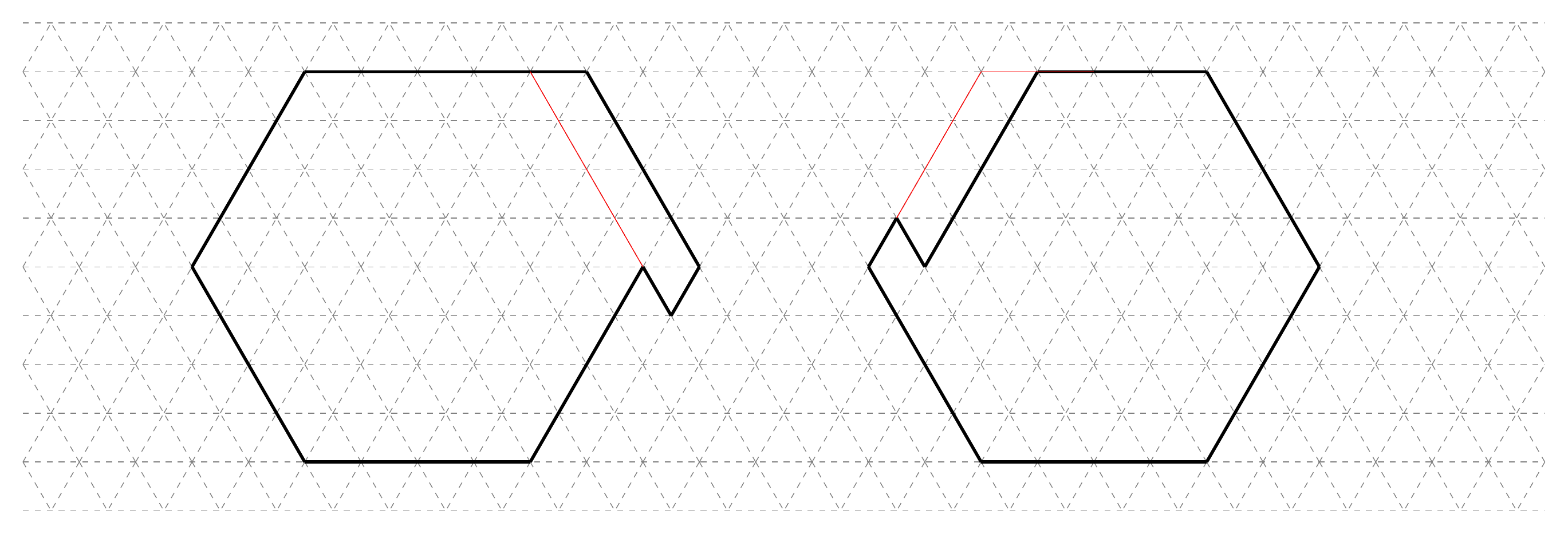} 
\caption{On the left, $S(A^*_2)$ with $A^*_2=6(r^*+1)^2+2(r^*+1)+1$ and $\frac{1}{2}<\delta<1$. On the right, $S(A^*_1)$ with $A^*_1=6{r^*}^2+10r^*+5$ and $0<\delta<\frac{1}{2}$.} \label{STANDARDcritico}
\end{figure}

\subsection{Main results}
Our results concerning the metastable behavior of the model are given
under the assumption that the torus is large compared to the size
of the critical clusters. More precisely, throughout the paper we assume
the following
\begin{condition}\label{condizionetoro}
The magnetic field $h$,  the ferromagnetic interaction $J$ and the torus $\Lambda$ are such that $0<h<1$, $J \geq 2h$, $\frac{J}{2h}-\frac{1}{2}$ is not integer, and $|\Lambda| \geq (\frac{4J}{h})^2$ is finite.
\end{condition}
Note that the assumption $\frac{J}{2h}-\frac{1}{2}$ not integer
is made so to avoid strong degeneracy of the critical configurations.
Similar assumptions are common in literature (see e.g., \cite{arous1996metastability, boviermanzo2002metastability, cirillo2008metastability, cirillo2013relaxation}).
Further observe that Theorems~\ref{Identification},~\ref{teotime'}, \ref{TMIX} and  Remark~\ref{transitiontime}
are expected to hold
also in the case of free boundary conditions.
As far as Theorem~\ref{thm:sharp_estimate} is concerned, then the estimate of the prefactor is a bit more delicate.However it is reasonable to think that an analogous result holds, at least asymptotically, as $|\Lambda \to \infty|$ (independently of $\beta$).

We say that a function $\beta \mapsto f(\beta)$ is super exponentially small (SES) if $$\lim_{\beta\to\infty}\frac{\log{f(\beta)}}{\beta}=-\infty.$$
With this notation we can state our first theorem concerning the recurrence of 
the system to either the state $\underline{-1}$ or $\underline{+1}$.

\begin{proposition}\label{teoRP} (Recurrence property). If $V^*=2J$, then \mbox{$\mathcal{X}_{V^*}=\{ \underline{-1}, \underline{+1} \}$} and for any $\epsilon>0$, the function
\begin{align}\label{recurrence2J}
    \beta \mapsto \sup_{\sigma \in \mathcal{X}} \mathbb{P}_{\sigma}(\tau_{\mathcal{X}_{V^*}}> e^{\beta(V^*+\epsilon)})
\end{align}
is SES.
\end{proposition}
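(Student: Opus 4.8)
### Proof Proposal

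\textbf{Strategy overview.} The claim has two parts: the identification $\mathcal{X}_{V^*} = \{\underline{-1},\underline{+1}\}$ when $V^* = 2J$, and the SES estimate on the hitting time of this set. The standard route (see e.g. \cite{manzo2004essential, olivieri2005large}) is to prove a uniform lower bound on the stability level of every configuration outside $\{\underline{-1},\underline{+1}\}$, namely $V_\sigma \le 2J$ with the bound being attained only below the level $V^*+\epsilon$ in a controlled sense, and then invoke a general recurrence lemma that converts such a bound into the SES decay of \eqref{recurrence2J}. So the plan is: first establish the geometric/energetic fact that from any $\sigma \notin \{\underline{-1},\underline{+1}\}$ there is a path to a strictly lower-energy configuration whose maximal energy increase along the way is at most $2J$; then quote the recurrence machinery.

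\textbf{Step 1: the energy-landscape lemma.} I would show that for every $\sigma \notin \{\underline{-1},\underline{+1}\}$ one has $V_\sigma \le 2J$, i.e. $\Phi(\sigma, \mathcal{I}_\sigma) - H(\sigma) \le 2J$. Using the Peierls representation \eqref{eq:peierls_hamiltonian}, $H(\sigma) - H(\underline{-1}) = J|\gamma(\sigma)| - h N^+(\sigma)$, a single spin flip changes $N^+$ by $\pm 1$ and changes $|\gamma(\sigma)|$ by an amount in $\{-6,-4,-2,0,2,4,6\}$ depending on the local spin environment (this is exactly the content of Table~\ref{Tab}). The worst single-step energy cost is flipping an isolated minus into a plus surrounded by minuses — which costs $6J - h$ — but such a move is never forced: if $\sigma$ contains a plus cluster one can instead \emph{shrink} it, and if $\sigma = \underline{-1}$ we are done by hypothesis. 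The key geometric observation is that any cluster of pluses that is not the whole torus has a corner, i.e. a plus site whose flip to minus decreases $|\gamma|$ by at least $2$ (removing a triangular face with at least two boundary edges on the contour); flipping it costs at most $2J - (-h)$... more precisely, removing such a site changes the energy by $-J\cdot(\text{edges removed}) + J\cdot(\text{edges created}) + h$, and by choosing a suitable extremal site (a "corner" of the polyiamond, guaranteed to exist since the polyiamond is finite and bounded) the net contour change is $\le 0$ while $N^+$ drops by $1$, so the step is actually \emph{downhill} or costs at most $2J$. Iterating such moves erases every cluster, and then erasing a sea/strip of minuses — or rather, the symmetric argument starting from a configuration with a large plus region — one reaches a strictly lower energy configuration. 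I would organize this as: (a) if $\sigma$ has at least one minus-site cluster that can be grown from a corner at cost $\le 2J$ reaching lower energy, do that; (b) otherwise $\sigma$ is "nearly all plus" and one shows directly $H(\sigma) > H(\underline{+1})$ with a path of height $\le H(\sigma)+2J$ to $\underline{+1}$; a careful case analysis on the contour of $\sigma$ closes it. The identity $\mathcal{X}_{V^*} = \{\underline{-1},\underline{+1}\}$ then follows: both global/local minima have $V_\sigma > 2J$ (flipping a single spin in $\underline{\pm1}$ costs $6J \mp h > 2J$ since $J \ge 2h$ and $h<1$, and no single step reaches $\mathcal{I}_\sigma$), while every other $\sigma$ has $V_\sigma \le 2J$.

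\textbf{Step 2: from stability bound to SES recurrence.} This is a black-box application of the classical recurrence result (e.g. \cite[Theorem~3.1]{manzo2004essential} or \cite{cirillo2013relaxation}): if for some $V$ one has $\mathcal{X} \setminus \mathcal{X}_V$ is such that every configuration in it has stability level $\le V$, then $\sup_\sigma \mathbb{P}_\sigma(\tau_{\mathcal{X}_V} > e^{\beta(V+\epsilon)})$ is SES. The proof of that lemma chains together the elementary estimate that, starting from any $\sigma$, within a time $e^{\beta(V+\epsilon/2)}$ the process reaches $\mathcal{I}_\sigma$ with probability exponentially close to $1$ (because the escape from any cycle of depth $\le V$ happens on that timescale), and iterates at most $|\mathcal{X}|$ times (finite, $\beta$-independent) until $\mathcal{X}_V$ is hit. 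Since $V^* = 2J$ and we have shown $V_\sigma \le 2J$ for all $\sigma \notin \{\underline{-1},\underline{+1}\}$, this applies verbatim.

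\textbf{Main obstacle.} The delicate part is entirely in Step 1: proving the sharp bound $V_\sigma \le 2J$ requires showing that \emph{every} non-trivial configuration admits a path, increasing the energy by at most $2J$, to a strictly lower energy state. The subtlety is handling configurations whose plus region is large and complicated (multiple clusters, holes, strips, seas) — one must always be able to find a "good" spin flip (a corner of a cluster to delete, or a corner site to add to a hole) that does not raise the energy by more than $2J$, and then argue the resulting monotone procedure terminates at something below $H(\sigma)$. This is where the polyiamond geometry of Section~\ref{geom1} does the work: the existence of a corner with the right contour behavior is a combinatorial statement about polyiamonds, and the termination/monotonicity argument must be set up so that no intermediate configuration exceeds $H(\sigma) + 2J$. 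Everything else is bookkeeping with \eqref{eq:peierls_hamiltonian} and Table~\ref{Tab}.
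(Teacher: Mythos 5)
Your overall architecture (a uniform stability-level bound $V_\sigma\le 2J$ for all $\sigma\notin\{\underline{-1},\underline{+1}\}$, plus $V_{\underline{\pm 1}}>2J$, followed by a black-box application of the recurrence theorem of \cite{MNOS04}) is exactly the paper's, and your Step~2 is fine. The gap is in the mechanism you propose for Step~1. You claim that every plus cluster has a ``corner'' whose deletion is downhill or costs at most $2J$, and that iterating such deletions erases the cluster. This fails on both counts. First, a boundary face with two contour edges is a $\frac{\pi}{3}$ protuberance; configurations possessing one (or a $\frac{5}{3}\pi$ or $\frac{4}{3}\pi$ angle) can indeed be relaxed at cost at most $J-h$, but after that easy reduction one is left with clusters that are quasi-regular hexagons, regular hexagonal clusters, or strips, whose boundary faces all have a \emph{single} contour edge: deleting any one of them costs $J+h$ uphill (the contour length increases by one), and reaching a strictly lower-energy configuration requires dismantling an entire bar, with running maximum $J+(k-2)h$ where $k$ is the bar length. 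This is below $2J$ only because $k\le 2r^*+1\le J/h$, a bound available only after the cluster has been identified as a subcritical quasi-regular hexagon. Second, and more seriously, for \emph{supercritical} clusters (radius $\ge r^*+1$, which need not be ``nearly all plus'' on a large torus) monotone erasure would force the path through the critical droplet at height $\Gamma^{Hex}$ above $H(\underline{-1})$, far exceeding $H(\sigma)+2J$; the correct move is to \emph{grow} the cluster by attaching a bar, at cost $2J-2h$ for the initial elementary rhombus, and it is this growing direction (specifically the configuration $\mathcal{E}(r^*+1)$, which must acquire one or several full bars before its energy drops) that produces stability levels approaching $2J$ and thus dictates $V^*=2J$. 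Your sketch never identifies this as the binding case.

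The paper closes these gaps with a two-tier argument you would need to reproduce: Lemma~\ref{noX} and Corollary~\ref{corXJh} first show that any configuration with a $\frac{1}{3}\pi$, $\frac{5}{3}\pi$ or $\frac{4}{3}\pi$ internal angle, or with interacting clusters, has stability level at most $J-h$, so that $\mathcal{X}_{J-h}$ consists only of non-interacting hexagonal clusters and strips (the sets $Z,R,U,Y$); Lemma~\ref{EST} then treats each class separately, shrinking subcritical hexagons bar by bar, growing supercritical ones, and handling plus strips of width one and width greater than one as distinct cases (strips wrap the torus and have no corners at all, a case your sketch omits entirely). A small numerical point: on the hexagonal lattice each site has three neighbours, so flipping an isolated spin costs $3J\mp h$, not $6J\mp h$; the conclusion $V_{\underline{-1}}>2J$ survives since $3J-h>2J$, but the slip suggests the energy table underlying the whole case analysis has not been checked.
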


Equation~\eqref{recurrence2J} implies that the system reaches with high probability
either the state $\underline{-1}$ (which is a local minimizer
of the Hamiltonian) or the  ground state in a time shorter than 
$e^{\beta (V^*+\epsilon)}$, uniformly in the starting configuration $\sigma$ for any $\epsilon >0$. In other words we can say that the dynamics speeded up by a 
time factor
of order $e^{\beta V^*}$ reaches with high probability $\{ \underline{-1}, \underline{+1} \}$. In Corollary \ref{XJ-h} we give a geometrical description of $\mathcal{X}_V$ for $V=J-h$ and we discuss the behavior of the speeded up dynamics by a time factor of order $e^{\beta (J-h)}$.

In the next theorem we identify the metastable state and we compute the maximal stability level. Recalling the
Definition of $\Gamma^{Hex}$ in Equation~\eqref{GammaH} we have

\begin{theorem}\label{Identification} (Identification of metastable state)
$\mathcal{X}^m=\{\underline{-1}\}$ and $\Gamma_m=\Gamma^{Hex}$.
\end{theorem}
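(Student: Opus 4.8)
\textbf{Proof proposal for Theorem~\ref{Identification}.}

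The plan is to establish two facts: (i) $\underline{-1}$ has stability level exactly $\Gamma^{Hex}$, and (ii) every configuration $\sigma \in \mathcal{X}\setminus\{\underline{+1},\underline{-1}\}$ has stability level strictly smaller than $\Gamma^{Hex}$. Together with Lemma~\ref{statostabile} (which gives $\mathcal{X}^s=\{\underline{+1}\}$) and the definitions \eqref{Gamma}--\eqref{Xm}, these two facts immediately yield $\Gamma_m=\Gamma^{Hex}$ and $\mathcal{X}^m=\{\underline{-1}\}$.

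For step (i), I would argue that $\mathcal{I}_{\underline{-1}}=\{\underline{+1}\}$: any configuration other than $\underline{-1}$ that has energy $\le H(\underline{-1})$ must in fact be $\underline{+1}$, which follows from the energy representation \eqref{eq:peierls_hamiltonian}, since $H(\sigma)\le H(\underline{-1})$ forces $J|\gamma(\sigma)|\le h N^+(\sigma)$, and the isoperimetric bounds on polyiamonds from Section~\ref{geom2} (maximal area for given perimeter) combined with $h<1\le J$ make this impossible unless $\gamma(\sigma)=\emptyset$, i.e.\ $\sigma=\underline{+1}$ (using $N^+(\underline{+1})=2L^2$ and the torus-size Condition~\ref{condizionetoro}). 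Hence $V_{\underline{-1}}=\Phi(\underline{-1},\underline{+1})-H(\underline{-1})=\Gamma$, and the bulk of the work is to show $\Phi(\underline{-1},\underline{+1})-H(\underline{-1})=\Gamma^{Hex}$. The upper bound $\Phi(\underline{-1},\underline{+1})-H(\underline{-1})\le\Gamma^{Hex}$ comes from exhibiting an explicit reference path from $\underline{-1}$ to $\underline{+1}$ that grows a quasi-hexagonal droplet through the standard configurations $S(A)$ and never exceeds energy $H(\underline{-1})+\Gamma^{Hex}$; this is the content of Section~\ref{bound}. The matching lower bound $\Phi(\underline{-1},\underline{+1})-H(\underline{-1})\ge\Gamma^{Hex}$ requires showing that \emph{every} path from $\underline{-1}$ to $\underline{+1}$ must pass through a configuration of energy at least $H(\underline{-1})+\Gamma^{Hex}$; this is the standard ``isoperimetric bottleneck'' argument: along any such path the number of plus spins increases by one at a time from $0$ to $2L^2$, so the path hits every intermediate volume, and at the critical volume the minimal perimeter of a polyiamond of that area (again via the results of Section~\ref{geom2}) forces the energy \eqref{eq:peierls_hamiltonian} up to the claimed value. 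One must also check that a single spin flip changes $|\gamma(\sigma)|$ by a bounded amount so that no volume is ``skipped'' at the energetically relevant step.

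For step (ii), I would use Proposition~\ref{teoRP}: since $V^*=2J$ and $\mathcal{X}_{V^*}=\{\underline{-1},\underline{+1}\}$, every $\sigma\neq\underline{\pm1}$ has $V_\sigma\le 2J$. Because $J\gg h$, one has $2J<\Gamma^{Hex}$ (indeed $\Gamma^{Hex}$ is of order $J^2/h$), so these configurations cannot realize the maximal stability level. It remains only to rule out that some $\sigma\neq\underline{-1}$ with $\underline{+1}\notin\mathcal{I}_\sigma$ could have $V_\sigma=\Gamma^{Hex}$; but any $\sigma$ with $H(\sigma)>H(\underline{-1})$ can reach a strictly-lower-energy configuration by shrinking its clusters, giving $V_\sigma$ bounded by a local energy barrier of order $J$, while if $H(\sigma)\le H(\underline{-1})$ then $\sigma\in\{\underline{+1},\underline{-1}\}$ by step (i). So the only state with stability level $\Gamma^{Hex}=\Gamma_m$ is $\underline{-1}$.

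The main obstacle is the lower bound in step (i), i.e.\ proving $\Phi(\underline{-1},\underline{+1})\ge H(\underline{-1})+\Gamma^{Hex}$. This is where the geometry of the hexagonal/triangular lattice enters in an essential way: one needs sharp isoperimetric inequalities for polyiamonds (minimal perimeter for given area, and the precise identification of the minimizing quasi-hexagonal shapes), and one must carefully track how the perimeter of the plus-cluster can change under a single spin flip near the critical volume — including the subtle case analysis for $0<\delta<\frac12$ versus $\frac12<\delta<1$, which is precisely why $\Gamma^{Hex}$ has two branches and why the assumption that $\frac{J}{2h}-\frac12$ is not an integer is imposed. The bookkeeping of ``adding a slice'' versus ``protruding a bump'' on a hexagonal droplet, and verifying that the energy profile of the optimal growth path peaks exactly at $\Gamma^{Hex}$, is the technically heaviest part; I would defer the detailed polyiamond estimates to Section~\ref{geom2} and the explicit reference path to Section~\ref{bound}, invoking them here as black boxes.
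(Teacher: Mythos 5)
Your overall architecture coincides with the paper's: the upper bound on $\Phi(\underline{-1},\underline{+1})$ via the reference path of Section~\ref{bound}, the matching lower bound via the fact that every path from $\underline{-1}$ to $\underline{+1}$ must cross the fixed-volume set $\mathcal{V}_{A^*_i}$, whose minimal energy is $H(\underline{-1})+\Gamma^{Hex}$ (Lemma~\ref{lowerbound}), and the estimate $V_\sigma\le 2J<\Gamma^{Hex}$ for every $\sigma\notin\{\underline{-1},\underline{+1}\}$ coming from the recurrence analysis (Lemma~\ref{EST} together with Corollary~\ref{XJ-h}). The paper then concludes by invoking the abstract result \cite[Theorem~2.4]{CNrelax13}, whose two hypotheses are exactly your facts (i) and (ii); you instead try to unfold that last step by hand, and this is where a genuine error appears.

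The claim $\mathcal{I}_{\underline{-1}}=\{\underline{+1}\}$ is false. The set $\mathcal{I}_{\underline{-1}}$ contains \emph{every} configuration of energy strictly below $H(\underline{-1})$, in particular all configurations with a large supercritical droplet: for instance, the configuration $\eta$ equal to $\underline{+1}$ except for a single minus spin has, by \eqref{eq:peierls_hamiltonian}, $H(\eta)-H(\underline{-1})=3J-h(2L^2-1)<0$ under Condition~\ref{condizionetoro}. Your justification fails because the isoperimetric results of Section~\ref{geom2} bound the perimeter from \emph{below} in terms of the area, so the inequality $J|\gamma(\sigma)|<hN^+(\sigma)$ is ruled out only for \emph{small} $N^+(\sigma)$; once $N^+(\sigma)$ is large the inequality is easily satisfied. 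What you actually need is the weaker statement $\Phi(\underline{-1},\mathcal{I}_{\underline{-1}})=\Phi(\underline{-1},\underline{+1})$, and this does follow from the tools you already invoke: the same isoperimetric lower bound shows that every $\eta\in\mathcal{I}_{\underline{-1}}$ has $N^+(\eta)>A^*_i$, hence every path $\underline{-1}\to\eta$ crosses $\mathcal{V}_{A^*_i}$ and must climb to at least $H(\underline{-1})+\Gamma^{Hex}$ --- precisely the bottleneck argument you use for $\Phi(\underline{-1},\underline{+1})$. With that repair (or by citing \cite[Theorem~2.4]{CNrelax13} as the paper does), the proof goes through. Two minor points: the inequality $2J<\Gamma^{Hex}$ should be checked from Condition~\ref{condizionetoro} rather than the informal assumption $J\gg h$ (it does hold, e.g.\ because the reference path already reaches height $4J-2h>2J$ at the elementary rhombus); and the closing sentence of your step (ii), which again relies on $\mathcal{I}_{\underline{-1}}=\{\underline{+1}\}$, is both incorrect as justified and redundant once $V_\sigma\le 2J$ is established for all $\sigma\neq\underline{\pm 1}$.
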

In the next theorems, we give the asymptotic behavior (for $\beta \to \infty$) of the transition time for the system started at the metastable state. In particular, in Theorem \ref{thm:sharp_estimate} we estimate the expected value of the transition time and in Theorem \ref{teotime'} we give its asymptotic distribution. 
\begin{theorem}\label{thm:sharp_estimate} (Sharp estimates of $\tau_{\underline{+1}}$) For $\beta$ large enough, we have
\begin{equation}\label{valoreatteso}
     \mathbb{E}_{\underline{-1}}[\tau_{\underline{+1}}]=\frac{1}{k}e^{\beta\Gamma^{Hex}}(1+o(1)),
\end{equation}
 where
 \begin{equation}
k=
\begin{cases}
5(r^*+1) & \qquad \text{if } \delta \in (0,\frac{1}{2}), \\
10(r^*+1) & \qquad \text{if } \delta \in (\frac{1}{2},1).
\end{cases}
\end{equation}
\end{theorem}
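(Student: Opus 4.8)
The plan is to obtain \eqref{valoreatteso} via the potential-theoretic formula for the mean hitting time,
\[
\mathbb{E}_{\underline{-1}}[\tau_{\underline{+1}}]
= \frac{\mu(\underline{-1})}{\mathrm{cap}(\underline{-1},\underline{+1})}\,\big(1+o(1)\big),
\]
which is the standard reduction once one knows that $\underline{-1}$ is a strict local minimum with the right stability level (Theorem~\ref{Identification}) and that the process is reversible with respect to $\mu$ (see \eqref{reversibility}). The numerator is explicit: $\mu(\underline{-1}) = e^{-\beta H(\underline{-1})}/Z$, and since $\underline{-1}$ is the dominant minimizer after $\underline{+1}$ the partition function contributes $Z = e^{-\beta H(\underline{+1})}(1+o(1))$, so $\mu(\underline{-1}) = e^{-\beta[H(\underline{-1})-H(\underline{+1})]}(1+o(1))$ up to a factor that will cancel against the analogous normalization inside the capacity. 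The whole problem therefore reduces to the sharp asymptotics of $\mathrm{cap}(\underline{-1},\underline{+1})$.

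To estimate the capacity I would use the Dirichlet principle (upper bound on $\mathrm{cap}$ via a well-chosen test function) together with the Thomson/flow principle (lower bound via a well-chosen unit flow), both localized near the gate. The key input is the geometric description of the minimal gate $\mathscr{G}(\underline{-1},\underline{+1})$ already announced in the excerpt: the critical configurations are quasi-hexagons of radius $r^*$ (critical area $A^*_1$ or $A^*_2$ depending on whether $\delta<\tfrac12$ or $\delta>\tfrac12$) together with their "one-flipped-protuberance" neighbours, i.e.\ the saddle is a quasi-hexagonal droplet with a bar on one side plus a single extra protruding unit spin. The combinatorial multiplicity of these saddle configurations and of the relevant moves in and out of them is exactly what produces the prefactor $k$. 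Concretely, I would (i) restrict the Dirichlet form to the edges joining configurations at the bottom of the metastable well region, at the saddle level, and at the top of the stable region, showing the contribution of all other edges is negligible; (ii) compute the effective conductance of the resulting reduced network, which behaves like a series–parallel network whose single "bottleneck" layer consists of the $\Phi$-maximal edges; (iii) count these edges with the correct weight. The number of ways to place the protuberance and the number of unit moves that lower the energy from the saddle give the factor: $5(r^*+1)$ sides-times-positions in the $\delta<\tfrac12$ case and $10(r^*+1)$ in the $\delta>\tfrac12$ case, where the extra factor $2$ reflects the different shape (the critical droplet in the second regime is one radius unit larger, with only a single extra protruding spin rather than a completed bar, doubling the local entropy of the saddle). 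The cancellation of the $\beta$-independent normalizations then yields $\mathrm{cap}(\underline{-1},\underline{+1}) = k\, e^{-\beta[H(\underline{-1})-H(\underline{+1})]} e^{-\beta\Gamma^{Hex}}(1+o(1))$ up to the same factor as in $\mu(\underline{-1})$, giving \eqref{valoreatteso}.

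The main obstacle, and the part requiring genuine care, is the precise determination of the gate structure and of the local geometry of the saddle: one must show that every optimal path passes through a configuration of the stated form, that no other family of configurations at height $\Gamma^{Hex}$ contributes at leading order to the capacity (the "dead ends" of Definition~\ref{minimalgate} must be shown to be dynamically irrelevant), and that the valleys on either side of the gate are "simple" enough that the reduced network really is a single bottleneck layer with a clean conductance. In particular the dependence of the shape on $\delta$ means the counting argument must be carried out separately in the two regimes $0<\delta<\tfrac12$ and $\tfrac12<\delta<1$, and one must verify that the assumption that $\tfrac{J}{2h}-\tfrac12$ is non-integer rules out the degenerate borderline case where both shapes would be simultaneously critical. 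The bookkeeping for the upper and lower bounds on the capacity must match exactly, which is where the explicit value of $k$ is pinned down; everything else (the $\mu(\underline{-1})/Z$ asymptotics, the SES estimates on non-gate edges, the reduction to the Dirichlet/flow principles) is by now standard given Proposition~\ref{teoRP}, Theorem~\ref{Identification}, and the polyiamond isoperimetric results quoted from Section~\ref{geom2}.
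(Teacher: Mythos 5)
Your plan follows essentially the same route as the paper: reduce $\mathbb{E}_{\underline{-1}}[\tau_{\underline{+1}}]$ to the capacity via the standard potential-theoretic identity, bound the capacity from above with the Dirichlet principle using a test function that is constant on the wells and on the gate sets, and from below with a Berman--Konsowa unit flow concentrated on optimal paths, with the gate description of Theorem~\ref{selle} as the key geometric input. The series--parallel reduction you sketch in step (ii) is in fact exactly how the paper pins down $k$: each gate configuration contributes the series combination of its in- and out-conductances, and since $\mathcal{\tilde S}(A^*_i)$ has one neighbour below the saddle level and one above (weight $\tfrac12$) while $\mathcal{\tilde D}(A^*_i)$ has two below and one above (weight $\tfrac23$), the parallel sum $6(r^*+1)\cdot\tfrac12+3(r^*+1)\cdot\tfrac23=5(r^*+1)$ gives $k$ in the regime $\delta\in(0,\tfrac12)$; in the paper this appears as the optimization over the constants $c_1,c_2$ assigned to the two gate families.

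However, your explicit accounting of where $k$ comes from is wrong in two places, and since the constant is the entire content of the theorem this is not cosmetic. First, the factor $2$ between the two regimes does \emph{not} come from the second critical droplet having ``a single extra protruding spin rather than a completed bar'': in both regimes the critical configurations consist of a quasi-regular hexagon ($E_{B_5}(r^*)$ for $\delta<\tfrac12$, $E_{B_1}(r^*+1)$ for $\delta>\tfrac12$) with \emph{two} extra triangular units attached to a longest side, either adjacent (the family $\mathcal{\tilde S}$) or at triangular distance two (the family $\mathcal{\tilde D}$). The doubling occurs because $E_{B_1}(r^*+1)$ has \emph{two} longest sides of length $r^*+2$ on which the pair can sit, whereas $E_{B_5}(r^*)$ has only one, so the cardinalities of both gate families double while the per-configuration weights $\tfrac12$ and $\tfrac23$ are unchanged. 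Second, ``sides-times-positions'' cannot produce $5(r^*+1)$ by a naive count (the natural counts are $6(r^*+1)$ and $3(r^*+1)$ for the two families); the $5$ only emerges after weighting each family by its series conductance as above, and the matching lower bound requires a flow that splits its mass between the $\mathcal{\tilde S}$- and $\mathcal{\tilde D}$-channels in the corresponding proportions (in the paper, $\tfrac{2}{3}$ of the flow through the rhombus configurations and $\tfrac13$ through the distance-two configurations). If you carry out your step (ii) as an honest effective-conductance computation you will recover the right constant, but the heuristic you state for it would not.
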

\begin{theorem}\label{teotime'} (Asymptotic distribution of $\tau_{\underline{+1}}$)

Let $T_{\beta}:= \inf\{ n \geq 1 \, | \, \mathbb{P}_{\underline{-1}}(\tau_{\underline{+1}} \leq n) \geq 1-e^{-1}\}$ 
\begin{equation}\label{Ptime'}
    \lim_{\beta \to \infty} \mathbb{P}_{\underline{-1}}(\tau_{\underline{+1}}>tT_{\beta})=e^{-t}
\end{equation}
and 
\begin{equation}\label{Etime'}
    \lim_{\beta \to \infty} \frac{\mathbb{E}_{\underline{-1}}(\tau_{\underline{+1}})}{T_{\beta}}=1.
\end{equation}
\end{theorem}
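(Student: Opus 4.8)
The plan is to reduce the statement to the single assertion that the rescaled transition time converges to a mean-one exponential, and to obtain the latter from the sharp mean asymptotics of Theorem~\ref{thm:sharp_estimate}, the recurrence estimate of Proposition~\ref{teoRP}, and the identification $\mathcal{X}^m=\{\underline{-1}\}$ of Theorem~\ref{Identification}, via the renewal (loss of memory) mechanism that is by now classical for metastable dynamics (see e.g.\ \cite{manzo2004essential, olivieri2005large, bovier2016metastability}). Set $E_\beta:=\mathbb{E}_{\underline{-1}}[\tau_{\underline{+1}}]$. It suffices to prove
\begin{equation*}
    \frac{\tau_{\underline{+1}}}{E_\beta}\ \xrightarrow[\beta\to\infty]{d}\ \mathrm{Exp}(1)
\end{equation*}
together with convergence of first moments. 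Indeed, since $t\mapsto 1-e^{-t}$ is continuous, strictly increasing, and equals $1-e^{-1}$ exactly at $t=1$, the displayed convergence forces $T_\beta/E_\beta\to 1$, which is \eqref{Etime'}; and then, for every $t\ge 0$,
\begin{equation*}
    \mathbb{P}_{\underline{-1}}(\tau_{\underline{+1}}>tT_\beta)=\mathbb{P}_{\underline{-1}}\!\Big(\tfrac{\tau_{\underline{+1}}}{E_\beta}>t\,\tfrac{T_\beta}{E_\beta}\Big)\ \longrightarrow\ e^{-t},
\end{equation*}
which is \eqref{Ptime'}.

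To prove the exponential law I would exploit that the metastable set is the single configuration $\underline{-1}$ and build a renewal structure on the successive returns to it. Started at $\underline{-1}$, the process performs a sequence of excursions, each ending at the first time it hits $\{\underline{-1},\underline{+1}\}$; by the strong Markov property, and since every excursion departs from the \emph{same} configuration $\underline{-1}$, these excursions are i.i.d., and we let $N_\beta$ be the index of the first one that ends at $\underline{+1}$. By Proposition~\ref{teoRP}, using $\mathcal{X}_{V^*}=\{\underline{-1},\underline{+1}\}$, $V^*=2J$, and $2J<\Gamma^{Hex}$ (which is forced by $\Gamma^{Hex}=\Gamma_m=V_{\underline{-1}}>V^*$), each excursion has length at most $e^{\beta(2J+\epsilon)}$ outside an event whose probability is SES, uniformly in the intermediate configurations; hence the mean unsuccessful excursion length $\bar\ell_\beta$ satisfies $\bar\ell_\beta\le e^{\beta(2J+\epsilon)}$ for $\beta$ large, and, writing $p_\beta$ for the per-excursion success probability, a Wald identity gives $E_\beta\asymp \bar\ell_\beta/p_\beta$, so that $p_\beta\asymp\bar\ell_\beta/E_\beta\le e^{-\beta(\Gamma^{Hex}-2J-\epsilon)}\to 0$. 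Since $N_\beta$ is geometric with vanishing parameter $p_\beta$, $p_\beta N_\beta\xrightarrow{d}\mathrm{Exp}(1)$; and since $\max_{j\le N_\beta}(\text{excursion length})\le e^{\beta(2J+\epsilon)}=o(E_\beta)$, a law of large numbers for the random sum $\tau_{\underline{+1}}=\sum_{j=1}^{N_\beta}(\text{excursion length})$ gives $\tau_{\underline{+1}}=(1+o_{\mathbb{P}}(1))\,N_\beta\,\bar\ell_\beta=(1+o_{\mathbb{P}}(1))\,(p_\beta N_\beta)\,E_\beta$, whence the exponential law. Finally, the geometric tail of $N_\beta$ together with the deterministic bound on excursion lengths yields a uniform estimate $\mathbb{P}_{\underline{-1}}(\tau_{\underline{+1}}>tE_\beta)\le Ce^{-t/C}$, hence uniform integrability and convergence of the first moments.

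Alternatively, within the potential-theoretic framework already used for Theorem~\ref{thm:sharp_estimate}, one may invoke the standard abstract results on asymptotic exponentiality (\cite{bovier2016metastability, bovier2010homogeneous}): the law of $\tau_{\underline{+1}}$ started from the quasi-stationary distribution on the well of $\underline{-1}$ is exactly geometric, and, started at $\underline{-1}$, the process reaches that quasi-stationary distribution with probability $1-o(1)$ before hitting $\underline{+1}$, because the relaxation time inside the well is of order at most $e^{\beta(2J+\epsilon)}\ll e^{\beta\Gamma^{Hex}}$ by Proposition~\ref{teoRP}; together with \eqref{valoreatteso} this again gives both \eqref{Ptime'} and \eqref{Etime'}.

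The hard part is not any individual estimate but making the renewal decomposition rigorous: one needs Proposition~\ref{teoRP} in the strong, uniform form in which it is stated, so as to rule out that after an unsuccessful excursion the process lingers anomalously long near the critical configurations or in a secondary valley before returning to $\underline{-1}$, and so as to control simultaneously the tails of $N_\beta$ and of the excursion lengths. Once the uniform SES recurrence to $\{\underline{-1},\underline{+1}\}$ on the time scale $e^{\beta(2J+\epsilon)}$ is in hand, together with the sharp value $E_\beta=\tfrac1k e^{\beta\Gamma^{Hex}}(1+o(1))$, the remaining steps are routine.
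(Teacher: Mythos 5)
Your proposal is correct and follows essentially the same route as the paper: the paper's proof is a one-line application of \cite[Theorem~4.15]{MNOS04} with $\eta_0=\{\underline{-1}\}$ and $T'_\beta=e^{\beta(V^*+\epsilon)}$, whose hypotheses are exactly the uniform SES recurrence of Proposition~\ref{teoRP} together with $\mathcal{X}_{V^*}=\{\underline{-1},\underline{+1}\}$, and your excursion/renewal decomposition is precisely the mechanism behind that cited abstract theorem. The only cosmetic difference is that you unpack the black box (and optionally lean on Theorem~\ref{thm:sharp_estimate}, which the paper does not need for this step, since only $E_\beta\gg e^{\beta(2J+\epsilon)}$ is required).
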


The two previous Theorems imply the following result.

\begin{remark}\label{transitiontime} (Asymptotic behavior of $\tau_{\underline{+1}}$ in probability) For any $\epsilon>0$, we have
\begin{equation}
    \lim_{\beta \to \infty} \mathbb{P}_{\underline{-1}}(e^{\beta(\Gamma^{Hex}-\epsilon)}< \tau_{\underline{+1}}<e^{\beta(\Gamma^{Hex}+\epsilon)})=1.
\end{equation}
\end{remark}

To prove Theorem~\ref{teotime'} a rather detailed
knowledge of the geometry of the \emph{critical configurations}
is required. However, though the result of Remark~\ref{transitiontime} is weaker, it can be proven
independently of Theorem~\ref{teotime'} without such a detailed
study.

The following theorem gives an estimate of the mixing time and the spectral gap for our model.
\begin{theorem}\label{TMIX} (Mixing time and spectral gap) For any $0<\epsilon<1$ we have

\begin{equation}\label{lim2PCA}
 \lim_{\beta \rightarrow \infty}{\frac{1}{\beta}\log{ t^{mix}_\beta(\epsilon)}}=\Gamma^{Hex},
\end{equation}

 and there exist two constants $0<c_1<c_2<\infty$ independent of $\beta$ such that for every $\beta>0$

\begin{equation}\label{rocompresopca}
 c_1e^{-\beta(\Gamma^{Hex}+\gamma_1)} \leq \rho_{\beta} \leq c_2e^{-\beta(\Gamma^{Hex}-\gamma_2)},
\end{equation}

 where $\gamma_1,\gamma_2$ are functions of $\beta$ that vanish for $\beta\to\infty$, and $\rho_{\beta}$ is the spectral gap.
\end{theorem}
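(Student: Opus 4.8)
The plan is to derive both statements from the general potential-theoretic machinery for metastable Markov chains with exponentially small transition probabilities, using as inputs the recurrence property (Proposition~\ref{teoRP}), the identification of the metastable state and energy barrier (Theorem~\ref{Identification}), and the sharp estimate of $\mathbb{E}_{\underline{-1}}[\tau_{\underline{+1}}]$ (Theorem~\ref{thm:sharp_estimate}). The key structural fact to establish first is that the pair $\{\underline{-1},\underline{+1}\}$ forms the relevant ``metastable partition'': by Lemma~\ref{statostabile} $\underline{+1}$ is the unique ground state, by Theorem~\ref{Identification} $\underline{-1}$ is the unique metastable state with stability level $\Gamma^{Hex}$, and by Proposition~\ref{teoRP} every other configuration has stability level at most $V^*=2J<\Gamma^{Hex}$ (this last inequality should be checked from \eqref{GammaH}, and it holds for $\beta$ large since $r^*\ge 1$ under Condition~\ref{condizionetoro}). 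Consequently $\Gamma^{Hex}$ is simultaneously the communication height $\Phi(\underline{-1},\underline{+1})-H(\underline{-1})$ and strictly exceeds the stability level of any configuration outside $\{\underline{-1},\underline{+1}\}$.

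For the spectral gap \eqref{rocompresopca} I would invoke the standard two-sided bound (see e.g.\ the potential-theoretic results of Bovier--den Hollander or the variational characterization used in that literature): the spectral gap $\rho_\beta$ is controlled above and below by $\mathrm{cap}(\underline{-1},\underline{+1})/\mu(\underline{-1})$ up to polynomial-in-$\beta$ (hence sub-exponential) corrections, provided the ``valley depth'' condition holds, namely that the second-deepest valley is strictly shallower than $\Gamma^{Hex}$ — which is exactly the content of the previous paragraph. Using the Dirichlet-principle estimate $\mathrm{cap}(\underline{-1},\underline{+1}) = e^{-\beta\Phi(\underline{-1},\underline{+1})}e^{\beta H(\underline{-1})}\cdot\Theta(1)$ up to sub-exponential factors (the capacity matching the energy barrier is what underlies Theorem~\ref{thm:sharp_estimate} via $\mathbb{E}_{\underline{-1}}[\tau_{\underline{+1}}]\sim \mu(\underline{+1}\text{-valley})/\mathrm{cap}$), one gets $\rho_\beta = e^{-\beta\Gamma^{Hex}}\cdot e^{\pm\beta\gamma(\beta)}$ with $\gamma(\beta)\to 0$, which is \eqref{rocompresopca}. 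One must be slightly careful to absorb the polynomial prefactors and the $|\Lambda|$-dependence into $\gamma_1,\gamma_2$; since $|\Lambda|$ is fixed this is harmless.

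For the mixing time \eqref{lim2PCA}, the plan is to combine the relaxation-time bounds $ (\rho_\beta^{-1} - 1)\log(2\epsilon)^{-1} \le t^{mix}_\beta(\epsilon) \le \rho_\beta^{-1}\log\big(\epsilon^{-1}\mu_{\min}^{-1/2}\big)$, where $\mu_{\min}=\min_\sigma\mu(\sigma)$ decays only exponentially in $\beta$ (with rate bounded by $\max_\sigma[H(\sigma)-H(\underline{+1})]$, a constant times $\beta$). Taking $\tfrac1\beta\log$ of both sides, the logarithm of the $\mu_{\min}^{-1/2}$ correction contributes $O(1)$ — hence vanishes after dividing by $\beta$ — and \eqref{rocompresopca} pins the exponential rate of $\rho_\beta^{-1}$ to $\Gamma^{Hex}$, giving \eqref{lim2PCA}. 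Alternatively one can bypass the spectral gap for the mixing-time statement and argue directly: the upper bound from Proposition~\ref{teoRP} plus Theorem~\ref{thm:sharp_estimate} (the chain equilibrates once it has hit $\underline{+1}$, which happens by time $e^{\beta(\Gamma^{Hex}+\epsilon)}$ with high probability uniformly in the start), and the lower bound from the fact that starting at $\underline{-1}$ the chain has not hit $\underline{+1}$ — hence is far from equilibrium in total variation — up to time $e^{\beta(\Gamma^{Hex}-\epsilon)}$, again by Theorem~\ref{thm:sharp_estimate} together with a Markov-inequality argument. The main obstacle I anticipate is not any single estimate but the bookkeeping needed to verify the hypotheses of the general theorems precisely in this setting — in particular confirming the strict separation $V^* < \Gamma^{Hex}$ of valley depths and controlling all sub-exponential prefactors (including the $|\Lambda|$-dependence) so that they can legitimately be swept into $\gamma_1,\gamma_2$ and into the $o(1)$ after dividing by $\beta$; once that is in place both displays follow from the cited black-box results.
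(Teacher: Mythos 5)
Your proposal is correct and follows essentially the same route as the paper: the paper's entire proof consists of feeding the model-dependent input $\Gamma_m=\Gamma^{Hex}$ (Theorem~\ref{Identification}) into the general black-box result \cite[Proposition 3.24 and Example 3]{nardi2016hitting}, and what you spell out --- the valley-depth/capacity two-sided bound for $\rho_\beta$ and the standard relaxation-time sandwich for $t^{mix}_\beta(\epsilon)$ --- is precisely the content of that cited proposition. One cosmetic remark: the separation $2J<\Gamma^{Hex}$ is a statement about $J,h,r^*$ with no dependence on $\beta$, and Condition~\ref{condizionetoro} only guarantees $r^*\ge 0$ rather than $r^*\ge 1$, but the inequality still holds in that case, so your verification goes through.
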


In the theorems below, we characterize the gate for the transition from $\underline{-1}$ to $\underline{+1}$.
To do this, we give an intuitive definition of the configurations denoted by $\mathcal{\tilde{S}}(A^*_i)$ and $\mathcal{\tilde{D}}(A^*_i)$ that play the role 
of $\emph{critical configurations}$.
$\mathcal{\tilde{S}}(A^*_i)$ is a configuration with a cluster such that its area is $A^*_i$ and its shape is that in Figure \ref{figselle} (a)-(c); $\mathcal{\tilde{D}}(A^*_i)$ is a configuration with a cluster such that its area is $A^*_i$ and its shape is that in Figure \ref{figselle} (b)-(d). 
We refer the reader to Notation~\ref{notation:standard_canonical_defective_configurations} for a precise definition of $\mathcal{\tilde{S}}(A^*_i)$ and to Corollary~\ref{Phimax} for the values of $A^*_i$ with $i \in \{1,2\}$. We observe that $\Gamma^{Hex}$ is equal to $H(\mathcal{\tilde{S}}(A^*_i))-H(\underline{-1})$. See Figure~\ref{STANDARDcritico} to have the two pictures of standard clusters with area $A^*_i$ according to the values of parameters. 

\begin{figure}[htb!]
\centering
    \includegraphics[width=\textwidth]{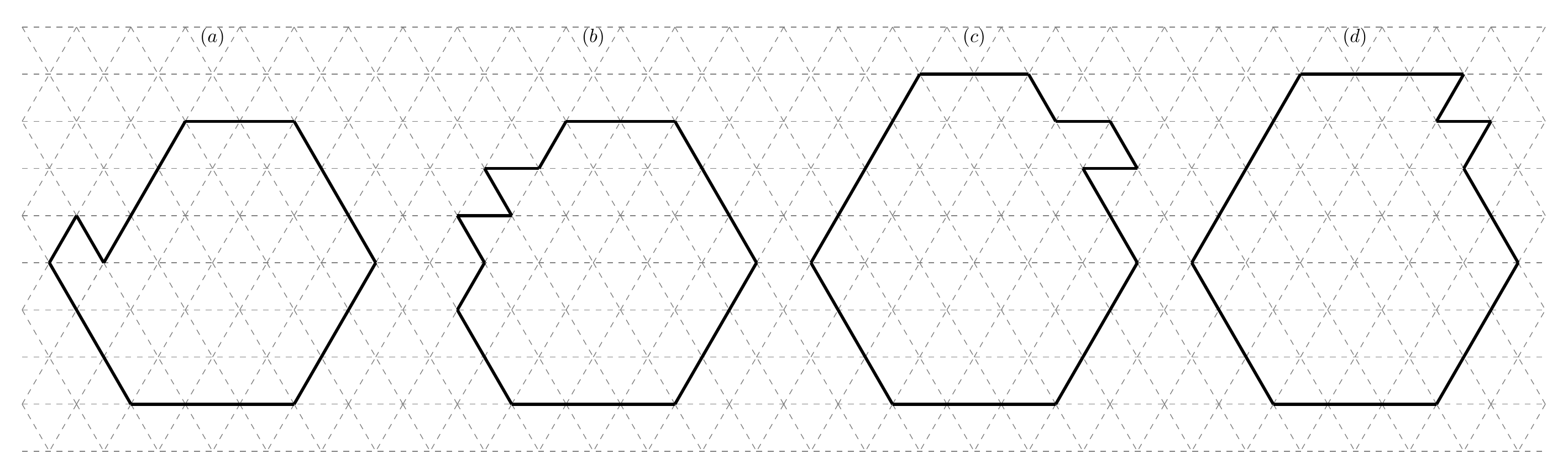}
    \caption{On the left there are two examples of two configurations $\mathcal{\tilde S}(A^*_1)$, $ \mathcal{\tilde D}(A^*_1)$ belonging to the gate for $\delta \in (0,1/2)$. On the right there are other two examples  $\mathcal{\tilde S}(A^*_2)$, $\mathcal{\tilde D}(A^*_2)$ for $\delta \in (1/2,1)$.}
  \label{figselle}
    \end{figure}

\begin{theorem}\label{selle} (Gate) Given $\delta \in (0,1)$, $A^*_i \in \{A^*_1, A^*_2 \}$ in \eqref{areacritica}. We have that any optimal path $\omega \in (\underline{-1} \to \underline{+1})_{opt}$ visits $\mathcal{\tilde S}(A^*_i) \cup  \mathcal{\tilde D}(A^*_i)$ , i.e. there exists an integer $j$ such that $\omega \ni \omega_j \equiv \mathcal{\tilde S}(A^*_i)$ or $\omega \ni \omega_j \equiv \mathcal{\tilde D}(A^*_i)$. In other words $\mathcal{\tilde S}(A^*_i) \cup \mathcal{\tilde D}(A^*_i)$ is the union of all minimal gates from $\underline{-1}$ to $\underline{+1}$. 
\end{theorem}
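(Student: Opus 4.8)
\emph{Proof proposal.} The plan is to first pin down the communication height and then to localise the bottleneck of the transition in terms of the area of the growing cluster. By the reference path constructed in Section~\ref{bound} together with the matching lower bound of Section~\ref{sec:other_theorems} (the same estimate underlying Theorem~\ref{Identification}), one has $\Phi(\underline{-1},\underline{+1})=H(\underline{-1})+\Gamma^{Hex}$, and $H(\mathcal{\tilde S}(A^*_i))=H(\mathcal{\tilde D}(A^*_i))=H(\underline{-1})+\Gamma^{Hex}$; hence $(\underline{-1}\to\underline{+1})_{opt}$ is exactly the set of paths along which the energy never exceeds $H(\underline{-1})+\Gamma^{Hex}$. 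The theorem then splits into showing (i) every such path visits $\mathcal{\tilde S}(A^*_i)\cup\mathcal{\tilde D}(A^*_i)$, so that this set is a gate, and (ii) every configuration in it lies on some optimal path and on no proper sub-gate, so that it coincides with $\mathscr{G}(\underline{-1},\underline{+1})$.

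For (i), I would fix $\omega=(\omega_0=\underline{-1},\dots,\omega_n=\underline{+1})\in(\underline{-1}\to\underline{+1})_{opt}$ and note that $N^{+}(\omega_t)$ changes by $\pm 1$ at each step, running from $0$ to $2L^2\ge A^*_i$, while the optimality constraint, via the Peierls identity \eqref{eq:peierls_hamiltonian}, reads $J\,|\gamma(\omega_t)|-h\,N^{+}(\omega_t)\le\Gamma^{Hex}$. The key is to prove that there is an index $j$ for which $\omega_j$ is a saddle whose plus spins form a single cluster of area exactly $A^*_i$. Here the polyiamond isoperimetry of Section~\ref{geom2} enters: by the isoperimetric inequality for unions of polyiamonds, a configuration with total plus area in the critical window and more than one cluster --- or with a single cluster far from a quasi-hexagon --- has $|\gamma|$ large enough that its energy strictly exceeds $H(\underline{-1})+\Gamma^{Hex}$ (in particular a stray plus spin off the main cluster would cost an extra $3J-h$, forbidden for $J\gg h$); and the discrete profile $a\mapsto J\,p_{\min}(a)-ha$, with $p_{\min}(a)$ the minimal perimeter of a polyiamond of area $a$ from Section~\ref{geom2}, attains over the unit-triangle-by-unit-triangle growth of a cluster its maximum exactly at $a=A^*_1$ if $\delta\in(0,1/2)$ and at $a=A^*_2$ if $\delta\in(1/2,1)$ --- which is where $r^*=\lfloor J/(2h)-1/2\rfloor$ and the position of $\delta$ relative to $1/2$ come in, cf.\ Corollary~\ref{Phimax}. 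Since every cluster strictly smaller than the critical one is subcritical (it can be dissolved along a non-increasing-energy path), $\omega$ cannot reach $\underline{+1}$ without passing, at plus area $A^*_i$, through such a single-cluster saddle.

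Knowing that $\omega_j$ has a single cluster of area $A^*_i$ and energy $H(\underline{-1})+\Gamma^{Hex}$ fixes $|\gamma(\omega_j)|$, and I would then invoke the classification of polyiamonds of prescribed area and minimal --- and next-to-minimal --- perimeter from Section~\ref{geom2}: the only polyiamonds with that area and perimeter are the quasi-hexagon of radius $r^*$ (resp.\ $r^*+1$) carrying one incomplete extra row terminated by a single protruding unit triangle, i.e.\ precisely the standard shape $\mathcal{\tilde S}(A^*_i)$ of Notation~\ref{notation:standard_canonical_defective_configurations} and its defective variant $\mathcal{\tilde D}(A^*_i)$ of Figure~\ref{figselle}. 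For (ii), for each $\zeta$ in the set I would build an optimal path through $\zeta$ meeting no other element of the set, exploiting the freedom in the reference path (which side the extra row grows on, where the protuberance sits, and whether the row is completed through $\mathcal{\tilde S}$ or, by a local rearrangement that does not lift the energy above $\Gamma^{Hex}$, through $\mathcal{\tilde D}$); this shows each such $\zeta$ is essential and hence $\mathcal{\tilde S}(A^*_i)\cup\mathcal{\tilde D}(A^*_i)=\mathscr{G}(\underline{-1},\underline{+1})$.

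The main obstacle will be the localisation together with the shape classification: ruling out \emph{all} alternative ways of moving the plus area across $A^*_i$ while keeping the energy $\le H(\underline{-1})+\Gamma^{Hex}$ requires the sharp geometric input of Section~\ref{geom2} --- the exact value of $p_{\min}(a)$, and the uniqueness up to the stated decorations of the shapes realizing it and the value just above --- combined with a careful, regime-dependent accounting (separately for $\delta\in(0,1/2)$ and $\delta\in(1/2,1)$) of how a cluster can be assembled one unit triangle at a time without ever exceeding the barrier, all of this along a path whose area may oscillate before definitively crossing the bottleneck.
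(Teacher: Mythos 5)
Your overall framing is sound: optimal paths are exactly those staying below $H(\underline{-1})+\Gamma^{Hex}$, at plus-area $A^*_i$ any configuration on such a path must realize the minimal energy in $\mathcal{V}_{A^*_i}$ (hence a single cluster of minimal perimeter), and the localisation of the bottleneck at $A^*_1$ or $A^*_2$ according to the sign of $\delta-\tfrac12$ is Corollary~\ref{Phimax}. The gap is in the step where you claim that ``the only polyiamonds with that area and perimeter are'' the shapes $\tilde S(A^*_i)$ and $\tilde D(A^*_i)$. This is false. The set of minimal-perimeter polyiamonds of area $A^*_i$ consists of a quasi-regular hexagon ($E_{B_5}(r^*)$ or $E_{B_1}(r^*+1)$) with \emph{two} extra triangular units attached in \emph{any} admissible way: as an elementary rhombus or at lattice distance $2$ on a longest side (these are $\tilde S$ and $\tilde D$), but also at larger distance on the same side, on two different sides, or on a shorter side. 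Every one of these has edge-perimeter $\bar p+2$ and hence energy exactly $H(\underline{-1})+\Gamma^{Hex}$; they are all genuine saddles in $\mathscr{S}(\underline{-1},\underline{+1})$. So no classification by area and (minimal or next-to-minimal) perimeter can single out $\tilde S\cup\tilde D$: the distinction between the gate and the other saddles is not energetic/isoperimetric but dynamical.

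What is missing — and what constitutes the bulk of the paper's proof — is the argument that the other attachment patterns are \emph{dead ends}: a third triangular unit can be added with an energy \emph{decrease} (by $J+h$) only where the cluster has a $\frac{5}{3}\pi$ internal angle, which happens precisely in the two configurations $\tilde S(A^*_i)$ and $\tilde D(A^*_i)$; in the remaining cases every forward move costs at least $J-h$ and pushes the energy strictly above $\Gamma^{Hex}$, so the path must retreat and eventually cross through $\tilde S\cup\tilde D$. Moreover, ruling out the rhombus/distance-two attachments on a \emph{shorter} side requires a separate explicit path computation (filling the corresponding shorter bar and showing the energy exceeds $\Gamma^{Hex}$ by $2h\delta$ or $2h(\delta+1)$ before the bar can be completed), done separately for $\delta\in(0,\tfrac12)$ and $\delta\in(\tfrac12,1)$. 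Without this local, move-by-move analysis your argument only shows that every optimal path meets the full set of minimal saddles $\mathscr{S}$ at area $A^*_i$, which is strictly larger than the claimed union of minimal gates.
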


From a physical point of view, the configurations in the gate are those 
that must be visited, for a system at very low temperature, in order for 
the transition to the stable state to take place.
Moreover, the characterization of the gate allows to compute the sharp estimates
for the transition time in Theorem~\ref{thm:sharp_estimate} using a potential 
theoretic approach.
Using solely a pathwise approach, exponential asymptotics 
for the expected transition time could have been obtained as well
without this detailed description of the gate. 
To this purpose, using the model dependent results
of Theorem~\ref{Identification}, one could apply
\cite[Theorem 4.9]{MNOS04} (setting $\eta_0=\{\underline{-1}\}$)
to get
\begin{align}
    \lim_{\beta \to \infty}\frac{1}{\beta} \log \mathbb{E}_{\underline{-1}}[\tau_{\underline{+1}}] =\Gamma^{Hex}.
\end{align}

\section{Geometry of the model}\label{geom1}

Since the faces of a cluster live naturally on the triangular lattice it is beneficial to associate clusters to plane polyforms obtained by joining equilateral triangles along their edges (\emph{polyiamonds}). In this way it will be possible to 
characterize spin configurations that are relevant for the dynamics under consideration in terms of the area and the perimeter of the polyiamond associated to their clusters. 
Though we will consider polyiamonds to study the Ising model on the
hexagonal lattice, the properties that will derive may be of use to study other 
statistical mechanics lattice models for which
the notion of clusters may be linked to that of polyiamonds.

\begin{definition}\label{def:polyiamond}
    A \emph{polyiamond} $P \subset \mathbb{R}^2$ is a 
    finite maximally edge-connected union of faces of the lattice $\mathbb{T}^2$.
    Each face belonging to the polyiamond is called \emph{triangular unit} whereas
    the faces of $\mathbb{T}^2$ outside of $P$
    are called \emph{empty triangular units}.
\end{definition}

We remark that two faces are not connected if they
share a single point.

Note that with this construction there is a bijection between clusters 
of plus spins not wrapping around the torus and
polyiamonds.
Analogously, minus spins are associated to the empty triangular units of the lattice $\mathbb{T}^2$. 
Strictly speaking, this mapping is different from the one introduced before which is a bijection between the configuration space $\mathcal{X}$ and the set of sea of minuses, strips, clusters and holes. Both these bijections are relevant in the entire paper.

\begin{definition}
    An \emph{elementary rhombus} is a set of two triangular units that share an edge.
\end{definition}

In the remainder of the Section we
will give those definitions and
provide the key results concerning
polyiamonds that are used in
Sections~\ref{recurrenceproperty} and~\ref{bound} whereas
a more comprehensive and self-contained
discussion on polyiamonds is deferred to Section~\ref{geom2}. 
\begin{definition}
The \emph{area} $A$ of a polyiamond is the number of its triangular units. For any polyiamond $P$, we denote its area by $||P||$. Analogously for any cluster $C$, we denote by $||C||$ the number of plus spins in $C$.  
\end{definition}
\begin{definition}\label{def:perimeter_of_polyiamond}
The \emph{boundary} of a polyiamond $P$ is the collection of unit edges of the lattice $\mathbb{T}^2$ such that each edge separates a triangular unit belonging to $P$ from an empty triangular unit. The \emph{edge-perimeter} $p(P)$ of a polyamond $P$ is the cardinality of its boundary. 
\end{definition}
In other words the perimeter is given by the number of interfaces on the discrete triangular lattice $(\mathbb{T}^2)$ between the sites inside the polyiamond and those outside.
If not specified differently, we will refer to the edge-perimeter simply as perimeter. 
\begin{definition}\label{def:internal_perimeter}
The \emph{external boundary} of a polyiamond consists of the connected components of the boundary such that for each edge there exists a hexagonal-path in $\mathbb{H}^2$ which connects this edge with infinity without intersecting
the polyiamond. The \emph{internal boundary} of a polyiamond consists of the connected components of the boundary that are not external. The \emph{external perimeter}, respectively the \emph{internal perimeter}, of a polyiamond is the cardinality of the external, respectively internal, boundary.
\end{definition}
\begin{definition}\label{def:hole_in_polyiamond}
A \emph{hole} of a polyiamond $P$ is a finite maximally connected component of empty triangular units surrounded by the internal boundary of $P$.  
\end{definition}
We refer to holes consisting of a single empty triangle as \emph{elementary holes}.
\begin{definition}\label{internalangle}
Orient the external boundary counter-clockwise and internal boundary clockwise. For each pair of oriented edges, the angle defined rotating counter-clockwise the second edge on the first edge is called \emph{internal angle}. See Figure \ref{anglesfigure}.

\begin{figure}[htb!] 
\centering \includegraphics[scale=0.6]{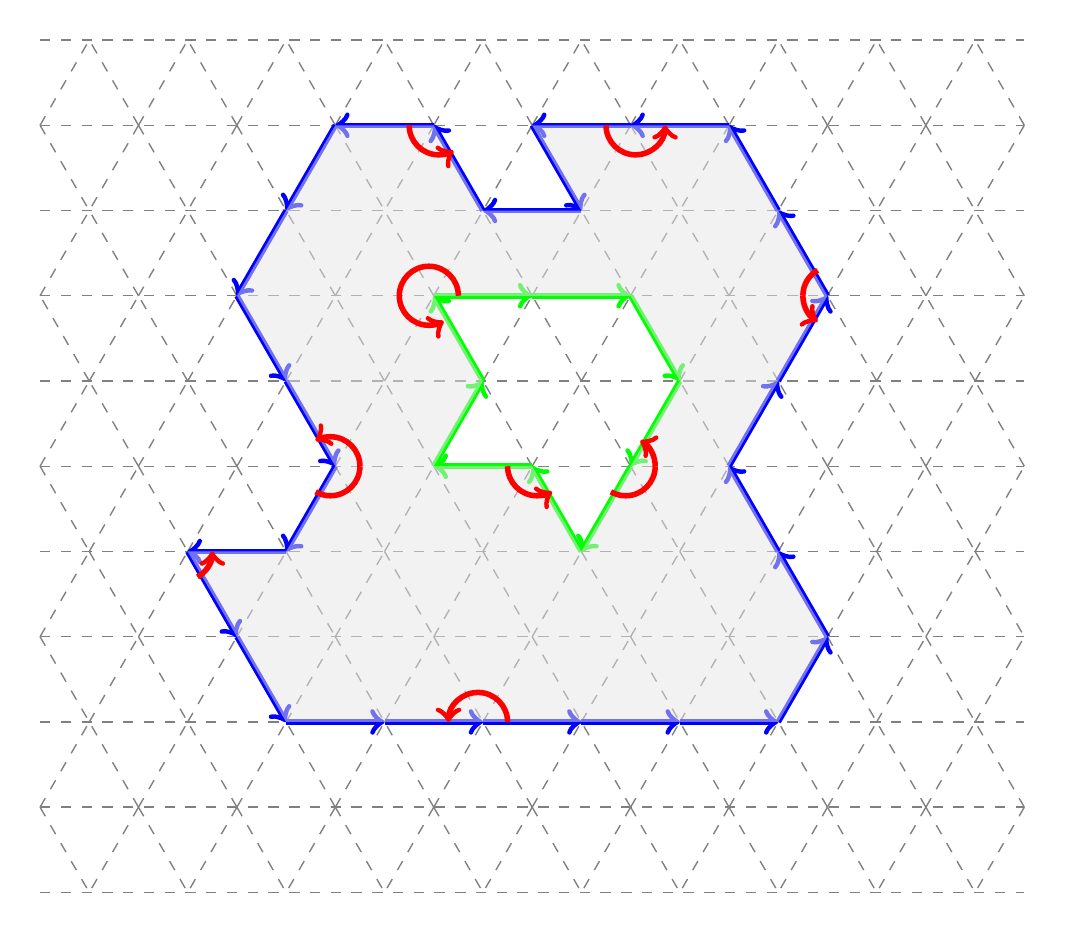} 
\caption{An example of polyiamond with in blue the external boundary oriented counter-clockwise, and in green the internal boundary oriented clockwise. In red, some internal angles of the polyiamond.} \label{anglesfigure}
\end{figure}
\end{definition}
\begin{definition}
A polyiamond is \emph{regular} if it has only internal angles of $\pi$ and $\frac{2}{3}\pi$ and it has no holes. 
\end{definition}
We note that a regular polyiamond has the shape of a hexagon.
\begin{definition}
A polyiamond is a \emph{regular hexagon} if it is a regular polyiamond with all equal sides. We denote by $E(r)$ the regular hexagon, where $r$ is its radius.
\end{definition}
\begin{definition}\label{def:bars}
A \emph{bar} $B(l)$ with larger base $l$ is a set of $||B(l)||=2l-1$ triangular units obtained as a difference between an equilateral triangle with side length $l$ and another equilateral triangle with side length $l-1$ 
(see Fig.~\ref{fig:barra}).
\end{definition}
\begin{figure}[htb!] 
    \centering \includegraphics[scale=0.5]{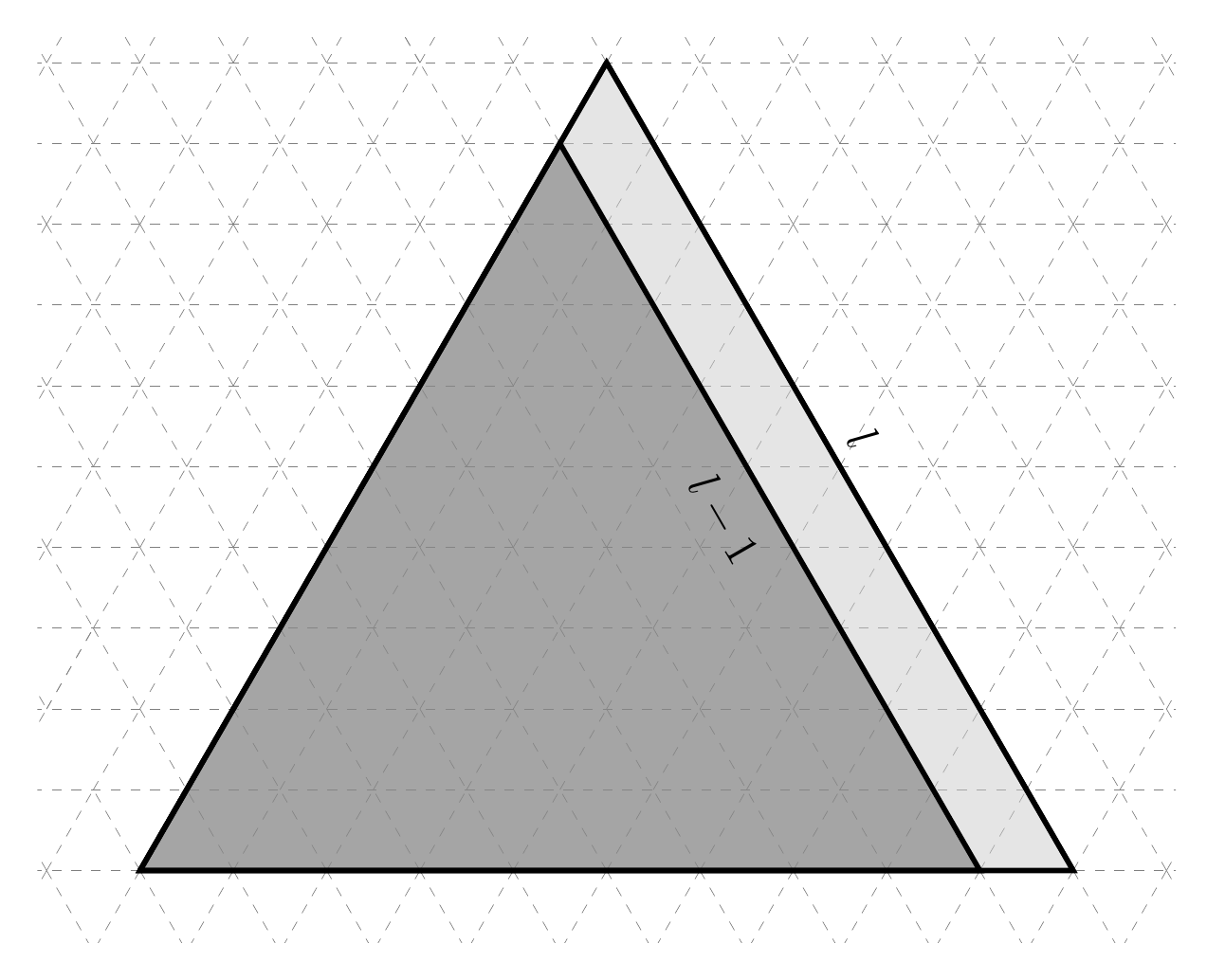} 
    \caption{The lightly shaded triangular units form a bar with larger base $l$ and
    it is obtained as difference between an equilateral triangle with
    side length $l$ and an equilateral triangle with side length $l-1$.}
    \label{fig:barra}
\end{figure}

\begin{definition}\label{def:quasi_reg_hex}
We denote by $E_{B_1}(r)$ the polyiamond obtained by attaching a bar $B(r)$ along its larger base to the a side of the regular hexagon,
see Figure \ref{bars}, so that 
$E_{B_1}(r)$ is contained in $E(r+1)$. 
Analogously, we denote by $E_{B_i}(r)$ for $i=2,\ldots,5$ the polyiamonds obtained by attaching a bar $B(r+1)$ along its larger base to a suitable side of $E_{B_{i-1}}(r)$ so that, again, $E_{B_i}(r)$ is contained in $E(r+1)$
Finally, we denote by $E_{B_6}(r)$ the polyiamond obtained by attaching a bar $B(r+2)$ along its larger base to $E_{B_5}(r)$ so to obtain $E(r+1)$.
We call $E_{B_i}(r)$ a \emph{quasi-regular hexagon}, where $r$ is the radius of the regular hexagon $E(r)$ and $i\in \{1,\ldots,6\}$ is the number of bars attached to it. 
\end{definition}
Note that $E_{B_i}(r)$ is defined up to 
translations and rotations of $z \frac{\pi}{3}$ for $z \in \mathbb{Z}$.
Moreover \mbox{$E_{B_0}(r) := E(r)$} and  $E(r+1) \equiv E_{B_6}(r)$. 
\begin{figure}[htb!] 
\centering \includegraphics[scale=0.5]{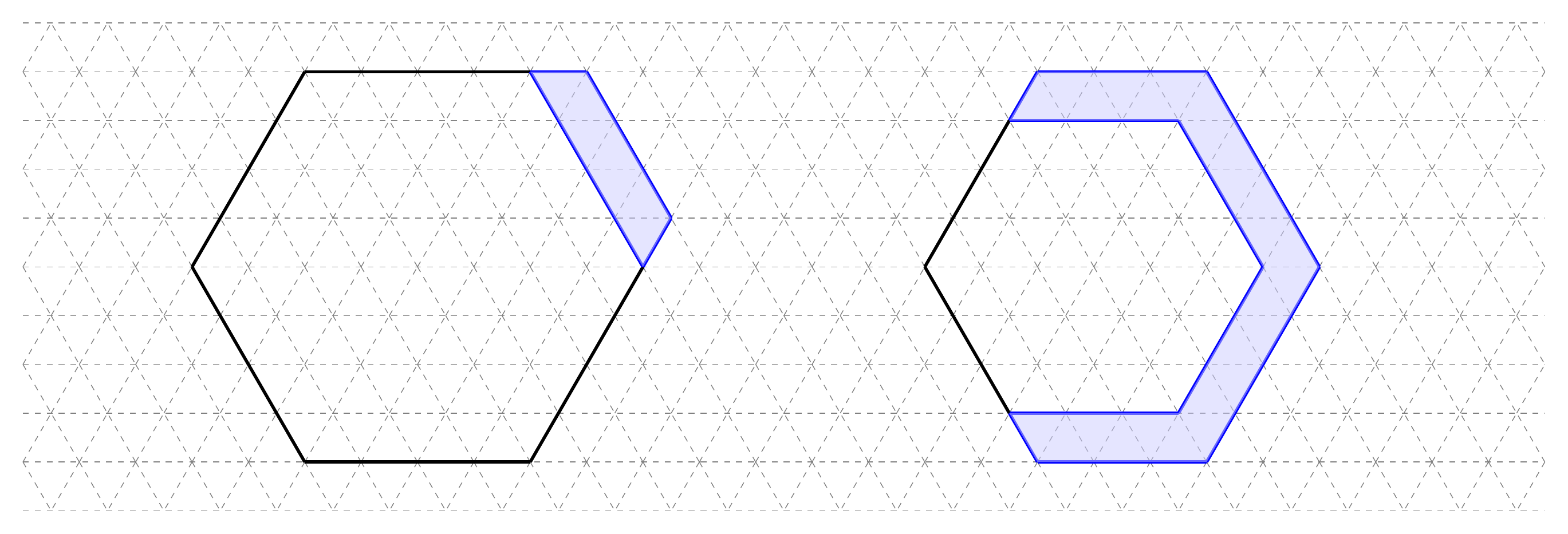} 
\caption{On the left a quasi-regular hexagon $E_{B_1}(4)$. We observe that the cardinality of $B_1$ of $E_{B_1}(3)$ is $||B_1||=2r-1=5$. On the right a quasi-regular hexagon $E_{B_4}(3)$. We observe that the cardinality of $B_1$ of $E_{B_1}(3)$ is $||B_1||=2r-1=5$, while the cardinality of $B_i$ of $E_{B_4}(3)$ is $||B_i||=2r+1=7$ with $i=2,\ldots,4$.} \label{bars}
\end{figure}
\begin{definition}
  An \emph{incomplete bar} of cardinality $k < 2l-1$ 
  is an edge-connected subset of a bar $B(l)$ attached to a  hexagon along its longest base, see Figure \ref{incompletebar}. 
\end{definition}
Observe that an incomplete bar has either the shape of a trapezoid or 
of a parallelogram with height of size $\frac{\sqrt{3}}{2}$. 
\begin{figure}[htb!] 
\centering \includegraphics[scale=0.5]{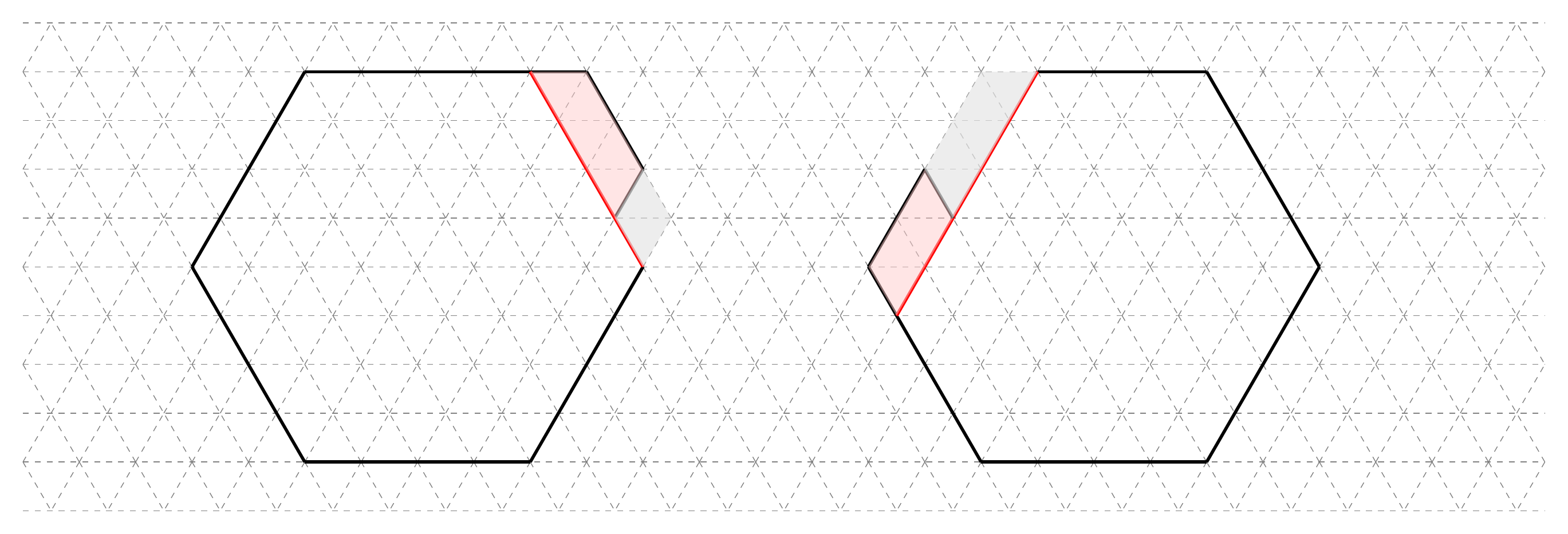} 
\caption{On the left an incomplete bar having the shape of a trapezoid and of cardinality $k=5$ attached to the regular hexagon $E(4)$. We observe that the cardinality of the bar containing the incomplete bar is $||B_1||=7>k$. On the right an incomplete bar with
the shape of a parallelogram of cardinality $k=4$, attached to the quasi-regular hexagon $E_{B_5}(3)$. We observe that the cardinality  of the bar containing the incomplete bar is $||B_6||=9>k$.
Both configurations are examples of \emph{standard polyiamonds}.} \label{incompletebar}
\end{figure}
\begin{definition}
For any $A \in \mathbb{N}$, the \emph{minimal quasi-regular hexagon} $R_A$ is the smallest quasi-regular hexagon containing at least $A$ triangular units. 
\end{definition}
Thus, $R_A$ has area $A+k$, where $k$ is the smallest number of triangular units added to $P$ to build a quasi-regular hexagon.
\begin{definition}
For any $A \in \mathbb{N}$, the \emph{maximal quasi-regular hexagon} $R'_A$ is
the largest quasi-regular hexagon containing at most $A$ triangular units. 
\end{definition}
Thus, $R'_A$ has area $A-q$, where $q$ is the smallest number of triangular units
that must be removed from $P$ to build a quasi-regular hexagon. 
\begin{definition}\label{canonical}
A \emph{canonical polyiamond} of area $A$, denoted by $\tilde{S}(A)$, is a quasi-regular hexagon $E_{B_i}(r)$, for $i \in \{0, \ldots, 5 \}$, with possibly an additional incomplete bar of cardinality $k$ and such that it is contained in $E_{B_{i+1}}(r)$ 
(see Fig.~\ref{incompletebar}). 
\end{definition}
\begin{definition}\label{standard}
Orient the external boundary clockwise and attach an incomplete bar to $E_{B_i}(r)$, for $i \in \{0, \ldots, 5 \}$, following this orientation. We call this canonical polyiamond \emph{standard polyiamond} and denote it by $S(A)$.
\end{definition}
\begin{definition}\label{def:defective_polyiamond}
     A polyiamond consisting of a quasi-regular hexagon with two triangular units attached to one of its longest sides at triangular lattice distance 2 one from the other is called \emph{defective} and is denoted by $\tilde D(A)$, where $A$ is the area (see the second shape in Figure~\ref{addtriangle1}).
\end{definition}
Note that a standard polyiamond $S(A)$ is determined solely by its area $A$.
We characterize $S(A)$ in terms of its radius $r$, the number $i$ of bars
attached to the regular hexagon $E(r)$ to obtain $E_{B_i}(r)$ and the
cardinality $k$ of the possibly incomplete bar. The values of
$r$, $i$ and $k$
for any value of $A$,
together with the minimal and the maximal quasi-regular
hexagons $R_A$ and $R'_{A}$,
can be obtained with the following:

\begin{alg}{Construction of standard polyiamond with area $A$.}\label{algorithm} Given $A$ as input, the outputs $r$, $k$, $i$, $R_A$, $R'_A$ are obtained by:
\begin{itemize}
\item[1)] Set $r= [\sqrt{\frac{A}{6}}]$.
\item[2)] Let $l$ be the difference between $A$ and $6r^2$, i.e., $l=A-6r^2$:
\begin{itemize}
\item[a)] If $l=0$, then $R_A=R'_A=E(r)$; $i=0$, $k=0$.
\item[b)] If $l-(2r-1) < 0$, then 
          $R_A=E_{B_1}(r)$ and $R'_A=E(r)$;
          $i=0$, $k=A-\area{R'_A}$.
\item[c)] If $l-(2r-1)=0$, then  
          $R_A=R'_A=E_{B_1}(r)$; 
          $i=1$, $k=0$.
\item[d)] If $l-((2r-1)+(2r+1)) < 0$, then 
          $R_A=E_{B_2}(r)$ and $R'_A=E_{B_1}(r)$; 
          $i=1$, $k=A-\area{R'_A}$.
\item[e)] If $l-((2r-1)+(2r+1))=0$, then  
          $R_A=R'_A=E_{B_2}(r)$; 
          $i=2$, $k=0$. 
\item[f)] If $l-((2r-1)+2(2r+1)) < 0$, then 
          $R_A=E_{B_3}(r)$ and $R'_A=E_{B_2}(r)$; 
          $i=2$, $k=A-\area{R'_A}$.
\item[g)] If $l-((2r-1)+2(2r+1))=0$, then  
          $R_A=R_A'=E_{B_3}(r)$; 
          $i=3$, $k=0$.
\item[h)] If $l-((2r-1)+3(2r+1)) < 0$, then 
          $R_A=E_{B_4}(r)$ and $R'_A=E_{B_3}(r)$; 
          $i=3$, $k=A-\area{R'_A}$.
\item[i)] If $l-((2r-1)+3(2r+1))=0$, then  
          $R_A=R'_A=E_{B_4}(r)$; 
          $i=4$, $k=0$.
\item[j)] If $l-((2r-1)+4(2r+1)) < 0$, then 
          $R_A=E_{B_5}(r)$ and $R'_A=E_{B_4}(r)$; 
          $i=4$, $k=A-\area{R'_A}$.
\item[k)] If $l-((2r-1)+4(2r+1))=0$, then  
          $R_A=R'_A=E_{B_5}(r)$; 
          $i=5$, $k=0$.
\item[l)] If $l-((2r-1)+4(2r+1)+(2r+3))<0$, then 
          $R_A=E_{B_6}(r)$ and $R'_A=E_{B_5}(r)$; 
          $i=5$, $k=A-\area{R'_A}$.  
\end{itemize}
\end{itemize}
\end{alg}
Once the standard polyiamonds of area $A$ have been described in
the previous terms, it is straightforward to write their perimeter
as follows:
\begin{remark}\label{proprietaS}
    The perimeter of $S(A)$ is 
    $p(A) = 6r + i + \mathbb{1}_{\{k>0\}} +\mathbb{1}_{\{k>0 \text{ even}\}}$
    with $r$, $i$ and $k$ given by the previous algorithm.
\end{remark}
\begin{notation}
We denote by $\mathcal{E}(r)$ the configuration $\sigma \in \mathcal{X}$ such that $\sigma$ has a unique cluster (of pluses) with shape $E(r)$.
We denote by $\mathcal{E}_{B_i}(r)$ the configuration $\sigma \in \mathcal{X}$ such that $\sigma$ has a unique cluster (of pluses) with shape $E_{B_i}(r)$.
\end{notation}
\begin{notation}\label{notation:standard_canonical_defective_configurations}
We denote by $\mathcal{\tilde{S}}(A)$ (respectively $\mathcal{S}(A)$, $\mathcal{\tilde{D}}(A)$) the configuration $\sigma \in \mathcal{X}$ such that $\sigma$ has a unique cluster (of pluses) with shape $\tilde{S}(A)$ (respectively $S(A)$, $\tilde{D}(A)$).
\end{notation}

Each of these geometrical definitions and properties can be extended from polyiamonds to clusters. So, for example, when we call a cluster \emph{standard cluster}, our meaning is that the cluster has the shape and the properties of a standard polyiamond. 

The next Theorem states that the set of polyiamonds of minimal perimeter and area $A$ contains the set of standard polyiamonds with area $A$. In other words, standard polyiamonds minimize the perimeter for any given number of triangular units. 
\begin{theorem}\label{thm:optimality_of_standard_polyiamonds}
For any $A \in \mathbb{N}$ the perimeter of a polyiamond $P$ of area $A$ is at least $p(S(A))$ where $p(S(A))$ is the perimeter of a standard polyiamond $S(A)$.
\end{theorem}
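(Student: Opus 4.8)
The plan is to prove the isoperimetric inequality $p(P) \ge p(S(A))$ by a double induction combined with an explicit lower bound on the perimeter in terms of the area, tuned so that equality is attained exactly at the standard polyiamonds. First I would reduce to the case of polyiamonds without holes: filling a hole decreases the area and strictly decreases the perimeter (each internal boundary edge is removed), so if the inequality holds for simply connected polyiamonds it holds in general, because $p(S(A))$ is monotone non-decreasing in $A$ — this monotonicity should be read off from the algorithmic description (Algorithm~\ref{algorithm}) and the perimeter formula in Remark~\ref{proprietaS}. Actually one must be slightly careful: removing a hole changes $A$, so the cleanest route is to show $p(P) \ge p(S(\area{P}))$ for hole-free $P$ and separately that for any $P$ with holes there is a hole-free $P'$ with $\area{P'} \ge \area{P}$ and $p(P') \le p(P)$, then invoke monotonicity of $p(S(\cdot))$.

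For hole-free polyiamonds I would set up the induction on the area $A$. The base cases ($A$ small, say up to $A=6$) are checked by hand. For the inductive step, take a hole-free polyiamond $P$ of area $A$ and look at its ``top row'' with respect to one of the three lattice directions — more precisely, fix a direction and consider the last (extremal) strip of triangular units orthogonal to it. Peeling off this strip, or rather peeling off a suitable bar-like piece, yields a polyiamond $P'$ of strictly smaller area to which the inductive hypothesis applies; one then controls the perimeter change $p(P) - p(P')$ from below using the number of triangular units removed and the geometry of how a strip attaches (here the notions of incomplete bars, trapezoids and parallelograms from the excerpt, and the fact that each added unit along a row contributes a bounded perimeter change, are exactly what is needed). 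The arithmetic bookkeeping is organized so that the worst case — the configuration losing the least perimeter per unit area — is precisely the standard construction: attaching a complete bar $B(r)$, $B(r+1)$, \dots to a quasi-regular hexagon changes the perimeter by exactly $1$ per bar, matching Remark~\ref{proprietaS}.

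An alternative, and probably cleaner, organization is to prove directly a closed-form lower bound: define $\bar p(A)$ to be the perimeter of $S(A)$ as given by Remark~\ref{proprietaS}, show $\bar p$ is concave-like in the appropriate sense (its increments $\bar p(A+1)-\bar p(A)$ are $0$ or $1$ and the pattern of $1$'s is dictated by the quasi-regular-hexagon thresholds), and then prove $p(P) \ge \bar p(\area{P})$ by showing that any elementary operation that decreases the area by one (removing a boundary triangular unit that keeps the polyiamond connected and hole-free) decreases the perimeter by at most the corresponding increment of $\bar p$. Since every hole-free polyiamond can be reduced to a single triangle by a sequence of such removals (e.g. always removing a unit from an extremal row), summing these inequalities gives the result; equality forces each removal to be ``efficient'', which one checks pins down $P$ to be standard.

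The main obstacle I expect is the inductive step's case analysis: showing that when one peels the extremal row the remaining object is still a legitimate polyiamond (connected, and — for the hole-free reduction — still hole-free or reducible to one) and that the perimeter accounting never beats the standard rate. The triangular lattice has more local configurations than the square lattice (units point ``up'' or ``down'', and a row can attach to the body below it in several inequivalent ways), so the bookkeeping of how many boundary edges are created or destroyed when a row of length $\ell$ sits on a body of ``matching length'' $\ell'$ is where the real work lies; getting the extremal/worst case to coincide exactly with the quasi-regular hexagon plus incomplete bar of Definition~\ref{canonical}–\ref{standard}, rather than merely up to an additive constant, is the delicate point. I would isolate this as a separate combinatorial lemma about how a single bar or incomplete bar contributes to the perimeter, prove it first, and then feed it into the induction.
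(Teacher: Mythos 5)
Your overall strategy (a direct induction on the area, peeling units or bars off an extremal row and matching the perimeter loss against the increments of $p(S(\cdot))$) is genuinely different from the paper's, which never inducts on the area: the paper imports from \cite{FuSie} that quasi-regular hexagons minimize the \emph{site}-perimeter for fixed area, proves the exact relation $p(P)=s(P)+\nu(P)+2e(P)$ between edge- and site-perimeter (Proposition~\ref{prop:site_edge_perimeter_relation}), deduces that quasi-regular hexagons are the unique edge-perimeter minimizers, and then handles the remaining standard shapes by ruling out $\frac{5}{3}\pi$ angles, holes and protuberances in a putative better competitor, completing it to a hexagon by adding elementary rhombi at $\frac{4}{3}\pi$ corners, and deriving a contradiction from the fact that area and perimeter of a hexagon have the same parity (Lemma~\ref{lemma:parity_area_perimeter}). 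However, as written your proposal has two concrete gaps. First, the function $A\mapsto p(S(A))$ is \emph{not} monotone non-decreasing: by Remark~\ref{proprietaS}, $p(S(23))=6+5+1+1=13$ while $p(S(24))=p(E(2))=12$, and more generally the increments alternate between $+1$ and $-1$ as an incomplete bar is filled. This breaks both your reduction to hole-free polyiamonds (you pass to $P'$ with $\area{P'}\ge\area{P}$ and $p(P')\le p(P)$ and then ``invoke monotonicity'') and your description of the increments as ``$0$ or $1$''. The statement that is actually monotone, and that the paper uses, is that the \emph{maximal area for a given perimeter} is increasing (\cite[Proposition~3.6]{FuSie}); your argument needs to be rephrased through that inverse function.

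Second, the summation/induction step does not close. Removing a single triangular unit changes the perimeter by $+1$ (one shared edge), $-1$ (two shared edges) or $-3$ (three shared edges, creating a hole), so the bound ``each removal decreases the perimeter by at most the increment of $p(S(\cdot))$'' telescopes to $p(P)-3\le p(S(A))-3$, which is the \emph{upper} bound $p(P)\le p(S(A))$, not the lower bound you want. To run the induction in the useful direction you need, for every $P$ of area $A$ with $p(S(A))-p(S(A-1))=+1$, a removable unit attached by exactly one edge (a protuberance) whose deletion keeps $P$ connected; proving that such a unit exists exactly when the increment demands it --- and that otherwise $P$ is already forced to be hexagon-like --- is precisely the structural dichotomy that constitutes the theorem, and it is not supplied. (It is also the point where the paper's parity argument does the work.) The first, row-peeling organization can in principle be carried out, but the extremal bookkeeping you defer to a ``separate combinatorial lemma'' is the entire content of the proof, so as it stands the proposal is a plan rather than an argument.
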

Considering the construction of minimal $R_A$ and maximal $R'_A$ quasi-regular hexagon given in the algorithm \ref{algorithm}, we get immediately the following:
\begin{corollary}\label{lemma617}
For any $A$ positive integer there exist four positive integers $r$, $k_1$, $k_2$ and $k_3$ such that one of the following conditions applies: 
\begin{enumerate}
    \item $A=6r^2+k_1$ with $0 \le k_1 < 2r-1$. Then the set of polyiamonds of area $A$ and minimal perimeter contains the polyiamond $S(6r^2+k_1)$. 
    \item $A=6r^2+2r-1+k_2$ with $0 \le k_2 < 2r+1$. Then the set of polyiamonds of area $A$ and minimal perimeter contains the polyiamond $S(6r^2+2r-1+k_2)$. 
    \item $A=6r^2+4r+k_2$ with $0 \le k_2 < 2r+1$. Then the set of polyiamonds of area $A$ and minimal perimeter contains the polyiamond $S(6r^2+4r+k_2)$. 
    \item $A=6r^2+6r+1+k_2$ with $0 \le k_2 < 2r+1$. Then the set of polyiamonds of area $A$ and minimal perimeter contains the polyiamond $S(6r^2+6r+1+k_2)$. 
    \item $A=6r^2+8r+2+k_2$ with $0 \le k_2 < 2r+1$. Then the set of polyiamonds of area $A$ and minimal perimeter contains the polyiamond $S(6r^2+8r+2+k_2)$. 
    \item $A=6r^2+10r+3+k_3$ with $0 \le k_3 < 2r+3$. Then the set of polyiamonds of area $A$ and minimal perimeter contains the polyiamond $S(6r^2+10r+3+k_3)$. 
\end{enumerate}
\end{corollary}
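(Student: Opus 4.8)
The plan is to derive Corollary~\ref{lemma617} as an essentially bookkeeping consequence of Theorem~\ref{thm:optimality_of_standard_polyiamonds} together with Algorithm~\ref{algorithm}. Theorem~\ref{thm:optimality_of_standard_polyiamonds} already tells us that for every $A \in \mathbb{N}$ the set of area-$A$ polyiamonds of minimal perimeter contains the standard polyiamond $S(A)$; so all that remains is to verify that any positive integer $A$ falls into exactly one of the six stated ranges for suitable integers $r, k_1, k_2, k_3$, and that the displayed parametrization is consistent with the case analysis (steps 2)--2l)) of Algorithm~\ref{algorithm}.

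First I would fix $r = [\sqrt{A/6}]$, exactly as in step~1) of the algorithm, and set $l = A - 6r^2$ as in step~2). By definition of the integer part, $l$ ranges over $\{0, 1, \ldots\}$ up to (but not including) $6(r+1)^2 - 6r^2 = 12r + 6$; one should check that $0 \le l \le 12r + 5$ so that the six intervals below cover all admissible $l$. Now partition the interval $[0, 12r+6)$ according to the cumulative bar sizes that appear in the algorithm: the breakpoints are $0$, $2r-1$, $(2r-1)+(2r+1) = 4r$, $4r + (2r+1) = 6r+1$, $(6r+1)+(2r+1) = 8r+2$, $(8r+2)+(2r+1) = 10r+3$, and finally $(10r+3)+(2r+3) = 12r+6$. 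These are precisely the left endpoints $0,\ 2r-1,\ 4r,\ 6r+1,\ 8r+2,\ 10r+3$ of the six cases in the statement, added to $6r^2$. For each case I would read off from steps a)--l) of the algorithm which quasi-regular hexagon $R'_A$ is the maximal one contained in $S(A)$ and confirm that $S(A)$ is then the relevant standard polyiamond, with the residual $k_1$ (resp.\ $k_2$, $k_3$) being $A - \area{R'_A}$ and satisfying the stated strict inequality because $l$ lies strictly below the next breakpoint. The slightly different labels $k_1, k_2, k_3$ simply record that the incomplete bar in the first range has maximal size $2r-1$, in ranges two through five it has maximal size $2r+1$, and in the last range it has maximal size $2r+3$ (this last bar being $B(r+2)$, cf.\ Definition~\ref{def:quasi_reg_hex} and step~l)).

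The main (and only) obstacle is a careful matching of indices: one must make sure the half-open intervals tile $[0, 12r+6)$ without gaps or overlaps, that the boundary values $l = 2r-1,\ 4r,\ 6r+1,\ 8r+2,\ 10r+3$ (where the algorithm's ``$=0$'' subcases c), e), g), i), k) fire and $k=0$) are correctly absorbed into the ``$\le k < \cdot$'' formulation of the corollary — i.e.\ the case with $k=0$ is the left endpoint of the corresponding range — and that small values of $A$ (where $r$ may be $0$ or $1$) do not create degenerate ranges. Once this finite verification is done, the conclusion of each item is immediate from Theorem~\ref{thm:optimality_of_standard_polyiamonds}: the minimal-perimeter set for area $A$ contains $S(A)$, which in the $j$-th range is exactly the polyiamond written in item~$(j)$.
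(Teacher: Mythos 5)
Your proposal is correct and follows essentially the same route as the paper, which presents this corollary as an immediate consequence of Algorithm~\ref{algorithm} together with Theorem~\ref{thm:optimality_of_standard_polyiamonds} without writing out a separate proof. Your verification that the breakpoints $0,\,2r-1,\,4r,\,6r+1,\,8r+2,\,10r+3,\,12r+6$ tile $[0,12r+6)$ is exactly the bookkeeping the paper leaves implicit.
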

Moreover, the next lemma, proved in Section \ref{geom2}, states that for some specific values of the area $A$ there exists a unique class of polyiamonds that minimize the perimeter, namely the class of quasi-regular hexagons with area $A$. 
\begin{lemma}\label{lemma:quasi_regular_hex_unique_permiter_minimizers}
    If the area $A$ of a polyiamond is $6r^2 + 2mr + (m-2)\mathbb{1}_{\{m > 0\}}$ for $m \in \{0, 1, \ldots, 5\}$ the set of polyiamonds of area $A$ and minimal perimeter contains only the quasi-regular hexagons.
\end{lemma}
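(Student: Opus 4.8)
The plan is to show that the areas $A = 6r^2 + 2mr + (m-2)\mathbb{1}_{\{m>0\}}$ for $m \in \{0,1,\ldots,5\}$ are precisely those for which Algorithm~\ref{algorithm} terminates in one of the cases (a), (c), (e), (g), (i), (k), i.e. those with $k = 0$, so that the standard polyiamond $S(A)$ coincides with a genuine quasi-regular hexagon $E_{B_m}(r)$. Indeed, unwinding Algorithm~\ref{algorithm}: case (a) gives $A = 6r^2$ (this is $m=0$); case (c) gives $A = 6r^2 + (2r-1) = 6r^2 + 2r - 1$ (this is $m=1$); case (e) gives $A = 6r^2 + (2r-1) + (2r+1) = 6r^2 + 4r$ (this is $m=2$); and similarly cases (g), (i), (k) give $6r^2 + 6r + 1$, $6r^2 + 8r + 2$, $6r^2 + 10r + 3$, matching $m = 3,4,5$. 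A direct check confirms $6r^2 + 2mr + (m-2)\mathbb{1}_{\{m>0\}}$ reproduces exactly this list: for $m \geq 1$ it equals $6r^2 + 2mr + m - 2$, and one verifies $2\cdot1+1-2 = 1$, wait — one should instead note $6r^2+2r-1$ corresponds to $m=1$ via $2mr + (m-2) = 2r - 1$, $6r^2+4r$ to $m=2$ via $4r + 0 = 4r$, etc., so the formula is simply a compact encoding of the six ``$k=0$'' area values.

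With this identification, by Theorem~\ref{thm:optimality_of_standard_polyiamonds} the minimal perimeter for area $A$ is $p(S(A))$, and by Remark~\ref{proprietaS} this equals $6r + m$ (since $k = 0$, both indicator terms vanish). It then remains to prove the \emph{uniqueness} statement: any polyiamond $P$ of area $A$ with $p(P) = 6r + m$ is (a translate/rotate of) the quasi-regular hexagon $E_{B_m}(r)$. I would argue this by first observing that such a $P$ must have no holes — an elementary hole adds at least $3$ to the perimeter without contributing to the area, so filling it would strictly decrease the perimeter while keeping a polyiamond of smaller area, contradicting minimality via a monotonicity argument; more carefully, one compares $p(S(A))$ for the two areas. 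Then, for a simply connected polyiamond, I would invoke the standard isoperimetric machinery for polyiamonds (developed in Section~\ref{geom2}, complementing \cite{FuSie,nagy2013isoperimetrically}): the perimeter is bounded below in terms of the ``bounding hexagon,'' and equality in the isoperimetric inequality forces the shape to be a quasi-regular hexagon with an incomplete bar; the incomplete bar must then be empty ($k=0$) precisely because the area hits one of the six exceptional values, leaving only the closed quasi-regular hexagon $E_{B_m}(r)$.

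The main obstacle I anticipate is the uniqueness step — ruling out \emph{all} non-quasi-regular minimizers, not merely exhibiting one minimizer. For generic areas there are many distinct minimal-perimeter polyiamonds (incomplete bars can be attached in different positions, defective configurations $\tilde D(A)$ also achieve the minimum, etc.), and the content of this lemma is that at the six special ``hexagon-closing'' areas this degeneracy collapses entirely. The cleanest route is a rigidity argument: show that if $P$ has area $6r^2 + 2mr + (m-2)\mathbb{1}_{\{m>0\}}$ and perimeter $6r+m$, then its three pairs of opposite ``widths'' (measured along the three lattice directions) are forced to equal those of $E_{B_m}(r)$, and a polyiamond whose projections in all three directions are intervals of the prescribed lengths, with no holes, is necessarily convex in the polyiamond sense and hence equals $E_{B_m}(r)$. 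This reduces everything to a finite combinatorial rigidity computation on the three directional profiles, which — once the machinery of Section~\ref{geom2} is in place — is routine; I would defer the detailed profile analysis to that section as the lemma statement already promises.
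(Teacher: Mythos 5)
Your reduction of the stated areas to the six ``hexagon-closing'' values $\area{E_{B_m}(r)}$ is correct and matches the paper's first observation, and you rightly identify uniqueness as the entire content of the lemma. However, the uniqueness step --- the only nontrivial part --- is not actually proved in your proposal. It is deferred to ``the standard isoperimetric machinery of Section~\ref{geom2}'', which is circular (this lemma \emph{is} part of that machinery, and the assertion that ``equality in the isoperimetric inequality forces the shape to be a quasi-regular hexagon'' is precisely what must be shown), and to a ``rigidity argument'' on the three directional profiles whose two key claims are left unproven: (i) that $p(P)=6r+m$ forces the three pairs of widths of $P$ to equal those of $E_{B_m}(r)$, and (ii) that prescribed interval projections in all three lattice directions together with the absence of holes force $P$ to coincide with $E_{B_m}(r)$. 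Neither claim is routine for polyiamonds (up- and down-triangles contribute asymmetrically to the boundary in each lattice direction), and no version of this profile analysis appears in the paper. Your hole-removal remark is also garbled: filling an elementary hole \emph{increases} the area while decreasing the perimeter, so the contradiction must be routed through monotonicity of the minimal perimeter as a function of the area, which you do not establish.

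The paper's actual argument is short but rests on a different, fully worked-out chain that your proposal never touches: it observes that the listed areas are exactly the areas of quasi-regular hexagons and then invokes Lemma~\ref{lemma:quasi_regular_hex_unique_edge_perimeter_minimizers}, which states that if $P\notin\quasiregularset$ and $\area{P}\ge\area{E}$ for a quasi-regular hexagon $E$, then $p(P)>p(E)$. That lemma is proved via the \emph{site}-perimeter: Proposition~\ref{thm:quasi_regular_hex_unique_area_maximizers_for_fixed_site_perimeter} shows, by explicit minimization of $M=3(d-a-b-c)^2+2\left((a-b)^2+(a-c)^2+(b-c)^2\right)$ over the integer quadruples $(a,b,c,d)$ parametrizing hexagons, that quasi-regular hexagons are the \emph{unique} maximizers of area for fixed site-perimeter; this is then transferred to the edge-perimeter via the relation $p(P)=s(P)+\nu(P)+2e(P)\ge s(P)$ of Proposition~\ref{prop:site_edge_perimeter_relation} and the strict monotonicity of $\areamax{\cdot}$ from \cite{FuSie}. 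To repair your proof you would either have to carry out the directional-profile rigidity computation in full or fall back on this site-perimeter route.
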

\begin{proof}[Proof of Lemma~\ref{lemma:quasi_regular_hex_unique_permiter_minimizers}]
    Note that areas of the form
    $6r^2 + 2mr + (m-2)\mathbb{1}_{\{m > 0\}}$ for $m \in \{0, 1, \ldots, 5\}$
    are compatible with the area of quasi-regular hexagons.
    Let $A$ be the area of a quasi-regular hexagon and let
    $\bar{p}$ be its edge-perimeter. The Lemma is clearly implied by Lemma~\ref{lemma:quasi_regular_hex_unique_edge_perimeter_minimizers}.
\end{proof}
\section{Recurrence proposition}\label{recurrenceproperty}
The goal of this Section is to prove Proposition~\ref{teoRP}. To achieve it we give some definitions.
Recalling \eqref{raggiocritico} and \eqref{areacritica}, we divide the set of standard clusters 
into three classes: \emph{supercritical}, \emph{critical} and \emph{subcritical} clusters. 

\begin{definition} We call a standard cluster \emph{supercritical} (respectively \emph{subcritical}) if it has the shape of $S(A)$ with area $A > A^*_i$ (respectively $A < A^*_i$) for $i \in \{1,2\}$. A \emph{critical} standard cluster is a standard cluster which has the shape of $S(A)$ with area $A=A^*_i$. 
\end{definition}
We will see that supercritical standard clusters have the tendency to grow with high probability, whereas subcritical standard clusters have the tendency to shrink with high probability. 

The following Definition distinguishes if two or more regular clusters of a configuration do interact or they do not.
\begin{definition}
Two regular clusters $Q$ and $\tilde Q$ are \emph{non-interacting} if \mbox{$d(Q,\tilde Q)> 2$}, where $d$ is the lattice distance. Otherwise, the two regular clusters are called \emph{interacting}.
\end{definition}
We recall Definition \ref{internalangle} and we extend it to clusters. 
\begin{definition}
We call a \emph{corner of a standard cluster} $C$ the pair of triangular faces of $C$ contained in the internal angle of $\frac{2}{3}\pi$. 
\end{definition}
\subsection{Recurrence property to the set $\mathcal{X}_{J-h}$}
In this Section we identify the configurations in $\mathcal{X}_{J-h}$, 
that is those configurations having a ``small'' stability level, and 
partition this set into five subsets. This partition will turn out
to be convenient in the next Section.
\begin{lemma}\label{noX}
A configuration $\sigma$ that satisfies at least one 
of the following conditions is not in $\mathcal{X}_0$:
\begin{enumerate}
    \item $\sigma$ has either a cluster or a plus strip $C$ with an internal angle of $\frac{5}{3}\pi$;
    \item $\sigma$ has either a cluster or a plus strip $C$ with an internal angle of $\frac{1}{3}\pi$.
\end{enumerate}
Moreover, a configuration $\sigma$ is not in $\mathcal{X}_{J-h}$ if the cluster or the plus strip $C$ has an internal angle of $\frac{4}{3}\pi$.
\end{lemma}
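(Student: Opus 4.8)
The plan is to argue that each local geometric feature listed in the statement creates a spin (or a small cluster of spins) whose flip strictly decreases the energy, which shows $V_\sigma = 0$ (indeed $\Phi(\sigma,\mathcal I_\sigma) = H(\sigma)$), hence $\sigma \notin \mathcal X_0$; for the $\frac{4}{3}\pi$ case the single flip only decreases the energy by $J-h$, so one descends to $\mathcal I_\sigma$ through a saddle of height at most $H(\sigma)+(J-h)$, giving $V_\sigma \le J-h$ and $\sigma \notin \mathcal X_{J-h}$. The common computational device is the Peierls identity \eqref{eq:peierls_hamiltonian}: flipping a single plus spin to minus changes the energy by $J\,\Delta|\gamma| + h$, where $\Delta|\gamma|$ is the change in contour length, and flipping a minus to plus changes it by $J\,\Delta|\gamma| - h$. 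On the triangular dual, a face has three dual edges, so a single-site flip changes $|\gamma|$ by an amount in $\{-3,-1,+1,+3\}$ according to how many of the three surrounding dual edges currently belong to $\gamma(\sigma)$ (three belong iff the face is isolated in its phase, etc.).

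First I would set up the correspondence between internal angles of a cluster/plus strip $C$ and the local spin pattern around the dual vertex realizing that angle, reading off, for each listed angle, how many dual edges of the relevant triangular face lie on $\gamma(\sigma)$. For item (1), an internal angle of $\frac{5}{3}\pi$ at a point of the external boundary of $C$ corresponds to a minus face wedged into $C$ with two (out of three) of its edges on the contour — flipping that minus spin to plus removes those two contour edges and adds one, so $\Delta|\gamma| = -2$ and $\Delta H = -2J - h < 0$. For item (2), an internal angle of $\frac{1}{3}\pi$ is a single plus triangular unit sticking out at the tip of a spike, with two of its three edges on $\gamma(\sigma)$; flipping it to minus gives $\Delta|\gamma| = -2$ and $\Delta H = -2J + h < 0$ since $J > 2h \ge h$ (and more than enough under Condition~\ref{condizionetoro}). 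In both cases one spin flip lands in $\mathcal I_\sigma$, so the communication height to $\mathcal I_\sigma$ equals $H(\sigma)$ and $V_\sigma = 0$. I should also note that exactly the same local pictures occur for a plus strip, since the argument is entirely local and does not use that $C$ fails to wrap the torus.

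For the final assertion, an internal angle of $\frac{4}{3}\pi$ corresponds to a plus face at a ``straight'' unit-wide protrusion having exactly two of its three edges on the contour but positioned so that flipping it to minus gives $\Delta|\gamma| = -2$ as well — wait, this needs care: the $\frac{4}{3}\pi$ case is precisely the one where flipping the corner spin yields $\Delta|\gamma| = 0$, so $\Delta H = -h$ if it is a plus being removed, which is already negative; the subtlety the lemma is really flagging is that to actually reach $\mathcal I_\sigma$ one may need to flip a spin whose removal first costs energy. Concretely, the relevant move at a $\frac{4}{3}\pi$ angle is to fill in or cut along the protrusion, and the cheapest route to strictly lower energy passes through an intermediate configuration obtained by one spin flip with energy cost exactly $J - h$ (adding a plus that raises $|\gamma|$ by $1$), after which further flips strictly decrease the energy below $H(\sigma)$. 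Hence $\Phi(\sigma,\mathcal I_\sigma) \le H(\sigma) + (J-h)$ and $V_\sigma \le J - h$, so $\sigma \notin \mathcal X_{J-h}$.

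The main obstacle I anticipate is the last part: one must exhibit, for a cluster or plus strip carrying a $\frac{4}{3}\pi$ internal angle, an explicit finite path from $\sigma$ that reaches $\mathcal I_\sigma$ with maximal height at most $H(\sigma)+(J-h)$, and check that no step of that path exceeds this barrier — this requires a genuine (though short) case analysis of the local configurations that can produce a $\frac{4}{3}\pi$ angle (a protrusion of width one abutting the bulk of $C$), together with a verification that advancing the protrusion by single flips keeps the contour length change bounded. The first two items, by contrast, are immediate one-step energy computations via \eqref{eq:peierls_hamiltonian} once the local dual-edge count is identified.
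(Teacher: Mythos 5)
Your overall strategy (local single- or double-flip moves, energies read off via the Peierls identity) is exactly the paper's, and items (1) and (2) go through, but there are two concrete problems. First, a consistent arithmetic slip in (1) and (2): you correctly say the flip removes two contour edges and adds one, but then write $\Delta|\gamma|=-2$ instead of $-1$, so your energy differences $-2J-h$ and $-2J+h$ should be $-(J+h)$ and $-(J-h)$ respectively (these are the values in Table~\ref{Tab} and in the paper's proof). Since both are still negative the conclusion $V_\sigma=0$ survives, but the numbers matter elsewhere in the paper, so get them right.

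The genuine gap is in the $\frac{4}{3}\pi$ case. Your intermediate claim that flipping the corner spin gives $\Delta|\gamma|=0$ and $\Delta H=-h$ is impossible: as you yourself note in your setup, a single-site flip on the triangular dual changes $|\gamma|$ by an odd amount in $\{-3,-1,+1,+3\}$, never $0$; and if a single flip already lowered the energy, the lemma would conclude $\sigma\notin\mathcal X_0$ rather than the weaker $\sigma\notin\mathcal X_{J-h}$. The correct local picture is not a plus protrusion but a concave notch: a $\frac{4}{3}\pi$ internal angle exposes two adjacent minus faces $j_1,j_2$, each sharing exactly one edge with $C$. The paper's path is just two flips: flip $j_1$ to plus ($\Delta|\gamma|=+1$, cost $J-h$), after which $j_2$ has two contour edges, so flipping it gives $\Delta|\gamma|=-1$ and gain $-(J+h)$; the net change is $-2h<0$ and the only intermediate configuration sits at height $H(\sigma)+(J-h)$, whence $V_\sigma\le J-h$. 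No longer path or case analysis of ``advancing a protrusion'' is needed; your proposal leaves this two-step construction unidentified, which is the whole content of the last assertion.
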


\begin{proof}[Proof of Lemma~\ref{noX}] 
Suppose that $C$ has an internal angle $\alpha=\frac{5}{3}\pi$.
Let $j$ be the site at distance one to a site in $C$ such that $\sigma(j)=-1$ and that belongs to the closed triangular face intersecting the boundary of $C$ in two or more edges, see Figure \ref{anglesconf}, case~(a). Since by Table~\ref{Tab}
\begin{equation}
    H(\sigma^{(j)})-H(\sigma)=-(J+h)<0,
\end{equation}
$\sigma^{(j)}$ belongs to $\mathcal{I}_{\sigma}$. 
Thus the stability level is equal to $V_{\sigma}=0$ and it follows that $\sigma \not \in \mathcal{X}_0$.

\begin{figure}[htb!] 
\centering \includegraphics[scale=0.7]{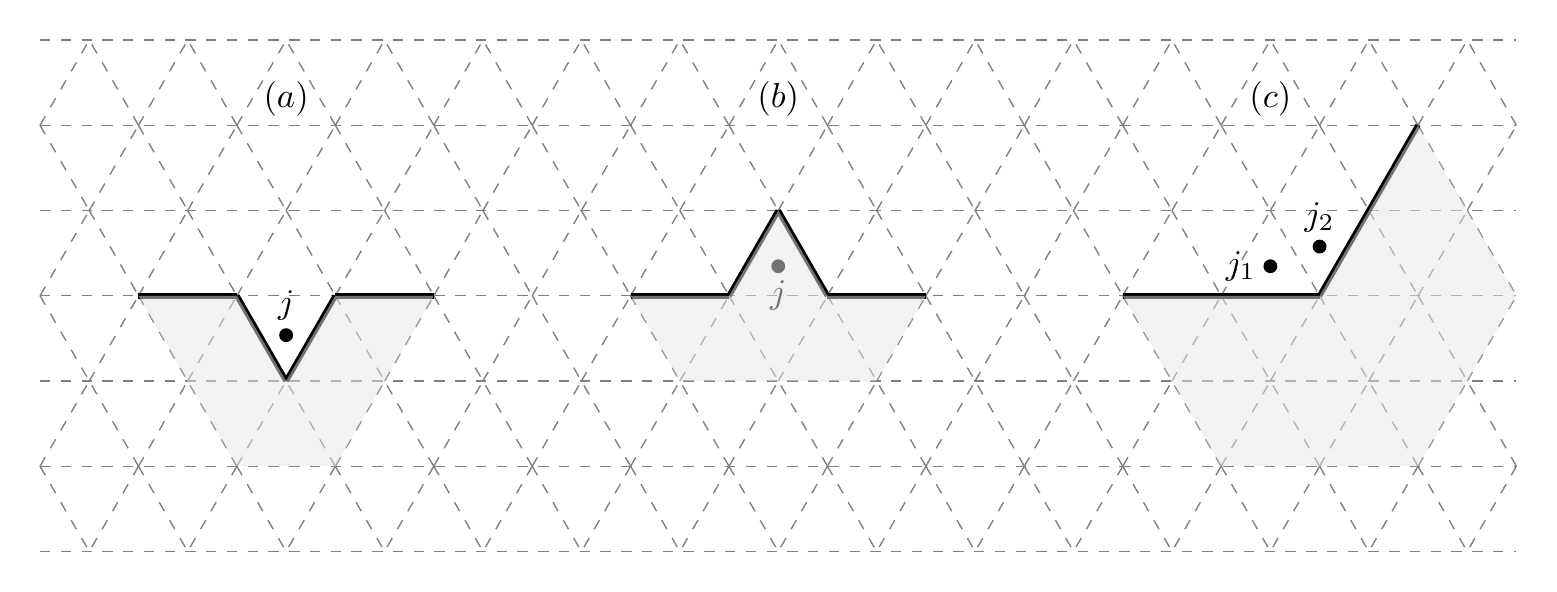} 
\caption{On the left hand side (center) we depict the site $j$ as in case 1. (case 2.). On the right hand side we depict the two sites $j_1,j_2$ when $\sigma$ has an internal angle of $\frac{4}{3}\pi$.} \label{anglesconf}
\end{figure}

Suppose now that $C$ has an internal angle $\alpha=\frac{1}{3}\pi$.
Let $j$ be a site such that $\sigma(j)=+1$ and that belongs to the closed triangular face of $C$ intersecting its boundary in two edges, see Figure~\ref{anglesconf}, case~(b). Since by Table~\ref{Tab}
\begin{equation}
     H(\sigma^{(j)})-H(\sigma)=-(J-h)<0,
\end{equation}
$\sigma^{(j)}$ belongs to $\mathcal{I}_{\sigma}$. Thus the stability level is equal to $V_{\sigma}=0$ and it follows that $\sigma \not \in \mathcal{X}_0$.

Next we prove that if a configuration $\sigma$ has a cluster or a plus strip $C$ with an internal angle of $\frac{4}{3}\pi$, then $\sigma \not \in \mathcal{X}_{J-h}$. Suppose that $C$ has an internal angle $\alpha=\frac{4}{3}\pi$. We construct a path that starts from $\sigma\equiv \omega_0$ and we define $\omega_1$ as follows. Let $j_1$, $j_2$ be two sites such that $\sigma(j_1)=\sigma(j_2)=-1$, $d(j_1,j_2)=1$ and let each of them belong to one closed triangular face intersecting the boundary of $C$ in one edge, see 
Figure \ref{anglesconf}, case~(c). Pick one of the two sites, for example $j_1$ and flips its spin, i.e., $\omega_1:=\omega_0^{(j_1)}$. Then flip the spin of $j_2$, i.e. $\omega_2:=\omega_1^{(j_2)}$.
The stability level is bounded above by
\begin{equation}
    V_{\sigma} \le H(\omega_1)-H(\omega_0)=J-h,
\end{equation}
indeed the configuration $\omega_2$ is in $\mathcal{I}_{\sigma}$ since
\begin{equation}
    H(\omega_2)-H(\omega_0)= (H(\omega_2)-H(\omega_1))+(H(\omega_1)-H(\omega_0))=-(J+h)+(J-h)=-2h<0.
\end{equation}
Thus $\sigma \not \in \mathcal{X}_{J-h}$.
\end{proof}
\begin{corollary}\label{corXJh}
A configuration $\sigma \in \mathcal{X}_{J-h}$ if it only has non-interacting clusters or plus strips with internal angles of $\frac{2}{3}\pi$ or $\pi$ only.
\end{corollary}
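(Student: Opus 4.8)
The plan is to show directly that $V_\sigma>J-h$, i.e.\ that any path joining $\sigma$ to a configuration of strictly smaller energy must go through a configuration of energy larger than $H(\sigma)+(J-h)$. The engine is the Peierls identity~\eqref{eq:peierls_hamiltonian}: turning a minus spin into a plus one, where the corresponding triangular face of $\mathbb T^2$ is edge-adjacent to the current plus region along exactly $k'$ of its three edges, changes the energy by $J(3-2k')-h$; turning a plus spin into a minus one, the face being edge-adjacent to the rest of its own cluster along $k$ edges, changes the energy by $J(2k-3)+h$. I would first use the hypotheses to constrain these numbers for a single spin flip applied to $\sigma$: since every cluster and plus strip has only internal angles $\tfrac23\pi$ and $\pi$, there are no holes (no $k'=3$ move), no plus face edge-adjacent to its own cluster along a single edge (no internal angle $\tfrac13\pi$, hence no $k=1$ removal), no cluster consisting of a single face (no $k=0$ removal), and no empty face edge-adjacent to a single cluster along two edges (no internal angle $\tfrac53\pi$); and since the clusters and plus strips are pairwise non-interacting, no flipped minus site can be edge-adjacent to two distinct clusters, which together with the previous point rules out every $k'=2$ move. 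Hence each admissible flip has $k'\in\{0,1\}$ or $k\in\{2,3\}$, so $H(\eta)-H(\sigma)\in\{J-h,\,3J-h,\,J+h,\,3J+h\}$ for every single-flip neighbour $\eta$ of $\sigma$; in particular $H(\eta)-H(\sigma)\ge J-h$, with equality exactly when $\eta$ is obtained by attaching a single protruding face to a cluster or plus strip of $\sigma$ (a \emph{unit protrusion}).

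Next I would look at such an $\eta$, for which $H(\eta)=H(\sigma)+(J-h)$. The attached face is a single $\tfrac13\pi$ spike, and it creates no hole, no $\tfrac53\pi$ notch, and no plus face other than itself that is edge-adjacent to its cluster along a single edge. Re-running the previous count with $\eta$ in place of $\sigma$ shows that every single spin flip applied to $\eta$ strictly increases the energy, except the removal of that protruding face, which returns to $\sigma$ with $H(\sigma)=H(\eta)-(J-h)$. So the only neighbour of $\eta$ of energy at most $H(\eta)$ is $\sigma$ itself.

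Combining the two facts concludes: if $\omega=\{\omega_1,\dots,\omega_n\}$ is a path with $\omega_1=\sigma$ and $\Phi(\omega)\le H(\sigma)+(J-h)$, then by the first step $H(\omega_2)=H(\sigma)+(J-h)$ and $\omega_2$ is a unit-protrusion configuration $\eta$, and by the second step the only admissible continuation is $\omega_3=\sigma$; iterating, $\omega$ merely oscillates between $\sigma$ and unit-protrusion configurations and never reaches energy below $H(\sigma)$. Hence no path from $\sigma$ to $\mathcal I_\sigma$ stays at or below $H(\sigma)+(J-h)$, that is $\Phi(\sigma,\mathcal I_\sigma)>H(\sigma)+(J-h)$, i.e.\ $V_\sigma>J-h$ and $\sigma\in\mathcal X_{J-h}$. (If $\sigma=\underline{+1}$ then $\mathcal I_\sigma=\emptyset$, $V_\sigma=\infty$, and there is nothing to prove.)

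The main obstacle is in the first step, precisely in how strongly one must use the non-interaction hypothesis. The angle hypotheses on their own exclude all the energetically favourable single flips performed on a fixed cluster — the additions with $k'\ge 2$ (cost at most $-J-h<0$) and the removals with $k\le1$ (cost at most $-J+h<0$) — via Lemma~\ref{noX} and the definitions of regular cluster, hole and notch. But one must also make sure that the separation between distinct clusters is large enough to forbid not only a direct single-flip bridge (a $k'=2$ move joining two clusters, which is what $d(\cdot,\cdot)>2$ already prevents) but also the two-flip move consisting of growing a unit protrusion from one cluster toward another and then joining it; this is the delicate point, and it requires that two distinct clusters not be connected by a chain of at most two empty triangular faces. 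Once the appropriate separation is built into the notion of "non-interacting", the rest is the finite case check on the values of $J(3-2k')-h$ and $J(2k-3)+h$ furnished by~\eqref{eq:peierls_hamiltonian}.
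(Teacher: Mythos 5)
Your argument and the paper's proof establish different implications. The paper's proof of Corollary~\ref{corXJh} begins ``Suppose $\sigma \in \mathcal{X}_{J-h}$'' and is essentially the contrapositive of Lemma~\ref{noX}: any internal angle $\frac{1}{3}\pi$, $\frac{4}{3}\pi$ or $\frac{5}{3}\pi$ forces $V_\sigma \le J-h$, and a pair of interacting clusters produces an empty triangular face sharing one edge with each of two plus faces, which is handled exactly like a $\frac{5}{3}\pi$ notch; hence membership in $\mathcal{X}_{J-h}$ \emph{implies} the geometric conditions. You instead attack the literal ``if'' direction, namely that the geometric conditions imply $V_\sigma > J-h$, by classifying single spin flips through \eqref{eq:peierls_hamiltonian}. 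That direction is the one actually needed for the inclusion $Z \cup R \cup U \cup Y \subseteq \mathcal{X}_{J-h}$ invoked in Corollary~\ref{XJ-h}, so it is a legitimate target, but it is not what the paper proves here, and your proof of it does not close.

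The obstacle you flag at the end is not a delicate point to be checked later; it is a genuine failure of the argument. ``Non-interacting'' is defined in the paper as $d(Q,\tilde{Q})>2$, which permits two clusters or plus strips at lattice distance exactly $3$, i.e.\ joined by a chain of two empty triangular faces, the first edge-adjacent to one component and the second edge-adjacent to the other. For such a $\sigma$ the two-flip bridge (flip the first empty face, a $k'=1$ move of cost $J-h$; then flip the second, now a $k'=2$ move of cost $-(J+h)$) reaches a configuration of energy $H(\sigma)-2h<H(\sigma)$ with maximal excursion exactly $H(\sigma)+J-h$, so $V_\sigma\le J-h$ and $\sigma\notin\mathcal{X}_{J-h}$. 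A concrete instance satisfying all your hypotheses is two plus strips separated by a single row of empty triangular faces: every internal angle is $\pi$ and the two strips are at distance $3$. Thus your Step~2 (``the only neighbour of $\eta$ of energy at most $H(\eta)$ is $\sigma$'') is false precisely when the unit protrusion is grown into such a gap, and the implication you set out to prove cannot hold without strengthening ``non-interacting'' so as to forbid chains of at most two empty faces between distinct components. Your Step~1 bookkeeping is otherwise sound, up to one secondary caveat: excluding $k'=2$ additions inside a single cluster uses not only the absence of $\frac{5}{3}\pi$ angles but also the fact (which deserves an argument) that a cluster with only $\frac{2}{3}\pi$ and $\pi$ internal angles cannot meet a dual vertex in two separate wedges with the same empty face adjacent to both.
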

\begin{proof}[Proof of Corollary~\ref{corXJh}] Suppose $\sigma \in \mathcal{X}_{J-h}$. By Lemma \ref{noX}, it follows that 
the clusters (or the plus strips) of $\sigma$ do not have angles of $\frac{5}{3}\pi$, $\frac{1}{3}\pi$ and $\frac{4}{3}\pi$. So, either $\sigma\equiv \underline{+1}$ or the clusters (or the plus strips) of $\sigma$ have only internal angles of $\pi$ and $\frac{2}{3}\pi$.

We observe that if two clusters $C_1$ and $C_2$ are interacting, there exists a triangular face with minus spin that shares a side with the external boundary of $C_1$ and a side with the external boundary of $C_2$. This case can be treated as the case of cluster with an internal angle of $\frac{5}{3}\pi$, therefore by Lemma~\ref{noX} these configurations do not belong to $\mathcal{X}_{J-h}$.
    \end{proof}
\paragraph{Partition of $\mathcal{X}_{J-h}$.} We partition the set $\mathcal{X}_{J-h}\setminus\{\underline{-1},\underline{+1}\}$ into four subsets $Z, R, U, Y$. 
$Z$ is the set of configurations consisting of a single quasi-regular hexagonal cluster, see Figure~\ref{configurationZR}. More precisely, $Z=Z_1 \cup Z_2$, where:
\begin{itemize}[label=\raisebox{0.25ex}{\tiny \textbullet}]
    \item $Z_1$ is the collection of configurations such that there exists only one cluster with shape $E_{B_m}(r) \subset \Lambda$ with $r \leq r^*$ and $m \in \{0,1,2,3,4,5\}$;
    \item $Z_2$ is the collection of configurations such that there exists only one cluster with shape $E_{B_m}(r) \subset \Lambda$ with $r \geq r^*+1$ and $m \in \{0,1,2,3,4,5\}$.
    \end{itemize}
   We define the set $R$ to be the set of configurations consisting of a single regular cluster see Figure~\ref{configurationZR}. Formally, $R=R_1 \cup R_2$, where:
\begin{itemize}[label=\raisebox{0.25ex}{\tiny \textbullet}]
    \item $R_1$ is the collection of configurations such that there exists only one cluster with hexagonal shape $E \subset \Lambda$ such that it contains the greatest quasi-regular hexagon with radius $r \leq r^*$; 
    \item $R_2$ is the collection of configurations such that there exists only one cluster with hexagonal shape $E \subset \Lambda$ such that it contains the greatest quasi-regular hexagon with radius $r \geq r^*+1$.
    \end{itemize}
     \begin{figure}[htb!]
    \centering
    \includegraphics[scale=0.8]{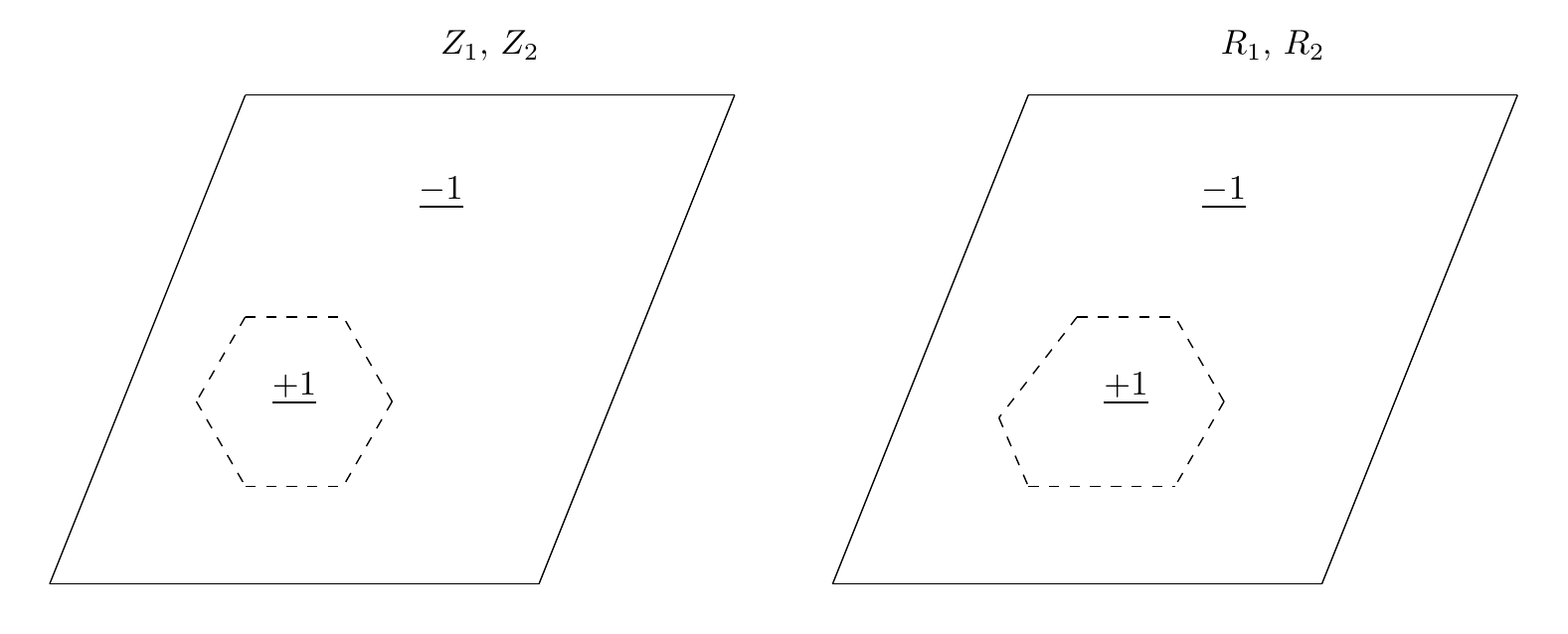}
    \caption{On the left an example of configuration in $Z$, on the right an example of configuration in $R$.}
    \label{configurationZR}
    \end{figure}
    The set $U$ contains all configurations with more than one 
    hexagonal cluster of the types in $Z_1$, $Z_2$, $R_1$, $R_2$, see Figure~\ref{configurationUY}. More precisely, we have $U=U_1 \cup U_2$, where: 
    \begin{itemize}[label=\raisebox{0.25ex}{\tiny \textbullet}]
    \item $U_1$ is the collection of configurations such that there exists a family of non-interacting clusters with hexagonal shape such that it contains the greatest quasi-regular hexagon with radius $r \leq r^*$; 
    \item $U_2$ is the collection of configurations such that there exists a family of clusters with at least one having hexagonal shape containing the greatest quasi-regular hexagon with radius $r \geq r^*+1$.
    \end{itemize}
In other words $U_1$ contains a collection of clusters of the same type of those in $Z_1$ or $R_1$, and $U_2$ contains a collection of clusters where at least one is of the same type of those in $Z_2$ or $R_2$.

The set $Y$ contains all possible 
(plus or minus)
strips with only 
$\pi$ internal angles on their boundary and, 
possibly, some hexagonal clusters, see Figure~\ref{configurationUY}. 
\begin{figure}[htb!]
    \centering
    \includegraphics[scale=0.8]{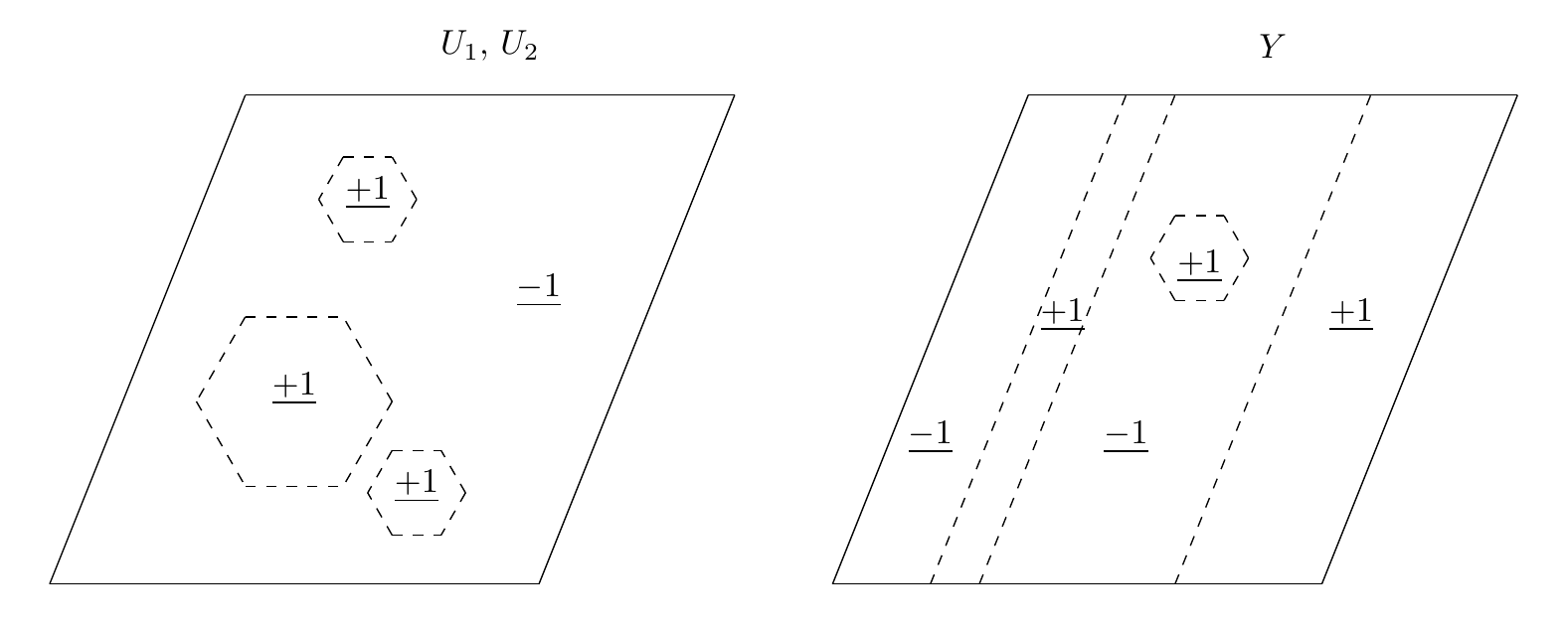}
    \caption{On the left an example of configuration in $U$, on the right an example of configuration in $Y$.}
    \label{configurationUY}
\end{figure}

\begin{corollary}\label{XJ-h}
We have 
$\mathcal{X}_{J-h}=
Z\cup 
R \cup 
U \cup 
Y \cup 
\{ \underline{+1}, \underline{-1}\}$ and for any $\epsilon>0$ and sufficiently large $\beta$,
\begin{align}\label{recurrenceJ-h}
    \sup_{\sigma \in \mathcal{X}} \mathbb{P}_{\sigma}(\tau_{\mathcal{X}_{J-h}}> e^{\beta(J-h+\epsilon)})=SES.
\end{align}
\end{corollary}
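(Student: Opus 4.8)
The plan is to prove the two assertions of the Corollary separately: the set-theoretic identification of $\mathcal{X}_{J-h}$, and then the recurrence estimate \eqref{recurrenceJ-h}. For the inclusion $\mathcal{X}_{J-h}\subseteq Z\cup R\cup U\cup Y\cup\{\underline{+1},\underline{-1}\}$, take $\sigma\in\mathcal{X}_{J-h}$ with $\sigma\neq\underline{\pm1}$ (the two homogeneous states lie trivially in the right-hand side, since $V_{\underline{+1}}=\infty$ and $V_{\underline{-1}}\geq 3J-h>J-h$, because every single flip out of $\underline{-1}$ already costs $3J-h$). By Lemma~\ref{noX} no cluster or plus strip of $\sigma$ carries an internal angle $\frac{5\pi}{3}$, $\frac{\pi}{3}$ or $\frac{4\pi}{3}$, and by the proof of Corollary~\ref{corXJh} these components are pairwise non-interacting and have only internal angles $\pi$ and $\frac{2\pi}{3}$. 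I would then observe that $\sigma$ has no holes: the internal boundary of a hole is a closed loop with material on its outside, and a short turning-angle count shows any such loop must have at least one internal angle equal to $\frac{4\pi}{3}$ or $\frac{5\pi}{3}$ (an elementary hole already forces $\frac{5\pi}{3}$), which Lemma~\ref{noX} excludes; the same argument on the minus side forces every minus strip to be straight. Hence each cluster of $\sigma$ is a hole-free polyiamond with internal angles only $\pi,\frac{2\pi}{3}$, i.e.\ a regular (hexagonal) polyiamond, and every strip is straight. Counting components then sorts $\sigma$ into the four families: one regular cluster and no strip gives $\sigma\in Z$ (if the cluster has the shape $E_{B_m}(r)$) or $\sigma\in R$ in general; more than one regular cluster and no strip gives $\sigma\in U$; and the presence of a strip gives $\sigma\in Y$.

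\textbf{Inclusion $\supseteq$.} Here I would show $V_\sigma>J-h$ for every $\sigma$ in $Z\cup R\cup U\cup Y$ by bounding $\Phi(\sigma,\mathcal{I}_\sigma)$ from below. For a single regular hexagonal cluster the two lowest escape routes are: \emph{erosion}, initiated at one of the six $\frac{2\pi}{3}$ corners, where the first flip removes a triangle with two neighbours in the cluster (cost $J+h$) and afterwards the twin corner triangle, and then a whole cascade, have only one neighbour and cost $-J+h$, driving the energy strictly below $H(\sigma)$; and \emph{growth of a new bar}, where along a straight side one must first create two unit protuberances, each a flip of cost $J-h$ at a site with one neighbour in the cluster, before a pocket-filling flip of cost $-(J+h)$ becomes available, so this branch reaches height $2(J-h)$ above $H(\sigma)$. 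A short case check of the remaining first moves (removing a non-corner boundary triangle, or nucleating a fresh cluster, both strictly more expensive, and likewise for the short sides of a general regular hexagon) shows these are the only candidates, whence $\Phi(\sigma,\mathcal{I}_\sigma)\geq H(\sigma)+\min\{J+h,\,2(J-h)\}>H(\sigma)+(J-h)$ since $J\geq 2h$. For straight strips the same dichotomy (erosion of a boundary row versus nucleating a new row) gives a barrier of the same form, and for configurations with several non-interacting clusters or strips the cheapest descending excursion acts on a single component, so the bound is unchanged; hence $\sigma\in\mathcal{X}_{J-h}$.

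\textbf{Recurrence estimate.} The bound \eqref{recurrenceJ-h} does not use the explicit description above: it is the model-independent recurrence property of the Metropolis dynamics to a level set $\mathcal{X}_V$, applied with $V=J-h$ — the same statement, with $V^*=2J$, that underlies Proposition~\ref{teoRP}. The proof is the standard iteration: for every $\sigma\notin\mathcal{X}_{J-h}$ one has $V_\sigma\leq J-h$, hence there is $\eta$ with $H(\eta)<H(\sigma)$ and a path $\sigma\to\eta$ of communication height $\leq H(\sigma)+(J-h)$; since $H$ decreases strictly along the resulting chain of excursions, whose number is bounded by a constant $N$ depending only on the energy range and the minimal energy gap (hence independent of $\beta$), $\mathcal{X}_{J-h}$ is reached; and a standard large-deviation estimate makes $\sup_\sigma\mathbb{P}_\sigma(\tau_{\{H<H(\sigma)\}}>e^{\beta(J-h+\epsilon/2)})$ SES, so summing $N$ such excursions and using a union bound over $\sigma\in\mathcal{X}$ gives \eqref{recurrenceJ-h}. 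This is exactly the \cite{MNOS04}-type machinery already invoked for Proposition~\ref{teoRP} (compare also the discussion following Theorem~\ref{selle}).

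\textbf{Main obstacle.} The delicate step is the inclusion $\supseteq$: verifying that every configuration assembled from regular hexagons and straight strips really has stability level exceeding $J-h$. This rests on a careful inventory of the single-spin moves near straight sides, $\frac{2\pi}{3}$ corners and strip boundaries, and on the quantitative point that along a straight side the energy cannot be lowered until \emph{two} protuberances (equivalently, a $\frac{5\pi}{3}$ internal angle) have been produced, so that the growth barrier is $2(J-h)$ and not $J-h$, while erosion must be initiated at a $\frac{2\pi}{3}$ corner at cost $J+h$. By contrast the recurrence estimate is routine once the general recurrence lemma is in place, and the inclusion $\subseteq$ is essentially bookkeeping on top of Lemma~\ref{noX} and Corollary~\ref{corXJh}.
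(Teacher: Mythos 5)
Your proposal is correct and, in outline, follows the same route as the paper: the inclusion $\subseteq$ rests on Lemma~\ref{noX} (exclusion of $\frac{1}{3}\pi$, $\frac{4}{3}\pi$ and $\frac{5}{3}\pi$ internal angles and of interacting clusters) followed by the bookkeeping that sorts the surviving configurations into $Z$, $R$, $U$, $Y$, and the recurrence estimate \eqref{recurrenceJ-h} is obtained by invoking the general recurrence theorem \cite[Theorem 3.1]{MNOS04} with $V=J-h$, exactly as in the paper. Where you genuinely add something is the inclusion $\supseteq$. The paper disposes of it by citing Corollary~\ref{corXJh}, whose statement does assert that configurations built from non-interacting clusters or strips with only $\pi$ and $\frac{2}{3}\pi$ internal angles lie in $\mathcal{X}_{J-h}$; but the argument written under that corollary only establishes the converse implication (it starts from $\sigma\in\mathcal{X}_{J-h}$ and derives the geometric constraints). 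Your explicit lower bound $V_\sigma\ge\min\{J+h,\,2(J-h)\}>J-h$ — obtained by checking that the only moves out of such a $\sigma$ of cost at most $J-h$ are single protuberances at cost exactly $J-h$, from which the only descending move is the reversal — supplies precisely this missing half, and the energy bookkeeping you use (corner erosion starts at $J+h$; a descending pocket-filling flip of cost $-(J+h)$ only becomes available after two protuberances at $J-h$ each) is consistent with the costs the paper uses in Lemma~\ref{EST}. Two minor points. First, your description of erosion as a cascade of cost $-J+h$ after the first two flips is not accurate: the computation in the case $Z_1$ shows the energy along that path oscillates with net drift $+2h$ per removed rhombus, so its maximum is $J+(k-2)h$ rather than $J+h$; this does not affect your bound, which only needs the cost $J+h$ of the first step. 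Second, your turning-angle argument excluding holes and forcing strips to be straight is a detail the paper leaves implicit, but it is indeed needed to conclude that the admissible clusters are genuinely hexagonal, so it is good that you made it explicit.
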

Equation \eqref{recurrenceJ-h} implies that the system visits with high probability a state with a stability level greater than $J-h$ in a time shorter than $e^{\beta (J-h +\epsilon)}$. 
In other words, the states in $\mathcal{X}_{J-h}$ are the relevant ones for a dynamics
speeded up by a factor $e^{\beta (J-h)}$.
\begin{proof}[Proof of Corollary~\ref{XJ-h}]
    By the partition described above, Definition \eqref{Xv} and Corollary \ref{corXJh} it follows that $\mathcal{X}_{J-h}=Z\cup R \cup U \cup Y \cup \{ \underline{+1}, \underline{-1}\}$. By \cite[Theorem 3.1]{MNOS04} for $V=J-h$ and by the partition of $\mathcal{X}_{J-h}$, we get \eqref{recurrenceJ-h}.
\end{proof}

\subsection{Proof of Proposition~\ref{teoRP}}
\begin{lemma}[Estimate of stability levels]\label{EST} For every $\sigma\in\mathcal{X}_{J-h}\setminus\{\underline{-1},\underline{+1}\},$ there exists \mbox{$V^*=2J$} such that $V_\sigma\leq V^*$. 
\end{lemma}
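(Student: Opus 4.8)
The goal is to show that every configuration in $\mathcal{X}_{J-h}\setminus\{\underline{-1},\underline{+1}\}$ has stability level at most $2J$. The natural strategy is to exploit the partition $\mathcal{X}_{J-h}\setminus\{\underline{-1},\underline{+1}\} = Z\cup R\cup U\cup Y$ from Corollary~\ref{XJ-h} and to treat each class separately, in each case exhibiting an explicit reference path $\omega:\sigma\to\mathcal{I}_\sigma$ whose communication height exceeds $H(\sigma)$ by no more than $2J$. Since $V_\sigma=\Phi(\sigma,\mathcal{I}_\sigma)-H(\sigma)$, it suffices to produce, for each $\sigma$, a path to \emph{some} configuration of strictly lower energy along which the energy never rises by more than $2J$ above $H(\sigma)$.

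\textbf{The core estimate.} For a configuration whose only cluster is a (quasi-)regular hexagon, the cheapest elementary move that changes the area is to attach a new triangular unit to a side: by Table~\ref{Tab} this first flip (creating a plus spin with two minus neighbours along a flat side) costs $2J-h$, and it is the most expensive single flip we will ever need. Growing a fresh bar along a side of a hexagon of radius $r$ then proceeds by flips that each cost at most $J-h$ (filling in against two plus neighbours) until the bar is complete; symmetrically, one may \emph{erode} a bar by removing its end triangles, the costliest such removal being $+J-h$ (detaching a unit attached by a single edge is actually a gain, while the last, doubly-attached unit costs $J-h$). Thus from any quasi-regular-hexagonal cluster one can reach an adjacent quasi-regular-hexagonal cluster — either the next larger or the next smaller — along a path of height at most $H(\sigma)+2J$. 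This is the key geometric input and the step I expect to require the most care, because one must verify the bound flip-by-flip using the local rules of Table~\ref{Tab} and the explicit shape of standard/canonical polyiamonds (Definitions~\ref{canonical},~\ref{standard}), and one must check that $2J$ (not, say, $2J-h$) is genuinely an upper bound in every case — in particular when $k=0$ and the first move on a fully completed hexagon is forced.

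\textbf{Handling the four classes.} For $\sigma\in Z$ (single quasi-regular hexagonal cluster $E_{B_m}(r)$): if $r\le r^*$ the cluster is subcritical, so shrinking it lowers the energy — compare $H$ before and after removing a full bar using \eqref{eq:peierls_hamiltonian}, the perimeter decrease being worth more than the area loss precisely because $r\le r^*$, i.e. $r<\frac{J}{2h}-\frac12$; the shrinking path has height $\le H(\sigma)+(J-h)<H(\sigma)+2J$. If $r\ge r^*+1$ the cluster is supercritical, so \emph{growing} it lowers the energy, and the growth path has height $\le H(\sigma)+(2J-h)$. Either way $V_\sigma\le 2J$. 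The set $R$ (single regular hexagon $E$) is the special case $m=0$ and is treated identically. For $\sigma\in U$ (several non-interacting hexagonal clusters): apply the single-cluster argument to one cluster of the dominant type — shrink a subcritical one or grow a supercritical one — the clusters being non-interacting so the rest of the configuration contributes nothing to the energy differences; since $U_2$ by definition contains a supercritical cluster, and in $U_1$ all clusters are subcritical and can be shrunk, we again get $V_\sigma\le 2J$. For $\sigma\in Y$ (strips with only $\pi$-angles, plus possibly some hexagonal clusters): a plus strip can be grown one line at a time — the first flip of a new line against a flat side costs $2J-h$ and the line completes by $\le J-h$ flips — and since adding a full wrapping line of $2L$ pluses to a plus strip decreases the energy (the perimeter is unchanged while $N^+$ increases), this gives a path of height $\le H(\sigma)+(2J-h)$ to $\mathcal{I}_\sigma$; a minus strip is handled symmetrically by growing the complementary plus region. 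If only hexagonal clusters are present alongside, fall back to the $U$ argument. In all cases the bound $V_\sigma\le V^*=2J$ holds, completing the proof.
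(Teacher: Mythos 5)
Your overall strategy coincides with the paper's: run through the partition $Z\cup R\cup U\cup Y$ of Corollary~\ref{XJ-h} and, for each class, exhibit an explicit bar--adding or bar--removing path to $\mathcal{I}_\sigma$ whose height above $H(\sigma)$ is at most $2J$. However, there is a genuine gap at exactly the point where the value $V^*=2J$ is decided. You assert that for a supercritical quasi-regular hexagon ($r\ge r^*+1$) ``growing it lowers the energy'' and that the growth path has height at most $H(\sigma)+(2J-h)$. This fails for the single configuration $\sigma=\mathcal{E}(r^*+1)$: the first bar of a regular hexagon has cardinality $2(r^*+1)-1=2r^*+1=\frac{J}{h}-2\delta<\frac{J}{h}$, so completing it changes the energy by $J-(2r^*+1)h=2\delta h>0$, i.e.\ $\mathcal{E}_{B_1}(r^*+1)\notin\mathcal{I}_{\mathcal{E}(r^*+1)}$. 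The path must therefore be prolonged through further bars (through $\mathcal{E}_{B_2}(r^*+1)$ when $\delta\in(0,\tfrac12)$, and all the way to $\mathcal{E}(r^*+2)$ when $\delta\in(\tfrac12,1)$), and along this prolongation the maximum sits at $2J-2h+2h\delta$, respectively $2J-10h+10h\delta$, above $H(\sigma)$. The latter tends to $2J$ as $\delta\to1$ and exceeds your claimed bound $2J-h$ as soon as $\delta>\tfrac{9}{10}$. This is the one configuration that forces $V^*=2J$ rather than $2J-2h$; your remark that the ``$k=0$'' case needs care gestures at it but the argument you then give for class $Z$ does not handle it.

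There are also several incorrect local energy costs that should be fixed even though they do not change the final conclusion for the other classes: by Table~\ref{Tab} attaching a triangular unit along a flat side (one plus neighbour, two minus neighbours) costs $J-h$, not $2J-h$, and the cumulative maximum when adding a bar is $2J-2h$, reached after the second flip; removing a corner unit (two plus neighbours) costs $J+h$, not $J-h$, and the height of the bar-erosion path is $J+(k-2)h$ above $H(\sigma)$ — which for $k=2r^*+1$ is close to $2J-h$, not the $J-h$ you claim. Finally, the class $R$ is not ``the special case $m=0$ of $Z$'': in the paper it consists of single clusters with a general hexagonal shape (unequal side lengths) containing a largest quasi-regular hexagon of radius $r$, and it is treated by cutting or adding a bar on a shortest or longest side; the case $m=0$ of $Z$ is just $\mathcal{E}(r)$, which is the very configuration discussed above.
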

 \begin{proof}[Proof of Lemma~] 
We begin by considering the set $Z$.
    
    \paragraph{Case $Z_1$.} For any configuration $\sigma\in Z_1$ we construct a path $\overline{\omega}\in \Theta(\sigma,I_{\sigma} \cap (Z_1 \cup \{\underline{-1}\}))$ that
    dismantles the bar on one of the shortest sides of the quasi-regular hexagon starting from one of its corners. Starting from $\sigma\equiv \omega_0\in Z_1$, we will define $\omega_1$ as follows. Consider a corner in one of the shortest sides of the cluster in $\mathcal{E}_{B_m}(r)$ and let $j$ be a site belonging to this corner. Flip the spin in $j$, i.e., $\omega_1:=\omega_0^{(j)}$. Define $\omega_2:=\omega_1^{(j_1)}$, where $j_1$ is the other site belonging to the same corner. 
    From the values in Table~\ref{Tab}, $H(\omega_{1})-H(\omega_{0})=J+h$ and $H(\omega_{2})-H(\omega_{1})=-(J-h)$. By iterating this procedure along the considered side, a bar of the cluster is erased and we obtain the configuration $\eta \equiv \omega_k$ such that $\eta=\mathcal{E}_{B_{m-1}}(r)$ for $m \neq 0$, otherwise $\eta=\mathcal{E}_{B_5}(r-1)$ for $m=0$. Note that the length of the path is equal to the cardinality $k$ of the bar. 
    
    In order to determine where the maximum is attained, we rewrite for $n=2, \ldots, k$
    \begin{align}
       H(\omega_n)-H(\omega_0)=
       \begin{cases}
       \sum_{t=2, \, t \, even}^{n}(H(\omega_{t})-H(\omega_{t-2})) & \text{if even n}, \\
       \sum_{t=2, \, t \, even}^{n-1}(H(\omega_{t})-H(\omega_{t-2}))+H(\omega_{n})-H(\omega_{n-1}) & \text{if odd n}. \\
       \end{cases}
    \end{align}
   From the values in Table~\ref{Tab}, we obtain for every $s=2,\cdots,k-1$
    \begin{equation}\label{reversibility1}
            H(\omega_{s})-H(\omega_{s-2})=[H(\omega_{s})-H(\omega_{s-1})]+[H(\omega_{s-1})-H(\omega_{s-2})]=2h.
    \end{equation}
    The previous can also be obtained by using \eqref{eq:peierls_hamiltonian} and observing that flipping two spins, as described above, corresponds to decreasing $N^{+}(\sigma)$ and leads to a cluster with a Peierls contour of the same length. So, we have
    \begin{align}\label{differenza_hamiltoniana_n_0}
        H(\omega_n)-H(\omega_0)=
        \begin{cases} 
        2h\frac{n-1}{2}+(J+h)=J+nh & \text{if odd } n=1,\cdots,k-2,\\
        2h \frac{n}{2}=nh & \text{if even } n=2,\cdots,k-1, \\
        -J+nh & \text{if } n=k.\\
        \end{cases}
    \end{align}
    Since in each case the result is an increasing function of $n$, comparing the three maxima, we see that the absolute maximum is attained in $\omega_{k-2}$. By definitions \ref{def:bars} and \ref{def:quasi_reg_hex}, we have
    \begin{itemize}[label=\raisebox{0.25ex}{\tiny \textbullet}]
        \item $k=2r-1$, if the initial configuration is $\mathcal{E}_{B_1}(r)$;
        \item $k=2r+1$, if the initial configuration is $\mathcal{E}_{B_m}(r)$ for $m=2,3,4,5$;
        \item $k=2r+3$, if the initial configuration is $\mathcal{E}(r+1)$.
    \end{itemize}
    So, we have
    \begin{align}\label{phiomegabar}
        \Phi(\overline{\omega})-H(\omega_0) = H(\omega_{k-2})-H(\omega_0)=J+(k-2)h.
    \end{align}
Thus $\Phi(\overline{\omega})$ depends only on the cardinality $k$, that is an increasing function of the radius $r$ of the quasi-regular hexagon. 
The cardinality of the longest bar among those of the quasi-regular hexagon in a configuration in $Z_1$ is $2r^*+1$ (obtained removing ${B_5}$ from $E_{B_5}(r^*)$). Note that the maximum is not obtained for $k=2r^*+3$, since $\mathcal{E}(r^*+1) \not \in Z_1$. Let us check that $\omega_k \in I_{\sigma} \cap (Z_1 \cup \{\underline{-1}\})$. 
Since $k \leq 2r^*+1$ with $r^*=\lfloor J/2h-1/2 \rfloor$ and by \eqref{differenza_hamiltoniana_n_0}, we get
    \begin{equation}\label{sottoI}
        H(\omega_0)-H(\omega_k)=J-kh \geq J-(2r^*+1)h>0.
    \end{equation}
    Finally, by equations \eqref{sottoI} and \eqref{phiomegabar}, we have
    \begin{equation}
        V_{\sigma} \leq \Phi(\overline{\omega}) -H(\sigma)=J+(k-2)h.
    \end{equation}
Thus, we find $V^*_{Z_1}=\max_{\sigma \in Z_1} V_{\sigma}$ by choosing $k-2=(2r^*+1)-2$ and recalling $r^*=\lfloor J/2h-1/2 \rfloor$, we have 
    \begin{equation}
        V^*_{Z_1} \leq 2J-2h.
    \end{equation}

    \paragraph{Case $Z_2$.} For any configuration $\sigma\in Z_2$ we construct a path $\overline{\omega}\in \Theta(\sigma,I_{\sigma} \cap (Z_2 \cup \{\underline{+1}\}))$. Starting from $\sigma\equiv \omega_0\in Z_2$, let us define $\omega_1$. Consider a corner in one of the longest sides of the cluster in $\mathcal{E}_{B_m}(r)$ and let $j$ be a site belonging to this corner. Let $j_1$ be the site at distance one from $j$ such that $\sigma(j_1)=-1$. We define $\omega_1:=\omega_0^{(j_1)}$, i.e., $\sigma(j_1)$ switches the sign. We consider $j_2$ the site at distance one from $j_1$ such that $\sigma(j_2)=-1$ and $d(j_2,j')=2$ where $j' \neq j$ is another site of the initial cluster. We define $\omega_2:=\omega_1^{(j_2)}$, $\omega_3:=\omega_2^{(j_3)}$, where $j_3$ is the site at distance one from $j_2$ such that $\sigma(j_3)=-1$ and $d(j_3,j')=1$ where $j' \neq j$ is another site of the initial cluster. By iterating this procedure along the considered side, a bar is added to the initial cluster. We obtain the configuration $\eta \equiv \omega_k$ such that $\eta= \mathcal{E}_{B_{m+1}}(r)$ for $m\neq 5$, otherwise $\eta=\mathcal{E}(r+1)$ for $m=5$. Note that the length of the path is equal to the cardinality $k$ of the bar.

    In order to determine where the maximum is attained, we rewrite for $n=2, \ldots, k$
    \begin{align}
       H(\omega_n)-H(\omega_0)=
       \begin{cases}
        \sum_{t=2, \, t \, even}^{n}(H(\omega_{t})-H(\omega_{t-2})) & \text{if even n}, \\
       \sum_{t=2, \, t \, even}^{n-1}(H(\omega_{t})-H(\omega_{t-2}))+H(\omega_{n})-H(\omega_{n-1}) & \text{if odd n}.
       \end{cases}
    \end{align}
 From the values in Table~\ref{Tab} we obtain $H(\omega_{1})-H(\omega_{0})=J-h$, $H(\omega_{2})-H(\omega_{0})=2J-2h$, and for every $s=3,\cdots,k$
        \begin{equation}\label{reversibility2}
            H(\omega_{s})-H(\omega_{s-2})=[H(\omega_{s})-H(\omega_{s-1})]+[H(\omega_{s-1})-H(\omega_{s-2})]=-2h.
        \end{equation}
        The previous can also be obtained by using \eqref{eq:peierls_hamiltonian} and observing that flipping two spins, as described above, corresponds to increase $N^{+}(\sigma)$ and leads to a cluster with a Peierls contour of the same length. So, we have
        \begin{align}\label{valoridiff}
        H(\omega_n)-H(\omega_0)=
        \begin{cases}
        J-h, & \text{if } n=1,\\
        2J-2h, & \text{if } n=2,\\
        -2h\frac{n-2}{2}+(2J-2h)=2J-nh, & \text{if even } n=4,\cdots,k-1,\\
        -2h\frac{n-1}{2}+(J-h)=J-nh, & \text{if odd } n=3,\cdots,k.\\
        \end{cases}
        \end{align}
        As a function of $n$, the absolute maximum is attained in $\omega_{2}$. So, we have
        \begin{align}\label{phiomegabarra}
        \Phi(\overline{\omega})-H(\omega_0)=H(\omega_2)-H(\omega_0)=2J-2h.
        \end{align}
    Finally, let us check that $\omega_k \in I_{\sigma} \cap (Z_2 \cup \{\underline{+1}\})$. If $\sigma \in Z_2 \setminus \mathcal{E}(r^*+1)$, then the cardinality of the smallest bar among those of the quasi-regular hexagon in a configuration in $Z_2$ is \mbox{$k_{\text{min}}=2(r^*+1)+1$}. Since $r^*=\lfloor J/2h-1/2 \rfloor$ and by \eqref{valoridiff}, we have
    \begin{align}
        H(\omega_0)-H(\omega_k) & =kh-J\geq k_{\text{min}}h-J>0,
    \end{align}
    thus
        \begin{align}\label{Vsup}
        V_{\sigma} \leq \Phi(\overline{\omega})-H(\sigma)=2J-2h.
        \end{align}
     Now we consider $\mathcal{E}(r^*+1)$, we note that $H(\mathcal{E}(r^*+1))<H(\mathcal{E}_{B_1}(r^*+1))$. 
     Thus we need to consider a new path $\overline{\omega}$ that consists of the previously defined $(\mathcal{E}(r^*+1),\ldots,\mathcal{E}_{B_1}(r^*+1))$ connected with an additional part depending on the value of $\delta$, where $\delta \in (0,1)$ is such that $r^*= J/2h-1/2 -\delta$. If $0<\delta<\frac{1}{2}$ then we add the bar $B_2$ as we have done above for $B_1$ obtaining $\overline{\omega}=(\mathcal{E}(r^*+1),\ldots,\mathcal{E}_{B_1}(r^*+1), \ldots,\mathcal{E}_{B_2}(r^*+1) )$. If $\frac{1}{2}<\delta<1$ in the same manner we add the bars $B_2,B_3,B_4,B_5,B_6$ obtaining  $\overline{\omega}=(\mathcal{E}(r^*+1),\ldots,\mathcal{E}_{B_1}(r^*+1), \ldots,\mathcal{E}_{B_6}(r^*+1) \equiv \mathcal{E}(r^*+2) )$. In both cases the last configurations of the new paths belong to $\mathcal{I}_{\mathcal{E}(r^*+1)}$, indeed 
     \begin{align}
         H(\mathcal{E}(r^*+1))>H(\mathcal{E}_{B_2}(r^*+1)), & \qquad \text{if } \delta \in (0,\frac{1}{2}), \notag \\
         H(\mathcal{E}(r^*+1))>H(\mathcal{E}(r^*+2)), & \qquad \text{if } \delta \in (\frac{1}{2},1). \notag
     \end{align}
     Thus, using equations \eqref{valoridiff}, \eqref{phiomegabarra} and \eqref{Vsup}, we obtain
         \begin{align}
             V_{\sigma} \leq 2J-2h+ H(\mathcal{E}_{B_1}(r^*+1))-H(\mathcal{E}(r^*+1))=2J-2h+2h\delta<2J-h, \; 
             \text{for } \delta \in \Big(0,\frac{1}{2}\Big), \notag\\
             V_{\sigma} \leq 2J-2h+ H(\mathcal{E}_{B_5}(r^*+1))-H(\mathcal{E}(r^*+1))=2J-10h+10h\delta<2J, \; 
             \text{for } \delta \in \Big(\frac{1}{2},1\Big). \notag
         \end{align}
    Thus we find
    \begin{equation}
        V^*_{Z_2}=\max_{\sigma \in Z_2} V_{\sigma} < 2J.
    \end{equation}

    In conclusion, we have $V^*_Z=\max\{V^*_{Z_1},V^*_{Z_2}\} < 2J$.

    \paragraph{Case $R_1$.} For any configuration $\sigma\in R_1$ we construct a path $\overline{\omega}\in \Theta(\sigma,I_{\sigma} \cap (R_1 \cup Z_1)))$. Starting from $\sigma\equiv \omega_0\in R_1$, let us define $\omega_1$. Consider the corner in one of the shortest sides of the cluster and let $j$ be a site belonging to it. We define $\omega_1:=\omega_0^{(j)}$, i.e., $\sigma(j)$ switches the sign. Consider $j'$ the other site belonging to the corner and define $\omega_2:=\omega_1^{(j')}$, in this way $\sigma(j')$ switches the sign. By iterating this procedure along the shortest side, a bar of the cluster is erased and we obtain the configuration $\eta \equiv \omega_l$, where $l$ is the cardinality of the considered bar. We observe that the greatest value of $l$ is always smaller than the cardinality $k$ of the greatest bar of the quasi-regular hexagon contained in the cluster, that is $l < k$. Analogously to the case $Z_1$, $\omega_l \in I_{\sigma}$. Then $V_{\sigma} < 2J-2h$. Therefore,
    \begin{equation}
    V^*_{R_1}= \max_{\sigma \in R_1} V_{\sigma} < 2J-2h.
    \end{equation}
    \paragraph{Case $R_2$.} For any configuration $\sigma\in R_2$ we construct a path $\overline{\omega}\in \Theta(\sigma,I_{\sigma} \cap (R_2 \cup Z_2 \cup \{\underline{+1} \}))$. Starting from $\sigma\equiv \omega_0\in R_2$, let us define $\omega_1$. Consider the corner in one of the shortest sides of the cluster and let $j$ be a site belonging to it. If the cardinality of the bar $l$ of the shortest side is smaller than $2(r^*+1)-1$, we define $\omega_1:=\omega_0^{(j)}$, i.e. $\sigma(j)$ switches the sign. Consider the other site $j'$ belonging to the corner and define $\omega_2:=\omega_1^{(j')}$, in this way $\sigma(j')$ switches the sign. By iterating this procedure along the shortest side, a bar of the cluster is erased and we obtain the configuration $\eta \equiv \omega_l$, where $l$ is the cardinality of the considered bar. Since $l< 2(r^*+1)-1$, we observe that the greatest value of $l$ is always smaller then the cardinality $k$ of the greatest bar of the quasi-regular hexagon contained in the cluster, that is $l < k$. Analogously to the case $Z_1$, $\omega_l \in I_{\sigma}$. Thus $V_{\sigma} < 2J-2h$.
    
    If the cardinality of the bar $l$ of the shortest side is bigger than $2(r^*+1)-1$, consider the site $j_1$ at distance one from $j$ and such that $\sigma(j_1)=-1$. We define $\omega_1:=\omega_0^{(j_1)}$, i.e. $\sigma(j_1)$ switches the sign. Consider $j_2$ the site at distance one from $j_1$ such that $\sigma(j_2)=-1$ and $d(j_2,j')=2$ where $j' \neq j$ is another site of the initial cluster. We define $\omega_2:=\omega_1^{(j_2)}$, $\omega_3:=\omega_2^{(j_3)}$, where $j_3$ is the site at distance one from $j_2$ such that $\sigma(j_3)=-1$ and $d(j_3,j')=1$ where $j' \neq j$ is another site of the initial cluster. By iterating this procedure along the considered side, a bar is added to the initial cluster. Analogously to the case $Z_2$, $\omega_l \in I_{\sigma}$ since $l > 2(r^*+1)-1$. Thus $V_{\sigma} \leq 2J-2h$ and
    \begin{equation}
    V^*_{R_2} \leq 2J-2h.
    \end{equation}
    In conclusion, we have $V^*_R=\max\{V^*_{R_1},V^*_{R_2}\} < 2J$. 

    \paragraph{Case $U_1$.} For every configuration $\sigma$ in $U_1$, all clusters are non-interacting and are of the same type of those in $Z_1$ or $R_1$. If $\sigma$ contains a cluster that is not a quasi-regular hexagon, then we take our path to be the path that cuts a bar, analogously to what has been done for $R_1$. We get a configuration in $\mathcal{I}_\sigma \cap U_1$. Otherwise, if all clusters are quasi-regular hexagons, then we take our path to be the path that cuts a bar of the cluster, analogously to what has been done for $Z_1$. We get a configuration in $\mathcal{I}_\sigma \cap (U_1 \cup Z_1)$. So, we have
    \begin{equation}
        V^*_{U_1}=\max\{V^*_{R_1},V^*_{Z_1}\} < 2J-2h.
    \end{equation}

    \paragraph{Case $U_2$.} For every configuration $\sigma$ in $U_2$, there exists at least a cluster of the same type of those in $Z_2$ or $R_2$. If $\sigma$ contains a cluster of the type of those in $R_2$, i.e. $\sigma$ contains a cluster that is not a quasi-regular hexagon, we take our path to be the path that cuts a bar as it has been done for $R_2$. We get a configuration in $\mathcal{I}_\sigma \cap U_2$. Otherwise, if the cluster is like those in $Z_2$, i.e. the cluster is a quasi-regular hexagon, then we take the path that adds a bar to the quasi-regular hexagon, alike the cases encountered when considering $Z_2$. We get a configuration in $\mathcal{I}_\sigma \cap (U_2 \cup \{\underline{+1}\})$. So, we have
    \begin{equation}
        V^*_{U_2}=\max \{V^*_{R_2},V^*_{Z_2}\} < 2J.
    \end{equation}
    We conclude that 
    \begin{equation*}
    V^*_U=\max\{V^*_{U_1},V^*_{U_2}\}=V^*_Z.
    \end{equation*}

\paragraph{Case $Y$.} We analyze four different kinds of configurations.  
\begin{enumerate}
    \item If $\sigma_1 \equiv \omega_0$ is a configuration in $Y$ such that it has a minus strip that contains at least a hexagon of the type of those in $Z_2$ or $R_2$, then we take our path to be
    as that in the case $U_2$ and we obtain a configuration in $\mathcal{I}_\sigma \cap ( Y \cup \{\underline{+1}\})$. So, we have 
    \begin{equation}
        V_{\sigma_1}=V^*_{U_2} < 2J
    \end{equation}
    \item If $\sigma_2 \equiv \omega_0$ is a configuration in $Y$ such that it contains a plus strip with width greater than one, then we take our path as follows. Pick a site $j$ with 
    $\sigma(j) = - 1$ at distance one from the strip.
    We define $\omega_1=\omega_0^{(j)}$, i.e., $\sigma(j)$ switches the sign. Then pick a site $j_1$ at distance one from $j$ such that $\sigma(j_1)=-1$ and define $\omega_2=\omega_1^{(j_1)}$. Consider a site $j_2$ nearest neighbor of $j_1$ such that $\sigma(j_2)=-1$ and define $\omega_3=\omega_2^{(j_2)}$. By iterating these last two steps, we obtain a configuration in $\mathcal{I}_\sigma \cap (Y \cup \{\underline{+1}\})$. In order to determine where the maximum is attained we write for $n=2,\cdots,L-1$ 
    \begin{align}
       H(\omega_n)-H(\omega_0)=
       \begin{cases}
       \sum_{t=2, \, t \, even}^{n}(H(\omega_{t})-H(\omega_{t-2})) & \text{if even n}, \\
       \sum_{t=2, \, t \, even}^{n}(H(\omega_{t})-H(\omega_{t-2}))+ H(\omega_{n})-H(\omega_{n-1}) & \text{if odd n}.
       \end{cases}
    \end{align}
 From the values in Table~\ref{Tab} we obtain
 \begin{align}
      & H(\omega_{1})-H(\omega_{0})=J-h, \\
      & H(\omega_{2})-H(\omega_{0})=2J-2h, \\
      & H(\omega_{L})-H(\omega_{L-1})=-(J+h),
 \end{align}
and for every $t=3,\cdots,L-1$
        \begin{equation}\label{reversibility3}
            H(\omega_{t})-H(\omega_{t-2})=[H(\omega_{t})-H(\omega_{t-1})]+[H(\omega_{t-1})-H(\omega_{t-2})]=-2h.
        \end{equation}
        The previous can also be obtained by using \eqref{eq:peierls_hamiltonian} and observing that flipping two spins, as described above, corresponds to increasing $N^{+}(\sigma)$ and leads to a cluster with a Peierls contour of the same length. 
        So, we have
    \begin{align}\label{differenza_hamiltoniana_strisce}
    \scriptstyle
    H(\omega_n)-H(\omega_0) =
    \begin{cases}
        J-h, & \text{if } n=1,\\
        2J-2h, & \text{if } n=2,\\
        -2h\frac{n-2}{2}+(2J-2h)=2J-nh, & \text{if even } n=4,\cdots,L-1,\\
        -2h\frac{n-1}{2}+(J-h)=J-nh, & \text{if odd } n=3,\cdots,L-1,\\
        -2h\frac{n-4}{2}+(2J-2h)-(2J+2h)=-Lh, & \text{if } n=L.\\
    \end{cases}
    \end{align}
The result of the last equations is obtained for $n=2$ and it is attained in $\omega_{2}$. Using \eqref{differenza_hamiltoniana_strisce}, we prove that $\omega_L \in I_{\sigma} \cap (Y \cup \{\underline{+1}\})$
    \begin{align}
     H(\omega_0)-H(\omega_L)=Lh>0.
    \end{align}
Thus 
        \begin{align}
        V_{\sigma_2}\leq H(\omega_2)-H(\omega_0)=2J-2h.
        \end{align}
    \item If $\sigma_3 \equiv \omega_0$ is a configuration in $Y$ such that it contains a plus strip with width one, then we take our path as follows. Pick a site $j$ in the strip and define $\omega_1:=\omega_0^{(j)}$, i.e., $\sigma(j)$ switches the sign. The difference of the energy given by the Table~\ref{Tab} is $H(\omega_1)-H(\omega_0)=J+h$. Considering $j_2,j_3,\ldots$ the nearest sites in the strip, we define $\omega_2:=\omega_1^{j_2}$, $\omega_3:=\omega_2^{(j_3)}$ and so on until we obtain a configuration in $\mathcal{I}_\sigma \cap (Y \cup Z_1 \cup U_1 \cup \{\underline{-1}\})$. Finally, let us check that $\omega_L \in I_{\sigma} \cap (Y \cup \{\underline{+1}\})$. Using Table~\ref{Tab}, we get
    \begin{align}
        H(\omega_L)- H(\omega_0) & = [H(\omega_L)-H(\omega_{L-1})]+ \sum_{t=2}^{L-1}(H(\omega_t)-H(\omega_{t-1})+[H(\omega_1)-H(\omega_0)]  \notag \\
       & = -(3J-h)-(L-2)(J-h)+(J+h)=-L(J-h)<0.
    \end{align}
    Thus
    \begin{equation}
        V_{\sigma_3} \leq J+h.
    \end{equation}
\end{enumerate}
    We conclude that 
    \begin{equation*}
    V^*_Y=\max\{V_{\sigma_1},V_{\sigma_2}, V_{\sigma_3}\} < 2J.
    \end{equation*}
\end{proof}
  
\begin{proof}[Proof of Proposition~\ref{teoRP}] 
By applying \cite[Theorem 3.1]{MNOS04} for $V^*=2J$ and Lemma \ref{EST}, we get \eqref{recurrence2J}.
\end{proof}

\section{Identification of maximal stability level}\label{bound}
\subsection{Reference path}\label{referencepath}
In this Section we define our \emph{reference path}, that is a sequence of configurations from $\underline{-1}$ to $\underline{+1}$, that are increasing clusters \emph{as close as possible to quasi-regular hexagonal shape}. 
We first describe intuitively this path: starting from $\underline{-1}$ we flip the spin at the origin and then consecutively filling (flipping the spins from minus to plus) a standard cluster, see Figure \ref{standardfigure}. Let $\omega^* \in \Theta(\underline{-1},\underline{+1})$, we will construct a path in which at each step we flip one spin from minus to plus. Starting from the origin, we add clockwise six triangular units to obtain the first regular hexagon with radius $r=1$, that is $\mathcal{E}(1)$. Then for each $r=1,\ldots,m$ we construct the bar on the top of the hexagon $\mathcal{E}(r)$, adding consecutive triangular units until we obtain $\mathcal{E}_{B_1}(r)$. Next we fill the bar on the top right adding consecutive triangular units until we get $\mathcal{E}_{B_2}(r)$. We go on in the same manner adding bars clockwise, until we get $\mathcal{E}_{B_3}(r)$,\ldots,$\mathcal{E}_{B_6}(r)\equiv \mathcal{E}(r+1)$ (see Figure \ref{standardfigure}). We iterate this procedure until the hexagon is large enough to wrap around the torus along one direction, giving rise to two triangles of minuses with side length $\frac{L}{2}$. Now the reference path fills six triangular units ``covering'' all the $\frac{5}{3}\pi$ angles.
As a result, it is possible to identify six bars of length two, each adjacent to one of the filled triangular units. For each initial triangle of minuses choose only one of the previous bars and fill it. We obtain two bars of length three, each intersecting the previous bars in the external boundary. Iterate this procedure filling at each step the bars of length four, five, \ldots, $\frac{L}{2}$. 

\begin{figure}[htb!] 
\centering \includegraphics[scale=0.6]{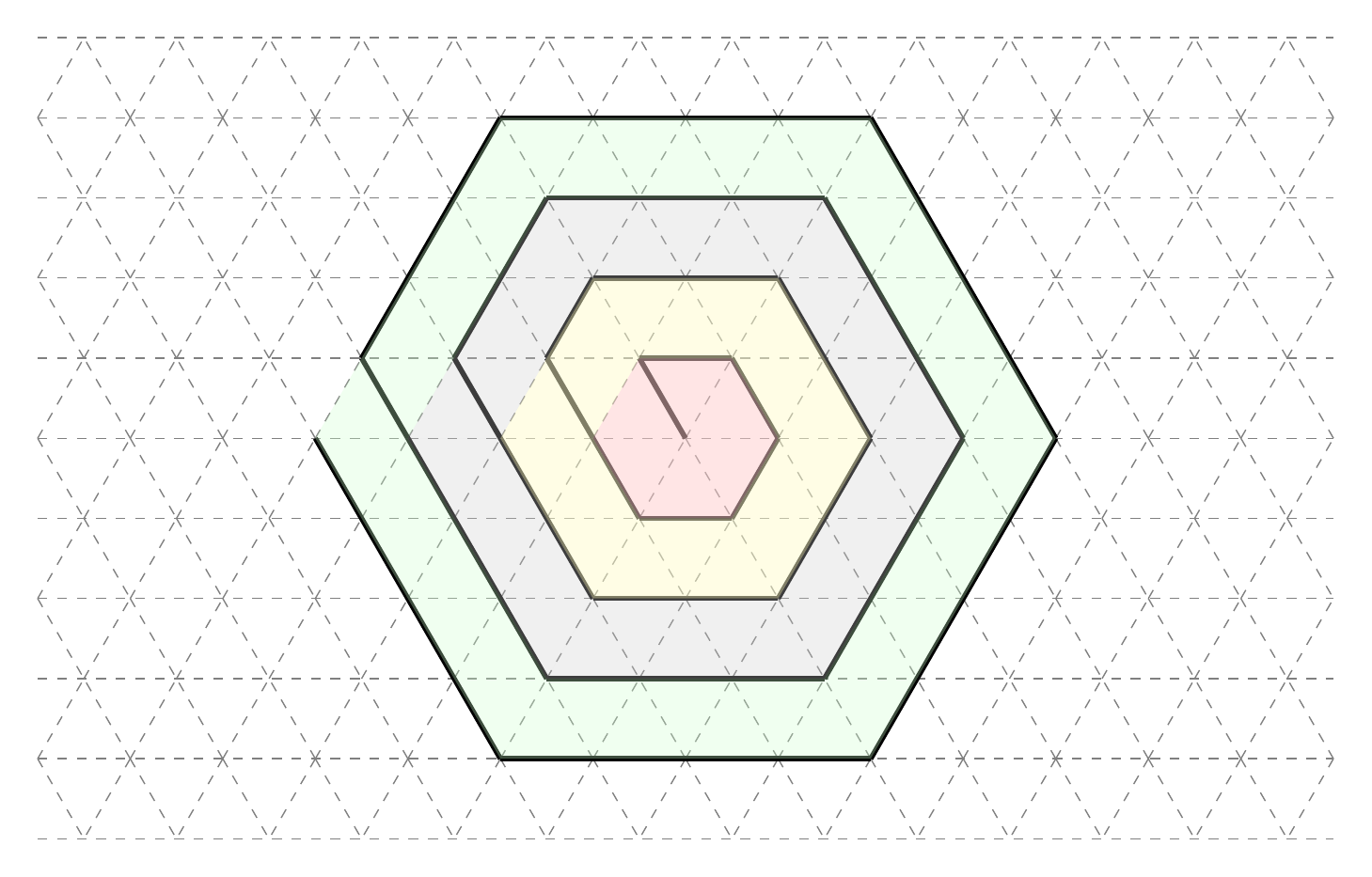} 
\caption{The construction of a standard polyiamond.} \label{standardfigure}
\end{figure}

All configurations in this path contain a standard cluster with radius $r < \frac{L}{2}$. Note that the standard cluster $\mathcal{E}(\frac{L}{2})$ wraps around the torus.
\begin{proposition}\label{CHbetweenQRHcritical}
The maximum of the energy in $\omega^*$ between two consecutive quasi-regular hexagons $\Phi_{\omega^*}(\mathcal{E}_{B_i}(r),\mathcal{E}_{B_{i+1}}(r))$ for every $i=0,\ldots,5$ is achieved in the standard polyiamond obtained adding to $\mathcal{E}_{B_i}(r)$ one elementary rhombus with two pluses along the longest side consecutive to $B_i$ clockwise.
\end{proposition}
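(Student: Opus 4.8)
The plan is to read off the energy profile along the stretch of the reference path $\omega^{*}$ that runs from $\mathcal{E}_{B_i}(r)$ to $\mathcal{E}_{B_{i+1}}(r)$, using the Peierls representation \eqref{eq:peierls_hamiltonian}. By the construction of $\omega^{*}$ in Section~\ref{referencepath}, this stretch is a sequence $\omega_{0}=\mathcal{E}_{B_i}(r),\omega_{1},\dots,\omega_{k}=\mathcal{E}_{B_{i+1}}(r)$ in which each step turns exactly one minus spin into a plus spin, so as to fill the bar $B_{i+1}$ one triangular unit at a time, starting at a $\frac{2}{3}\pi$ corner and proceeding clockwise along the longest side consecutive to $B_i$. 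Here $k=\area{B_{i+1}}\in\{2r-1,\,2r+1,\,2r+3\}$ is the cardinality of that bar, and in particular $k$ is odd; this oddness is what will pin down the position of the maximum.

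First I would note that this sub-path is, up to a rotation and a translation, exactly the path used in the proof of Lemma~\ref{EST}, case $Z_2$, and compute $H(\omega_{n})-H(\omega_{0})$ accordingly. Since $N^{+}$ increases by one at every step, only the length of the Peierls contour has to be monitored: the first two units added form an elementary rhombus that meets $\mathcal{E}_{B_i}(r)$ along a single edge, so each of these two flips raises $|\gamma|$ by one, giving $H(\omega_{1})-H(\omega_{0})=H(\omega_{2})-H(\omega_{1})=J-h$; from the third flip on, passing from $\omega_{s-2}$ to $\omega_{s}$ adds two plus spins while leaving $|\gamma|$ unchanged, so, as in \eqref{reversibility2},
\begin{equation*}
H(\omega_{s})-H(\omega_{s-2})=-2h,\qquad 3\le s\le k .
\end{equation*}
This yields the closed form $H(\omega_{n})-H(\omega_{0})=J-h$ for $n=1$, $=2J-nh$ for even $n$ with $2\le n\le k-1$, and $=J-nh$ for odd $n$ with $3\le n\le k$ (the endpoint value being $J-kh$ because $k$ is odd). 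Since $J>h$, the profile is strictly maximized at $n=2$, so that
\begin{equation*}
\Phi_{\omega^{*}}\bigl(\mathcal{E}_{B_i}(r),\mathcal{E}_{B_{i+1}}(r)\bigr)=H(\omega_{2})=H\bigl(\mathcal{E}_{B_i}(r)\bigr)+2J-2h ,
\end{equation*}
and $\omega_{2}$ is precisely the standard polyiamond of Definition~\ref{standard} obtained by attaching to $\mathcal{E}_{B_i}(r)$ one elementary rhombus of two plus spins along the longest side consecutive to $B_i$ clockwise, which is the assertion.

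The part that needs care is the justification of the three local contour facts used above: that the first two flips each create exactly one new unit of interface; that from the third flip onward the interface increments alternate $+1,-1,+1,\dots$; and that the last flip, which completes the quasi-regular hexagon $\mathcal{E}_{B_{i+1}}(r)$, lowers the energy. Each of these reduces to inspecting the finitely many local pictures at and near a $\frac{2}{3}\pi$ corner of a quasi-regular hexagon, with the single-spin costs read off Table~\ref{Tab}; these are the same checks already carried out in the proof of Lemma~\ref{EST}. A useful consistency check is that the increments must sum to $H(\mathcal{E}_{B_{i+1}}(r))-H(\mathcal{E}_{B_i}(r))=J-\area{B_{i+1}}\,h$, which matches \eqref{eq:peierls_hamiltonian} together with the fact that the quasi-regular hexagon $E_{B_{i+1}}(r)$ has exactly one more unit of perimeter than $E_{B_i}(r)$. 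I do not expect any genuinely hard point beyond this bookkeeping; the only mildly delicate instance is $i=5$, where $B_{6}=B(r+2)$ is attached flush along the unique longest boundary segment of $E_{B_5}(r)$, but the filling order and the relevant local pictures are unchanged, so the same computation applies verbatim.
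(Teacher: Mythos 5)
Your proposal is correct and follows essentially the same route as the paper's proof: both compute the single-flip energy increments along the bar-filling stretch of $\omega^*$ (the paper reads them off Table~\ref{Tab} directly, you equivalently track the Peierls contour as in \eqref{eq:peierls_hamiltonian}), arrive at the same profile $H(\omega_n)-H(\omega_0)=2J-nh$ for even $n$ and $J-nh$ for odd $n$, and conclude the maximum sits at $n=2$. The closed form, the role of the oddness of $\area{B_{i+1}}$, and the identification of $\omega_2$ with the elementary rhombus attached clockwise all match the paper.
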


\begin{proposition}\label{CHbetweenQRHcritical2}
The maximum of the energy in $\omega^*$ between two consecutive quasi-regular hexagons $\Phi_{\omega^*}(\mathcal{E}_{B_i}(r),\mathcal{E}_{B_{i-1}}(r))$ for every $i=1,\ldots,6$ is achieved in the standard polyiamond obtained removing counter-clockwise from $\mathcal{E}_{B_i}(r)$ a number of triangular units equals to $||B_i||-2$.
\end{proposition}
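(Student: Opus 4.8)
My plan is to show that Proposition~\ref{CHbetweenQRHcritical2} is essentially a restatement of Proposition~\ref{CHbetweenQRHcritical} and, for completeness, to reproduce the short energy computation that underlies both. Fix $i\in\{1,\dots,6\}$ and set $j:=i-1\in\{0,\dots,5\}$. First I would observe that the portion of the reference path $\omega^*$ joining $\mathcal{E}_{B_{i-1}}(r)=\mathcal{E}_{B_j}(r)$ to $\mathcal{E}_{B_i}(r)$ is exactly the clockwise filling of the bar $B_i$: a sequence $\omega_0=\mathcal{E}_{B_j}(r),\omega_1,\dots,\omega_k=\mathcal{E}_{B_i}(r)$ with $k=||B_i||$ in which $\omega_{n+1}$ is obtained from $\omega_n$ by turning a single minus spin into a plus, and in which each $\omega_n$ is a standard cluster, namely a quasi-regular hexagon carrying an incomplete bar of cardinality $n$ attached clockwise. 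Removing $||B_i||-2$ triangular units counter-clockwise from $\mathcal{E}_{B_i}(r)$ undoes the last $k-2$ of these flips, hence leaves exactly $\omega_2$: the configuration $\mathcal{E}_{B_j}(r)$ with one elementary rhombus attached at the clockwise beginning of the side carrying $B_i$, that is the longest side consecutive to $B_j$ clockwise. This is precisely the maximizer of $\Phi_{\omega^*}(\mathcal{E}_{B_j}(r),\mathcal{E}_{B_{j+1}}(r))$ provided by Proposition~\ref{CHbetweenQRHcritical}, so the statement follows at once.

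To make the argument self-contained I would then evaluate $H$ along $\omega_0,\dots,\omega_k$ directly. Each flip increases $N^{+}$ by one, so by the Peierls identity~\eqref{eq:peierls_hamiltonian} the cost of the $n$-th flip equals $J$ times the change of the contour length minus $h$; reading Table~\ref{Tab} --- equivalently, counting how many edges the newly added triangular unit shares with the current cluster --- the first two flips each cost $J-h$ (one shared edge), while for $n\ge 3$ the flips alternate between cost $-(J+h)$ (two shared edges, a ``fill-in'' flip) and cost $J-h$ (one shared edge). This is the same bookkeeping carried out in case $Z_2$ of the proof of Lemma~\ref{EST}, and it gives $H(\omega_n)-H(\omega_0)=2J-nh$ for even $n$ and $H(\omega_n)-H(\omega_0)=J-nh$ for odd $n$, valid for all $1\le n\le k$. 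Since $J>h>0$, the values for $n=0,1,2$ are strictly increasing, whereas every value with $n\ge 3$ is at most $2J-4h<2J-2h$; hence the maximum of $H$ along this portion of $\omega^*$ is attained uniquely at $\omega_2$, which is the configuration described in the statement.

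I expect the one genuinely delicate point to be the perimeter bookkeeping: one has to check carefully that, filling $B_i$ clockwise from a corner of the quasi-regular hexagon, the first two added triangular units each share exactly one edge with the current cluster and that the shared-edge count then alternates between two and one, inspecting in particular the behaviour at the $\frac{2}{3}\pi$ corner where $B_i$ is attached and at the unit that completes $B_i$ (which, for $i=6$, turns $\mathcal{E}_{B_5}(r)$ into $\mathcal{E}(r+1)$); this is a purely local case check against Table~\ref{Tab}. I would also dispose of the degenerate instance $i=1$, $r=1$, where $||B_1||=1$: there the considered portion of $\omega^*$ has no interior configuration, the maximum is the endpoint $\mathcal{E}_{B_1}(1)$, and the statement is to be read with the convention that removing $||B_1||-2$ triangular units is vacuous.
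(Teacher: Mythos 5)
Your proposal is correct, and the computation at its core is the same one the paper uses: the first two added units each cost $J-h$, the subsequent flips alternate between $-(J+h)$ and $J-h$, the running totals are $2J-nh$ for even $n$ and $J-nh$ for odd $n$, and the unique maximum sits at $n=2$, i.e.\ at $\|B_i\|-2$ units removed from $\mathcal{E}_{B_i}(r)$. The genuine difference is organizational. You deduce Proposition~\ref{CHbetweenQRHcritical2} from Proposition~\ref{CHbetweenQRHcritical} by noting that $\Phi_{\omega^*}(\mathcal{E}_{B_i}(r),\mathcal{E}_{B_{i-1}}(r))$ is the maximum over the very same sub-path of $\omega^*$, and that the counter-clockwise removal of $\|B_i\|-2$ units is exactly the inverse of the last $\|B_i\|-2$ clockwise additions, so the maximizer must be the configuration $\omega_2$ already identified there; the energy table is then only a consistency check. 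The paper instead runs an independent computation in the removal direction, tabulating $H(\mathcal{S}(A^{(n-1)}))-H(\mathcal{S}(A^{(n)}))$ and the cumulative energies referenced to $H(\mathcal{E}_{B_i}(r))$ (which come out as $J+nh$, $nh$ and $-J+nh$ in terms of the number of removed units). Your reduction is the cleaner and shorter argument; what the paper's redundant version buys is the removal-direction bookkeeping written out explicitly, which is precisely what is reused when bars are dismantled in case $Z_1$ of the proof of Lemma~\ref{EST}. Your two side remarks are also well placed: the local count of shared edges at the two corners of the bar is indeed the only point requiring care, and the degenerate instance $\|B_1\|=1$ for $r=1$ is passed over silently in the paper, so spelling out a convention there is a small improvement rather than a deviation.
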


\begin{proof}[Proof of Proposition~\ref{CHbetweenQRHcritical}]
Let $E_{B_i}(r)$ and $E_{B_{i+1}}(r)$ be two quasi-regular hexagons for some $r \in \mathbb{N}$ and some $i=0,\ldots,5$. Let $A^{(n)}$ be the area obtained adding $n$ triangular units to the area of the quasi-regular hexagon $E_{B_i}(r)$, where $n=0,\ldots,||B_{i+1}||$. Note that $A^{(n)}$ is the area of the standard polyiamond $S(A^{(n)})$. We observe that $S(A^{(0)}) \equiv E_{B_i}(r)$ and $S(A_{||B_{i+1}||}) \equiv E_{B_{i+1}}(r)$. By Table~\ref{Tab}, we have
\begin{align}
H(\mathcal{S}(A^{(n)}))-H(\mathcal{S}(A^{(n-1)}))=
\begin{cases}
    J-h, & \text{if } n=1,\\
    J-h, & \text{if } n \text{ is even},\\
    -(J+h), & \text{if } n \neq 1 \text{ is odd}.\\
    \end{cases}
\end{align}

It follows that
\begin{align}
H(\mathcal{S}(A^{(n)}))-H(\mathcal{E}_{B_i}(r))=
\begin{cases}
    J-nh, & \text{if } n \text{ is odd},\\
    2J-nh, & \text{if } n \text{ is even}.\\
    \end{cases}
\end{align}
Since the r.h.s. of the last equation decreases with 
$n$ in both the odd and the even case, it is immediate to check that the maximum is attained for $n=2$ namely in $\mathcal{S}(A^{(2)})$.

\end{proof}

\begin{proof}[Proof of Proposition~\ref{CHbetweenQRHcritical2}]
Let $E_{B_i}(r)$ and $E_{B_{i-1}}(r)$ be two quasi-regular hexagons for some $r \in \mathbb{N}$ and some $i=1,\ldots,6$. Let $A^{(n)}$ be the area obtained adding $n$ triangular units to the area of the quasi-regular hexagon $E_{B_{i-1}}(r)$, where $n=0,\ldots,||B_{i}||$. Note that $S(A^{(n)})$ can be obtained either by removing $||B_i||-n$ triangular units from $E_{B_i}(r)$ or by adding $n$ triangular units to $E_{B_{i-1}}(r)$. 
We recall that \emph{removing} a triangular units means to flip a plus spin into a minus spin. 
By Table~\ref{Tab}, we have
\begin{align}
H(\mathcal{S}(A^{(n-1)}))-H(\mathcal{S}(A^{(n)}))=
\begin{cases}
    J+h, & \text{if } n \neq ||B_i|| \text{ is odd},\\
    -(J-h), & \text{if } n \text{ is even},\\
    -(J-h), & \text{if } n=||B_i||.\\
    \end{cases}
\end{align}

It follows that
\begin{align}
H(\mathcal{S}(A^{(n)}))-H(\mathcal{E}_{B_i}(r))=
\begin{cases}
    J+nh, & \text{if } n \neq ||B_i|| \text{ is odd},\\
    nh, & \text{if } n \text{ is even},\\
    -(J-nh), & \text{if } n=||B_i||.
    \end{cases}
\end{align}
Since the r.h.s. of the last equation increases with $n$ in all three cases and since $||B_i||$ is odd by Definition \ref{def:quasi_reg_hex}, the maximum is attained removing $||B_i||-2$ triangular units from $E_{B_i}(r)$. So we obtain $\mathcal{S}(A^{(2)})$.
\end{proof}

Recalling \eqref{raggiocritico}, from now on the strategy of the proof is to divide the reference path $\omega^*$ into three regions depending on $r$: 
\begin{itemize}[label=\raisebox{0.25ex}{\tiny \textbullet}]
    \item the region $r \leq r^*$ will be considered in Proposition \ref{rminore};
    \item the region $r=r^*+1$ will be considered in Proposition \ref{ruguale};
    \item the region $r \geq r^*+2$ will be considered in Proposition \ref{rmaggiore}.
\end{itemize}
\begin{proposition}\label{rminore}
If $r \leq r^*$, then the communication height between two consecutive regular hexagons $\Phi_{\omega^*}(\mathcal{E}(r), \mathcal{E}(r+1))$ along the path  $\omega^*$ is achieved in a configuration with a standard cluster such that the number of its triangular units is $\tilde A=6r^2+10r+5$, that is $\Phi_{\omega^*}(\mathcal{E}(r), \mathcal{E}(r+1))=\Phi_{\omega^*}(\mathcal{E}_{B_5}(r), \mathcal{E}(r+1))=H(\mathcal{S}(\tilde A))-H(\underline{-1})$.
Moreover, $\Phi_{\omega^*}(\underline{-1}, \mathcal{E}(r^*+1))=\Phi_{\omega^*}(\mathcal{E}(r^*), \mathcal{E}(r^*+1))=H(\mathcal{S}(A_1))-H(\underline{-1})$ is achieved in a configuration with a standard cluster $\mathcal{S}(A_1)$ such that $A_1:=6{r^*}^2+10r^*+5$.
\end{proposition}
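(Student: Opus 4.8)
The plan is to split the portion of $\omega^*$ running from $\mathcal{E}(r)$ to $\mathcal{E}(r+1)$ into the six sub-paths $\mathcal{E}_{B_i}(r)\to\mathcal{E}_{B_{i+1}}(r)$, $i=0,\dots,5$, to use Proposition~\ref{CHbetweenQRHcritical} to pin down the local maximum of the energy on each of them, to compare the resulting six values, and finally to optimize over $r\le r^*$. First I would observe that, for $r\le r^*$, the restriction of $\omega^*$ between two consecutive quasi-regular hexagons is precisely the increasing family of standard polyiamonds of Definitions~\ref{canonical}--\ref{standard}, so Proposition~\ref{CHbetweenQRHcritical} applies verbatim and yields, for every $i\in\{0,\dots,5\}$,
\begin{equation}
\Phi_{\omega^*}\big(\mathcal{E}_{B_i}(r),\mathcal{E}_{B_{i+1}}(r)\big)=H\big(\mathcal{S}(\area{E_{B_i}(r)}+2)\big)=H(\mathcal{E}_{B_i}(r))+2J-2h ,
\end{equation}
the maximum being realized at the standard configuration obtained by adding one elementary rhombus, whose energy strictly exceeds that of both endpoints $\mathcal{E}_{B_i}(r)$ and $\mathcal{E}_{B_{i+1}}(r)$ (the gap to the latter equals $J+(\area{B_{i+1}}-2)h>0$). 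Hence $\Phi_{\omega^*}(\mathcal{E}(r),\mathcal{E}(r+1))=\max_{0\le i\le5}\big(H(\mathcal{E}_{B_i}(r))+2J-2h\big)$.

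Next I would single out the largest of these six peaks. Since passing from $E_{B_i}(r)$ to $E_{B_{i+1}}(r)$ increases the area by $\area{B_{i+1}}$ and the perimeter by exactly one (Remark~\ref{proprietaS}), formula \eqref{eq:peierls_hamiltonian} gives $H(\mathcal{E}_{B_{i+1}}(r))-H(\mathcal{E}_{B_i}(r))=J-\area{B_{i+1}}h$. For $i+1\in\{1,\dots,5\}$ one has $\area{B_{i+1}}\le 2r+1\le 2r^*+1<J/h$, the last inequality being strict because $\frac{J}{2h}-\frac{1}{2}$ is not an integer, so $r^*<\frac{J}{2h}-\frac{1}{2}$ by \eqref{raggiocritico}. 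Therefore $i\mapsto H(\mathcal{E}_{B_i}(r))$ is strictly increasing on $\{0,\dots,5\}$, the largest peak is the one over the last sub-path $\mathcal{E}_{B_5}(r)\to\mathcal{E}_{B_6}(r)=\mathcal{E}(r+1)$, and since $\area{E_{B_5}(r)}=6r^2+10r+3$ it is realized at the standard configuration with $\tilde A=6r^2+10r+5$ triangular units; this is the first assertion. Reading off \eqref{eq:peierls_hamiltonian} once more (the perimeter of $S(\tilde A)$ being $6r+7$), the corresponding energy above $\underline{-1}$ is $f(r):=(6r+7)J-(6r^2+10r+5)h$.

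Finally, to prove the second assertion I would maximize $f$ over $r\in\{0,\dots,r^*\}$. A one-line computation gives $f(r+1)-f(r)=6J-(12r+16)h$, which is positive whenever $r<\frac{J}{2h}-\frac{4}{3}$; as every $r\le r^*-1$ satisfies $r<\frac{J}{2h}-\frac{3}{2}<\frac{J}{2h}-\frac{4}{3}$, the map $r\mapsto f(r)$ is strictly increasing on $\{0,\dots,r^*\}$, hence maximal at $r=r^*$, on the sub-path $\mathcal{E}(r^*)\to\mathcal{E}(r^*+1)$. Decomposing $\omega^*$ from $\underline{-1}$ to $\mathcal{E}(r^*+1)$ as the initial stretch $\underline{-1}\to\mathcal{E}(1)$ followed by the sub-paths $\mathcal{E}(r)\to\mathcal{E}(r+1)$ for $r=1,\dots,r^*$, and checking (from Table~\ref{Tab}, this stretch being just the $r=0$ instance of the above pattern) that its communication height is $f(0)\le f(r^*)$, I would conclude $\Phi_{\omega^*}(\underline{-1},\mathcal{E}(r^*+1))=\Phi_{\omega^*}(\mathcal{E}(r^*),\mathcal{E}(r^*+1))=f(r^*)=H(\mathcal{S}(A_1))-H(\underline{-1})$ with $A_1=6{r^*}^2+10r^*+5$. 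All of this is elementary arithmetic on the energy increments of Table~\ref{Tab}; the one point requiring genuine care is the verification, in the first step, that $\omega^*$ restricted between consecutive quasi-regular hexagons is exactly the family of standard configurations to which Proposition~\ref{CHbetweenQRHcritical} applies, together with the treatment of the degenerate small values of $r$ (and of $r^*$). I expect that matching --- not any of the inequalities --- to be the main, though modest, obstacle.
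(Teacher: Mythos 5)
Your proposal is correct and follows essentially the same route as the paper's proof: the same decomposition into the six sub-paths $\mathcal{E}_{B_i}(r)\to\mathcal{E}_{B_{i+1}}(r)$, the same appeal to Proposition~\ref{CHbetweenQRHcritical} to locate each local peak, the same comparisons (your observation that every peak sits exactly $2J-2h$ above the preceding quasi-regular hexagon, so that comparing peaks reduces to the sign of $J-\area{B_{i+1}}h$, is just a compact repackaging of the paper's inequalities \eqref{dis1}--\eqref{dis5}), and the same final monotonicity-in-$r$ argument. The only genuine addition is your explicit check of the initial stretch $\underline{-1}\to\mathcal{E}(1)$, which the paper leaves implicit; that is a small but welcome piece of extra care rather than a different approach.
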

\begin{proposition}\label{rmaggiore}
If $r \geq r^*+2$, then the communication height between two consecutive regular hexagons $\Phi_{\omega^*}(\mathcal{E}(r), \mathcal{E}(r+1))$ along the path  $\omega^*$ is achieved in a configuration with a standard cluster such that the number of its triangular units is $\tilde A=6r^2+2$, that is $\Phi_{\omega^*}(\mathcal{E}(r), \mathcal{E}(r+1))=\Phi_{\omega^*}(\mathcal{E}(r), \mathcal{E}_{B_1}(r))=H(\mathcal{S}(\tilde A))-H(\underline{-1})$.
Moreover, $\Phi_{\omega^*}(\mathcal{E}(r^*+2), \underline{+1})= \Phi_{\omega^*}(\mathcal{E}(r^*+2),\mathcal{E}(r^*+3))=H(\mathcal{S}(A_2))-H(\underline{-1})$ is achieved in a configuration with a standard cluster $\mathcal{S}(A_2)$ such that $A_2:=6(r^*+2)^2+2$.
\end{proposition}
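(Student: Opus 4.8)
The plan is to exploit the decomposition of the portion of the reference path $\omega^*$ between $\mathcal{E}(r)$ and $\mathcal{E}(r+1)$ into the six elementary sub-paths running through the quasi-regular hexagons $\mathcal{E}(r)=\mathcal{E}_{B_0}(r),\mathcal{E}_{B_1}(r),\dots,\mathcal{E}_{B_5}(r),\mathcal{E}_{B_6}(r)=\mathcal{E}(r+1)$, together with Proposition~\ref{CHbetweenQRHcritical}. First I would record two consequences of (the proof of) that proposition: on the sub-path from $\mathcal{E}_{B_i}(r)$ to $\mathcal{E}_{B_{i+1}}(r)$ the energy attains its maximum, equal to $H(\mathcal{E}_{B_i}(r))+2J-2h$, at the standard configuration $\mathcal{S}(\area{E_{B_i}(r)}+2)$ obtained by adding one elementary rhombus to $\mathcal{E}_{B_i}(r)$; and, evaluating the same energy formula at $n=\area{B_{i+1}}$ (recall $\area{B_{i+1}}$ is odd, by Definitions~\ref{def:bars} and~\ref{def:quasi_reg_hex}), one gets $H(\mathcal{E}_{B_{i+1}}(r))-H(\mathcal{E}_{B_i}(r))=J-\area{B_{i+1}}\,h$.

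Second, I would invoke the hypothesis $r\ge r^*+2$ to show that $i\mapsto H(\mathcal{E}_{B_i}(r))$ is strictly decreasing. Indeed the smallest bar appearing is $B_1$, of cardinality $\area{B_1}=2r-1\ge 2r^*+3$, whereas, writing $J/h=2r^*+1+2\delta$ with $\delta\in(0,1)$, one has $J/h<2r^*+3\le \area{B_{i+1}}$ for every $i$; hence each difference $J-\area{B_{i+1}}\,h$ is negative. Therefore the six within-shell maxima $H(\mathcal{E}_{B_i}(r))+2J-2h$ decrease with $i$, so the maximal energy of $\omega^*$ between $\mathcal{E}(r)$ and $\mathcal{E}(r+1)$ is attained on the first sub-path, at $\mathcal{S}(6r^2+2)$ (which is indeed an intermediate configuration of $\omega^*$ between $\mathcal{E}(r)$ and $\mathcal{E}_{B_1}(r)$, since $2<2r-1$ for $r\ge 2$, and $r\ge r^*+2\ge 2$ under Condition~\ref{condizionetoro}). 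This yields the first assertion, $\Phi_{\omega^*}(\mathcal{E}(r),\mathcal{E}(r+1))=\Phi_{\omega^*}(\mathcal{E}(r),\mathcal{E}_{B_1}(r))=H(\mathcal{S}(\tilde A))-H(\underline{-1})$ with $\tilde A=6r^2+2$.

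Third, for the ``moreover'' part I would additionally use the monotonicity of $r\mapsto H(\mathcal{E}(r))$. Summing the six differences above gives $H(\mathcal{E}(r+1))-H(\mathcal{E}(r))=6\bigl(J-(2r+1)h\bigr)$, which is negative exactly when $r\ge r^*+1$; so $H(\mathcal{E}(r))$ is strictly decreasing for $r\ge r^*+1$. Consequently, among the shells with $r\ge r^*+2$ the within-shell maximum $H(\mathcal{E}(r))+2J-2h$ is largest for $r=r^*+2$, which singles out $A_2=6(r^*+2)^2+2$. What remains is to check that no configuration visited by $\omega^*$ \emph{after} the cluster stops being a growing regular hexagon — the part of the path that wraps the cluster around the torus, fills the $\frac{5}{3}\pi$ corners and completes the residual bars — reaches a higher energy: every such configuration has a plus cluster of area at least $\area{E(L/2)}$, so by the Peierls identity~\eqref{eq:peierls_hamiltonian} its energy is at most $H(\mathcal{E}(L/2))+2J-2h$ (each corner filling is downhill by $J+h$, as in the proof of Lemma~\ref{noX}, and each bar filling raises the energy by at most $2J-2h$ over the current, ever lower, configuration, by the same elementary estimate used in Proposition~\ref{CHbetweenQRHcritical}), while $H(\mathcal{E}(L/2))<H(\mathcal{E}(r^*+2))$ because $L/2>r^*+2$ under Condition~\ref{condizionetoro}. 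This gives $\Phi_{\omega^*}(\mathcal{E}(r^*+2),\underline{+1})=\Phi_{\omega^*}(\mathcal{E}(r^*+2),\mathcal{E}(r^*+3))=H(\mathcal{S}(A_2))-H(\underline{-1})$.

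The \emph{main difficulty} is precisely this last step: once $\omega^*$ leaves the regime of growing regular hexagons, Proposition~\ref{CHbetweenQRHcritical} no longer applies verbatim, and one has to re-examine the bar-by-bar filling of the two residual triangles of minuses, together with the interactions at the $\frac{5}{3}\pi$ corners, to confirm that the energy stays comfortably below $H(\mathcal{S}(A_2))$. This is intuitively clear — a cluster covering most of the torus has energy close to the global minimum $H(\underline{+1})$ — but making it fully rigorous requires some careful bookkeeping along that tail of $\omega^*$.
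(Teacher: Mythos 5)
Your proof is correct and follows essentially the same route as the paper: decompose the shell $\mathcal{E}(r)\to\mathcal{E}(r+1)$ into the six sub-paths between consecutive quasi-regular hexagons and apply Proposition~\ref{CHbetweenQRHcritical} to locate each sub-path maximum at the first added elementary rhombus. The paper carries out the comparison of the six maxima by writing six explicit inequalities between the energies $H(\mathcal{S}(6r^2+2)), H(\mathcal{S}(6r^2+2r+1)),\ldots$, each valid for $r\ge \frac{J}{2h}\pm\frac{1}{2}$; your reduction of all of these to the single observation that $H(\mathcal{E}_{B_{i+1}}(r))-H(\mathcal{E}_{B_i}(r))=J-\area{B_{i+1}}h<0$ for $r\ge r^*+2$ is arithmetically equivalent and arguably cleaner, as is your treatment of the ``moreover'' part via the monotonicity of $r\mapsto H(\mathcal{E}(r))$ in place of maximizing $-6r^2h+6rJ+2J-2h$ over integers $r\ge r^*+2$. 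The one place where you go beyond the paper is the tail of $\omega^*$ past the last full hexagon (wrapping around the torus, covering the $\frac{5}{3}\pi$ corners, filling the residual bars): the paper's proof silently treats $\Phi_{\omega^*}(\mathcal{E}(r^*+2),\underline{+1})$ as if the path consisted only of hexagonal shells, whereas you correctly flag that this segment needs a separate (if routine) energy bound; your sketch of that bound — each bar filling costs at most $2J-2h$ above a base configuration whose energy is already below $H(\mathcal{E}(r^*+2))$ — is the right argument, even if, as you note, the bookkeeping at the corners would need to be written out to make it fully rigorous.
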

\begin{proposition}\label{ruguale} 
If $r=r^*+1$, then $r= \lfloor J/2h+1/2 \rfloor$ and the communication height $\Phi_{\omega^*}(\mathcal{E}(r^*+1),\mathcal{E}(r^*+2))$ along the path  $\omega^*$ is achieved in a configuration with a standard cluster $\mathcal{S}(A_3)$ with $A_3:=6(r^*+1)^2+2(r^*+1)+1$.
\end{proposition}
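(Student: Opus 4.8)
The plan is to restrict the reference path $\omega^*$ to the portion running from $\mathcal{E}(r^*+1)$ to $\mathcal{E}(r^*+2)$ and to locate its highest configuration by first reducing the problem to a comparison among the energies of the quasi-regular hexagons visited along the way. The identity $r = \lfloor J/2h + 1/2 \rfloor$ is immediate from \eqref{raggiocritico}, since $\lfloor x \rfloor + 1 = \lfloor x+1 \rfloor$ gives $r = r^*+1 = \lfloor J/2h - 1/2 \rfloor + 1 = \lfloor J/2h + 1/2 \rfloor$.

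For the communication height, recall that on this portion $\omega^*$ passes, in order, through the quasi-regular hexagons $\mathcal{E}(r^*+1) = \mathcal{E}_{B_0}(r^*+1), \mathcal{E}_{B_1}(r^*+1), \dots, \mathcal{E}_{B_6}(r^*+1) = \mathcal{E}(r^*+2)$, filling one bar at a time clockwise. By Proposition~\ref{CHbetweenQRHcritical}, inside each sub-transition $\mathcal{E}_{B_i}(r^*+1) \to \mathcal{E}_{B_{i+1}}(r^*+1)$, $i = 0, \dots, 5$, the energy along $\omega^*$ is maximal at the standard configuration obtained by adding one elementary rhombus to $\mathcal{E}_{B_i}(r^*+1)$, with excess energy exactly $2J - 2h$ over $H(\mathcal{E}_{B_i}(r^*+1))$. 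Since $2J-2h$ is positive and, as the computation of the consecutive differences below shows, exceeds each of them, the maximum over the whole portion is
\[
\Phi_{\omega^*}(\mathcal{E}(r^*+1), \mathcal{E}(r^*+2)) = 2J - 2h + \max_{0 \le i \le 5} H(\mathcal{E}_{B_i}(r^*+1)),
\]
so it remains only to identify the quasi-regular hexagon $\mathcal{E}_{B_i}(r^*+1)$ of largest energy.

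I would compute the consecutive differences via \eqref{eq:peierls_hamiltonian}, using that by Remark~\ref{proprietaS} (with $k=0$) the edge-perimeter of $E_{B_i}(r)$ equals $6r+i$; this gives
\[
H(\mathcal{E}_{B_{i+1}}(r^*+1)) - H(\mathcal{E}_{B_i}(r^*+1)) = J - \area{B_{i+1}}\, h,
\]
with $\area{B_1} = 2(r^*+1) - 1 = 2r^*+1$, $\area{B_{i+1}} = 2r^*+3$ for $i = 1,2,3,4$, and $\area{B_6} = 2r^*+5$. The definition of $r^*$ in \eqref{raggiocritico}, together with the standing assumption that $J/2h - 1/2$ is not an integer, yields $2r^*+1 < J/h < 2r^*+3$, hence $J - (2r^*+1)h > 0$ while $J - (2r^*+k)h < 0$ for every integer $k \ge 3$. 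Therefore $H(\mathcal{E}_{B_0}(r^*+1)) < H(\mathcal{E}_{B_1}(r^*+1))$ and $H(\mathcal{E}_{B_1}(r^*+1)) > H(\mathcal{E}_{B_2}(r^*+1)) > \cdots > H(\mathcal{E}_{B_6}(r^*+1))$, so the maximum is attained at $i=1$. Substituting back, $\Phi_{\omega^*}(\mathcal{E}(r^*+1), \mathcal{E}(r^*+2)) = H(\mathcal{E}_{B_1}(r^*+1)) + 2J - 2h = H(\mathcal{S}(A_3))$, where $A_3 = \area{E_{B_1}(r^*+1)} + 2 = 6(r^*+1)^2 + 2(r^*+1) + 1$, which is the asserted claim.

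The computations are elementary; the only genuinely delicate point is the sign analysis of the increments $J - \area{B_{i+1}}\, h$, and in particular the observation that $r = r^*+1$ is precisely the regime in which attaching the first (shortest) bar \emph{raises} the energy while every subsequent bar \emph{lowers} it — this is what places the maximiser over $\mathcal{E}_{B_1}(r^*+1)$, in contrast with Proposition~\ref{rminore} (maximiser over $\mathcal{E}_{B_5}$) and Proposition~\ref{rmaggiore} (maximiser over $\mathcal{E}_{B_0} = \mathcal{E}$). A minor point to address separately is the degenerate case $r^*=0$, where the first sub-transition is a single spin flip with no internal peak; the reduction above is unaffected, since the maximiser still lies in the sub-transition $\mathcal{E}_{B_1}(1) \to \mathcal{E}_{B_2}(1)$, whose added bar has cardinality $3 \ge 2$.
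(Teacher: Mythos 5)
Your proof is correct and follows essentially the same route as the paper: both reduce the problem, via Proposition~\ref{CHbetweenQRHcritical}, to comparing the local maxima over the sub-transitions $\mathcal{E}_{B_i}(r^*+1)\to\mathcal{E}_{B_{i+1}}(r^*+1)$, and your increments $J-\area{B_{i+1}}h$ with the sign analysis $2r^*+1<J/h<2r^*+3$ are exactly the paper's inequalities \eqref{dis1sup}--\eqref{dis5sup} after subtracting the common offset $2J-2h$. Your explicit treatment of the degenerate case $r^*=0$ is a small bonus the paper does not spell out.
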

\begin{proof}[Proof of Proposition~\ref{rminore}] 
Let $m$ be the number of site pairs $(i,j)$ such that $d(i,j)=1$ with $i,j \in \Lambda$, and let $p(S(A))$ be the perimeter of the standard hexagon $S(A)$. By Definition \ref{eq:peierls_hamiltonian}, we have:
\begin{align}
    H(\mathcal{S}(A))-H(\underline{-1})=Jp(S(A))-hA.
\end{align}
So, by Remark \ref{proprietaS} and Definition \ref{eq:peierls_hamiltonian}, it follows that:
\begin{equation}\label{eq:standard_energy_peierls}
    H(\mathcal{S}(A))-H(\underline{-1})=
    \begin{cases}
    -6r^2h+6rJ+2J-2h & \mbox{ for } A=6r^2+2\\
    -6r^2h+6rJ-2rh+3J-h & \mbox{ for } A=6r^2+2r+1 \\
    -6r^2h+6rJ-4rh+4J-2h & \mbox{ for } A=6r^2+4r+2\\
    -6r^2h+6rJ-6rh+5J-3h & \mbox{ for } A=6r^2+6r+3 \\
    -6r^2h+6rJ-8rh+6J-4h & \mbox{ for } A=6r^2+8r+4\\
    -6r^2h+6rJ-10rh+7J-5h & \mbox{ for } A=6r^2+10r+5
    \end{cases}
\end{equation}
We compare $\Phi_{\omega^*}(\mathcal{E}(r),\mathcal{E}_{B_1}(r))=\Phi_{\omega^*}(\mathcal{S}(6r^2),\mathcal{S}(6r^2+2r-1))$ with $\Phi_{\omega^*}(\mathcal{E}_{B_1}(r),\mathcal{E}_{B_2}(r))=\Phi_{\omega^*}(\mathcal{S}(6r^2+2r-1),\mathcal{S}(6r^2+4r))$. 
By Proposition \ref{CHbetweenQRHcritical}, we have: $\Phi_{\omega^*}(\mathcal{E}(r),\mathcal{E}_{B_1}(r))=H(\mathcal{S}(6r^2+2))-H(\underline{-1})$ and $\Phi_{\omega^*}(\mathcal{E}_{B_1}(r),\mathcal{E}_{B_2}(r))=H(\mathcal{S}(6r^2+2r+1))-H(\underline{-1})$. 
By \eqref{eq:standard_energy_peierls} it follows
\begin{equation}\label{dis1}
    -6r^2h+6rJ+2J-2h \leq -6r^2h+6rJ-2rh+3J-h.
\end{equation}
This inequality holds if $r \leq \frac{J}{2h}+\frac{1}{2}$. Since we assume $r \leq r^*$, the inequality \eqref{dis1} is satisfied.

\noindent We compare 
$\Phi_{\omega^*}(\mathcal{E}_{B_1}(r), \mathcal{E}_{B_2}(r))$
$=\Phi_{\omega^*}(\mathcal{S}(6r^2+2r-1), \mathcal{S}(6r^2+4r))$ 
with
$\Phi_{\omega^*}(\mathcal{E}_{B_2}(r), \mathcal{E}_{B_3}(r))$ $=\Phi_{\omega^*}(\mathcal{S}(6r^2+4r), \mathcal{S}(6r^2+6r+1))$. 
By Proposition \ref{CHbetweenQRHcritical}, we have: $\Phi_{\omega^*}(\mathcal{E}_{B_1}(r),\mathcal{E}_{B_2}(r))=H(\mathcal{S}(6r^2+2r+1))-H(\underline{-1})$ and $\Phi_{\omega^*}(\mathcal{E}_{B_2}(r),\mathcal{E}_{B_3}(r))=H(\mathcal{S}(6r^2+4r+2))-H(\underline{-1})$. 
By \eqref{eq:standard_energy_peierls} it follows
\begin{equation}\label{dis2}
     -6r^2h+6rJ-2rh+3J-h \leq -6r^2h+6rJ-4rh+4J-2h.
\end{equation}
This inequality holds if $r \leq \frac{J}{2h}-\frac{1}{2}$. Since we assume $r \leq r^*$, the inequality \eqref{dis2} is satisfied.

\noindent We compare $\Phi_{\omega^*}(\mathcal{E}_{B_2}(r),\mathcal{E}_{B_3}(r))=\Phi_{\omega^*}(\mathcal{S}(6r^2+4r),\mathcal{S}(6r^2+6r+1))$ with $\Phi_{\omega^*}(\mathcal{E}_{B_3}(r),\mathcal{E}_{B_4}(r))$ = $\Phi_{\omega^*}(\mathcal{S}(6r^2+6r+1),\mathcal{S}(6r^2+8r+2))$.
By Proposition \ref{CHbetweenQRHcritical}, we have: $\Phi_{\omega^*}(\mathcal{E}_{B_2}(r),\mathcal{E}_{B_3}(r))=H(\mathcal{S}(6r^2+4r+2))-H(\underline{-1})$ and $\Phi_{\omega^*}(\mathcal{E}_{B_3}(r),\mathcal{E}_{B_4}(r))=H(\mathcal{S}(6r^2+6r+3))-H(\underline{-1})$. 
By \eqref{eq:standard_energy_peierls} it follows
\begin{equation}\label{dis3}
     -6r^2h+6rJ-4rh+4J-2h \leq -6r^2h+6rJ-6rh+5J-3h.
\end{equation}
This inequality holds if $r \leq \frac{J}{2h}-\frac{1}{2}$. Since we assume $r \leq r^*$, the inequality \eqref{dis3} is satisfied. 

\noindent We compare $\Phi_{\omega^*}(\mathcal{E}_{B_3}(r),\mathcal{E}_{B_4}(r))=\Phi_{\omega^*}(\mathcal{S}(6r^2+6r+1),\mathcal{S}(6r^2+8r+2))$ with\\ $\Phi_{\omega^*}(\mathcal{E}_{B_4}(r),\mathcal{E}_{B_5}(r))=\Phi_{\omega^*}(\mathcal{S}(6r^2+8r+2),\mathcal{S}(6r^2+10r+3))$.
By Proposition \ref{CHbetweenQRHcritical}, we have: $\Phi_{\omega^*}(\mathcal{E}_{B_3}(r),\mathcal{E}_{B_4}(r))=H(\mathcal{S}(6r^2+6r+3))-H(\underline{-1})$ and $\Phi_{\omega^*}(\mathcal{E}_{B_4}(r),\mathcal{E}_{B_5}(r))=H(\mathcal{S}(6r^2+8r+4))-H(\underline{-1})$.
By \eqref{eq:standard_energy_peierls} it follows
\begin{equation}\label{dis4}
     -6r^2h+6rJ-6rh+5J-3h \leq -6r^2h+6rJ-8rh+6J-4h.
\end{equation}
This inequality holds if $r \leq \frac{J}{2h}-\frac{1}{2}$. Since we assume $r \leq r^*$, the inequality \eqref{dis4} is satisfied.

\noindent We compare $\Phi_{\omega^*}(\mathcal{E}_{B_4}(r),\mathcal{E}_{B_5}(r))=\Phi_{\omega^*}(\mathcal{S}(6r^2+8r+2),\mathcal{S}(6r^2+10r+3))$ with\\ $\Phi_{\omega^*}(\mathcal{E}_{B_5}(r),\mathcal{E}(r+1))=\Phi_{\omega^*}(\mathcal{S}(6r^2+10r+3),\mathcal{S}(6r^2+12r+6))$. 
By Proposition \ref{CHbetweenQRHcritical}, we have: $\Phi_{\omega^*}(\mathcal{E}_{B_4}(r),\mathcal{E}_{B_5}(r))=H(\mathcal{S}(6r^2+8r+4))-H(\underline{-1})$ and $\Phi_{\omega^*}(\mathcal{E}_{B_5}(r),\mathcal{E}(r+1))=H(\mathcal{S}(6r^2+10r+5))-H(\underline{-1})$. 
By \eqref{eq:standard_energy_peierls} it follows
\begin{equation}\label{dis5}
     -6r^2h+6rJ-8rh+6J-4h \leq -6r^2h+6rJ-10rh+7J-5h.
\end{equation}
This inequality holds if $r \leq \frac{J}{2h}-\frac{1}{2}$. Since we assume $r \leq r^*$, the inequality \eqref{dis5} is satisfied. 

Thus the communication height between two consecutive regular hexagons along the path  $\omega^*$ is achieved in $\mathcal{S}(6r^2+10r+5)$, that is $\Phi_{\omega^*}(\mathcal{E}(r), \mathcal{E}(r+1))=\Phi_{\omega^*}(\mathcal{E}_{B_5}(r), \mathcal{E}(r+1))=H(\mathcal{S}(\tilde A))-H(\underline{-1})$. The maximum of the function $H(\mathcal{S}(\tilde A))-H(\underline{-1})=-6r^2h+6rJ-10rh+7J-5h$ is obtained in $r=\frac{J}{2h}-\frac{5}{6}$. However $r \in \mathbb{N}$ and $r \leq r^*$, therefore the maximum is attained in $r^*$ and $\Phi_{\omega^*}(\underline{-1}, \mathcal{E}(r^*+1))=\Phi_{\omega^*}(\mathcal{E}(r^*), \mathcal{E}(r^*+1))=H(\mathcal{S}(A_1))-H(\underline{-1})$ where $\mathcal{S}(A_1)$ is a configuration with a standard cluster such that $A_1:=6{r^*}^2+10r^*+5$.
\end{proof}

\begin{proof}[Proof of Proposition~\ref{rmaggiore}] Using Remark \ref{proprietaS} and Proposition \ref{CHbetweenQRHcritical}, we compare\\ $\Phi_{\omega^*}(\mathcal{E}(r),\mathcal{E}_{B_1}(r))=\Phi_{\omega^*}(\mathcal{S}(6r^2),\mathcal{S}(6r^2+2r-1))=H(\mathcal{S}(6r^2+2))-H(\underline{-1})$ with\\ $\Phi_{\omega^*}(\mathcal{E}_{B_1}(r),\mathcal{E}_{B_2}(r))=\Phi_{\omega^*}(\mathcal{S}(6r^2+2r-1),\mathcal{S}(6r^2+4r))=H(\mathcal{S}(6r^2+2r+1))-H(\underline{-1})$:
\begin{equation}\label{dis1sup}
    -6r^2h+6rJ+2J-2h \geq -6r^2h+6rJ-2rh+3J-h.
\end{equation}
This inequality holds if $r \geq \frac{J}{2h}+\frac{1}{2}$. Since we assume $r \geq r^*+2$, the inequality \eqref{dis1sup} is satisfied. 

Now, we compare $\Phi_{\omega^*}(\mathcal{E}_{B_1}(r),\mathcal{E}_{B_2}(r))=\Phi_{\omega^*}(\mathcal{S}(6r^2+2r-1),\mathcal{S}(6r^2+4r))=H(\mathcal{S}(6r^2+2r+1))-H(\underline{-1})$ with $\Phi_{\omega^*}(\mathcal{E}_{B_2}(r),\mathcal{E}_{B_3}(r))=\Phi_{\omega^*}(\mathcal{S}(6r^2+4r),\mathcal{S}(6r^2+6r+1))=H(\mathcal{S}(6r^2+4r+2))-H(\underline{-1})$:
\begin{equation}\label{dis2sup}
     -6r^2h+6rJ-2rh+3J-h \geq -6r^2h+6rJ-4rh+4J-2h.
\end{equation}
This inequality holds if $r \geq \frac{J}{2h}-\frac{1}{2}$. Since we assume $r \geq r^*+2$, the inequality \eqref{dis2sup} is satisfied.

Now, we compare $\Phi_{\omega^*}(\mathcal{E}_{B_2}(r),\mathcal{E}_{B_3}(r))=\Phi_{\omega^*}(\mathcal{S}(6r^2+4r),\mathcal{S}(6r^2+6r+1))=H(\mathcal{S}(6r^2+4r+2))-H(\underline{-1})$ with $\Phi_{\omega^*}(\mathcal{E}_{B_3}(r),\mathcal{E}_{B_4}(r))=\Phi_{\omega^*}(\mathcal{S}(6r^2+6r+1),\mathcal{S}(6r^2+8r+2))=H(\mathcal{S}(6r^2+6r+3))-H(\underline{-1})$:
\begin{equation}\label{dis3sup}
     -6r^2h+6rJ-4rh+4J-2h \geq -6r^2h+6rJ-6rh+5J-3h.
\end{equation}
This inequality holds if $r \geq \frac{J}{2h}-\frac{1}{2}$. Since we assume $r \geq r^*+2$, the inequality \eqref{dis3sup} is satisfied.

Now, we compare $\Phi_{\omega^*}(\mathcal{E}_{B_3}(r),\mathcal{E}_{B_4}(r))=\Phi_{\omega^*}(\mathcal{S}(6r^2+6r+1),\mathcal{S}(6r^2+8r+2))=H(\mathcal{S}(6r^2+6r+3))-H(\underline{-1})$ with $\Phi_{\omega^*}(\mathcal{E}_{B_4}(r),\mathcal{E}_{B_5}(r))=\Phi_{\omega^*}(\mathcal{S}(6r^2+8r+2),\mathcal{S}(6r^2+10r+3))=H(\mathcal{S}(6r^2+8r+4))-H(\underline{-1})$:
\begin{equation}\label{dis4sup}
     -6r^2h+6rJ-6rh+5J-3h \geq -6r^2h+6rJ-8rh+6J-4h.
\end{equation}
This inequality holds if $r \geq \frac{J}{2h}-\frac{1}{2}$. Since we assume $r \geq r^*+2$, the inequality \eqref{dis4sup} is satisfied. 

Now, we compare $\Phi_{\omega^*}(\mathcal{E}_{B_4}(r),\mathcal{E}_{B_5}(r))=\Phi_{\omega^*}(\mathcal{S}(6r^2+8r+2),\mathcal{S}(6r^2+10r+3))=H(\mathcal{S}(6r^2+8r+4))-H(\underline{-1})$with $\Phi_{\omega^*}(\mathcal{E}_{B_5}(r),\mathcal{E}(r+1))=\Phi_{\omega^*}(\mathcal{S}(6r^2+10r+3),\mathcal{S}(6r^2+12r+6))=H(\mathcal{S}(6r^2+10r+5))-H(\underline{-1})$:
\begin{equation}\label{dis5sup}
     -6r^2h+6rJ-8rh+6J-4h \geq -6r^2h+6rJ-10rh+7J-5h.
\end{equation}
This inequality holds if $r \geq \frac{J}{2h}-\frac{1}{2}$. Since we assume $r \geq r^*+2$, the inequality \eqref{dis5sup} is satisfied.

Thus the communication height between two consecutive regular hexagons along the path  $\omega^*$ is achieved in $\mathcal{S}(6r^2+2)$, that is $\Phi_{\omega^*}(\mathcal{E}(r), \mathcal{E}(r+1))=\Phi_{\omega^*}(\mathcal{E}(r), \mathcal{E}_{B_1}(r))=H(\mathcal{S}(\tilde A))-H(\underline{-1})$. The maximum of the function $H(\mathcal{S}(\tilde A))-H(\underline{-1})=-6r^2h+6rJ+2J-2h$ is obtained in $r=\frac{J}{2h}$, but $r \in \mathbb{N}$ and $r \geq r^*+2$, so $\Phi_{\omega^*}(\mathcal{E}(r^*+2),\underline{+1})=\Phi_{\omega^*}(\mathcal{E}(r^*+1), \mathcal{E}(r^*+2))=H(\mathcal{S}(A_2))-H(\underline{-1})$ where $\mathcal{S}(A_2)$ is a configuration with a standard cluster such that $A_2:=6(r^*+2)^2+2$.
\end{proof}
\begin{proof}[Proof of Proposition~\ref{ruguale}] 
We analyze $\Phi_{\omega^*}(\mathcal{E}(r^*+1),\mathcal{E}(r^*+2))$ using Definition \ref{hamiltonianFunction}, Remark \ref{proprietaS} and Proposition \ref{CHbetweenQRHcritical}. 
With the same arguments of the proofs of Propositions \ref{rminore} and \ref{rmaggiore}, we consider the Equation \eqref{dis1sup} which holds if and only if $r \geq \frac{J}{2h}+\frac{1}{2}$. In this case, we consider $r=r^*+1=\lfloor \frac{J}{2h}+\frac{1}{2} \rfloor$, so we have 
\begin{equation}
    \Phi_{\omega^*}(\mathcal{S}(6r^2),\mathcal{S}(6r^2+2r-1)) <\Phi_{\omega^*}(\mathcal{S}(6r^2+2r-1), \mathcal{S}(6r^2+4r)).
\end{equation}
Equations \eqref{dis2sup} - \eqref{dis5sup} hold if and only if $r \geq \frac{J}{2h}-\frac{1}{2}$
\begin{align*}
\Phi_{\omega^*}(\mathcal{S}(6r^2+2r-1), \mathcal{S}(6r^2+4r))> & \Phi_{\omega^*}(\mathcal{S}(6r^2+4r),\mathcal{S}(6r^2+6r+1)) \notag \\ 
> & \Phi_{\omega^*}(\mathcal{S}(6r^2+6r+1),\mathcal{S}(6r^2+8r+2)) \notag \\ 
> & \Phi_{\omega^*}(\mathcal{S}(6r^2+8r+2),\mathcal{S}(6r^2+10r+3)) \notag \\
> & \Phi_{\omega^*}(\mathcal{S}(6r^2+10r+3),\mathcal{S}(6r^2+12r+6)). 
\end{align*}
Then the communication height along the path $\omega^*$ between two consecutive regular hexagons with radius $r^*+1$ is $\Phi_{\omega^{*}}(\mathcal{E}(r^*+1),\mathcal{E}(r^*+2))=\Phi_{\omega^*}(\mathcal{S}(6(r^*+1)^2+2(r^*+1)-1), \mathcal{S}(6(r^*+1)^2+4(r^*+1)))$ and, by Proposition \ref{CHbetweenQRHcritical}, it is achieved in $\mathcal{S}(A_3)$, such that $A_3:=6(r^*+1)^2+2(r^*+1)+1$.
\end{proof}

\begin{corollary}\label{Phimax}
Let $\delta \in (0,1)$ such that $\frac{J}{2h}-\frac{1}{2}-\delta$ is integer. The maximum $\Phi_{\omega^*}(\underline{-1}, \underline{+1})$ along the path $\omega^*$ is achieved in a configuration with a standard cluster with area $A^*_i$ for $i \in \{1,2\}$
(see Figures \ref{cadPhi1} and \ref{cadPhi2}), where
\begin{enumerate}
        \item $A^*_1=A_1=6{r^*}^2+10r^*+5$, if $0<\delta<\frac{1}{2}$;
        \item $A^*_2=A_3=6(r^*+1)^2+2(r^*+1)+1$, if $\frac{1}{2}<\delta<1$.
    \end{enumerate}
\end{corollary}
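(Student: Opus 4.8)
The plan is to glue together Propositions~\ref{rminore},~\ref{ruguale} and~\ref{rmaggiore}, whose three sub-paths together exhaust $\omega^*$, and then to run a three-way energy comparison governed by $\delta$. Since $\omega^*$ visits the regular hexagons in the order $\underline{-1},\mathcal{E}(1),\mathcal{E}(2),\dots,\mathcal{E}(r^*+1),\mathcal{E}(r^*+2),\dots,\underline{+1}$, its communication height decomposes as
\begin{equation*}
\Phi_{\omega^*}(\underline{-1},\underline{+1})
=\max\bigl\{\Phi_{\omega^*}(\underline{-1},\mathcal{E}(r^*+1)),\ \Phi_{\omega^*}(\mathcal{E}(r^*+1),\mathcal{E}(r^*+2)),\ \Phi_{\omega^*}(\mathcal{E}(r^*+2),\underline{+1})\bigr\}.
\end{equation*}
By Proposition~\ref{rminore} the first term equals $H(\mathcal{S}(A_1))-H(\underline{-1})$ with $A_1=6{r^*}^2+10r^*+5$; by Proposition~\ref{ruguale} the second equals $H(\mathcal{S}(A_3))-H(\underline{-1})$ with $A_3=6(r^*+1)^2+2(r^*+1)+1$; by Proposition~\ref{rmaggiore} the third equals $H(\mathcal{S}(A_2))-H(\underline{-1})$ with $A_2=6(r^*+2)^2+2$. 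In particular the third term already accounts for the tail of $\omega^*$ along which the cluster wraps around the torus, so no separate analysis of that region is required, and it remains only to identify $\max\{H(\mathcal{S}(A_1)),H(\mathcal{S}(A_3)),H(\mathcal{S}(A_2))\}$ as a function of $\delta$.

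For the comparison I would substitute $A_1$, $A_3$ and $A_2$ into the closed formula~\eqref{eq:standard_energy_peierls} for $H(\mathcal{S}(A))-H(\underline{-1})$ and eliminate $J$ using $J=2h\bigl(r^*+\tfrac12+\delta\bigr)$. A short computation gives
\begin{equation*}
H(\mathcal{S}(A_1))-H(\mathcal{S}(A_3))=2h(1-2\delta),\qquad
H(\mathcal{S}(A_3))-H(\mathcal{S}(A_2))=2h(6-5\delta).
\end{equation*}
Since $\delta\in(0,1)$, the second quantity is strictly positive, so $\mathcal{S}(A_2)$ never attains the maximum. The first quantity is strictly positive when $0<\delta<\tfrac12$ and strictly negative when $\tfrac12<\delta<1$; hence $\Phi_{\omega^*}(\underline{-1},\underline{+1})=H(\mathcal{S}(A_1))-H(\underline{-1})$ in the former range, with $A^*_1=A_1$, and $\Phi_{\omega^*}(\underline{-1},\underline{+1})=H(\mathcal{S}(A_3))-H(\underline{-1})$ in the latter, with $A^*_2=A_3$, which is the claim.

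The arithmetic of the second paragraph is entirely routine once the substitution $J=2h(r^*+\tfrac12+\delta)$ is made, so the only point that deserves care is the decomposition in the first paragraph: one should check that Propositions~\ref{rminore}--\ref{rmaggiore} genuinely cover every portion of $\omega^*$ --- the opening excursion $\underline{-1}\to\mathcal{E}(1)$ is folded into $\Phi_{\omega^*}(\underline{-1},\mathcal{E}(r^*+1))$, and the closing wrapping region into $\Phi_{\omega^*}(\mathcal{E}(r^*+2),\underline{+1})$ --- and that, within the range $r\le r^*$, the largest sub-path communication height is attained at $r=r^*$, which is precisely what Proposition~\ref{rminore} asserts. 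Condition~\ref{condizionetoro} enters only to guarantee that the torus is wide enough for $\mathcal{E}(r^*+1)$ and $\mathcal{E}(r^*+2)$ to be bona fide non-wrapping clusters, so that this three-piece splitting of $\omega^*$ is meaningful; once that is noted the proof is complete. (The borderline value $\delta=\tfrac12$ falls outside the two cases of the statement and corresponds to a genuine energetic degeneracy between $\mathcal{S}(A_1)$ and $\mathcal{S}(A_3)$.)
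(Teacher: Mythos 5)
Your proposal is correct and follows essentially the same route as the paper: both decompose $\Phi_{\omega^*}(\underline{-1},\underline{+1})$ into the three sub-path maxima given by Propositions~\ref{rminore}, \ref{ruguale} and \ref{rmaggiore}, and then compare $H(\mathcal{S}(A_1))$, $H(\mathcal{S}(A_3))$, $H(\mathcal{S}(A_2))$ as a function of $\delta$ (the paper leaves this to ``standard computations''; your explicit differences $2h(1-2\delta)$ and $2h(6-5\delta)$ check out).
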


\begin{figure}[htb!] 
\centering \includegraphics[scale=0.35]{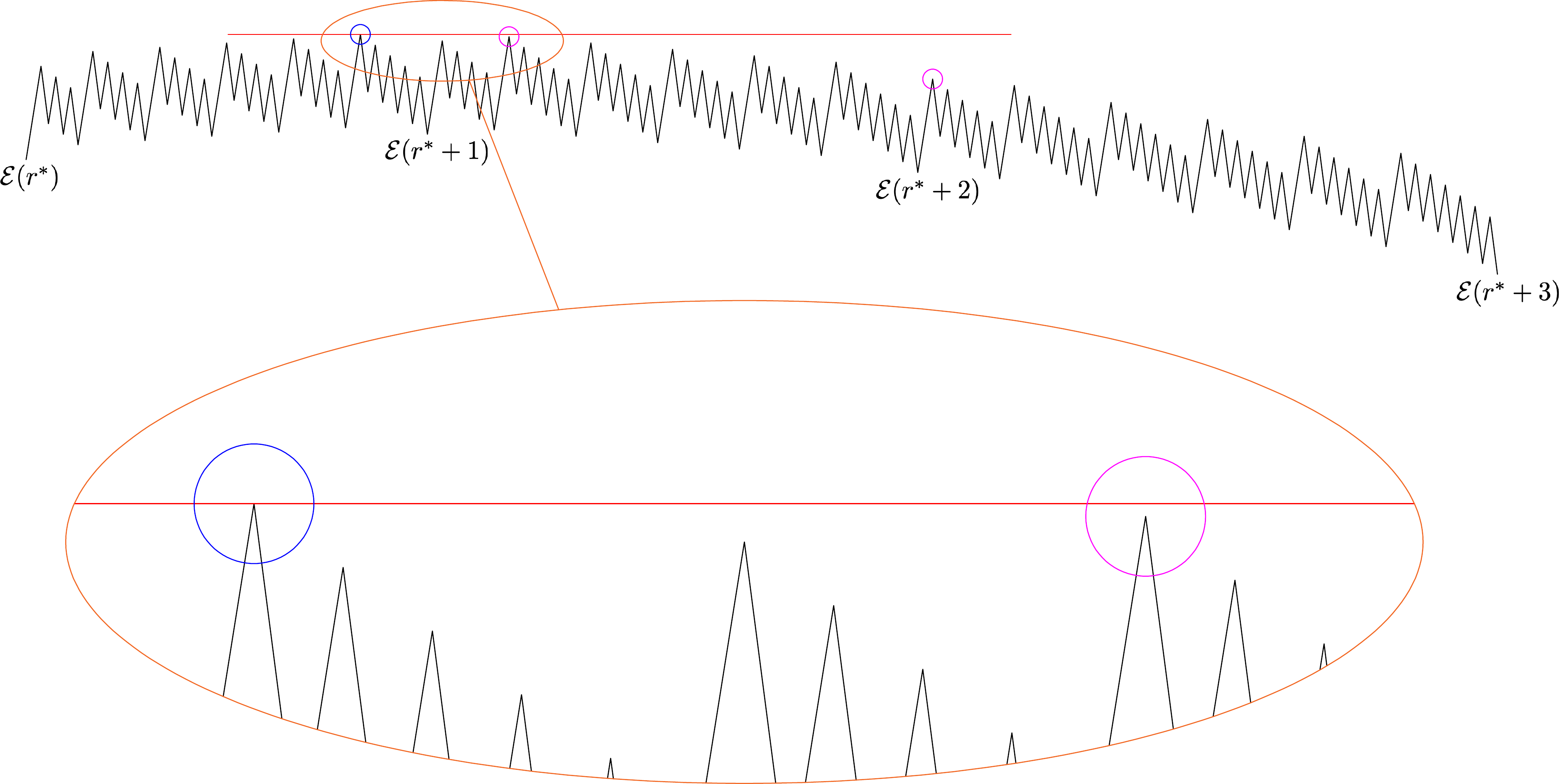} 
\caption{An example of the energy landscape between $\mathcal{E}(r^*)$ and $\mathcal{E}(r^*+3)$ for the values of the external magnetic field $h=5/7$ and the ferromagnetic interaction $J=7$, thus $\delta \in (0,1/2)$. We zoom in some part of the energy landscape, in order to compare the saddles and we highlight the maximal saddle in blue.} \label{cadPhi1}
\end{figure}
\begin{figure}[htb!] 
\centering \includegraphics[scale=0.35]{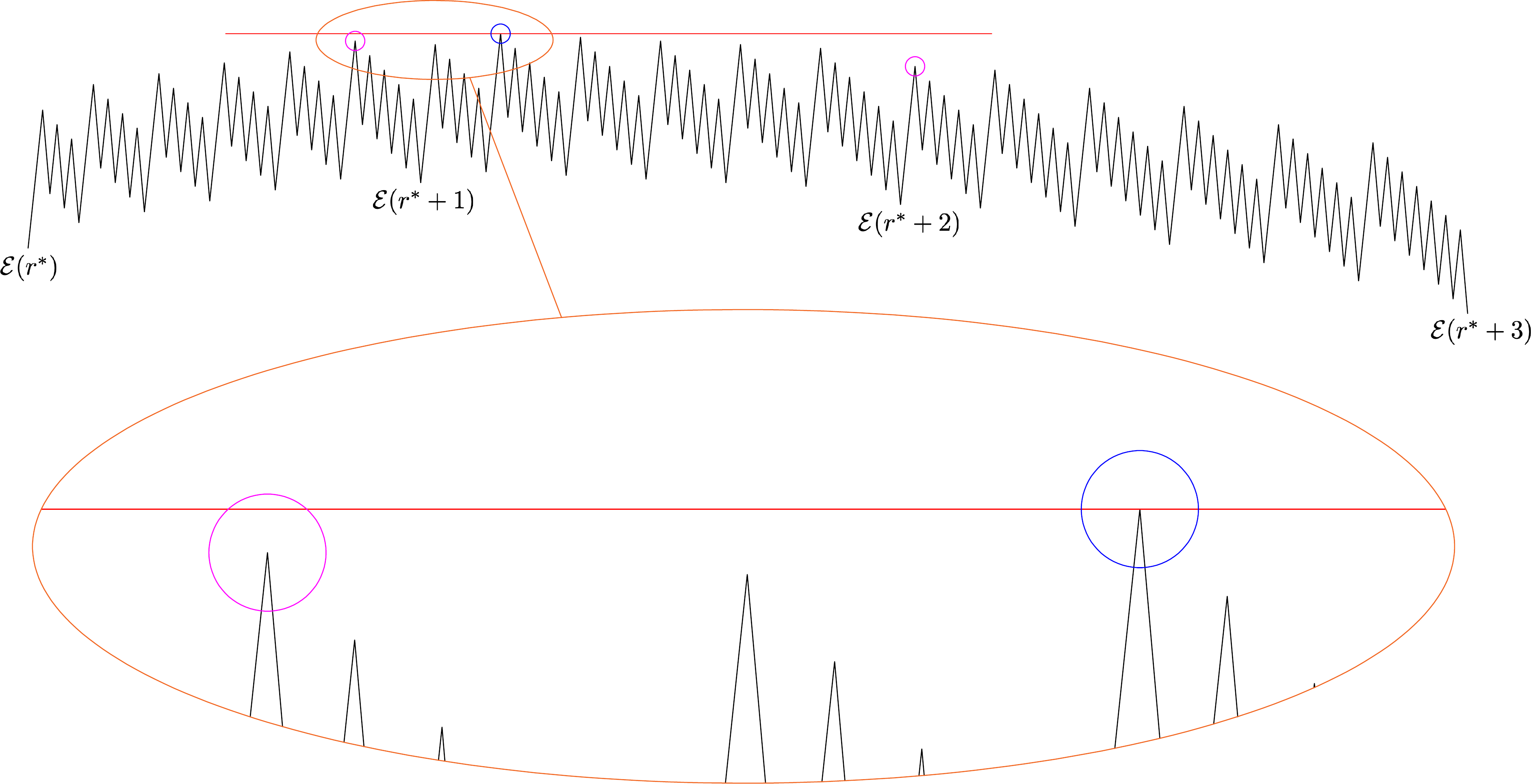} 
\caption{An example of the energy landscape between $\mathcal{E}(r^*)$ and $\mathcal{E}(r^*+3)$ for the values of the external magnetic field $h=1$, the ferromagnetic interaction $J=21/2$, thus $\delta \in (1/2,1)$. We zoom in some part of the energy landscape, in order to compare the saddles and we highlight the maximal saddle in blue.} \label{cadPhi2}
\end{figure}
\begin{proof}[Proof of Corollary~\ref{Phimax}] We compare $\Phi_{\omega^*}(\underline{-1},\mathcal{E}(r^*+1))$, $\Phi_{\omega^*}(\mathcal{E}(r^*+1), \mathcal{E}(r^*+2))$ and $\Phi_{\omega^*}(\mathcal{E}(r^*+2), \underline{+1})$. By Proposition \ref{rminore}, we have 
\begin{align}\label{Phi1}
\Phi_{\omega^*}(\underline{-1},\mathcal{E}(r^*+1)) & =  H(\mathcal{S}(A_1))-H(\underline{-1}) \notag \\
& =-6{r^*}^2h+6r^*J-10r^*h+7J-5h.
\end{align}
By Proposition \ref{ruguale}, we have 
\begin{align}\label{Phi2}
\Phi_{\omega^*}(\mathcal{E}(r^*+1), \mathcal{E}(r^*+2)) & = H(\mathcal{S}(A_3))-H(\underline{-1}) \notag \\
& =-6(r^*+1)^2h+6(r^*+1)J-2(r^*+1)h+3J-h.
\end{align}
By Proposition \ref{rmaggiore}, we have 
\begin{align}\label{Phi3} 
\Phi_{\omega^*}(\mathcal{E}(r^*+2), \underline{+1}) & = H(\mathcal{S}(A_2))-H(\underline{-1}) \notag \\
& =-6(r^*+2)^2h+6(r^*+2)J+2J-2h.
\end{align}

Recalling \eqref{raggiocritico} and comparing equations \eqref{Phi1}, \eqref{Phi2}, \eqref{Phi3}, we obtain
\begin{align}
\Phi_{\omega^*}(\underline{-1},\mathcal{E}(r^*+1)) & >\Phi_{\omega^*}(\mathcal{E}(r^*+2), \underline{+1}), \\
\Phi_{\omega^*}(\mathcal{E}(r^*+1), \mathcal{E}(r^*+2)) & >\Phi_{\omega^*}(\mathcal{E}(r^*+2), \underline{+1}).
\end{align}
Thus $\Phi_{\omega^*}(\mathcal{E}(r^*+2), \underline{+1})$ can not be the maximum. As it can be seen in Figures \ref{cadPhi1} and \ref{cadPhi2} and by standard computations, we have
\begin{align}
    \Phi_{\omega^*}(\underline{-1},\mathcal{E}(r^*+1))>  \Phi_{\omega^*}(\mathcal{E}(r^*+1), \mathcal{E}(r^*+2)), & \qquad \text{if } 0<\delta<\frac{1}{2} \\
    \Phi_{\omega^*}(\mathcal{E}(r^*+1), \mathcal{E}(r^*+2))>  \Phi_{\omega^*}(\underline{-1},\mathcal{E}(r^*+1)), & \qquad \text{if } \frac{1}{2}<\delta<1
\end{align}
So, if $0<\delta<\frac{1}{2}$, then the maximum $\Phi_{\omega^*}(\underline{-1}, \underline{+1})=\Phi_{\omega^*}(\underline{-1},\mathcal{E}(r^*+1))$ is achieved in a configuration with the standard cluster $\mathcal{S}(6{r^*}^2+10r^*+5)$. If $\frac{1}{2}<\delta<1$, then the maximum $\Phi_{\omega^*}(\underline{-1}, \underline{+1})=\Phi_{\omega^*}(\mathcal{E}(r^*+1), \mathcal{E}(r^*_2+1))$ is achieved in a configuration with the standard cluster $\mathcal{S}(6(r^*+1)^2+2(r^*+1)+1)$. 
Moreover, we observe that if $\delta=\frac{1}{2}$, then the maximum $\Phi_{\omega^*}(\underline{-1}, \underline{+1})=\Phi_{\omega^*}(\underline{-1},\mathcal{E}(r^*+1))=\Phi_{\omega^*}(\mathcal{E}(r^*+1), \mathcal{E}(r^*+2))$ is achieved in two configurations with the standard clusters $\mathcal{S}(6{r^*}^2+10r^*+5)$ and $\mathcal{S}(6(r^*+1)^2+2(r^*+1)+1)$.
Indeed, in this case, both these configurations
share the same energy.
\end{proof}

\subsection{Lower bound of maximal stability level}
Given $\sigma \in \mathcal{X}$, we recall \eqref{eq:number_of_pluses} for the number of plus spins in the configuration $\sigma$. We denote with $\mathcal{F}(Q)$ the set of ground states in $Q \subseteq \mathcal{X}$, that is
\begin{equation}\label{F}
\mathcal{F}(Q):=\{y \in Q | \min_{x \in Q} H(x) = H(y)\}.
\end{equation}
For $n$ integer, $0 \leq n \leq |\Lambda|$, we introduce the  following set
\begin{equation}\label{Vn}
    \mathcal{V}_n:=\{\sigma \in \mathcal{X} | N^+(\sigma)=n\},
\end{equation}
namely $\mathcal{V}_n$ is the set of configurations with a number of plus spins fixed at the value $n$. The number of plus corresponds to the area of the cluster.
\begin{lemma}\label{lowerbound} Assume that Condition \ref{condizionetoro} is satisfied. We have 
\begin{enumerate}
    \item Let $\sigma \in \mathcal{V}_{A_i^*}$ for $i \in \{1,2\}$, we have that the set $N^+(\sigma)$ is not a nearest neighbor connected subset of $\Lambda$ winding around the torus $\Lambda$;
    \item $\mathcal{V}_{A_i^*} \supset \mathcal{S}(A_i^*)$, for $i \in \{1,2\}$;
    \item $H(\mathcal{F}(\mathcal{V}_{A_i^*}))=H(\underline{-1})+\Gamma^{Hex}$, for $i \in \{1,2\}$.
\end{enumerate} 
\end{lemma}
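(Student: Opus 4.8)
The plan is to run everything through the Peierls identity \eqref{eq:peierls_hamiltonian}. Since $N^{+}(\sigma)=A^*_i$ is frozen on $\mathcal{V}_{A^*_i}$, one has $H(\sigma)-H(\underline{-1})=J|\gamma(\sigma)|-hA^*_i$, so minimising $H$ on $\mathcal{V}_{A^*_i}$ is the same as minimising the total Peierls length, and $H(\mathcal{F}(\mathcal{V}_{A^*_i}))=H(\underline{-1})-hA^*_i+J\min_{\sigma\in\mathcal{V}_{A^*_i}}|\gamma(\sigma)|$. Part (2) is then immediate: $\mathcal{S}(A^*_i)$ consists of a single cluster of area $A^*_i$, so $N^{+}(\mathcal{S}(A^*_i))=A^*_i$ and $\mathcal{S}(A^*_i)\in\mathcal{V}_{A^*_i}$, which already gives $\min_{\sigma\in\mathcal{V}_{A^*_i}}|\gamma(\sigma)|\le p(S(A^*_i))$.

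For part (1) I would argue by a perimeter comparison. A nearest-neighbour connected subset of $\Lambda$ which winds around the torus, and whose cardinality is $\le A^*_i\ll|\Lambda|$ (so that it is far from occupying the whole torus), has a Peierls boundary made of non-contractible closed curves and hence of total length at least $2L$, where $L$ is the linear size of $\Lambda$. By Condition~\ref{condizionetoro}, $|\Lambda|=2L^2\ge(4J/h)^2$, so $2L\ge 4\sqrt{2}\,J/h$, while by \eqref{raggiocritico} and Remark~\ref{proprietaS} the perimeter $p(S(A^*_i))$ equals $6r^{*}$ up to an additive constant and is thus at most $3J/h$ up to a constant; a short computation (the finitely many small values of $r^{*}$ being treated by hand, where such a winding set cannot occur at all for cardinality reasons) gives $2L>p(S(A^*_i))$. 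Thus a configuration of $\mathcal{V}_{A^*_i}$ whose plus set is connected and winds around $\Lambda$ has $|\gamma(\sigma)|>p(S(A^*_i))$, so it is not energy minimising in $\mathcal{V}_{A^*_i}$; this is the sense in which the relevant configurations of $\mathcal{V}_{A^*_i}$ have a non-winding plus set, and it is all that is needed below.

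For the lower bound in part (3) it therefore suffices to bound $|\gamma(\sigma)|$ from below when the plus region $C(\sigma)$ is a disjoint union of clusters (polyiamonds, possibly with holes) $C_1,\dots,C_m$ with $\sum_j\area{C_j}=A^*_i$, in which case $|\gamma(\sigma)|=\sum_j p(C_j)$. By Theorem~\ref{thm:optimality_of_standard_polyiamonds}, $p(C_j)\ge p(S(\area{C_j}))$, and I would complement this with the superadditivity $p(S(a))+p(S(b))\ge p(S(a+b))+2$, which follows from gluing $S(a)$ and $S(b)$ along one boundary edge to produce a polyiamond of area $a+b$ and perimeter $p(S(a))+p(S(b))-2$, together with the minimality of $S(a+b)$. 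Iterating, $\sum_j p(S(\area{C_j}))\ge p(S(A^*_i))+2(m-1)\ge p(S(A^*_i))$, so with part (2), $\min_{\sigma\in\mathcal{V}_{A^*_i}}|\gamma(\sigma)|=p(S(A^*_i))$. Plugging this into the expression for $H(\mathcal{F}(\mathcal{V}_{A^*_i}))$ and substituting the area values \eqref{areacritica} and the perimeter of Remark~\ref{proprietaS} into \eqref{eq:standard_energy_peierls} --- equivalently, reading $H(\mathcal{S}(A^*_1))-H(\underline{-1})$ from the row $A=6r^2+10r+5$ at $r=r^{*}$ and $H(\mathcal{S}(A^*_2))-H(\underline{-1})$ from the row $A=6r^2+2r+1$ at $r=r^{*}+1$ --- one recovers exactly the two branches of $\Gamma^{Hex}$ in \eqref{GammaH}, consistently with Corollary~\ref{Phimax}, so $H(\mathcal{F}(\mathcal{V}_{A^*_i}))=H(\underline{-1})+\Gamma^{Hex}$.

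The step I expect to be the main obstacle is controlling the wrap-around on the torus: one has to be sure that no plus strip, and more generally no connected plus component that winds around $\Lambda$, can attain Peierls length as small as $p(S(A^*_i))$ --- this is exactly where the hypothesis $|\Lambda|\ge(4J/h)^2$ of Condition~\ref{condizionetoro} is used --- and at the same time exclude that holes or several disjoint clusters lower the contour length below $p(S(A^*_i))$, which is handled by the isoperimetric inequality of Theorem~\ref{thm:optimality_of_standard_polyiamonds} combined with the elementary gluing estimate above.
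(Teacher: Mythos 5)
Your treatment of items (2) and (3) follows essentially the same route as the paper's proof: freeze $N^{+}=A^*_i$ on $\mathcal{V}_{A^*_i}$, use \eqref{eq:peierls_hamiltonian} to reduce the minimisation of $H$ to the minimisation of the contour length, invoke Theorem~\ref{thm:optimality_of_standard_polyiamonds}, and read the two branches of $\Gamma^{Hex}$ off \eqref{eq:standard_energy_peierls} at $r=r^*$ and $r=r^*+1$. Your one substantive addition there is the superadditivity step $p(S(a))+p(S(b))\ge p(S(a+b))+2$ obtained by gluing: the paper's displayed computation minimises $Jp(C)$ over configurations with a \emph{single} cluster $C$ and applies the isoperimetric theorem directly, tacitly ignoring the possibility that the plus set splits into several components; your gluing argument closes that gap correctly (modulo the routine remark that $|\gamma(\sigma)|=\sum_j p(C_j)$ and that the glued shape can always be realised without overlap after a rotation by $\pi/3$). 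Where you genuinely diverge is item (1). The paper's proof is a pure cardinality count, checking $A^*_i<6(J/h)^2<(4J/h)^2\le|\Lambda|$; you instead bound the Peierls length of a winding component from below by $2L$ (two non-contractible dual cycles) and compare it with $p(S(A^*_i))\approx 3J/h$, concluding only that winding configurations cannot be energy minimisers in $\mathcal{V}_{A^*_i}$. That is strictly weaker than the literal statement of item (1), which asserts that \emph{no} $\sigma\in\mathcal{V}_{A^*_i}$ has a winding connected plus set, and which would require comparing $A^*_i$ with the minimal cardinality of a winding connected set, a quantity of order $L=\sqrt{|\Lambda|/2}$ rather than $|\Lambda|$. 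You flag this weakening yourself, and your energy-based substitute is indeed all that item (3) and the downstream use of the lemma (e.g., in Theorem~\ref{selle}) require; but as a proof of item (1) as stated your argument is incomplete, so you should either prove the cardinality bound $A^*_i<cL$ under Condition~\ref{condizionetoro} or restate the item in the energetic form you actually establish.
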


\begin{proof}[Proof of Lemma~\ref{lowerbound}]
\begin{enumerate}
    \item Recalling the two cases of critical area in Corollary \ref{Phimax} and observing that $r^*<\frac{J}{2h}$, we have
    \begin{itemize}
        \item[a.]
        \begin{align}
            |N^+(\sigma)| & =6{r^*}^2+10r^*+5<6 (r^*+1)^2 < 6 \Big(\frac{J}{2h}+1 \Big)^2 \notag \\
            & \leq 6 \Big(\frac{J}{h}\Big)^2 < \Big(\frac{4J}{h}\Big)^2
        \end{align}
        where in the third inequality we have used that, by Condition \ref{condizionetoro}, $J \geq 2h$. The item follows since, by Condition \ref{condizionetoro}, we have that $|\Lambda| \geq \Big(\frac{4J}{h}\Big)^2$.
        \item[b.]
        \begin{align}
            |N^+(\sigma)| & =6(r^*+1)^2+2(r^*+1)+1<6 (r^*+2)^2 < 6 \Big(\frac{J}{2h}+2 \Big)^2 \notag \\
            & \leq 6 \Big(\frac{3J}{2h}\Big)^2 < \Big(\frac{4J}{h}\Big)^2
        \end{align}
        where in the third inequality we have used that, by Condition \ref{condizionetoro}, $J \geq 2h$. The item follows since, by Condition \ref{condizionetoro}, we have that $|\Lambda| \geq \Big(\frac{4J}{h}\Big)^2$.
    \end{itemize}
    \item By Equation \eqref{Vn} and Definition \ref{standard} of standard polyamond $S(A_i^*)$ and the corresponding cluster in $\mathcal{S}(A_i^*)$, we have $\mathcal{V}_{A_i^*} \supset \mathcal{S}(A_i^*)$, for $i \in \{1,2\}$.
    \item Let $m$ be the number of site pairs of $\Lambda$ at lattice distance one, and let $p(C)$ be the perimeter of the cluster $C$ in a configuration $\sigma(C) \in \mathcal{V}_{A^*_i}$. By Definition \ref{hamiltonianFunction} and \eqref{eq:peierls_hamiltonian} we have:
    \begin{align}
    \min_{\sigma(C) \in \mathcal{V}_{A^*_i}}H(\sigma(C)) & 
    = \min_{\sigma(C) \in \mathcal{V}_{A^*_i}} \big[H(\underline{-1})+Jp(C)-hA^*_i \big] \notag \\
   & = H(\underline{-1})-hA^*_i + J \min_{x \in \mathcal{V}_{A^*_i}} p(C) \notag \\
   & = H(\underline{-1})-hA^*_i + J p(S(A^*_i)),
    \end{align}
    where in the last equality we use Theorem \ref{thm:optimality_of_standard_polyiamonds}. Using Remark \ref{proprietaS} and the values of $A^*_i$ given in Corollary \ref{Phimax}, we compute $p(S(A^*_i))$ for $i=1,2$, obtaining 
    \begin{equation}
    -hA^*_i + J p(S(A^*_i))=
    \begin{cases}
        -6{r^*}^2h+6r^*J-10r^*h+7J-5h & \text{for } i=1\\
        -6(r^*+1)^2h+6(r^*+1)J-2(r^*+1)h+3J-h & \text{for } i=2
    \end{cases} \notag
    \end{equation}
    
    Recalling \eqref{raggiocritico}, we observe that these values correspond to the Definition \ref{GammaH}. Therefore
    \begin{equation}
        H(\mathcal{F}(\mathcal{V}_{A_i^*}))=H(\underline{-1})+\Gamma^{Hex}.
    \end{equation}
\end{enumerate}
\end{proof}

\begin{lemma}\label{PhiGammaHex}
Assume that Condition \ref{condizionetoro} is satisfied. We have that $\Phi_{\omega^*}(\underline{-1},\underline{+1})-H(\underline{-1})=\Gamma^{Hex}$.
\end{lemma}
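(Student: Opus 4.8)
The plan is to read off $\Phi_{\omega^*}(\underline{-1},\underline{+1})$ from the energy profile of the reference path that has already been analysed, and then to identify this value with $\Gamma^{Hex}$ through the explicit perimeter of standard polyiamonds. Throughout I would use that Condition~\ref{condizionetoro} holds, so that $\Lambda$ is large enough for the whole construction of $\omega^*$ to make sense and for Lemma~\ref{lowerbound} to apply.

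First I would invoke Corollary~\ref{Phimax}: it states that the energy along $\omega^*$ attains its maximum in a configuration whose unique cluster is the standard cluster $\mathcal{S}(A^*_i)$, with $i=1$ when $0<\delta<\frac12$ and $i=2$ when $\frac12<\delta<1$ (and, in the borderline case $\delta=\frac12$, at both, with equal energy). Since $\omega^*$ is the concatenation $\underline{-1}\to\mathcal{E}(r^*+1)\to\mathcal{E}(r^*+2)\to\underline{+1}$, this gives $\Phi_{\omega^*}(\underline{-1},\underline{+1})=H(\mathcal{S}(A^*_i))$, and the claim reduces to the identity $H(\mathcal{S}(A^*_i))-H(\underline{-1})=\Gamma^{Hex}$.

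Next I would establish that identity. By \eqref{eq:peierls_hamiltonian} the difference $H(\mathcal{S}(A^*_i))-H(\underline{-1})$ equals $J\,p(S(A^*_i))-hA^*_i$; running Algorithm~\ref{algorithm} with input $A=A^*_i$ and then Remark~\ref{proprietaS} gives $p(S(A^*_i))$ explicitly, and substituting reproduces exactly the two cases of \eqref{GammaH}. Rather than redo this computation, it is cleaner to quote Lemma~\ref{lowerbound}: by item~(2) we have $\mathcal{S}(A^*_i)\in\mathcal{V}_{A^*_i}$, and by Theorem~\ref{thm:optimality_of_standard_polyiamonds} together with \eqref{eq:peierls_hamiltonian} and item~(1) (which rules out windings) the standard cluster minimises $H$ over $\mathcal{V}_{A^*_i}$, so $\mathcal{S}(A^*_i)\in\mathcal{F}(\mathcal{V}_{A^*_i})$; hence $H(\mathcal{S}(A^*_i))=H(\mathcal{F}(\mathcal{V}_{A^*_i}))=H(\underline{-1})+\Gamma^{Hex}$ by item~(3). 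Combining this with the previous step closes the proof.

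The one point deserving care is that Corollary~\ref{Phimax} really controls the whole of $\omega^*$, including the stretch where the growing hexagon wraps around the torus and the subsequent excursions that fill the $\frac53\pi$ corners and grow the perpendicular bars. This is exactly what Propositions~\ref{rminore}, \ref{ruguale} and~\ref{rmaggiore} guarantee: the communication heights between consecutive hexagons are increasing in $r$ for $r\le r^*$ and decreasing in $r$ for $r\ge r^*+2$, while Condition~\ref{condizionetoro} forces $L/2\gg r^*$, so no configuration visited after $\mathcal{E}(r^*+2)$ — in particular none in the wrapping phase — can exceed $H(\mathcal{S}(A^*_i))$. Thus the bookkeeping is routine, and the statement one should double-check is precisely this "no later spike" monotonicity, which is already in place.
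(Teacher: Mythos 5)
Your proposal is correct and follows essentially the same route as the paper: both rest on Corollary~\ref{Phimax} to locate the maximum of $H$ along $\omega^*$ at $\mathcal{S}(A^*_i)$ and then match $H(\mathcal{S}(A^*_i))-H(\underline{-1})$ with the two cases of \eqref{GammaH}. Your alternative of quoting Lemma~\ref{lowerbound}(3) instead of recomputing the perimeter, and your remark on the post-wrapping stretch of $\omega^*$, are harmless refinements of the same argument.
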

\begin{proof}[Proof of Lemma~\ref{PhiGammaHex}]
By Corollary \ref{Phimax}, we have two values of $A^*_i$ depending on the two parameters $J,h$. We analyze two different cases:
\begin{itemize}[label=\raisebox{0.25ex}{\tiny \textbullet}]
    \item $A^*_1=6{r^*}^2+10r^*+5$. By Corollary \ref{Phimax}, we have 
    \begin{equation}
        \Phi_{\omega^*}(\underline{-1},\underline{+1})-H(\underline{-1})=
        H(\mathcal{S}(A^*_1))-H(\underline{-1})=-6{r^*}^2h+6r^*J-10r^*h+7J-5h
        \end{equation}
        and this value corresponds to that of Definition~\ref{GammaH}.
    \item $A^*_2=6(r^*+1)^2+2(r^*+1)+1$. By Corollary \ref{Phimax}, we have 
    \begin{align}
        \Phi_{\omega^*}(\underline{-1},\underline{+1})-H(\underline{-1}) & =
        H(\mathcal{S}(A^*_2))-H(\underline{-1}) \notag \\
        & =-6(r^*+1)^2h+6(r^*+1)J-2(r^*+1)h+3J-h
    \end{align}
    and this value corresponds to that of Definition~\ref{GammaH}.
\end{itemize}
\end{proof}

\section{Proofs of other theorems}\label{sec:other_theorems}
\begin{proof}[Proof of Theorem~\ref{Identification}]
In Section \ref{referencepath} we computed the value of $\Gamma$ to be $\Gamma^{Hex}$, see Definition \eqref{GammaH}. There, we also proved that
\begin{equation}
 \Phi(\underline{-1},\underline{+1})-H(\underline{-1})=\Gamma^{Hex}.
\end{equation}
Thus, the first assumption of \cite[Theorem 2.4]{CNrelax13} is satisfied for the choice of $A=\{\underline{-1}\}$ and $a=\Gamma^{Hex}$. The second assumption of \cite[Theorem 2.4]{CNrelax13} is satisfied thanks to Lemma \ref{EST}, since either $\mathcal{X}\setminus \{\underline{-1}, \underline{+1} \}=\emptyset$ or
\begin{equation}
 V_\sigma<\Gamma^{Hex} \text { for all } \sigma\in \mathcal{X}\setminus \{\underline{-1}, \underline{+1} \}.
\end{equation}
Finally, by applying \cite[Theorem 2.4]{CNrelax13}, we conclude that $\Gamma_m=\Gamma^{Hex}$ and $\mathcal{X}^m=\{\underline{-1}\}$.
\end{proof}

\begin{proof}[Proof of Theorem~\ref{teotime'}]
By Proposition~\ref{teoRP}, the assumptions of \cite[Theorem 4.15]{MNOS04} are verified taking $\eta_0=\{\underline{-1}\}$ and $T'_{\beta}=e^{\beta(V^*+\epsilon)}$. Then \eqref{Ptime'} and \eqref{Etime'} follow from \cite[Theorem 4.15]{MNOS04}. 
\end{proof}
\begin{proof}[Proof of Remark~\ref{transitiontime}] Apply \cite[Theorem 4.1]{MNOS04} with $\eta_0=\{\underline{-1}\}$ and $\Gamma=\Gamma^{Hex}$.
\end{proof}
\begin{proof}[Proof of Theorem~\ref{TMIX}]
By \cite[Proposition 3.24 and Example 3]{nardi2016hitting}, since $\Gamma_m=\Gamma^{Hex}$ thanks to Theorem \ref{Identification}, we get the result. 
\end{proof}
\begin{proof}[Proof of Theorem~\ref{selle}]
Recalling the two cases in Corollary \ref{Phimax}, we analyze for $i\in \{1,2\}$ the elements of $\mathcal{V}_{A^*_i}$ with minimal perimeter (otherwise the configuration has,
at least, energy $H(\underline{-1}) + \Gamma^{Hex} + 2h$). By Lemma \ref{lemma:lower_bound_edge_perimeter_non_standard_polyiamonds}, we have that every optimal path from $\underline{-1}$ to $\underline{+1}$ intersects the configurations with cluster of area $A^*_i-2$ and minimal perimeter
(otherwise the configuration has,
at least, energy $H(\underline{-1}) + \Gamma^{Hex} + 2h$) consisting of 
the quasi-regular hexagons $E_{B_5}(r^*)$ if $i=1$ and $E_{B_1}(r^*+1)$ if $i=2$. Adding two triangular units to these quasi-regular hexagons, we obtain an element $\sigma(C^*)$ of $\mathcal{V}_{A^*_i}$, see Figure \ref{addtriangle1}. $\sigma(C^*)$ is a configuration with cluster $C^*$ which is composed by a quasi-regular hexagonal cluster and two triangular units attached to it according to one of the following cases:
\begin{enumerate}[label=(\arabic*)]
    \item the two triangular units form an elementary rhombus which is attached to one of the longest sides of the quasi-regular hexagonal cluster, $\mathcal{\tilde S}(A^*_i)$;
    \item the two triangular units are attached to one of the longest sides of the quasi-regular hexagonal cluster at triangular lattice distance 2, $\mathcal{\tilde D}(A^*_i)$;
    \item the two triangular units are attached to the same side of the quasi-regular hexagonal cluster at triangular lattice distance grater than 2;
    \item the two triangular units are attached to two different sides of the quasi-regular hexagonal cluster;
    \item the two triangular units form an elementary rhombus which is attached to one of the sides, other than the longest, of the quasi-regular hexagonal cluster;
    \item the two triangular units are attached at triangular lattice distance 2 to the same side, other than the longest, of the quasi-regular hexagonal cluster.
\end{enumerate}
We note that in all the above cases $C^*$ has minimal perimeter, since $C^*$ has the same perimeter of a standard hexagon with the same area. In the remainder we analyze the previous cases from a dynamical point of view. 
We will show that all optimal paths must go through a
configuration as in case (1) or (2), whereas optimal paths
visiting configurations as those of cases (3), (4), (5), and (6)
(\emph{dead ends}) must go back to configurations as those of the first two cases before reaching $\underline{+1}$.

Let $\omega_0$ be the configuration that contains the quasi-regular hexagon of area $A^*_i-2$. $\omega_1$ is obtained from $\omega_0$ flipping a minus spin
adjacent to the cluster and $\omega_2$ is obtained from $\omega_1$ flipping another minus spin to reach a configuration as those of the
cases described above. By Table~\ref{Tab}, we have
\begin{equation}
    H(\omega_2)-H(\omega_0)=[H(\omega_2)-H(\omega_1)]+[H(\omega_1)-H(\omega_0)]=2J-2h.
\end{equation}
We observe that $H(\omega_2)-H(\underline{-1})=H(\sigma(C^*))-H(\underline{-1})$ is equal to $\Gamma^{Hex}$ by Corollary \ref{Phimax}, Lemma \ref{lowerbound} and Lemma \ref{PhiGammaHex}. Next, we consider a configuration $\omega_3$ obtained from $\omega_2$ flipping another minus spin. We observe that the perimeter of the cluster decreases only if we add a plus spin in a site belonging to an internal angle of $\frac{5}{3}\pi$ (of the cluster). This spin flip can be done only in the cases (1) and (2) above, see Figure \ref{addtriangle1}.
\begin{figure}[htb!]
\centering
    \includegraphics[width=\textwidth]{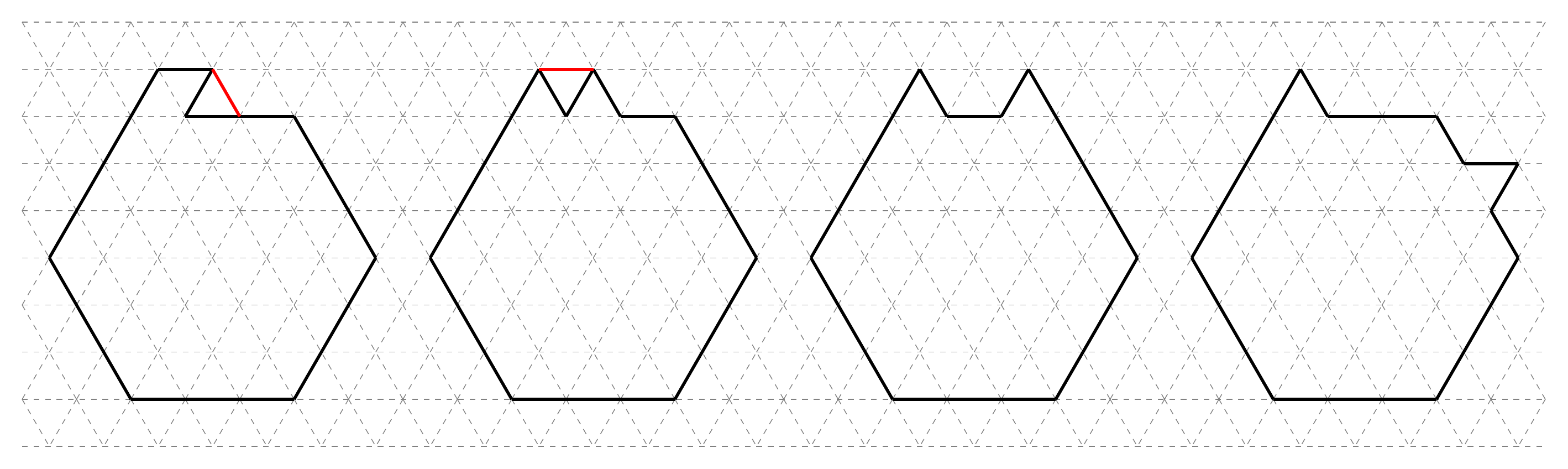}
    \caption{From left to right we depict configurations in cases (1),(2),(3),(4)  where two triangular units are added to the quasi-regular hexagon. In red the third triangular unit that can be added only in cases (1) and (2) decreasing the energy.}
  \label{addtriangle1}
    \end{figure}
In these two cases, we obtain $\omega_3$ adding the third triangular unit to cover the internal angle of $\frac{5}{3}\pi$. Therefore, the energy of the system is lowered by $J+h$, see Table~\ref{Tab}:
\begin{equation}
    H(\omega_3)-H(\omega_2)=-(J+h).
\end{equation}
Thus, we have
\begin{equation}
    H(\omega_3)-H(\underline{-1})=[H(\omega_3)-H(\omega_2)]+ [H(\omega_2)-H(\underline{-1})]=-(J+h)+\Gamma^{Hex}<\Gamma^{Hex}.
\end{equation}
In cases (3) and (4) the cluster does not have an angle of $\frac{5}{3}\pi$. This implies that when we add another triangular unit, the energy of the system can only increase. Thus, we have
\begin{equation}
    H(\omega_3)-H(\omega_2) \ge J-h,
\end{equation}
and
\begin{equation}\label{nosaddle}
    H(\omega_3)-H(\underline{-1})=[H(\omega_3)-H(\omega_2)]+ [H(\omega_2)-H(\underline{-1})] \ge J-h+\Gamma^{Hex}>\Gamma^{Hex}.
\end{equation}
To rule out cases (5) and (6), we show that the two triangular units have to be attached along one of the longest sides of the quasi-regular hexagon. In particular, recalling Corollary \ref{Phimax} and Definition \ref{areacritica}, if $\delta \in (0,\frac{1}{2})$ we 
must attach the two triangular units along the longest side of $E_{B_5}(r^*)$. 
Indeed, recalling Definition \ref{def:quasi_reg_hex}, the longest side of $E_{B_5}(r^*)$ has length $r^*+2$ and it has
the same length of the larger base of the bar $B_6$, that has cardinality $2r^*+3$. Any other side $s$ of $E_{B_5}(r^*)$ has length $r^*+1$, so the bar $B$ with the larger base $r^*+1$ has cardinality $l=2r^*-1$. 
Suppose by contradiction that there exits a path 
$\tilde \omega:=(\omega_0, \ldots, \omega_2, \ldots, \omega_l, \ldots, \omega_{l+2}, \ldots, \omega_{2r^*+3})$ 
that intersects the configurations described in case (5) and (6) with $\Phi_{\tilde \omega} \leq H(\underline{-1})+\Gamma^{Hex}$. 
Let $\omega_0:=\mathcal{E}_{B_5}(r^*)$, $\omega_2$ be the configuration with the two triangular units attached along $s$, 
$\omega_3,\ldots,\omega_l$ be the configurations obtained filling the new bar $B$ and let $\omega_{l+2}$ be the configuration obtained from 
$\omega_l$ by attaching two triangular units. 
Recalling \eqref{valoridiff} and \eqref{raggiocritico}, we have the following contradiction 
\begin{align}
    H(\omega_{l+2})-H(\underline{-1}) & =[H(\omega_{l+2})-H(\omega_l)]+[H(\omega_l)-H(\omega_2)]+[H(\omega_2)-H(\underline{-1})] \notag \\ 
    & =[2J-2h]+[-J-(l-2)h]+\Gamma^{Hex}=J-lh+\Gamma^{Hex}= \notag \\
    & =J-(2r^*-1)h+\Gamma^{Hex}=2h(\delta+1)+\Gamma^{Hex}>\Gamma^{Hex}.
    \end{align}
Analogously, if $\delta \in (\frac{1}{2},1)$ 
we must attach the two triangular units along one of the longest sides of $E_{B_1}(r^*+1)$. 
Indeed, recalling Definition \ref{def:quasi_reg_hex}, there exist two longest sides of $E_{B_1}(r^*+1)$ with length $r^*+2$ and each of these sides has the same length of the larger base of a bar with cardinality $2(r^*+1)+1$. The other sides $s$ of $E_{B_1}(r^*+1)$ have length $r^*+1$ and the 
corresponding bars have cardinality $2(r^*+1)-1$. 
Suppose by contradiction that there exits a path $\tilde \omega:=(\omega_0, \ldots, \omega_2, \ldots, \omega_l, \ldots, \omega_{l+2}, \ldots, \omega_{2r^*+3})$ that intersects the configurations described in case (5) and (6) with $\Phi_{\tilde \omega} \leq H(\underline{-1})+\Gamma^{Hex}$. Let $\omega_0:=\mathcal{E}_{B_1}(r^*+1)$, $\omega_2$ be the configuration with the two triangular units attached along $s$, $\omega_3,\ldots,\omega_l$ be the configurations obtained filling the new bar $B$ and let $\omega_{l+2}$ be the configuration obtained from $\omega_l$ by attaching two triangular units. Recalling \eqref{valoridiff} and \eqref{raggiocritico}, we have the following contradiction
\begin{align}
    H(\omega_{l+2})-H(\underline{-1}) & =[H(\omega_{l+2})-H(\omega_l)]+[H(\omega_l)-H(\omega_2)]+[H(\omega_2)-H(\underline{-1})] \notag \\ 
    & =[2J-2h]+[-J-(l-2)h]+\Gamma^{Hex}=J-lh+\Gamma^{Hex}= \notag \\
    & =J-(2(r^*+1)-1)h+\Gamma^{Hex}=2h\delta+\Gamma^{Hex}>\Gamma^{Hex}.
\end{align}
Therefore, the two triangular units have to be attached along one of the longest sides of the quasi-regular hexagon as in case (1) and (2). 
Recalling Definition \ref{gates} and \ref{minimalgate}, let $\mathcal{W}(\underline{-1},\underline{+1})$ be a minimal gate for the transition from $\underline{-1}$ to $\underline{+1}$. By the previous analysis, we have that 
all configurations as those in the cases (3), (4), (5), (6) 
are not in $\mathcal{W}(\underline{-1},\underline{+1})$. 
Moreover, observing that configurations as those of case (1) correspond to the configurations $\mathcal{\tilde S}(A^*_i)$ and configurations as those of case (2) correspond to the configurations $\mathcal{\tilde D}(A^*_i)$ (see Figure \ref{figselle}), we conclude $\mathcal{W}(\underline{-1},\underline{+1})=\mathcal{\tilde S}(A^*_i) \cup \mathcal{\tilde D}(A^*_i)$. 
\end{proof}

\subsection{Proof of Theorem~\ref{thm:sharp_estimate}}
To prove Theorem~\ref{thm:sharp_estimate}, we estimate the \emph{capacity} between $\underline{-1}$ and $\underline{+1}$ which is linked to the mean hitting time
of the stable state through the following 
formula (see \cite[Corollary 7.11]{bovier2016metastability}):
\begin{equation}
    \mathbb{E}_{\underline{-1}} [\tau_{\underline{+1}}] =
    \frac{1}{\text{CAP}(\underline{-1},\underline{+1})} \sum_{\sigma \in \mathcal{X}}\mu(\sigma)h_{\underline{-1},\underline{+1}}(\sigma).
\end{equation}
For a detailed discussion of the strategy outlined below to
estimate the capacity refer to \cite{bovier2016metastability}.
\paragraph{Capacity as the minimum of the Dirichlet form.}
Let $h: {\mathcal{X}} \to \mathbb{R}$ and consider
the following \emph{Dirichlet form}
\begin{align}\label{dirichlet}
\mathfrak{E}(h) & =\frac{1}{2}\sum_{\sigma,\eta\in\mathcal{X}} \mu(\sigma) p(\sigma, \eta) {[h(\sigma)-h(\eta)]}^2 \\
& =\frac{1}{2}\sum_{\sigma,\eta\in\mathcal{X}} \frac{e^{-\beta H(\sigma)}}{Z} \frac{e^{-\beta [H(\eta)-H(\sigma)]_+}}{|\Lambda|}{[h(\sigma)-h(\eta)]}^2
\end{align}
where $Z$ is the partition function $Z:=\sum_{\eta \in \mathcal{X}} e^{-\beta H(\eta)}$ .

Given two non-empty disjoint sets $A, \, B$ the \textit{capacity} of the pair $A, \, B$ is defined by
\begin{equation}\label{capacity}
   \text{CAP}(A,B):= \min_{\substack{h:\mathcal{X}\to[0,1] \\ {h_{|A}=1,h_{|B}=0}}} \mathfrak{E}(h).
\end{equation}
From this definition it follows immediately that the capacity is a \textit{symmetric} function of the sets $A$ and $B$.\\
The right hand side of (\ref{capacity}) has a unique minimizer $h_{A,B}^*$ called \textit{equilibrium potential} of the pair $A, \, B$ given  by
\begin{equation}
h_{A,B}^*(\eta)=\mathbb{P}_\eta(\tau_A<\tau_B),
\end{equation}
for any $\eta\in A\cup B$.

Hence, inserting a general test function $h$ in the Dirichlet form, one obtains an \emph{upper bound} for the capacity. Obviously, the closer $h$ is to the equilibrium potential, the sharper is the bound.

\paragraph{Capacity as the maximum of the expectation of a flow dependent variable.}
A remarkable property of capacity is that it can be characterized also by  another variational principle, useful 
to obtain \emph{a lower bound}. Think to $\mathcal{X}$ as the vertex set of a graph $(\mathcal{X},\mathcal{L})$ whose edge set $\mathcal{L}$ consists of all pairs $(\sigma,\eta)$ with $\sigma,\eta \in \mathcal{X}$ for which $P(\sigma,\eta)>0$ (see also \cite{nardi2012sharp, bovier2010homogeneous, bianchi2009sharp} for further details and applications
to several models). 
\begin{definition}
\label{unit-flow}
Given two non-empty disjoint sets $A,B\subset \mathcal{X}$, a loop-free non-negative unit
flow, $f$, from $A$ to $B$ is a function $f\colon\,E \rightarrow [0,\infty)$ such
that:
\begin{itemize}
\item[(a)] $(f(e)>0 \Longrightarrow f(-e)=0)$ $\forall\,e\in E$.
\item[(b)] $f$ satisfies Kirchoff's law:
\begin{equation}\label{ab.1}
\sum_{\sigma'\in\mathcal{X}} f(\sigma,\sigma') = \sum_{\sigma''\in\mathcal{X}} f(\sigma'',\sigma),
\qquad \forall\,\sigma\in \mathcal{X}\backslash (A\cup B).
\end{equation}
\item[(c)] $f$ is normalized:
\begin{equation}\label{normalization}
\sum_{\sigma\in A} \sum_{\sigma'\in\mathcal{X}} f (\sigma,\sigma')
= 1 = \sum_{\sigma\in\mathcal{X}} \sum_{\sigma \in B} f(\sigma'',\sigma ).
\end{equation}
\item[(d)] Any path from $A$ to $B$ along edges $e$ such that $f(e)>0$ is self-avoiding, that is, visits each configuration at most once.
\end{itemize}
The space of all loop-free non-negative unit flows from $A$ to $B$ is denoted by
$\mathbb{U}_{A,B}$.
\end{definition}

A loop-free non-negative unit flow $f$ is naturally associated with a probability measure $\mathbb{P}^f$ on self-avoiding paths. To see this, define $F(\sigma) = \sum_{\sigma'\in\mathcal{X}}
f(\sigma,\sigma')$, $\sigma\in\mathcal{X} \setminus B$. Then $\mathbb{P}^f$ is the Markov chain $(\sigma_n)_{n\in\mathbb{N}_0}$
with initial distribution $\mathbb{P}^f (\sigma_0) = F(\sigma_0)\mathbb{1}_A(\sigma_0)$, transition probabilities
\begin{equation}
\label{ab.3}
q^f (\sigma,\sigma' )= \frac{f(\sigma,\sigma')}{F(\sigma)}, \qquad \sigma \in \mathcal{X} \setminus B,
\end{equation}
such that the chain is stopped upon arrival in $B$. In terms of this probability measure, we have the following proposition:
\begin{proposition}[Berman–Konsowa principle: flow version] \label{BermanKonsowa}
Let $A,B \subset \mathcal{X}$ be two non-empty disjoint sets. Then, with the notation introduced
above and denoting by $\gamma$ a self-avoiding path from $A$ to $B$,
\begin{equation}\label{ab.5}
\text{CAP}(A,B)=\sup_{f\in \mathbb{U}_{A,B}}\mathbb{E}^f\left(\left[
\sum_{e\in\gamma}\frac{f(e_l,e_r)}{\mu(e_l)\,p(e_l,e_r)}\right]^{-1}\right),
\end{equation}
where $e_l$ and $e_r$ are the endpoints
of edge $e$ and the expectation is taken with respect to the measure $\mathbb{P}^f$ on self-avoiding paths.
\end{proposition}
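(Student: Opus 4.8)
The plan is to prove the two inequalities in \eqref{ab.5} separately, with the Dirichlet principle \eqref{capacity} as the common tool. Throughout I would abbreviate $c(\sigma,\eta):=\mu(\sigma)\,p(\sigma,\eta)$; the detailed balance condition \eqref{reversibility} gives $c(\sigma,\eta)=c(\eta,\sigma)$, so that $\mathfrak{E}(h)=\sum_e c(e_l,e_r)[h(e_l)-h(e_r)]^2$, the sum running over unoriented edges. The first, preliminary, step is the identity
\begin{equation*}
\mathbb{E}^f\Big[\sum_{e\in\gamma}g(e)\Big]=\sum_{e}g(e)\,f(e)\qquad\text{for every }f\in\mathbb{U}_{A,B}\text{ and every }g\colon E\to[0,\infty),
\end{equation*}
which reduces to $\mathbb{P}^f(e\in\gamma)=f(e)$. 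I would prove the latter by observing that conditions (a) and (d) make $\mathrm{supp}(f)$ a directed acyclic graph, so one can argue by induction along a topological ordering: $\mathbb{P}^f(\gamma\ni\sigma)=F(\sigma)$ for $\sigma\in A$ by the choice of initial distribution, and for the remaining $\sigma$ one combines the inductive hypothesis with Kirchhoff's law \eqref{ab.1} and the transition rule \eqref{ab.3}; finally $\mathbb{P}^f(e\in\gamma)=\mathbb{P}^f(\gamma\ni e_l)\,q^f(e_l,e_r)=f(e)$ by self-avoidance.

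For the bound $\text{CAP}(A,B)\ge\sup_f\mathbb{E}^f(\cdots)$, fix $f\in\mathbb{U}_{A,B}$ and any $h\colon\mathcal{X}\to[0,1]$ with $h|_A=1$, $h|_B=0$. Along a self-avoiding path $\gamma$ from $A$ to $B$ the increments $h(e_l)-h(e_r)$ telescope to $1$, so Cauchy--Schwarz yields
\begin{equation*}
1=\Big(\sum_{e\in\gamma}\big[h(e_l)-h(e_r)\big]\Big)^2\le\Big(\sum_{e\in\gamma}\frac{f(e)}{c(e_l,e_r)}\Big)\Big(\sum_{e\in\gamma}\frac{c(e_l,e_r)\big[h(e_l)-h(e_r)\big]^2}{f(e)}\Big).
\end{equation*}
Rearranging, taking $\mathbb{E}^f$, and applying the preliminary identity with $g(e)=c(e_l,e_r)[h(e_l)-h(e_r)]^2/f(e)$ I would obtain
\begin{equation*}
\mathbb{E}^f\!\left(\Big[\sum_{e\in\gamma}\frac{f(e)}{c(e_l,e_r)}\Big]^{-1}\right)\le\sum_{e\in\mathrm{supp}(f)}c(e_l,e_r)\big[h(e_l)-h(e_r)\big]^2\le\mathfrak{E}(h),
\end{equation*}
the last inequality because by (a) the support of $f$ contains at most one orientation of each unoriented edge. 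Minimising over admissible $h$ via \eqref{capacity} and then taking $\sup_f$ gives this half.

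For the reverse bound I would exhibit a flow attaining the value $\text{CAP}(A,B)$. Let $h^*=h^*_{A,B}$ be the equilibrium potential and set $f^*(\sigma,\eta):=\big[c(\sigma,\eta)\big(h^*(\sigma)-h^*(\eta)\big)\big]_+$, $\hat f:=f^*/\text{CAP}(A,B)$. I claim $\hat f\in\mathbb{U}_{A,B}$: (a) is built into the positive part; for (b), $f^*(\sigma,\eta)-f^*(\eta,\sigma)=c(\sigma,\eta)(h^*(\sigma)-h^*(\eta))$ and $h^*$ is harmonic off $A\cup B$, so the out-flow of $\hat f$ equals its in-flow at every interior site, while summation by parts identifies the out-flow from $A$ with $\mathfrak{E}(h^*)=\text{CAP}(A,B)$, giving (c) after normalisation; and (d) holds because $h^*$ decreases strictly along any path in $\mathrm{supp}(f^*)$, which forces the path to be self-avoiding and, by flow conservation and finiteness of $\mathcal{X}$, to end in $B$. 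For this flow the bracket is deterministic by telescoping:
\begin{equation*}
\sum_{e\in\gamma}\frac{\hat f(e)}{c(e_l,e_r)}=\frac{1}{\text{CAP}(A,B)}\sum_{e\in\gamma}\big(h^*(e_l)-h^*(e_r)\big)=\frac{1}{\text{CAP}(A,B)},
\end{equation*}
so $\mathbb{E}^{\hat f}\!\left(\big[\sum_{e\in\gamma}\hat f(e)/c(e_l,e_r)\big]^{-1}\right)=\text{CAP}(A,B)$, hence $\sup_f\mathbb{E}^f(\cdots)\ge\text{CAP}(A,B)$. Combining the two halves proves \eqref{ab.5}.

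The step I expect to be the main obstacle is the preliminary identity: verifying that $\mathbb{P}^f$ is genuinely a probability measure on self-avoiding paths terminating in $B$ and that it charges each directed edge $e$ with probability exactly $f(e)$ requires carefully combining the acyclicity forced by (a) and (d), Kirchhoff's law, and the normalisation. Once this is in place, the Cauchy--Schwarz estimate and the telescoping computation for the harmonic flow are routine. Since this is a classical statement, in the final version I would either carry out the argument above in full or simply refer to \cite{bovier2016metastability}.
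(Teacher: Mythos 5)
The paper offers no proof of Proposition~\ref{BermanKonsowa}: it is quoted as a classical variational principle and used as a black box, with the reader referred to \cite{bovier2016metastability} (see also \cite{bianchi2009sharp, bovier2010homogeneous, nardi2012sharp}). So there is nothing in the paper to compare your argument against; judged on its own, it is the standard proof of the Berman--Konsowa principle and it is correct. The three ingredients --- the edge-marginal identity $\mathbb{P}^f(e\in\gamma)=f(e)$, the Cauchy--Schwarz estimate tested against an arbitrary admissible $h$ and combined with the Dirichlet principle \eqref{capacity}, and the harmonic unit flow $\hat f$ built from the equilibrium potential, for which the random variable inside the expectation is deterministic and equal to $\text{CAP}(A,B)$ --- are exactly those of the proof in \cite{bovier2016metastability}. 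You have also correctly isolated the only delicate step, namely verifying that $\mathbb{P}^f$ is a genuine probability measure on self-avoiding paths terminating in $B$ and that it charges each oriented edge with probability $f(e)$; when writing this out you should note that conditions (a)--(d) of Definition~\ref{unit-flow} as stated do not literally forbid flow back into $A$, so either one restricts to flows whose support contains no edge entering $A$ (such flows suffice for the supremum, and your harmonic flow is automatically of this type since $h^*$ is strictly decreasing along its support), or one handles the contribution of such edges separately. Given that the paper itself only quotes the result, replacing your sketch by a reference to \cite{bovier2016metastability} would be entirely consistent with its treatment.
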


Thanks to this variational principle, any flow provides a \emph{computable lower bound} for the capacity.

\paragraph{Upper bound.}
Consider the following sets:
\begin{itemize}[label=\raisebox{0.25ex}{\tiny \textbullet}]
    \item $\mathcal{X}^* \subset \mathcal{X}$ defined as 
          the subgraph obtained by removing all vertices $\sigma$ 
          with $\Phi(\underline{-1}, \sigma) > \Gamma^{Hex}+H(\underline{-1})$ or
          with $\Phi(\underline{+1}, \sigma) > \Gamma^{Hex}+H(\underline{-1})$
          together with all edges incident to these vertices (note that, in particular,
          $\mathcal{X}^*$ does not contain vertices with 
          $H(\sigma) > \Gamma^{Hex}+H(\underline{-1})$);
          and all edges incident to these vertices;
    \item $A:=\{\sigma \in \mathcal{X} \, | \, 
                 \Phi(\underline{-1},\sigma) < \Gamma^{Hex}+H(\underline{-1}) \} 
             \subset \mathcal{X}^{*}$;
    \item $B:=\{\sigma \in \mathcal{X} \, | \,
                  \Phi(\sigma,\underline{+1}) < \Gamma^{Hex}+H(\underline{-1}) \}
              \subset \mathcal{X}^{*}$;
    \item $\mathcal{G}_i \subset \mathcal{X}^*, i=1, \ldots, I$ a collection of sets
          such that, for all $i$, $\Phi(\underline{-1}, \sigma) = \Phi(\underline{+1}, \sigma)$ and $\sigma \sim \eta$ and $\sigma \sim \eta'$, for all $\eta \in A$, $\eta' \in B$
          for all $\sigma \in \mathcal{G}_i$;
     \item $\mathcal{N}_j^A \subset \mathcal{X}^*, j=1, \ldots, J_A$ a collection of sets
          such that, for all $j$ and for all $\sigma \in \mathcal{N}_j^A$, 
          $\Phi(\underline{-1}, \sigma) 
            = \Phi(\underline{+1}, \sigma)$
          and any path $\omega: \sigma \to \underline{+1}$ must be such that $\omega \cap A \neq \emptyset$;
    \item $\mathcal{N}_j^B \subset \mathcal{X}^*, j=1, \ldots, J_B$ a collection of sets
          such that, for all $j$ and for all $\sigma \in \mathcal{N}_j^B$, 
          $\Phi(\underline{+1}, \sigma) 
            = \Phi(\underline{-1}, \sigma)$
          and any path $\omega: \sigma \to \underline{-1}$ must be such that $\omega \cap B \neq \emptyset$.
\end{itemize}

In words, sets $A$ and $B$ are the cycles with stability level $\Gamma^{Hex}$
around $\underline{-1}$ and $\underline{+1}$ respectively.
Further sets $\mathcal{G}_{i}$ consists of those configurations belonging to
some minimal gate whereas $\mathcal{N}_j^A$ and $\mathcal{N}_j^B$ are
\emph{dead ends}, that is, to achieve a transition between $\underline{-1}$ and $\underline{+1}$ starting from one of these sets the dynamics
must go back to $A$ or $B$.

Note that sets $\mathcal{G}_i$, $\mathcal{N}_j^A$ and $\mathcal{N}_j^B$ are
not connected via \emph{allowed moves}.

By standard arguments (see \cite[Chapter 16]{bovier2016metastability}) it is straightforward to check that
the sum over $\sigma, \eta \in \mathcal{X}$ in \eqref{dirichlet} can be substituted by 
a sum over $\mathcal{X}^*$ at the price of a factor $[1+o(1)]$.

To find an upper bound for the capacity we choose a test function $h^U$ 
defined in the following way:
\begin{equation}
    h^U(\sigma):=
    \begin{cases}
    1 & \qquad \sigma \in A \cup \{ \bigcup_{j=1}^{J_A}\mathcal{N}_j^A \}  \\
    c_i & \qquad \sigma \in \mathcal{G}_i, \, i=1,\ldots, I \\
    0 & \qquad \sigma \in B \cup \{ \bigcup_{j=1}^{J_B}\mathcal{N}_j^B \}
\end{cases}
\end{equation}

It is convenient to call 
$\mathcal{X}_A := A \cup
    \{\bigcup_{j=1}^{J_A}\mathcal{N}_j^A\}$, 
$\mathcal{X}_B := B \cup 
    \{ \bigcup_{j=1}^{J_B}\mathcal{N}_j^B \}$.

Consider the case $\delta \in (0, \frac{1}{2})$. 
By Theorem~\ref{selle}, it follows that $I = 2$
and $\mathcal{G}_1 \equiv \mathcal{\tilde{S}}(A^*_1)$
and $\mathcal{G}_2 \equiv \mathcal{\tilde{D}}(A^*_1)$. Define $(\mathcal{C}^*)^-$ as the set of configurations where the cluster has the shape of $E_{B_5}(r^*)$ with attached a triangular unit along the longest side and $(\mathcal{C}^*)^+$ as the set of configurations where the cluster has the shape of $E_{B_5}(r^*)$ with an incomplete bar of cardinality $3$ along the longest side. 
Note that $(\mathcal{C}^*)^- \subset A$ and $(\mathcal{C}^*)^+ \subset B$.
\begin{align}
    \text{CAP}(A,B) & \leq 
        (1 + o(1)) \Big[
            \min_{c_1, c_2 \in [0,1]} 
            \min_{
                \substack{h:\mathcal{X}^*\to [0,1] \\ 
                {h_{|\mathcal{X}_A}=1, \, h_{|\mathcal{X}_B}=0}, \\ {h_{|\mathcal{G}_1}=c_1}, \, h_{|\mathcal{G}_2}=c_2}
            }  
            \frac{1}{2}\sum_{\sigma, \eta \in \mathcal{X}^*} 
                \mu(\sigma)P(\sigma,\eta) [h(\sigma)-h(\eta)]^2
        \Big] \notag \\
    & = (1 + o(1))\Big[
        \min_{c_1,c_2 \in [0,1]} 
        \sum_{\substack{\sigma \in \mathcal{X}_A, \\ \eta \in \mathcal{G}_i, \, i=1,2 \\ \sigma \sim \eta}} \frac{e^{-\beta H(\sigma)}}{Z|\Lambda|} (1-c_i)^2 +
        \sum_{\substack{\sigma \in \mathcal{X}_B, \\ \eta \in \mathcal{G}_i, \, i=1,2\\ \sigma \sim \eta}} \frac{e^{-\beta H(\sigma)}}{Z|\Lambda|} c_i^2 \Big] \notag \\
    & = (1 + o(1))\Big[ \frac{e^{-\beta \Gamma^{Hex}}}{Z|\Lambda|} \min_{c_1, c_2 \in [0,1]} \sum_{\substack{\sigma \in (\mathcal{C}^*)^-, \\ \eta \in \mathcal{G}_i, \, i=1,2}}  (1-c_i)^2 + \sum_{\substack{\sigma \in (\mathcal{C}^*)^+, \\ \eta \in \mathcal{G}_i, \, i=1,2 }} c_i^2 \Big] \notag \\
    & = (1 + o(1))\Big[ \frac{e^{-\beta \Gamma^{Hex}}}{Z|\Lambda|} \min_{c_1, c_2 \in [0,1]} \sum_{\substack{\eta \in \mathcal{G}_i, \\ i=1,2}}  | (\mathcal{C}^*)^- \sim \eta| (1-c_i)^2 + \sum_{\substack{\eta \in \mathcal{G}_i, \\ i=1,2}} |(\mathcal{C}^*)^+ \sim \eta| c_i^2 \Big]
\end{align}

where we used the definition of the Gibbs measure~\eqref{def:gibbs} 
and the expression of the transition probability~\eqref{transitionprob} and we observed that
$\sigma \in \mathcal{X}_A$ and $\eta \in \mathcal{G}_i$
are neighbors only if $\sigma \in (\mathcal{C}^*)^-$
and 
$\sigma \in \mathcal{X}_B$ and $\eta \in \mathcal{G}_i$
are neighbors only if $\sigma \in (\mathcal{C}^*)^+$.

For all $\eta \in \mathcal{G}_1$ we have that $| (\mathcal{C}^*)^- \sim \eta|=1$, whereas for all $\eta \in \mathcal{G}_2$ we have that $| (\mathcal{C}^*)^- \sim \eta|=2$.
Moreover, we have $|(\mathcal{C}^*)^+ \sim \eta|=1$ for all $\eta \in \mathcal{G}_1 \cup \mathcal{G}_2$. Therefore, we obtain 
\begin{align}
    \text{CAP}(A,B) & \leq (1 + o(1))\Big[\frac{e^{-\beta \Gamma^{Hex}}}{Z|\Lambda|} \min_{c_1,c_2 \in [0,1]} \sum_{\eta \in \mathcal{G}_1} (1-c_1)^2+ c_1^2 + \sum_{\eta \in \mathcal{G}_2} 2(1-c_2)^2+c_2^2 \Big] \notag \\
    & \leq  (1 + o(1))\Big[\frac{e^{-\beta \Gamma^{Hex}}}{Z|\Lambda|} \min_{c_1,c_2 \in [0,1]} |\mathcal{G}_1| (2c_1^2-2c_1+1) + |\mathcal{G}_2| (3c_2^2-4c_2+2) \Big]
\end{align}
The functions $f(c_1):=2c_1^2-2c_1+1$ and $f(c_2):=3c_2^2-4c_2+2$ are minimized respectively at $c_1=\frac{1}{2}$ and $c_2=\frac{2}{3}$. 
Moreover, $|\mathcal{G}_1|=6(l-1)|\Lambda|$ and $|\mathcal{G}_2|=3(l-1)|\Lambda|$ where $l$ is the length of the longest side of $E_{B_5}(r^*)$, i.e., $l=r^*+2$.
Finally, we have

\begin{align}\label{eq:upper_bound_capacity_small_delta}
    \text{CAP}(A,B) & \leq 
    (1+o(1)) \Big[ \frac{e^{-\beta \Gamma^{Hex}}}{Z|\Lambda|} (6(l-1)|\Lambda| \frac{1}{2} + 3(l-1)|\Lambda| \frac{2}{3}) \Big] \notag \\
    & =(1+o(1)) \Big[5(l-1)\frac{e^{-\beta \Gamma^{Hex}}}{Z} \Big].
\end{align}

The case $\delta \in (\frac{1}{2}, 1)$ can be treated likewise
with the following modifications:
the critical area $A^*_1$ becomes $A^*_2$; 
$E_{B_5}(r^*)$ is replaced by $E_{B_1}(r^*+1)$;
$|\mathcal{G}_1|=12(l-1)|\Lambda|$ and $|\mathcal{G}_2|=6(l-1)|\Lambda|$ where $l$ is the length of the longest side of $E_{B_1}(r^*+1)$, i.e., again $l=r^*+2$.
We these changes we get
\begin{align}\label{eq:upper_bound_capacity_big_delta}
    \text{CAP}(A,B) \leq (1+o(1)) \Big[ 10(l-1)\frac{e^{-\beta \Gamma^{Hex}}}{Z} \Big].
\end{align}

\paragraph{Lower bound.} 
By the symmetry of the capacity, $\text{CAP}(\underline{-1},\underline{+1})=\text{CAP}(\underline{+1},\underline{-1})$. We consider a loop-free unitary flow from $\underline{+1}$ to $\underline{-1}$ to estimate $\text{CAP}(\underline{+1},\underline{-1})$. 

Let $(\mathcal{C}^*)^+$, respectively $(\mathcal{C}^*)^-$, be the set of configurations that contain a cluster with the shape of a quasi-regular hexagon $E_{B_5}(r^*)$ with an incomplete bar of cardinality 3, respectively 1, attached along its longest side. 
If $\delta \in (0,1/2)$, choose the unitary flow $f$ as follows.
Distribute the mass of the test flow equally among a suitable subset of optimal paths. 
Consider a configuration $\sigma \in (\mathcal{C}^*)^+$ and observe that the triangular units in
the incomplete bar of $\sigma$ are of two types: the first type 
consists of those triangular units at lattice distance one from $E_{B_5}(r^*)$ (the two extreme of the 
incomplete bar)
whereas the second type consists of those triangular units at lattice distance two from $E_{B_5}(r^*)$
(the central triangular unit of the incomplete bar).
Each $\sigma \in (\mathcal{C}^*)^+$ can be 
univocally identified by two coordinates $x,y$: $x$ is the coordinate of the center of $E(r^*+1)$ and $y$ is the coordinate of the 
first triangular unit of the incomplete bar.
Further, for each $\sigma \in (\mathcal{C}^*)^+$, there exists an optimal path from $\sigma$ to 
$\underline{+1}$. In this way it is possible to find
a bijection between the set of configurations $(\mathcal{C}^*)^+$ and
the set of paths from $\sigma$ to $\underline{+1}$ for 
$\sigma \in (\mathcal{C}^*)^+$. 
Call $\gamma_{x,y}$ the time reversal of the deterministic path 
from the configuration $\sigma$ (identified by $x,y$) to $\underline{+1}$.
Note that from $(\mathcal{C}^*)^+$ the path $\gamma_{x,y}$ 
can be extended
towards $\underline{-1}$ by flipping one of the spins of the incomplete bar with probability $1/3$. If one of the two spins of the first type is flipped, creating an elementary rhombus, then the path is extended deterministically by
flipping the remaining spins from plus to minus starting from the spin belonging to the second type. Otherwise, if the flipped spin
is that of the second type, then the path can be extended by flipping one of the spins of the first type with probability $1/2$, and then
proceeding deterministically to flip the remaining spins 
from plus to minus. \\
Let $K$ be the number of negative spins in configurations
in $(\mathcal{C}^*)^+$, and let $\nu_0:=\frac{1}{3|\Lambda|(l-1)}$ be the number of possible configurations in $(\mathcal{C}^*)^+$.
Consider the following unitary flow from $\underline{+1}$ to $\underline{-1}$, see Figure \ref{fig:flow}.
\begin{align}\label{flow}
&f(\sigma',\sigma'')
&=
\begin{cases}
\nu_0 
& \text{if}\,\,\sigma'=\gamma_{x,y}(k),\,\sigma''=\gamma_{x}(k+1) \\
& \text{for some }\, x,y\in \Lambda, \, 0\leq k\leq K-1 \\
\frac{\nu_0}{3}& 
\text{if}\,\,\sigma'=\gamma_{x,y}(K),\,\sigma''=\gamma_{x,y}(K+1)\in\mathcal{\tilde{D}}(A_1^*)  \\
&\text{for some }\,x,y\in \Lambda \\
\frac{\nu_0}{3}
& \text{if}\,\,\sigma'=\gamma_{x,y}(K),\,\sigma''=\gamma_{x,y}(K+1)\in\mathcal{\tilde{S}}(A_1^*)  \\
&\text{for some }\,x,y\in \Lambda \\
\frac{\nu_0}{6}& 
\text{if}\,\,\sigma'=\gamma_{x,y}(K+1)\in\mathcal{\tilde{D}}(A_1^*) ,\,\sigma''=\gamma^i_{x,y}(K+2) \\
&\text{for some }\,x,y\in \Lambda \\
\frac{\nu_0}{3}& 
\text{if}\,\,\sigma'=\gamma_{x,y}(K+1)\in\mathcal{\tilde{S}}(A_1^*) ,\,\sigma''=\gamma^i_{x,y}(K+2) \\
&\text{for some }\,x,y\in \Lambda \\
\frac{\nu_0}{2}& 
\text{if}\,\,\sigma'=\gamma_{x,y}(K+2)\in\mathcal{\tilde{D}}(A_1^*) ,\,\sigma''=\gamma_{x,y}(K+3) \\
&\text{for some }\,x,y\in \Lambda \\
\nu_0 
& \text{if}\,\,\sigma'=\gamma_{x,y}(k),\,\sigma''=\gamma_{x,y}(k+1)\\
& \text{for some }\,x,y\in \Lambda,\,K+3\leq k\leq |\Lambda|-1 \\
0 &\text{otherwise}.
\end{cases}
\end{align}
where $\gamma_{x,y}(k)$ is the $k$-th configuration visited by the path $\gamma_{x,y}$.

    \begin{figure}[!htb]
        \centering
        \includegraphics[scale=1.5]{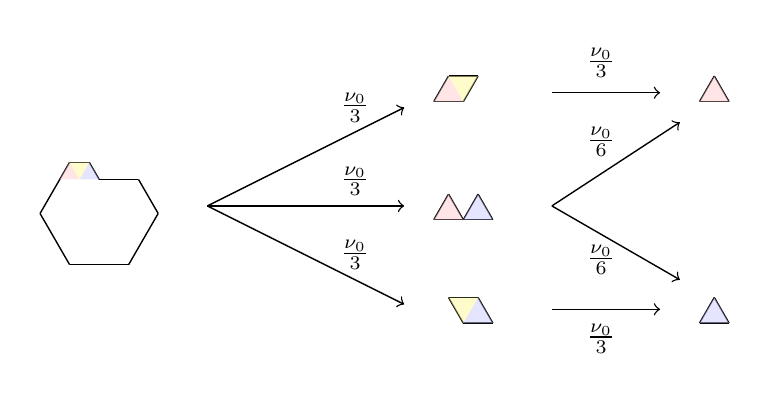}
        \caption{The chosen flow from $(\mathcal{C}^*)^+$ to $(\mathcal{C}^*)^-$. In particular, starting from the top, the second flow crosses a configuration in $\mathcal{\tilde{D}}(A_1^*)$, the first and the last one cross a configuration in $\mathcal{\tilde{S}}(A_1^*)$. The color red and the blue represent the left and the right triangular units of the first type respectively.}
        \label{fig:flow}
    \end{figure}

The flow described above can be used to assign to each path 
$\gamma=(\gamma(0),...,\gamma(|\Lambda|))$ with 
$\gamma(0)=\underline{+1}$ and $\gamma(|\Lambda|)=\underline{-1}$ 
a probability $\mathbb{P}(\gamma)$ defined as
\begin{equation}\label{eq:path_flow_probability}
    \mathbb{P}(\gamma):=\mathbb{P}^f(\mathbb{X}=\gamma)=
    \frac{\prod_{i=1}^{|\Lambda|}f(\gamma(i-1),\gamma(i))}{\prod_{i=1}^{|\Lambda|}F(\gamma(i-1))}.
\end{equation}
Non null probability paths
$\underline{+1}\to\underline{-1}$
can be partitioned into two sets
$\mathcal{I}_{\tilde S}$ and $\mathcal{I}_{\tilde D}$.
$\mathcal{I}_{\tilde S}$ contains those paths
such that $\gamma(K+1)$ is in $\mathcal{\tilde{S}}(A_1^*)$
whereas $\mathcal{I}_{\tilde S}$ contains those paths such that
$\gamma(K+1)$ is in $\mathcal{\tilde{D}}(A_1^*)$.

By Proposition \ref{BermanKonsowa} and by the choice of the flow \eqref{flow}, we have
\begin{align}\label{lb2}
& \text{CAP}(\underline{+1},\underline{-1}) \geq 
 \sum_{\gamma\in  \mathcal{I}_{\tilde S} \cup \mathcal{I}_{\tilde D}} \mathbb{P}(\gamma) \left[ \sum_{k=0}^{|\Lambda|-1}
\frac{f(\gamma(k),\gamma(k+1))}{\mu(\gamma(k))p(\gamma(k),\gamma(k+1))}
\right]^{-1} \notag \\
& \geq \sum_{\gamma\in  \mathcal{I}_{\tilde S}} \mathbb{P}(\gamma) 
\frac{e^{-\beta\Gamma^{Hex}}}{Z|\Lambda|}
\left[ f(\gamma(K),\gamma(K+1))+f(\gamma(K+1),\gamma(K+2))
\right]^{-1}(1+o(1)) \notag \\
& + \sum_{\gamma\in  \mathcal{I}_{\tilde D}} \mathbb{P}(\gamma) 
\frac{e^{-\beta\Gamma^{Hex}}}{Z|\Lambda|}
\left[ f(\gamma(K),\gamma(K+1))+f(\gamma(K+1),\gamma(K+2))
\right]^{-1}(1+o(1)) \notag \\
& \geq \sum_{\gamma\in  \mathcal{I}_{\tilde S}} \mathbb{P}(\gamma) 
\frac{e^{-\beta\Gamma^{Hex}}}{Z|\Lambda|}
\frac{3}{2\nu_0} (1+o(1)) + 
\sum_{\gamma\in  \mathcal{I}_{\tilde D}} \mathbb{P}(\gamma) 
\frac{e^{-\beta\Gamma^{Hex}}}{Z|\Lambda|}
\frac{2}{\nu_0} (1+o(1)) (1+o(1)).
\end{align}
Observe that there are $|\Lambda|/2$ hexagons in $\Lambda$, $6$ choices for the longest side of $E_{B_5}(r^*)$, and $l-1$ positions for the incomplete bar of cardinality three. Moreover, from $(\mathcal{C}^*)^+$ there are two possible choices to obtain $\mathcal{\tilde{S}}(A_1^*)$ by removing one of two spins of the first type. From $(\mathcal{C}^*)^+$ there is only one choice to obtain $\mathcal{\tilde{D}}(A_1^*)$ by removing the spin of the second type, but to the next step there are two choices to obtain a configuration with only one triangular unit. Thus, the number of the paths in both $\mathcal{I}_{\tilde S}$
and $\mathcal{I}_{\tilde D}$ is equal to $6|\Lambda|(l-1)$. 
Furthermore, by \eqref{eq:path_flow_probability} we have
\begin{equation}
    \mathbb{P}(\gamma)=f(\gamma(K),\gamma(K+1))=\nu_0/3,
\end{equation}
if $\gamma \in \mathcal{I}_{\tilde S}$. Otherwise, if $\gamma \in \mathcal{I}_{\tilde D}$, we have
\begin{equation}
   \mathbb{P}(\gamma)=f(\gamma(K+1),\gamma(K+2))=\nu_0/6.
\end{equation}
Hence, it follows
\begin{equation}
    \text{CAP}(\underline{+1},\underline{-1}) \geq 
    \frac{e^{-\beta\Gamma^{Hex}}}{Z} 5(l-1)(1+o(1)).
\end{equation}
that matches the upper bound~\eqref{eq:upper_bound_capacity_small_delta}.

If $\delta \in (1/2,1)$, it is possible to construct the same flow described above.
In this case, the incomplete bar of length 3 can be attached
on one of the two longest sides of $E_{B_1}(r^*+1)$. Thus, the number of the paths in both $\mathcal{I}_{\tilde S}$ and $\mathcal{I}_{\tilde D}$ is equal to $12|\Lambda|(l-1)$. 
The same arguments allow us to write
\begin{equation}
    \text{CAP}(\underline{+1},\underline{-1}) \geq 
    \frac{e^{-\beta\Gamma^{Hex}}}{Z} 10(l-1) (1+o(1)),
\end{equation}
which matches the upper bound~\eqref{eq:upper_bound_capacity_big_delta}.

\section{Polyiamonds with minimum edge-perimeter and maximal area}\label{geom2}
The problem of finding the shape minimizing the perimeter of a polyiamond given its area is relevant in a handful of fields and a vast literature on the topic has flourished in several communities (see for instance \cite{FuSie, grussien2012iso, nagy2013isoperimetrically, davoli2017sharp, schmidt2013ground}).
However, depending on the application,
different definitions of perimeter may be taken into account.
For instance, one may wish to consider, for  the perimeter of a polyiamond, the number of neighboring vertices or, as we do above, the number of boundary edges.
The problem of minimizing the perimeter might, therefore,
be different.

In this Section we leverage on the results in \cite{FuSie} where the properties of the \emph{site-perimeter} of polyiamonds are studied. More formally,
\begin{definition}\label{def:site_perimeter}
 	Given a polyiamond P, its site-perimeter s(P) is the number of empty triangular units 
    sharing at least one edge with the polyiamond.
\end{definition}

In \cite{FuSie} both polyiamonds with fixed site-perimeter and maximal area and polyiamonds with fixed area and minimal site-perimeter are identified. In particular, they prove that those polyiamonds referred to as quasi-regular hexagons in the previous Sections have both maximal area for fixed site-perimeter and minimal site-perimeter for fixed area. Moreover, they show that 
those polyiamonds that here we called \emph{standard} have minimal site-perimeter and provide an explicit formula for its value.
Here we will show, on one hand, that quasi-regular hexagons are the \emph{only} polyiamonds of maximal area for fixed site-perimeter and, on the other hand, that standard polyiamonds not only minimize the site-perimeter for fixed area, but they also minimize the edge-perimeter establishing Theorem~\ref{thm:optimality_of_standard_polyiamonds}. Moreover, we show that quasi-regular hexagons maximize the area for fixed edge-perimeter as well.

In the remainder of this Section we first recall the definitions and the results of 
\cite{FuSie} (see Subsection~\ref{sec:polyiamonds_known_results}) and then we show how these
results can be extended as mentioned above (see Subsection~\ref{sec:polyiamonds_new_results}). 
Finally we give the proof of Theorem~\ref{thm:optimality_of_standard_polyiamonds}.

\subsection{Site-perimeter of polyiamonds: known results}\label{sec:polyiamonds_known_results}
In \cite{FuSie} hexagons, living on the triangular lattice, are identified starting from an equilateral
triangle and ``cutting the corners'' (see \cite[Definition~2.1]{FuSie}).
In particular hexagons are parametrized by quadruples $(a, b, c, d)$ and $T^{d}_{a,b,c}$ denotes a hexagon obtained from an equilateral
triangle with side length $d$ and removing from its corners the equilateral
triangles of side lengths $a, b$ and $c$  (see Figure~\ref{fig:side_lengths_generic_T_notation_hexagon}).
This parametrization allows to express in a straightforward manner the area and the site-perimeter of any hexagon as follows:
\begin{align}\label{eq:polyiamonds_area_and_perimeter}
\begin{aligned}
    s(T_{a,b,c}^{d}) & = 3d - a - b - c \\
    \area{T_{a,b,c}^{d}} & = d^2 - a^2 - b^2 - c^2
\end{aligned}
\end{align}
Note that degenerate hexagons are included in this definition. If this is the case the ``hexagon'' may, indeed, be a triangle, a quadrilateral or a pentagon (see Figure~\ref{angles13}).

\begin{figure}[htb!]
    \centering
    \includegraphics[width=0.7\textwidth]{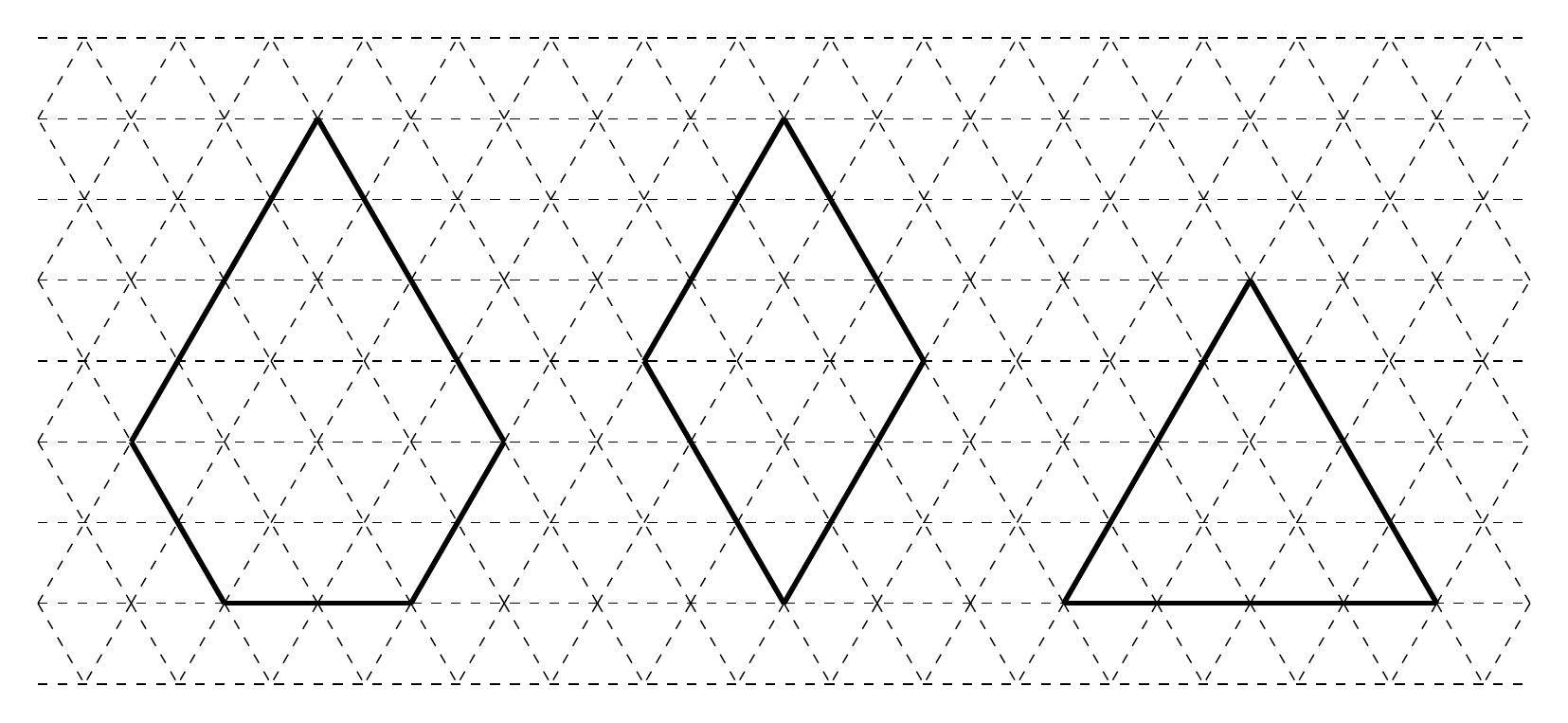}
    \caption{Examples of degenerate hexagons.}
    \label{angles13}
\end{figure}

\begin{remark}
    In general there exist two possible parametrizations identifying the same hexagon. 
\end{remark}

\begin{figure}
    \centering
    \includegraphics[width=0.5\textwidth]{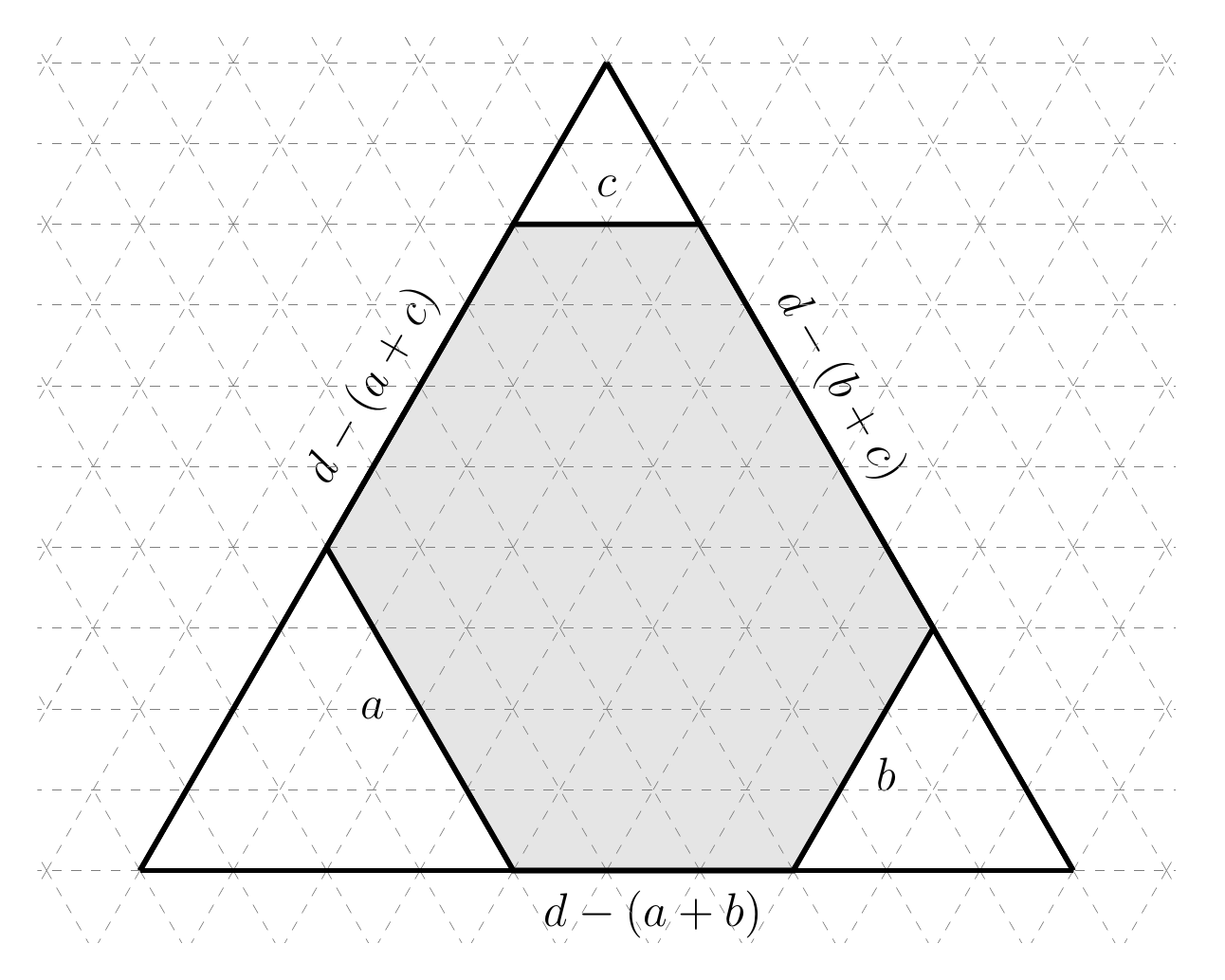}
    \caption{The side lengths of the hexagon obtained from the equilateral
            triangle of side length $d$ cutting the equilateral triangles
            of side lengths $a$, $b$ and $c$.}
    \label{fig:side_lengths_generic_T_notation_hexagon}
\end{figure}

In \cite[Proposition~3.4]{FuSie} the shape of those polyiamonds 
maximizing the area for a given site-perimeter is identified. 
In the following we show that these shapes are those that in this paper we called quasi-regular hexagons. 

Order the set of quasi-regular hexagons by increasing values of their area
and note that going from one quasi-regular hexagon to the next in this
sequence, the perimeter (both edge and site) increases by exactly one unit.
Consequently each quasi-regular hexagon can be identified univocally by its
perimeter. Write all possible values of the site-perimeter of a quasi-regular hexagon as
\begin{align}\label{eq:quasi_hexagon_perimeter_function_radius_and_numbars}
    s = 6r + i, \qquad r \ge 1, \qquad i \in \{0, 1, 2, 3, 4, 5\}.
\end{align}
Note that here $s(P)$ denotes the site-perimeter of polyiamond $P$ whereas in 
\cite{FuSie} the same notation identifies the area of $P$.

Since $s=6r$ is the site-perimeter of the regular hexagon of 
radius/side length $r$, $s = 6r + i$ is the site-perimeter of the quasi-regular
hexagon obtained by adding $i$ bars to the regular hexagon of radius $r$.

By constructing, explicitly, the shapes referred to in \cite[Proposition~3.4]{FuSie}, 
it is straightforward to check that these are the quasi-regular hexagons (see Fig.~\ref{fig:theorem_hexagons_examples}).
In particular there is the following correspondence between the notation used in \cite{FuSie} 
and the notation used in the previous Sections to denote quasi-regular hexagons
\begin{align}\label{eq:quasi_regular_hexagons_correspondence}
\begin{aligned}
    T^{\lfloor \frac{s}{2}\rfloor}_{r,r,r}, i = 0 & \quad \mbox{corresponds to} \quad E(r),\\
    T^{\lfloor \frac{s}{2}\rfloor}_{r-1,r,r}, i = 1 & \quad \mbox{corresponds to} \quad E_{B_1}(r),\\
    T^{\lfloor \frac{s}{2}\rfloor}_{r,r,r+1}, i = 2 & \quad \mbox{corresponds to} \quad E_{B_2}(r),\\
    T^{\lfloor \frac{s}{2}\rfloor}_{r,r,r}, i = 3 & \quad \mbox{corresponds to} \quad E_{B_3}(r),\\
    T^{\lfloor \frac{s}{2}\rfloor}_{r,r+1,r+1}, i = 4 & \quad \mbox{corresponds to} \quad E_{B_4}(r),\\
    T^{\lfloor \frac{s}{2}\rfloor}_{r,r,r+1}, i = 5 & \quad \mbox{corresponds to} \quad E_{B_5}(r).
\end{aligned}
\end{align}
Since there is only a quasi-regular hexagon for each value of $s$, 
the Proposition amounts to saying that quasi-regular hexagons maximize the area 
for a given site-perimeter.

As a consequence, \cite[Proposition~4.5]{FuSie} states, in our notation, 
that the minimal site-perimeter for a polyiamond of area $A$ 
is the site-perimeter of the smallest quasi-regular hexagon of area at least $A$.

\begin{figure}
    \centering
    \includegraphics[width=0.9\textwidth]{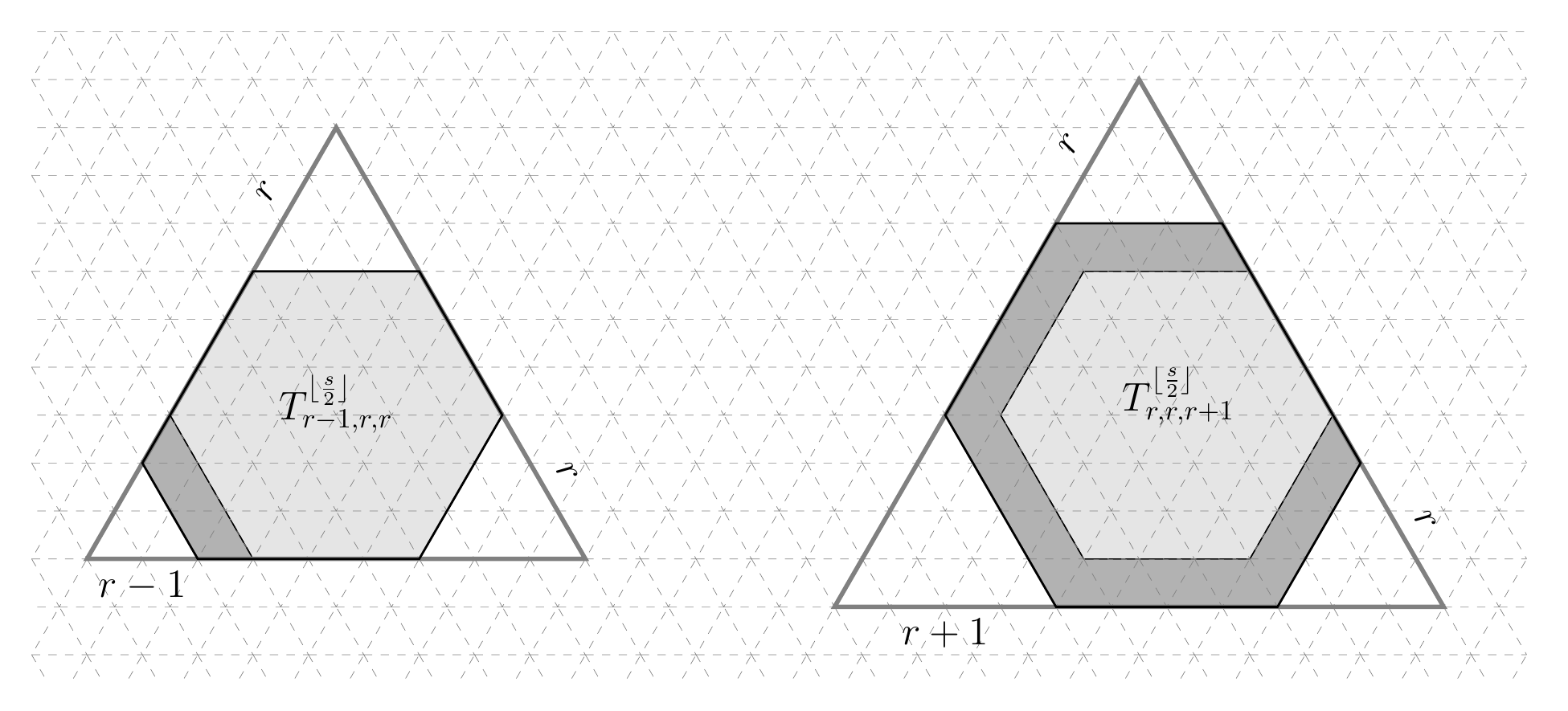}
    \caption{Two examples of the correspondences in~$\eqref{eq:quasi_regular_hexagons_correspondence}$. On the left the correspondence between 
    $T_{r-1, r, r}^{\lfloor \frac{s}{2} \rfloor}$ and $E_{B_1}(r)$. On the right the correspondence between
    $T^{\lfloor \frac{s}{2}\rfloor}_{r,r,r+1}$ and $E_{B_5}(r)$.}
    \label{fig:theorem_hexagons_examples}
\end{figure}

\subsection{Site-perimeter of polyiamonds: further results}\label{sec:polyiamonds_new_results}
We extend the result of \cite[Proposition~3.4]{FuSie} as follows
\begin{proposition}\label{thm:quasi_regular_hex_unique_area_maximizers_for_fixed_site_perimeter}
	Quasi-regular hexagons are the unique polyiamonds of maximal area for fixed site-perimeter.
\end{proposition}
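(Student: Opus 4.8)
The plan is to leverage the known result \cite[Proposition~3.4]{FuSie}, which identifies, for each value of the site-perimeter $s$, \emph{a} polyiamond of maximal area, together with the explicit area/perimeter formulas \eqref{eq:polyiamonds_area_and_perimeter} for hexagons $T^d_{a,b,c}$. First I would recall that, by \eqref{eq:quasi_hexagon_perimeter_function_radius_and_numbars}, any site-perimeter value can be written uniquely as $s = 6r+i$ with $r\ge 1$ and $i\in\{0,\dots,5\}$, and that the quasi-regular hexagon realizing this perimeter — explicitly listed in \eqref{eq:quasi_regular_hexagons_correspondence} — attains the maximal area $A_{\max}(s)$ given by \cite[Proposition~3.4]{FuSie}. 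So the content of the Proposition is a \emph{uniqueness} statement: any polyiamond $P$ with $s(P) = s$ and $\area{P} = A_{\max}(s)$ must be a quasi-regular hexagon.

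The key steps are as follows. (1) Observe that an area-maximal polyiamond for fixed site-perimeter must be simply connected (a hole, or a handle, could be filled in, strictly increasing the area while not increasing — indeed weakly decreasing — the site-perimeter), so $P$ has no holes and no internal boundary. (2) Show that such a $P$ cannot have an internal angle of $\tfrac{1}{3}\pi$, $\tfrac{4}{3}\pi$ or $\tfrac{5}{3}\pi$ on its (external) boundary: in each such case one can add one or two triangular units to fill/smooth the angle, strictly increasing the area while keeping the site-perimeter fixed or decreasing it, contradicting maximality. This is essentially the same local surgery used in the proof of Lemma~\ref{noX}, only now bookkeeping the \emph{site}-perimeter rather than the energy. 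Hence every internal angle of $P$ is $\tfrac{2}{3}\pi$ or $\pi$, i.e.\ $P$ is a (possibly degenerate) hexagon $T^d_{a,b,c}$ in the parametrization of \cite{FuSie}. (3) Among hexagons, use \eqref{eq:polyiamonds_area_and_perimeter}: for fixed $s = 3d-a-b-c$ one maximizes $\area{T^d_{a,b,c}} = d^2-a^2-b^2-c^2$; a direct convexity/rearrangement argument shows the maximizer requires $a,b,c$ as equal as possible given the parity/divisibility constraint imposed by $s$, and that these are exactly the quasi-regular hexagons listed in \eqref{eq:quasi_regular_hexagons_correspondence}. Since for each $s$ there is exactly one quasi-regular hexagon, uniqueness follows.

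I expect the main obstacle to be step (2): one must be careful that the local surgeries (filling a $\tfrac{5}{3}\pi$ reflex angle, smoothing a $\tfrac{4}{3}\pi$ angle, etc.) genuinely do not \emph{increase} the site-perimeter — unlike the edge-perimeter, the site-perimeter counts empty \emph{triangles} adjacent to $P$, so adding a unit can in principle expose new empty triangles elsewhere. One has to check, case by case on the local configuration around the angle, that the net change in the count of adjacent empty triangles is $\le 0$ while the area strictly increases; combined with maximality of $\area{P}$ this yields the desired contradiction. A secondary subtlety in step (3) is handling the degenerate hexagons (triangles, quadrilaterals, pentagons, cf.\ Figure~\ref{angles13}) on the same footing, but these are subsumed by allowing $a$, $b$, $c$ or $d-a-b$ to vanish in the optimization, so no separate treatment is needed. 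Once these points are settled, the Proposition follows by combining the reduction to hexagons with the elementary optimization over $(a,b,c,d)$.
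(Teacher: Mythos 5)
Your proposal is essentially correct, and its final step coincides with what the paper actually does, but you arrive there by a genuinely more self-contained route. The paper's proof works \emph{only} with hexagons $T^{d}_{a,b,c}$: it minimizes $M = s^2 - 6\area{T^{d}_{a,b,c}} = 3\alpha^2 + 2\beta$ with $\alpha = d-a-b-c$ and $\beta = (a-b)^2+(a-c)^2+(b-c)^2$, quotes the optimal values $M^\star$ from the proof of \cite[Proposition~3.4]{FuSie}, enumerates for each residue $i$ of $s$ modulo $6$ all quadruples attaining $M^\star$, and then disposes of the case of odd $i$ (where two optimal quadruples appear) by showing the two quadruples are the two parametrizations of one and the same hexagon. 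The reduction from arbitrary polyiamonds to hexagons is delegated entirely to \cite{FuSie}; your steps (1)--(2) re-prove that reduction by local surgeries, which is exactly the ingredient needed to upgrade "unique among hexagons" to "unique among all polyiamonds", so your version is arguably more complete on this point. Two remarks on your plan. First, your surgery for a $\frac{1}{3}\pi$ angle does not work as stated: adding an empty neighbor of a spike removes one contributing triangle but can expose two new ones, so the site-perimeter may \emph{increase}; fortunately, as you yourself note at the end, you do not need to exclude $\frac{1}{3}\pi$ angles by surgery, because once holes and the reflex angles $\frac{4}{3}\pi$ and $\frac{5}{3}\pi$ are excluded the polyiamond is a possibly degenerate hexagon, and the degenerate cases are eliminated by the optimization over $(a,b,c,d)$ -- you should drop the $\frac{1}{3}\pi$ case from step (2) to avoid a false claim. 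Second, your "make $a,b,c$ as equal as possible" rearrangement in step (3) must be carried out carefully enough to detect that for odd $s$ the minimizing parameters are not unique as quadruples but do identify a unique hexagon; this double-parametrization issue is the one place where the paper's case analysis does real work, and your sketch should not gloss over it.
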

\begin{proof}
    Let $s = 6r +i$ be the site-perimeter of a hexagon $T_{a, b, c}^{d}$ and consider the function
    \mbox{$M = s^{2} - 6 \area{T_{a,b,c}^{d}}$}.
    If the site-perimeter is fixed, polyiamonds of maximal area are those that minimize $M$.
	By~\eqref{eq:polyiamonds_area_and_perimeter} one can compute $M$ in terms of the parameters identifying the hexagon as
    $M = 3(d-a-b-c)^2 + 2((a-b)^2 + (a-c)^2 + (b-c)^2)$.
    In \cite{FuSie} it is shown that the minimum of $M$ depends on the value of the remainder $i$
    (modulo $6$). Calling $M^\star$ the minimum of $M$, by the proof of \cite[Proposition~3.4]{FuSie} we know that 
    \begin{align}\label{eq:optimal_values_Mstar}
    	M^\star = \begin{cases}
    							0 & \text{if } i \in \{0\}\\
                                3 & \text{if } i \in \{3\}\\
                                4 & \text{if } i \in \{2, 4\}\\
                                7 & \text{if } i \in \{1, 5\}
    			  \end{cases}
    \end{align}

    Call $\alpha = d - a - b - c$ and 
    $\beta = (a-b)^2 + (a-c)^2 + (b-c)^2$. 
    This implies that $M$ can be written as $M = 3\alpha^2 + 2\beta$ where $\alpha$ and $\beta$ are integers. 
    Therefore for $i \in \{0, \ldots, 5\}$ there is a unique pair of integers 
    $(\alpha^2, \beta)$ for which the optimal value of $M$ is attained.
    
    If $i = 0$, we see from \eqref{eq:optimal_values_Mstar} that $M^\star = 0$ and, hence, it must be $\alpha = 0$ and $\beta = 0$. This, in turn, implies $a = b = c  = v$ for some positive integer value $v$ and $d = a + b + c= 3v$.
    Hence, recalling that $s=3d-a-b-c$, we must have $0 = \alpha = d - a - b - c = \frac{1}{3}(s - 2(a + b + c)) = \frac{1}{3}(s - 6v)$. The unique solution of this equation is $v = r$ . This yields $d = 3r = \frac{s}{2} \in \mathbb{N}$ and, therefore, the unique quadruple  $(a, b, c, d)$  parametrizing a hexagon for which the minimum of $M$ is attained is $(r, r, r, \frac{s}{2})$. 
    
    If $i = 1$, from \eqref{eq:optimal_values_Mstar} we have $M^\star = 7$  amounting to saying $\alpha^2 = 1$ and $\beta = 2$. Assuming, without loss of generality, $a \le b \le c$, the latter implies that, for some $v$ it must be either
    $a = v - 1; \, b = c = v$ or 
    $a = b = v-1; \, c = v$.
    
    Consider, first, the case, $a = v - 1; \, b = c = v$.
    We have
    $\alpha = \frac{1}{3}(s - 2(a + b + c))
            = \frac{1}{3}(6(r-v) + 3)$.
    If $\alpha = +1$, then the solution of the equation is $v=r$ implying $d=3r = \lfloor \frac{s}{2} \rfloor$.
    This solution corresponds to the quadruple $(r-1, r, r, \lfloor \frac{s}{2} \rfloor)$.
    If $\alpha = -1$, then it must be $v = r+1$ and the associated quadruple is
    $(r, r+1, r+1, \lceil \frac{s}{2} \rceil)$.

    If we consider the case $a = b = v-1; \, c = v$, then
    $\alpha = \frac{1}{3}(6(r-v) + 5)$. However no acceptable quadruple can be obtained
    in this case since $\alpha$ must be integer and $\frac{1}{3}(6(r-v) + 5) \notin \mathbb{N}$.
    
    Arguing in the same manner for $i = 2, 3, 4, 5$ it is possible to determine all quadruples for which
    the minimum of $M$ is attained. In particular we have:
    
    if $i = 2$ the only quadruple minimizing $M$ is $(r, r, r+1, \frac{s}{2})$;
    
    if $i = 3$ the two quadruples minimizing $M$ are
    $(r, r, r, \lfloor\frac{s}{2}\rfloor)$ and
    $(r+1, r+1, r+1, \lceil\frac{s}{2}\rceil)$;
    
    if $i = 4$, $M$ is minimized only by the quadruple $(r, r+1, r+1, \frac{s}{2})$;
    
    if $i = 5$ the two quadruples minimizing $M$ are
    $(r, r, r+1, \lfloor\frac{s}{2}\rfloor)$ and
    $(r+1, r+1, r+2, \lceil\frac{s}{2}\rceil)$.
    
    Note that, for $i$ even, there is only one quadruple minimizing $M$ and, therefore, in these
    cases the hexagon maximizing the area for fixed site-perimeter is obviously unique.
    
    On the other hand, for $i$ odd, there are two quadruples minimizing $M$. However, these two quadruples
    identify the same hexagon. To see this we argue as follows.
    The parameters $a, b$ and $c$ are the lengths of three non consecutive sides of the hexagon.
    Therefore there is another parametrization $T_{a^\prime, b^\prime, c^\prime}^{d^\prime}$, in principle different from $T_{a, b, c}^{d}$, for a hexagon with site-perimeter $s$, given in terms of the lengths
    of the other three non consecutive sides for a suitable value $d^\prime$.
    Since $i$ is odd, $\frac{s}{2} \notin \mathbb{N}$ thus it is not possible to have $a = a^\prime$, $b = b^\prime$ and $c = c^\prime$ because $s = a + a^\prime + b + b^\prime + c + c^\prime$ would be even.
    Thus for each hexagon with odd site-perimeter there are, indeed, two quadruples. Since for $i$ odd the minimum of $M$ is always attained only in two quadruples they must identify the same hexagon (if the two quadruples identify different hexagons, then there should exist four quadruples minimizing $M$).
\end{proof}

\subsubsection{Proof of Theorem~\ref{thm:optimality_of_standard_polyiamonds}}

Edge-perimeter and site-perimeter are closely related. It is straightforward to check that
\linebreak
\mbox{$s(P) \le p(P)$}. Indeed, as shown in Figure~\ref{fig:site_edge_perimeter_relation}, an empty triangular unit, giving unitary contribution to the site-perimeter, may share $1$, $2$ or $3$ edges with the polyiamond each giving a unitary contribution to the edge-perimeter. More precisely the following proposition holds:
\begin{proposition}\label{prop:site_edge_perimeter_relation}
    Let $\nu(P)$ be the number of 
    $\frac{5}{3}\pi$ internal angles that are not part of an elementary hole
    and $e(P)$ the number of elementary holes in $P$. Then
    \begin{equation}\label{eq:site_edge_perimeter_relation}
        p(P) = s(P) + \nu(P) + 2e(P)
    \end{equation}
\end{proposition}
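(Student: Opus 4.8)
The plan is to read off both $p(P)$ and $s(P)$ from a single classification of the empty triangular units adjacent to $P$. For an empty triangular unit $t$ that shares at least one edge with $P$, let $d(t)\in\{1,2,3\}$ denote the number of its three edges lying on the boundary of $P$ — equivalently, the number of its three lattice neighbours that belong to $P$ — and set $n_k:=\#\{t:\ d(t)=k\}$. Every boundary edge of $P$ is an edge of exactly one empty unit, so summing $d(t)$ over the empty units adjacent to $P$ counts each boundary edge once; hence $p(P)=n_1+2n_2+3n_3$, while $s(P)=n_1+n_2+n_3$ by Definition~\ref{def:site_perimeter}. Subtracting, $p(P)-s(P)=n_2+2n_3$, so the proposition reduces to the two identities $n_3=e(P)$ and $n_2=\nu(P)$.

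The identity $n_3=e(P)$ is immediate: $d(t)=3$ means all three neighbours of $t$ lie in $P$, so $t$ is a maximal connected component of empty units whose boundary is an internal-boundary component consisting of a single triangle, i.e.\ an elementary hole in the sense of Definition~\ref{def:hole_in_polyiamond}, and conversely. For $n_2=\nu(P)$ I would construct a bijection between the empty units $t$ with $d(t)=2$ and the $\tfrac{5}{3}\pi$ internal angles of $P$ that are not part of an elementary hole. If $d(t)=2$, its two boundary edges, being two of the three edges of a triangle, meet at a unique vertex $v(t)$, and its remaining edge (opposite $v(t)$) is internal. In the other direction, recall from Definition~\ref{internalangle} that around a vertex $v$ the boundary of $P$ decomposes into passages, one per maximal run of consecutive triangular units of $P$ incident to $v$, the internal angle of a passage being $\tfrac{\pi}{3}$ times the length of that run; hence a $\tfrac{5}{3}\pi$ internal angle occurs at $v$ exactly when five of the six units around $v$ lie in $P$ and the sixth, $t$, is empty. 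In that case the two edges of $t$ at $v$ are boundary edges, so $d(t)\ge 2$, and the angle is part of an elementary hole precisely when the third edge of $t$ is also a boundary edge, i.e.\ when $d(t)=3$; thus the $\tfrac{5}{3}\pi$ angles not part of an elementary hole are exactly those whose unique empty unit satisfies $d(t)=2$.

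To close the bijection I must establish the delicate local fact that, \emph{if $d(t)=2$, then the five triangular units around $v(t)$ other than $t$ all belong to $P$}, so that $v(t)$ carries a (necessarily unique) $\tfrac{5}{3}\pi$ internal angle with notch $t$, $t$ is not an elementary hole, and the assignments $t\mapsto v(t)$ and $(\tfrac{5}{3}\pi\text{ angle at }v)\mapsto(\text{its empty unit})$ are mutually inverse, giving $n_2=\nu(P)$. I expect this to be the main obstacle: writing $N_1,N_2\in P$ for the neighbours meeting $t$ along its edges at $v$ and $N_3\notin P$ for the third neighbour, one must rule out that any of the three units of the hexagon at $v$ opposite $t$ is empty, using the edge‑connectedness of $P$ together with the planarity of $\mathbb{T}^2$ — if such a unit were empty, one of $N_1,N_2$ would be pinched between two empty units at $v$ and could reach the rest of $P$ only through its edge opposite $v$, and tracing the resulting contour around $v$ must be shown incompatible with $d(t)=2$. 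Once this is in place, the formula follows from $p(P)-s(P)=n_2+2n_3$ together with $n_3=e(P)$ and $n_2=\nu(P)$, and Theorem~\ref{thm:optimality_of_standard_polyiamonds} then follows by substituting \eqref{eq:site_edge_perimeter_relation} into the site‑perimeter minimality of standard polyiamonds from \cite{FuSie}, as indicated after the statement.
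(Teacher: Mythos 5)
Your counting framework is exactly the paper's: classify the empty triangular units adjacent to $P$ by the number $d(t)\in\{1,2,3\}$ of shared edges, so that $p(P)=n_1+2n_2+3n_3$ and $s(P)=n_1+n_2+n_3$, hence $p(P)-s(P)=n_2+2n_3$, and the identification $n_3=e(P)$ is correct. The problem is the step you yourself flag as the main obstacle, namely that $d(t)=2$ forces the other five triangular units around $v(t)$ to lie in $P$: this statement is \emph{false}, so no amount of edge-connectedness or planarity will close it. Take $P=E(2)\setminus\{t,t'\}$, where $t$ is one of the six faces of the central $E(1)$ and $t'$ is its outward neighbour, so that $\{t,t'\}$ is an interior elementary rhombus; $P$ is edge-connected, $d(t')=2$ (its neighbours are $t\notin P$ and two ring faces of $E(2)$), and $v(t')$ is the midpoint of a side of $\partial E(2)$, around which three of the six faces lie \emph{outside} $E(2)$. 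So only two of the five units other than $t'$ belong to $P$ and no run of five consecutive faces of $P$ occurs at $v(t')$. If one insists, as your passage decomposition does, on reading a $\frac{5}{3}\pi$ internal angle as such a run of five, then this $P$ has $p(P)=16$, $s(P)=14$, $e(P)=0$ and $\nu(P)=1$ (only at the centre, produced by $t$), and the claimed identity would read $16=15$: under that reading the proposition itself fails, not merely your lemma.

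What actually makes the identity true, and what the paper's proof and Figure~\ref{fig:site_edge_perimeter_relation} tacitly use, is a weaker, purely local reading: the two boundary edges of an empty unit with $d(t)=2$ are two edges of a single triangle meeting at $v(t)$, and the angle they span on the side away from $t$ is $2\pi-\frac{\pi}{3}=\frac{5}{3}\pi$ regardless of which of the remaining faces around $v(t)$ belong to $P$. With the convention that, at a vertex carrying four or six boundary edges, consecutive pairs are chosen so that the two edges of each such empty triangle are paired together (rather than pairing the edges that bound a maximal run of $P$-faces), one has $\nu(P)=n_2$ essentially by definition and the formula follows from your arithmetic with nothing left to prove. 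So the gap is not a missing combinatorial lemma but a reading of Definition~\ref{internalangle} at vertices visited more than once by the boundary that is incompatible with the statement; replace the global claim about the five surrounding units by the local identification of the $\frac{5}{3}\pi$ angle with the notch of the empty unit itself, and your argument is complete and coincides with the paper's.
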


Proposition~\ref{prop:site_edge_perimeter_relation} immediately implies
\begin{proposition}\label{prop:minimal_edge_perimeter_for_minimal_site_perimeter}
    If $s(P)$ is minimal (that is there is no polyiamond with the same area and a smaller site-perimeter) and $p(P) = s(P)$, then $p(P)$ is minimal as well (that is there is no polyiamond with the same area and a smaller edge-perimeter). 
\end{proposition}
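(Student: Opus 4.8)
The plan is to obtain the statement as an immediate corollary of the exact relation in Proposition~\ref{prop:site_edge_perimeter_relation}, which writes the edge-perimeter as the site-perimeter plus two nonnegative correction terms. Since we are allowed to assume that identity, the proof reduces to a short chain of inequalities comparing $p$ and $s$ across all polyiamonds of a fixed area.

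First I would fix a polyiamond $P$ satisfying the two hypotheses: $s(P)$ is minimal among all polyiamonds of area $\area{P}$, and $p(P) = s(P)$. Let $P'$ be an arbitrary polyiamond with $\area{P'} = \area{P}$; the goal is to show $p(P') \ge p(P)$. Applying Proposition~\ref{prop:site_edge_perimeter_relation} to $P'$ gives
\[
    p(P') = s(P') + \nu(P') + 2 e(P'),
\]
and since $\nu(P') \ge 0$ and $e(P') \ge 0$ (they count, respectively, the reentrant $\frac{5}{3}\pi$ internal angles not belonging to elementary holes, and the elementary holes), we get $p(P') \ge s(P')$. By minimality of $s(P)$ we have $s(P') \ge s(P)$, and by hypothesis $s(P) = p(P)$. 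Chaining these, $p(P') \ge s(P') \ge s(P) = p(P)$, which is exactly the assertion that $p(P)$ is minimal.

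There is essentially no obstacle internal to this deduction: the argument is the three-term chain above, and all the substantive content sits in Proposition~\ref{prop:site_edge_perimeter_relation} (the identity $p = s + \nu + 2e$) and in the refinement $s \le p$ it implies, both of which I take as given. The only points deserving a word of care are that the comparison must be carried out within the class of polyiamonds of a fixed area (so that ``minimal'' is meaningful on both sides), and that the hypothesis $p(P) = s(P)$ is precisely the statement that $\nu(P) = e(P) = 0$, i.e.\ that $P$ has no $\frac{5}{3}\pi$ internal angle outside an elementary hole and no elementary hole — this is what makes the site-perimeter lower bound sharp at $P$. This is the observation that will be combined with the site-perimeter optimality of standard polyiamonds recalled from \cite{FuSie} to conclude the proof of Theorem~\ref{thm:optimality_of_standard_polyiamonds}.
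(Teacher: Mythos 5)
Your proof is correct and is essentially the same as the paper's: both rest on the identity $p = s + \nu + 2e$ from Proposition~\ref{prop:site_edge_perimeter_relation} together with the nonnegativity of $\nu + 2e$, the only difference being that you argue by a direct chain of inequalities $p(P') \ge s(P') \ge s(P) = p(P)$ while the paper phrases the same comparison as a proof by contradiction.
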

The proof of this statement is straightforward,
indeed, call
$\alpha(P) = \nu(P) + 2e(P)$ and note that $\alpha(P) \ge 0$.
Let $\bar{P}$ be a minimizer of $s$ such that
$\alpha(\bar{P}) = 0$ (it, obviously, exists). Assume there is a polyiamond
$\tilde{P}$ such that 
$p(\tilde{P}) < p(\bar{P})$. This is equivalent to saying
$s(\tilde{P}) + \alpha(\tilde{P}) < 
s(\bar{P}) + \alpha(\bar{P}) 
= s(\bar{P})$ and this is clearly a contradiction since 
$\alpha \geq 0$.
    
\begin{figure}
    \centering
    \includegraphics[width=0.7\textwidth]{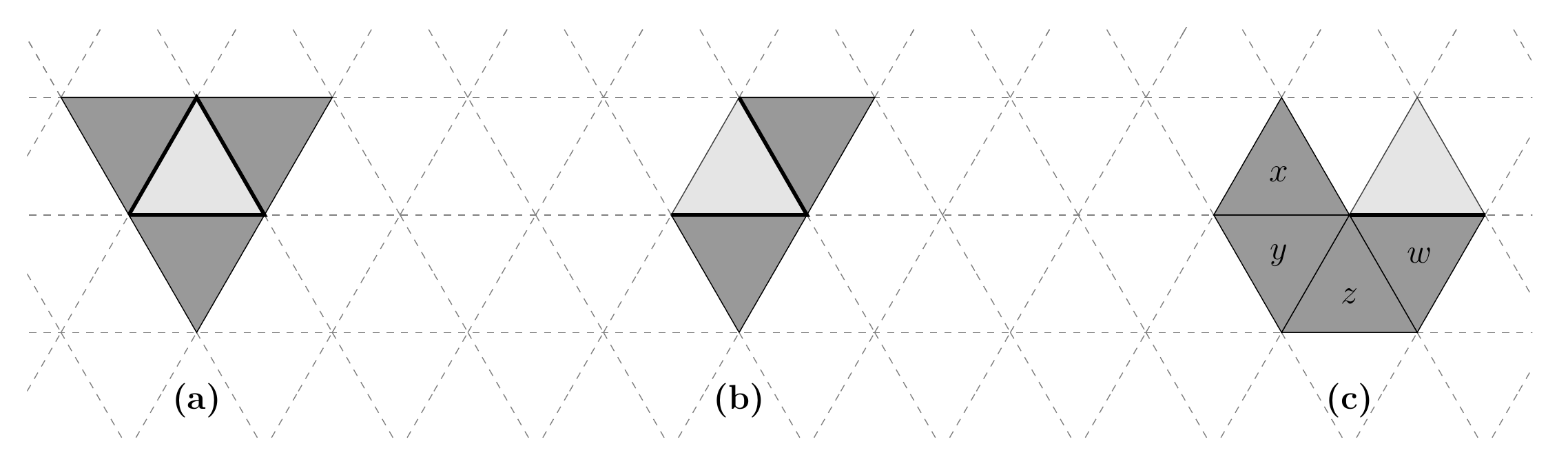}
    \caption{The three possible cases for the number of edges shared by
    an empty triangular unit and the polyiamond. If the number
    of shared edges is 3 (case (a)) the empty triangular unit is an elementary
    hole of the polyiamond. The empty triangular unit shares 2 edges with the polyiamond if and only if the two edges identify a $\frac{5}{3}\pi$ internal angle (case (b)).
    The case where the empty triangular unit and the polyiamond share a single edge is represented in (c). 
    Note that this case may correspond to different values of the internal angle (that is an angle of $\frac{\pi}{3}$ when the polyiamond contains only the triangular unit \emph{w}; an angle of $\frac{2 \pi}{3}$ when the polyiamond contains the triangular units \emph{w}, \emph{z}; an angle $\pi$ when the polyiamond contains the triangular units \emph{w}, \emph{z}, \emph{y}; an angle of $\frac{4 \pi}{3}$ when the polyiamond contains the triangular units \emph{w}, \emph{z}, \emph{y} and \emph{x}).
    }
    \label{fig:site_edge_perimeter_relation}
\end{figure}
    
\begin{proof}[Proof of Proposition~\ref{prop:site_edge_perimeter_relation}]
    For the proof we refer to Figure~\ref{fig:site_edge_perimeter_relation}.
    Note that in case (c) the contribution of 
    the empty triangular unit to the site-perimeter of the polyiamond
    is the same (one unit) that the shared edge gives to the
    edge-perimeter. In case (a) the three edges on the boundary of the polyiamond are adjacent to
    the same empty triangular unit. Therefore, for each elementary hole the edge-perimeter of the polyiamond increases by two extra units with respect to the site-perimeter.
    Finally, in case (b) the two edges on the boundary are adjacent to the same empty triangular unit. Hence, for each $\frac{5}{3}\pi$ angle the edge-perimeter of the
    polyiamond increases by one extra unit with respect to the site-perimeter.
\end{proof}

We have already seen that all quasi-regular hexagons have minimal site-perimeter for fixed area and maximal area for fixed site perimeter. Thanks to Proposition~\ref{prop:site_edge_perimeter_relation} it is possible to show that these two properties holds also for the edge-perimeter. Moreover, we show that quasi-regular hexagons are the unique polyiamonds of
maximal area for fixed edge-perimeter.
Denote by $\quasiregularset$ the set of all quasi-regular hexagons. More formally we have:
\begin{lemma}\label{lemma:quasi_regular_hex_unique_edge_perimeter_minimizers}
   Let $E$ be a quasi-regular hexagon and $P \notin \quasiregularset$ a polyiamond such that $||P|| \geq ||E||$. 
   Then \mbox{$p(P) > p(E)$} and \mbox{$s(P) > s(E)$}.
\end{lemma}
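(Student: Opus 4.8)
The plan is to prove the two inequalities in the opposite order from how they are stated: first $s(P)>s(E)$, and then to deduce $p(P)>p(E)$ from it with essentially no extra work. The key observation for this reduction is that a quasi-regular hexagon is ``perfect'' for the relation in Proposition~\ref{prop:site_edge_perimeter_relation}: by Definition~\ref{def:quasi_reg_hex}, $E=E_{B_i}(r)$ is built by gluing bars whose longest base matches exactly the side to which they are attached, so $E$ has only internal angles $\pi$ and $\frac{2}{3}\pi$ and no holes, i.e.\ $\nu(E)=e(E)=0$. Hence Proposition~\ref{prop:site_edge_perimeter_relation} gives $p(E)=s(E)$ (in agreement with Remark~\ref{proprietaS} and \eqref{eq:quasi_hexagon_perimeter_function_radius_and_numbars}), whereas for the arbitrary polyiamond $P$ it gives only $p(P)=s(P)+\nu(P)+2e(P)\ge s(P)$. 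Therefore, once $s(P)>s(E)$ is established, one gets $p(P)\ge s(P)>s(E)=p(E)$, which is the remaining inequality.

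It then remains to prove $s(P)>s(E)$. First I would record the two facts about $\quasiregularset$ that drive the argument: ordering the quasi-regular hexagons by increasing area, passing from one to the next raises the site-perimeter by exactly one unit, so the map $\quasiregularset\to\{6,7,8,\dots\}$, $E\mapsto s(E)$, is a strictly increasing bijection; in particular the area is a strictly increasing function of the site-perimeter along $\quasiregularset$. Now, since $\area{P}\ge\area{E}\ge\area{E(1)}=6$, the restatement of \cite[Proposition~4.5]{FuSie} applies with $A=\area{P}$: letting $E_2$ be the smallest quasi-regular hexagon with $\area{E_2}\ge\area{P}$, we get $s(P)\ge s(E_2)$, and since $\area{E_2}\ge\area{P}\ge\area{E}$ the monotonicity above yields $s(E_2)\ge s(E)$; hence $s(P)\ge s(E)$. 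To upgrade this to a strict inequality, suppose $s(P)=s(E)$. Then the chain $s(P)\ge s(E_2)\ge s(E)$ collapses, so $s(E_2)=s(E)$, hence $E_2=E$ by injectivity, hence $\area{P}\le\area{E_2}=\area{E}$ and therefore $\area{P}=\area{E}$. But $E$ attains the maximal area among all polyiamonds of site-perimeter $s(E)$ (quasi-regular hexagons being area-maximizers for fixed site-perimeter), so this maximal area equals $\area{E}=\area{P}$; thus $P$ is itself an area-maximizer for site-perimeter $s(E)$, and by the uniqueness in Proposition~\ref{thm:quasi_regular_hex_unique_area_maximizers_for_fixed_site_perimeter} we conclude $P=E\in\quasiregularset$, contradicting $P\notin\quasiregularset$. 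Hence $s(P)>s(E)$.

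The only genuinely delicate point is the equality case $s(P)=s(E)$: one cannot invoke a strict form of the site-isoperimetric inequality directly, but must instead convert ``$P$ has area $\area{E}$ and site-perimeter $s(E)$'' into ``$P$ is an area-maximizer for site-perimeter $s(E)$'' and then use the \emph{uniqueness} of that maximizer given by Proposition~\ref{thm:quasi_regular_hex_unique_area_maximizers_for_fixed_site_perimeter}. Everything else is bookkeeping with the bijection $\quasiregularset\to\{6,7,\dots\}$ and its monotonicity, which is exactly why it pays to run the whole argument in terms of site-perimeters and only at the very end return to edge-perimeters via Proposition~\ref{prop:site_edge_perimeter_relation}; note also that this is precisely the input invoked in the proof of Lemma~\ref{lemma:quasi_regular_hex_unique_permiter_minimizers}, since for the areas listed there one has a quasi-regular hexagon of that exact area.
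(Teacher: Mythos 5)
Your proof is correct and rests on exactly the same ingredients as the paper's: the relation $s(P)\le p(P)$ with equality for quasi-regular hexagons, the FuSie isoperimetric results (quasi-regular hexagons maximize area for fixed site-perimeter, with $\areamax{\cdot}$ strictly increasing), and the uniqueness of the area maximizer to dispose of the equality case. The only difference is organizational: the paper proves the contrapositive (if $p(P)\le p^\star$ then $\area{P}<A$) and declares the site-perimeter case ``analogous,'' whereas you prove $s(P)>s(E)$ directly and then deduce $p(P)>p(E)$ from $p(P)\ge s(P)>s(E)=p(E)$, which avoids running the argument twice.
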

\begin{proof}[Proof of Lemma~\ref{lemma:quasi_regular_hex_unique_edge_perimeter_minimizers}]
   We give the proof for $p(P)$. The proof for $s(P)$ is analogous.
   Let $A$ be the area of the quasi-regular hexagon of edge-perimeter $p^{\star}$. We prove the equivalent statement: if $p(P) \le p^\star$, then $\area{P} < A$ for all $P \notin \quasiregularset$.
   
    Denote by
    $s(P)$ the site-perimeter of $P$ and by
    $\areamax{s}$ the largest possible value of the area 
    for a polyiamond of site-perimeter $s$. We consider the cases \mbox{$p(P) < p^\star$} and 
    \mbox{$p(P) = p^\star$} 
    separately. 
    
    Let $p(P) < p^\star$. By Proposition~\ref{eq:site_edge_perimeter_relation}, we have
    $s(P) \le p(P) <p^{\star}$. Then
    $\area{P} \le \areamax{s(P)} < \areamax{p^\star} = A$.
    The last inequality follows from \cite[Proposition~3.6]{FuSie} stating that
    $\areamax{\cdot}$ is a strictly increasing function.
    
    Let $p(P) = p^\star$, then $s(P) \le p^\star$ and
    $\area{P} \le \areamax{s(P)}\le \areamax{p^\star} = A$
    for all $P \notin \quasiregularset$ by
    Lemma~\ref{thm:quasi_regular_hex_unique_area_maximizers_for_fixed_site_perimeter} and, again, by noting that $\areamax{\cdot}$
    is increasing.
\end{proof}

The previous results serve as building blocks to show that, fixing the area, also all standard polyiamonds, other than quasi-regular hexagons, have minimal edge-perimeter establishing Theorem~\ref{thm:optimality_of_standard_polyiamonds}. 

\begin{proof}[Proof of Theorem~\ref{thm:optimality_of_standard_polyiamonds}]
Let $E$ be a quasi-regular hexagon of edge-perimeter $p(E)$ and area $\area{E}$.
Consider, at first, the standard polyiamonds obtained
by adding an incomplete bar 
with an odd number of triangular units
to $E$. These polyiamonds
have edge-perimeter $p(E) + 1$ and area 
strictly larger than $\area{E}$ and, therefore,
their edge-perimeter is minimal by Lemma~\ref{lemma:quasi_regular_hex_unique_edge_perimeter_minimizers}. 
It remains to show that also standard polyiamonds obtained
by adding an incomplete bar 
with an even number of triangular units
to a quasi-regular hexagon have minimal edge-perimeter.

Write $A = A_0 + \ell$ where $A_0$ is the area of the greatest quasi-regular hexagon $R'_{A}$ containing,
at most, $A_0$ triangular units (that can be obtained
via Algorithm~\ref{algorithm}) and $\ell \ge 2$ is the (even) number
of triangular units in the incomplete bar.

Let $p^{\star} = p(R'_{A})$ and consider
a standard polyiamond $\tilde{P}$ with area
$A$ and perimeter $p(\tilde{P} )= p^{\star} + 2$.
We will show that there is no polyiamond $P$ such that
$\area{P} = A$ and $p(P) < p(\tilde{P}) = p^{\star} + 2$.
    
If a $P$ as such existed, then it would be immediate to check that
it could not have neither $\frac{5}{3}\pi$ internal angles nor elementary
holes. Indeed, the polyiamond $P_{+}$ obtained by adding a triangular unit in the ``corner'' or in the elementary hole would have perimeter
$p(P_{+}) < p^{\star} + 1$ and area 
$\area{P_{+}} = A_0 + \ell + 1 > A_0$. 
This would contradict 
Lemma~\ref{lemma:quasi_regular_hex_unique_edge_perimeter_minimizers}.
Similarly, it can be seen that $P$ can not have ``protuberances'' 
($\frac{1}{3}\pi$ internal angles). Indeed, the polyiamond
$P_{-}$ obtained from $P$ by removing the protuberance would
have perimeter $p(P_{-}) < p^{\star}+1$ and area
$\area{P_{-}} = A_0 + \ell - 1 > A_0$, since $\ell \geq 2$
and, also in this case,
Lemma~\ref{lemma:quasi_regular_hex_unique_edge_perimeter_minimizers} would be
contradicted.
Therefore $P$ can only have $\frac{2}{3}\pi$ and $\frac{4}{3}\pi$ internal
angles.
Consider the sequence of polyiamonds 
$P =: P_{0}, P_{1}, \ldots P_{m}$ where each $P_{i}$ 
is obtained from $P_{i-1}$ by adding an elementary rhombus
to a corner corresponding to a $\frac{4}{3}\pi$ internal angle
until no $\frac{4}{3}\pi$ internal angle is present.
Then $p(P_i) = p(P_{i-1})$ for all $i$ and
$\area{P_{i}} = \area{P_{i-1}} + 2 = \area{P} + 2i$.
Note that, if some of the $P_{i}$ had elementary holes, $\frac{5}{3}\pi$ or
$\frac{1}{3}\pi$ internal angles we could argue as above and obtain a contradiction.
Then $P_{m}$ should be, necessarily, a (non degenerate) hexagon.

To conclude, we rely on the following
\begin{lemma}\label{lemma:parity_area_perimeter}
   Area and perimeter (both site and edge) of every hexagon have the same parity.
\end{lemma}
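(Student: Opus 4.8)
The plan is to read everything off the parametrization of hexagons recalled from \cite{FuSie}. By~\eqref{eq:polyiamonds_area_and_perimeter}, a hexagon $T_{a,b,c}^{d}$ has site-perimeter $s(T_{a,b,c}^{d}) = 3d - a - b - c$ and area $\area{T_{a,b,c}^{d}} = d^2 - a^2 - b^2 - c^2$. First I would reduce both expressions modulo $2$: since $-1 \equiv 1 \pmod 2$ and $n^2 \equiv n \pmod 2$ for every integer $n$, one gets $s(T_{a,b,c}^{d}) \equiv d + a + b + c \pmod 2$ and $\area{T_{a,b,c}^{d}} \equiv d^2 + a^2 + b^2 + c^2 \equiv d + a + b + c \pmod 2$. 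Hence the area and the site-perimeter of any hexagon have the same parity, and this holds verbatim for the degenerate hexagons (triangles, quadrilaterals, pentagons) of Figure~\ref{angles13}, since the formulas in~\eqref{eq:polyiamonds_area_and_perimeter} are valid for all admissible quadruples.

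Next I would deal with the edge-perimeter. A hexagon, possibly degenerate, is convex, so it has no $\frac{5}{3}\pi$ internal angles and no holes; in the notation of Proposition~\ref{prop:site_edge_perimeter_relation} this means $\nu(T_{a,b,c}^{d}) = e(T_{a,b,c}^{d}) = 0$, and therefore~\eqref{eq:site_edge_perimeter_relation} gives $p(T_{a,b,c}^{d}) = s(T_{a,b,c}^{d})$. Combined with the previous paragraph this yields $p \equiv s \equiv \area{} \pmod 2$, which is the assertion. As an alternative to invoking Proposition~\ref{prop:site_edge_perimeter_relation}, the identity $p = s$ for a hexagon can be obtained by a direct count of side lengths: the three ``long'' sides (inherited from the sides of the generating triangle) have lengths $d-a-b$, $d-b-c$, $d-c-a$, the three ``short'' sides (the bases of the removed corner triangles) have lengths $a$, $b$, $c$, and their sum is $3d - a - b - c = s$.

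I do not expect a genuine obstacle: once the \cite{FuSie} parametrization is available, the statement is a short congruence computation. The only point that needs a line of care is checking that the argument indeed covers the degenerate cases, which is immediate because both the area/site-perimeter formulas~\eqref{eq:polyiamonds_area_and_perimeter} and the convexity-based identity $p = s$ continue to hold when some of the side lengths vanish.
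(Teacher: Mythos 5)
Your proposal is correct and follows essentially the same route as the paper: both reduce the \cite{FuSie} formulas $s = 3d - a - b - c$ and $\area{T^d_{a,b,c}} = d^2 - a^2 - b^2 - c^2$ modulo $2$ and use that an integer and its square have the same parity. Your extra justification that $p = s$ for a (possibly degenerate) hexagon, via convexity and Proposition~\ref{prop:site_edge_perimeter_relation}, is a point the paper leaves implicit but is a welcome clarification.
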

\begin{proof}[Proof of Lemma~\ref{lemma:parity_area_perimeter}]
    As already mentioned above, the area and both the site-perimeter
    and edge-perimeter of
    a hexagon $E$ are computed to be $\area{E} = d^2 - (a^2 + b^2 + c^2)$
    and $p(E) = 3d - (a + b + c)$ respectively, where $a, b, c, d$ are
    parameters identifying the hexagon. The conclusion follows by observing
    that $d$ has the same parity of $d^2$ and $(a + b + c)$ has the same
    parity of $(a^2 + b^2 + c^2)$.
\end{proof}
Observe that $\area{P_m}$ has the same parity of $\area{R'_{A}}$ 
(both $\ell$ and $2i$ are, indeed, even) whereas
the edge-perimeters of these two polyiamonds differ by one and, hence, have different parities.
Since $P_m$ is a proper hexagon, this contradicts Lemma~\ref{lemma:parity_area_perimeter} completing the proof.
\end{proof}

We conclude this section by providing a lower bound for the 
edge-perimeter of \emph{non standard} polyiamonds with area equal to the area of a quasi-regular hexagon.

\begin{lemma}\label{lemma:lower_bound_edge_perimeter_non_standard_polyiamonds}
    Let $\bar{p}$ be the edge-perimeter of a quasi-regular hexagon
    and let $\bar{A}$ be its area. Then, for all $P \notin \quasiregularset$ such that $\area{P} = \bar{A}$,
    $p(P) \ge \bar{p} + 2$.
\end{lemma}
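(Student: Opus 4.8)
The plan is to combine three facts already established in this section: the site/edge-perimeter relation of Proposition~\ref{prop:site_edge_perimeter_relation}, the uniqueness of quasi-regular hexagons as area-maximizers for fixed site-perimeter (Lemma~\ref{thm:quasi_regular_hex_unique_area_maximizers_for_fixed_site_perimeter}), and the parity argument used in the proof of Theorem~\ref{thm:optimality_of_standard_polyiamonds} (Lemma~\ref{lemma:parity_area_perimeter}). First I would record that, since $\bar A$ is the area of a quasi-regular hexagon and quasi-regular hexagons have minimal site-perimeter for their area with $s = p$, we have $\bar p = \bar s$, where $\bar s$ is the (minimal) site-perimeter for area $\bar A$. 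Now let $P \notin \quasiregularset$ with $\area P = \bar A$. By minimality of the site-perimeter, $s(P) \ge \bar s = \bar p$. If $s(P) \ge \bar p + 2$, then by Proposition~\ref{prop:site_edge_perimeter_relation}, $p(P) = s(P) + \nu(P) + 2e(P) \ge s(P) \ge \bar p + 2$ and we are done. So the only cases left are $s(P) = \bar p$ and $s(P) = \bar p + 1$.

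Consider $s(P) = \bar p = \bar s$, i.e. $P$ has minimal site-perimeter for its area. By Lemma~\ref{thm:quasi_regular_hex_unique_area_maximizers_for_fixed_site_perimeter}, the \emph{unique} polyiamond with site-perimeter $\bar s$ and maximal area is a quasi-regular hexagon; since $\area P = \bar A$ equals that maximal area (the area of the quasi-regular hexagon with perimeter $\bar p$), $P$ would have to be that quasi-regular hexagon, contradicting $P \notin \quasiregularset$. Hence $s(P) \neq \bar p$, and it remains to rule out — or rather, to obtain the bound in — the case $s(P) = \bar p + 1$. Here Proposition~\ref{prop:site_edge_perimeter_relation} gives $p(P) = \bar p + 1 + \nu(P) + 2e(P)$, so it suffices to show $\nu(P) + 2e(P) \ge 1$, i.e. that $P$ cannot have $\nu(P) = e(P) = 0$. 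If it did, then $p(P) = s(P) = \bar p + 1$, so $p(P)$ and $\area P = \bar A$ would have opposite parities (since $\bar p$ and $\bar A$ have the same parity, both being the perimeter and area of a hexagon, by Lemma~\ref{lemma:parity_area_perimeter}). But a polyiamond with no $\frac{5}{3}\pi$ angles and no elementary holes can be inflated, exactly as in the proof of Theorem~\ref{thm:optimality_of_standard_polyiamonds}, by repeatedly adding elementary rhombi at $\frac{4}{3}\pi$ corners — an operation that preserves the edge-perimeter and changes the area by $2$ — until a proper hexagon $P_m$ is reached (the absence of $\frac{5}{3}\pi$, $\frac{1}{3}\pi$ angles and elementary holes is preserved at each step, else one lowers the perimeter while increasing the area, contradicting Lemma~\ref{lemma:quasi_regular_hex_unique_edge_perimeter_minimizers}). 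Then $\area{P_m} \equiv \area P = \bar A \pmod 2$ while $p(P_m) = p(P) = \bar p + 1 \not\equiv \bar p \equiv \bar A \pmod 2$, contradicting Lemma~\ref{lemma:parity_area_perimeter} for the hexagon $P_m$. Therefore $\nu(P) + 2e(P) \ge 1$, hence $p(P) \ge \bar p + 2$, which completes the proof.

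I expect the main obstacle to be the case $s(P) = \bar p + 1$: the site-perimeter bound alone only yields $p(P) \ge \bar p + 1$, so one genuinely needs the parity obstruction together with the rhombus-inflation reduction to a proper hexagon to gain the extra unit. One should double-check that the inflation procedure terminates — it does, since $\area{P_i}$ strictly increases while being bounded above by $\areamax{s(P)}$ (using that adding a rhombus at a $\frac{4}{3}\pi$ corner keeps the site-perimeter, hence the area bound, unchanged) — and that no new elementary holes or sharp angles are created along the way, which is exactly the monotonicity-of-$\areamax{\cdot}$ argument invoked in the proof of Theorem~\ref{thm:optimality_of_standard_polyiamonds}.
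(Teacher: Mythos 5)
Your argument is correct and essentially coincides with the paper's proof: both reduce to the case $p(P)=\bar p+1$ with no $\frac{5}{3}\pi$ internal angles and no elementary holes by exploiting the uniqueness of quasi-regular hexagons as extremizers (you route this through the site-perimeter and Proposition~\ref{thm:quasi_regular_hex_unique_area_maximizers_for_fixed_site_perimeter}, the paper invokes Lemma~\ref{lemma:quasi_regular_hex_unique_edge_perimeter_minimizers} and a filling argument — a bookkeeping difference only), and then both conclude by the identical rhombus-inflation to a (possibly degenerate) hexagon together with the parity obstruction of Lemma~\ref{lemma:parity_area_perimeter}. The one imprecision is your parenthetical claim that $P$ itself has no $\frac{1}{3}\pi$ angles, which Lemma~\ref{lemma:quasi_regular_hex_unique_edge_perimeter_minimizers} does not deliver at step $0$ (removing a protuberance drops the area to $\bar A - 1 < \bar A$); this is harmless, since $\frac{1}{3}\pi$ corners only make the terminal polyiamond $P_m$ a degenerate hexagon, to which the parity lemma still applies, and the paper accordingly does not exclude them here.
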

\begin{proof}
    The proof can be done following the same strategy of the
    proof of 
    Theorem~$\ref{thm:optimality_of_standard_polyiamonds}$.
    Let $P\notin \quasiregularset$ 
    be a polyiamond of area $\bar{A}$ where
    $\bar{A}$ is the area of a quasi-regular hexagon.
    By Lemma~$\ref{lemma:quasi_regular_hex_unique_edge_perimeter_minimizers}$ it follows that $p(P) \ge \bar{p} + 1$. 
    We will show that $p(P) = \bar{p} + 1$
    can not hold.
    
    To this end, suppose $p(P) = \bar{p} + 1$. 
    Then $P$ can not have neither $\frac{5}{3}\pi$ internal angles
    nor elementary holes. Indeed, if it were the case, the polyiamond
    obtained by filling the angle or the hole would have area
    $\bar{A} + 1$ and edge-perimeter at most $\bar{p}$
    contradicting Proposition~\ref{thm:quasi_regular_hex_unique_area_maximizers_for_fixed_site_perimeter}.
    
    Consider, then, the sequence of polyiamonds $P =: P_{0}, P_{1}, \ldots P_{m}$ where each $P_{i}$
    is obtained from $P_{i-1}$ by adding an elementary rhombus
    to a corner corresponding to a $\frac{4}{3}\pi$ internal angle
    until no $\frac{4}{3}\pi$ internal angle is present.
    Then $p(P_i) = p(P_{i-1})$ for all $i$ and
    $\area{P_{i}} = \area{P_{i-1}} + 2 = \area{P} + 2i$.
    Note that, if some of the $P_{i}$ had either elementary holes or $\frac{5}{3}\pi$ internal angles we could argue as above and obtain a contradiction.
    Then $P_{m}$ must be, necessarily, a hexagon, possibly degenerate, and, therefore, 
    $p(P_m)$ and $\area{P_m}$ must have the same parity by
    Lemma~\ref{lemma:parity_area_perimeter}.
    By construction, $\area{P_m}$ has the same parity of $\bar{A}$
    and $p(P_m)$ has the same parity of $\bar{p} + 1$.
    Since $\bar{p}$ is the edge-perimeter of
    a quasi-regular hexagon of area $\bar{A}$, 
    $\bar{p} + 1$ and $\bar{A}$ have different parities
    contradicting the hypothesis that $P_m$ is a hexagon.
\end{proof}

\begin{remark}
    Note that, in the case of site-perimeter, the analogue of the property of the previous lemma does not hold. 
\end{remark}
Indeed, a counterexample is given by the polyiamond obtained by
removing an elementary rhombus from one corner of the hexagon
and moving it on top of a side of the hexagon. The polyiamond obtained
in this way has site-perimeter $\bar{p} + 1$.

\section*{Acknowledgments}
Unfortunately Francesca passed away on 21 October 2021, before the completion of the review process of the manuscript. She kept working passionately on this paper until the end and every page is full of her love for mathematics and for life.
Francesca, we miss you.

The Authors are grateful to 
Cristian Spitoni 
for valuable suggestions.
F.R.N. was partially supported by 
the Netherlands Organisation for Scientific Research (NWO) [Gravitation 
Grant number 024.002.003--NETWORKS].  A.T. has been supported by the H2020 Project Stable and Chaotic 
Motions in the Planetary Problem (Grant 677793 StableChaoticPlanetM of the European Research Council). 
V.J. and F.R.N. are grateful to INDAM-GNAMPA.

\providecommand{\bysame}{\leavevmode\hbox to3em{\hrulefill}\thinspace}
\providecommand{\MR}{\relax\ifhmode\unskip\space\fi MR }
\providecommand{\MRhref}[2]{%
  \href{http://www.ams.org/mathscinet-getitem?mr=#1}{#2}
}
\providecommand{\href}[2]{#2}

\end{document}